\newcommand{\oM}{\overline{M}}
\newcommand{\M}{M}
\newcommand{\om}{\overline{m}}
\newcommand{\m}{m}
\newcommand{\tphi}{\widetilde{\varphi}}
\newcommand{\trho}{\widetilde{\rho}}
\newcommand{\txtev}{\textsf{ev}}
\DeclareMathOperator{\ocosk}{\overline{cosk}}
\DeclareMathOperator{\ores}{\overline{res}}
\DeclareMathOperator{\Gl}{Gl}
\title{Logical Structure on Inverse Functor Categories}
\author{Marcelo Fiore}
\affiliation{University of Cambridge}
\email{marcelo.fiore@cl.cam.ac.uk}
\author{Krzysztof Kapulkin}
\affiliation{University of Western Ontario}
\email{kkapulki@uwo.ca}
\author{Yufeng Li}
\affiliation{University of Cambridge}
\email{yufeng.li@cl.cam.ac.uk}
\begin{document}

\begin{abstract}
  Inspired by recent work on the categorical semantics of dependent type theories, 
  we investigate the following question:  When is logical structure (crucially, 
  dependent-product and subobject-classifier structure) induced from a category to 
  categories of diagrams in it?
  Our work offers several answers, providing a variety of conditions on both the 
  category itself and the indexing category of diagrams.
  Additionally, motivated by homotopical considerations, we investigate the case when 
  the indexing category is equipped with a class of weak equivalences and study 
  conditions under which the localization map induces a structure-preserving functor 
  between presheaf categories.
\end{abstract}

\maketitle

\section*{Introduction}
Given a category $\McE$, seen as a universe of discourse, with some logical structure 
one is typically interested in inducing the corresponding logical structure on the
presheaf category $\McE^\McI$ of $\McI$-shaped diagrams in $\McE$.
Here, by logical structure we specifically mean: categorical products and
coproducts, dependent sums (i.e.~left adjoints to pullback functors), 
dependent products (i.e.~right adjoint to pullback functors), as well as 
subobject classifiers.

In a diagram category, categorical products and coproducts (being limits and colimits) 
and dependent sums (being given by postcomposition) are naturally constructed pointwise.
While, on the other hand, the construction of dependent products and subobject 
classifiers is generally involved.

When, besides dependent products, one allows extra structure on $\McE$ (typically, admitting
sufficiently many limits) dependent products in $\McE^\McI$ may be constructed.
For example, it is shown in~\cite[Theorem 2.12]{sv10} that if $\McE$ is finitely
complete and has products indexed by  the class of all maps $\mor(\McI)$ in
$\McI$, then dependent products in $\McE$ give rise to dependent products in
$\McE^\McI$.
In general, as $\mor(\McI)$ can be large, this puts a stringent completeness requirement 
on $\McE$.
Therefore, it is natural to investigate the possibility of lessening such 
assumptions %requirements 
on $\McE$.

Particularly interesting examples in which not enough limits may exist for arbitrary 
indexing categories $\McI$ are those where the universe of discourse consists of: 
$(i)$~``finite structures'' (such as the topos $\FinSet$ of finite sets and functions); 
and, $(ii)$~``syntactic structures'' (such as the free topos~\cite{sl80} or classifying 
categories for various flavours of (dependent) type theories~\cite{kl21,shu14}).

A concrete counterexample of a diagram category in which exponentials (and hence 
dependent products) fail to exist, even though they exist in the universe of 
discourse, is the category of finitely branching forests $\FinSet^{\omega}$.
Indeed, given diagrams $X,Y \in \FinSet^\omega$, if the exponential 
$Y^X \in \FinSet^\omega$ were to exist, at each $n \in \omega$, the component $(Y^X)n$ 
should consist of compatible families of functions $(Xm \to Ym)_{n < m}$ which, in general, 
need not be finite (as there are infinitely many $m \in \omega$ such that $n < m$ for any 
fixed $n \in \omega$).
This counterexample shows that in order to reduce the requirements on $\McE$ while still 
ensuring the existence of exponentiable objects and, more generally, \emph{powerful} maps
(viz.~exponentiable objects in slice categories) in the category of diagrams $\McE^\McI$, 
one needs to put restrictions on the indexing category $\McI$.

%Throughout the paper, when considering dependent products, instead of assuming that all of 
%them exist, we study dependent products along specific maps known as \emph{powerful}.

In the first part of the paper, we explore the construction of dependent products
and subobject classifiers in $\McE^\McI$ induced by their counterparts in $\McE$.  We do 
this by placing various classes of restrictions on $\McI$ that are successively generalized 
by proceeding in a modular fashion.  This is structured as follows.
%
%\begin{enumerate}
  %
%  \item

  We begin in~\Cref{sec:gpd-logic} with simple diagram shapes given by groupoids.
  Here, as expected, dependent products and subobject classifiers are constructed 
  pointwise.
  However, %in connection to our subsequent development in the second part of the paper, 
  for the dependent product, rather than directly working with groupoidal diagrams of the 
  form $\McE^\bG$, for $\bG$ a groupoid, we instead work with the generalization 
  $\McE^{\McC^{-1}\McC}$, where $\McC$ is any category and $\McC^{-1}\McC$ is the 
  homotopical category obtained by inverting all maps (this generalizes the groupoidal 
  case because a category $\McC$ is a groupoid if and only if the localization 
  $\McC \to \McC^{-1}\McC$ is an equivalence).
  In so doing, 
  and in connection to our subsequent development in the second part of the paper, 
  we both construct dependent products in
  $\McE^{\McC^{-1}\McC}$ and also show that they are preserved by the inclusion
  $\McE^{\McC^{-1}\McC} \to \McE^\McC$.
  A type-theoretic version of this result was previously proved 
  in~\cite[Proposition 5.13]{kl21}.
  %
  %This investigation then prompts the follow-up question: instead of marking
  %\emph{all} maps in $\McC$ as weak equivalences, is there a condition on the
  %collection of maps $\McW \hookrightarrow \McC$ marked as weak equivalences
  %such that $\McE^{\McW^{-1}\McC} \to \McE^\McC$ preserves dependent products
  %(i.e.~the dependent product in the homotopical diagram category agrees with
  %the dependent product in the diagram category)?
  %
  Of course, these %As groupoids contain no non-invertible arrows, the 
  constructions %in~\Cref{sec:gpd-logic} 
  fail to even encompass the simple case of arrow categories.
  These we consider next in their 
  %Just as discrete categories are special cases of groupoids, arrow categories
  %are simply arrow categories of identity functors, so 
  natural generalization %of arrow categories are 
  %comma categories, also known 
  as \emph{Artin-gluing} categories.
  Thus, as a first step towards encompassing shapes with non-invertible arrows, 
  in~\Cref{sec:gluing-logic}, we consider logical structure in Artin-gluing
  categories.
  %
%  \item
  For $[n]$ the free category generated by $n$ composable arrows, the iteration of 
  the construction of logical structure in arrow and Artin-gluing categories 
  in~\Cref{sec:gluing-logic} gives rise to a compatible family of logical structures 
  in each $\McE^{[n]^\op}$.
  In passing from the finite to the infinite case, one need look at logical structure 
  on 
  $\McE^{\omega^\op} \simeq \McE^{\colim_n [n]^\op} \simeq \lim_n\McE^{[n]^\op}$; 
  intuitively, by assembling the family of compatible logical structures in each 
  $\McE^{[n]^\op}$ to induce corresponding logical structures in $\McE^{\omega^\op}$.
  This naturally leads to the more general question of inducing logical structures in 
  the limit of categories $\lim_j\bD_j$ from compatible logical structures in each 
  $\bD_j$.
  A solution to this problem is provided in~\Cref{sec:limit-logic}.
  %
%  \item
  By~\cite{shu15}, one observes that the categories built up inductively in a
  colimiting process via repeatedly applying the Artin-gluing construction into
  groupoidal categories, encompasses the notion of \emph{inverse} category (a special 
  case of Reedy categories~\cite{bm10}) which play an important role in homotopical
  category theory.
  Motivated by this, %Therefore, 
  in~\Cref{sec:iter-glue}, %motivated by \cite{shu15}, 
  we introduce a framework that encompasses categories obtained from iterated 
  Artin gluing.
  \Cref{sec:diagrams-logic}
  then combines %We combine 
  the results of~\Cref{sec:gpd-logic,sec:gluing-logic,sec:limit-logic}. 
  in this framework. %in~\Cref{sec:diagrams-logic}.
  %
%\end{enumerate}

In the second part of the paper, we re-examine the development of~\Cref{sec:gpd-logic}.
Specifically, the results there prompt the following question: Rather than specifying 
\emph{all} maps in $\McC$ as weak equivalences, are there conditions on a collection 
of maps $\McW \hookrightarrow \McC$ specifying weak equivalences such that 
$\McE^{\McW^{-1}\McC} \to \McE^\McC$ preserves dependent products?  That is, such that 
the dependent product in the homotopical diagram category agrees with the dependent 
product in the diagram category.
%
%In the second part of the paper, we answer the question posed in~\Cref{sec:gpd-logic}: 
%Given a category $\McC$ with weak equivalences $\McW \hookrightarrow \McC$, under 
%which conditions does the inclusion $\McE^{\McW^{-1}\McC} \to \McE^\McC$ preserves 
%dependent products?
%
One answer, other than the case $\McW = \McC$ %already 
proven in~\Cref{sec:gpd-logic}, is provided by \cite[Proposition 2.10]{sv10}, which
requires $\McE^{\McW^{-1}\McC} \to \McE^\McC$ to be dense and fully faithful.
Here, for $\McC$ built up inductively in a colimiting process as per the framework 
of~\Cref{sec:iter-glue}, we provide an alternative answer to this question phrased 
only with respect to the relation between the weak equivalences $\McW$ and the 
ambient category $\McC$.
This is the content of the highly technical~\Cref{sec:homotopical-Pi}.

%%% Local Variables:
%%% TeX-master: "./main.tex"
%%% TeX-engine: default
%%% End:

\section{Logical Structure in Groupoids}\label{sec:gpd-logic}
As explained in the introduction, our investigation into the construction of the
subobject classifier and dependent product in diagram categories $\McE^\McI$
starts with a warm-up by considering the simplest case where $\McI$ is a
groupoid.
%
% As established in \Cref{prop:iter-glue}, for
% $(\McN,\McI,\partial\McI,\McI^\circ)$ the data of an iterated gluing diagram and
% a category $\McE$, one has the equivalence of categories
% $\McE^{\McI_n} \simeq \Gl(\M_n \colon \McE^{\McI_{<n}} \to
% \McE^{\partial\McI_n}) = \McE^{\partial\McI_n} \downarrow \M_n$.
% %
% Because $\McI_{<n}$ is a weighted colimit, $\McE^{\McI_{<n}}$ is a weighted
% limit, so the logical structure of $\McE^{\McI_{<n}}$ is handled by the results
% of \Cref{sec:limit-logic}.
% %
% On the other hand, $\McE^{\partial\McI_n}$ is itself a diagram category and in
% the general framework of iterated gluing diagrams, there are no restrictions on
% what the $n$-th boundary $\partial\McI_n$ can be, so in general one must require
% the existence of logical structures in $\McE^{\partial\McI_n}$.
% %
% However, in the case of generalised inverse diagrams, each $\partial\McI_n$ is a
% groupoid, and in this case, the logical structures $\McE^{\partial\McI_n}$ are
% inherited from $\McE$ by pointwise constructions.
% %
% Therefore, in this section, we construct the subobject classifier and dependent
% products in categories of groupoid-indexed diagrams.

\subsection{Subobject Classifiers}\label{subsec:gpd-Omega}
Let $\bG$ be a groupoid and $\McE$ be a category.
The goal for this part is to show in \Cref{prop:gpd-Omega-ptwise} that if $\McE$
has a subobject classifier and truth map then so does $\McE^\bG$, and these
logical structures are constructed pointwise provided that $\bG$ is connected or
$\McE$ has an initial object.

\begin{lemma}\label{lem:gpd-conn-mono-ptwise}
  If $\bG$ is connected or $\McE$ admits an initial object then a map in
  $\McE^\bG$ is a mono exactly when each of its components are.
\end{lemma}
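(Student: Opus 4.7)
The plan is to prove the two implications separately. The $(\Leftarrow)$ direction requires no hypothesis: if each component $f_y$ is monic, and $\alpha,\beta\colon Z \to X$ satisfy $f\alpha = f\beta$ in $\McE^\bG$, then componentwise $f_y\alpha_y = f_y\beta_y$ forces $\alpha_y = \beta_y$ for each $y \in \bG$, whence $\alpha = \beta$.

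The $(\Rightarrow)$ direction is where the hypothesis enters. Suppose $f$ is mono in $\McE^\bG$, and fix $y \in \bG$ together with $a, b \colon W \to Xy$ satisfying $f_y a = f_y b$. The strategy is to extend $a$ and $b$ to natural transformations $\alpha, \beta \colon Z \to X$ with $\alpha_y = a$, $\beta_y = b$, and $f\alpha = f\beta$; then the monicity of $f$ yields $\alpha = \beta$, and in particular $a = b$.

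If $\McE$ admits an initial object $0$, one handles $\bG$ outside of $y$'s connected component $\bG_y$ trivially: take $Z$ to be the constant $0$-diagram on the complement of $\bG_y$, with $\alpha,\beta$ there given by the unique initial maps. This localizes the problem to $\bG_y$. If instead $\bG$ is connected, every pair of objects is linked by an isomorphism, so $X$- and $Y$-functoriality applied to such isomorphisms place $f_y$ and any other $f_z$ in a commutative square with invertible horizontal arrows; monicity of one component of $f$ is thereby equivalent to monicity of every other, and one may transport $a, b$ around $\bG$ using these isomorphisms as the starting data for the extension.

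I expect the main technical obstacle to be reconciling the transports in the connected case when the automorphism group $G = \bG(y,y)$ is non-trivial: distinct morphisms $y \to z$ in $\bG$ act by distinct isomorphisms on $Xy$, so a naive constant-$W$ apex with $\alpha_y = a$ generally fails to satisfy naturality. The cleanest way to package the construction, and what I would pursue, is to take $Z$ to be the left Kan extension of $W$ along $\{y\} \hookrightarrow \bG$, so that natural transformations $Z \to X$ correspond bijectively to maps $W \to Xy$ in $\McE$; the extensions of $a$ and $b$ and the equality $f\alpha = f\beta$ then follow from the universal property together with $f_y a = f_y b$, after which $f$'s monicity finishes the argument.
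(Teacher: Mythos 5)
Your final paragraph arrives at exactly the paper's argument: both use the adjunction $\Delta_y \dashv \ev_y$ (equivalently, the left Kan extension along $y \colon \mbbo{1} \to \bG$) to get a natural bijection $\McE(W, Xy) \cong \McE^\bG(\Delta_y W, X)$, under which monicity of $\alpha$ in $\McE^\bG$ translates directly to monicity of the component $\alpha_y$. The preceding discussion of transporting $a,b$ around $\bG$ and the case split on connectedness vs.\ initial object is motivational scaffolding; the Kan-extension packaging you settle on handles both hypotheses uniformly and is precisely what the paper does.
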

\begin{proof}
  If $\bG$ is connected or $\McE$ has an initial object, for each $x \in \bG$, the
  functor $\ev_x = x^* \colon \McE^\bG \to \McE = \McE^{\mbbo{1}}$ given by
  restriction along $x \colon \mbbo{1} \to \bG$ admits a left adjoint $\Delta_x$.
  Examining the left Kan extension formula, one sees that for each $e \in \McE$,
  the functor $\Delta_x e$ maps each $y \in \bG$ to $e$ if $y$ is in the same
  connected component with $x$ and if there is $y \in \bG$ disconnected from
  $x$ then $\Delta_x e$ maps $y$ to the initial object 0 in $\McE$.

  The existence of a left adjoint as above shows that
  $\alpha \colon G \hookrightarrow F \in \McE^\bG$ is a mono just in case each
  of its component $\alpha_x \colon Gx \hookrightarrow Fx \in \McE$ is a mono.
  To see this, suppose $\alpha$ is a mono.
  To show each component
  $\alpha_x = \ev_x\alpha \colon Gx = \ev_xG \to Fx = \ev_xF$ is a mono is to
  show $(\ev_x\alpha)_* \colon \McE(e, \ev_xG) \to\McE(e, \ev_xF)$ is an
  injection.
  But because $\Delta_x \dashv \ev_x$ and $\alpha \colon G \hookrightarrow F$ is
  a mono, $(\ev_x\alpha) = \alpha_x$ is a mono.
  \begin{equation*}
    \begin{tikzcd}
      \McE(e, \ev_xG) \ar[r, "\cong"] \ar[d, "(\ev_x\alpha)_*"', hook]
      &
      \McE^\bG(\Delta_xe, G) \ar[d, "\alpha_*", hook]
      \\
      \McE(e, \ev_xF) \ar[r, "\cong"']
      &
      \McE^\bG(\Delta_xe, F)
    \end{tikzcd}
  \end{equation*}
\end{proof}
\begin{proposition}\label{prop:gpd-Omega-ptwise}
  Suppose either $\bG$ is connected or $\McE$ has an initial object 0.
  Then, subobject classifier and truth map in $\McE^\bG$ are given by the
  constant subobject classifier diagram and constant truth map.

  That is, if $\Omega \in \McE$ is the subobject classifier with
  $\true \colon 1 \to \Omega \in \McE$ being the truth map then the constant
  diagram $\Omega \in \McE^\bG$ and the constant natural transformation
  $\true \colon 1 \to \Omega^\bG \in \McE$ is serves as the subobject classifier
  and truth map in $\McE^\bG$.
\end{proposition}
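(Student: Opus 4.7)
The plan is to classify monos in $\McE^\bG$ componentwise, reducing to classifications in $\McE$ via \Cref{lem:gpd-conn-mono-ptwise}, and then to exploit the groupoidal structure of $\bG$ to assemble the pointwise classifiers into a natural transformation valued in the constant diagram at $\Omega$, which I shall denote $\underline{\Omega}$.

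Given a mono $\alpha \colon G \hookrightarrow F$ in $\McE^\bG$, \Cref{lem:gpd-conn-mono-ptwise} applies under the standing hypothesis and yields that each component $\alpha_x \colon Gx \hookrightarrow Fx$ is a mono in $\McE$; hence each $\alpha_x$ is classified by a unique characteristic map $\chi_x \colon Fx \to \Omega$. To exhibit $(\chi_x)_{x \in \bG}$ as a natural transformation $F \to \underline{\Omega}$, I would fix any $g \colon x \to y$ in $\bG$. Because $\bG$ is a groupoid, $Gg$ and $Fg$ are isomorphisms, so the naturality square of $\alpha$ at $g$ is automatically a pullback. Pasting this square atop the pullback classifying $\alpha_y$ by $\chi_y$ exhibits $\alpha_x$ as classified by $\chi_y \circ Fg$, and uniqueness of classifying maps in $\McE$ then forces $\chi_y \circ Fg = \chi_x$, which is exactly naturality into $\underline{\Omega}$.

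Conversely, any natural transformation $\chi \colon F \to \underline{\Omega}$ yields at each $x$ a mono $G_x \hookrightarrow Fx$, obtained as the pullback of the truth map along $\chi_x$; naturality of $\chi$ with codomain constant at $\Omega$ then supplies, via the universal property of these pullbacks, canonical transition maps making $G$ into a functor and the inclusion into a natural mono $G \hookrightarrow F$. These pointwise pullbacks assemble into the required pullback square in $\McE^\bG$ against the constant truth map, and the bijection between monos over $F$ and natural transformations $F \to \underline{\Omega}$ follows, establishing $\underline{\Omega}$ with the constant truth map as the subobject classifier of $\McE^\bG$. The only substantive step here is the naturality verification for $(\chi_x)_x$, which is clean precisely because \emph{every} naturality square of $\alpha$ over a morphism of $\bG$ is a pullback---an immediate consequence of $\bG$ being a groupoid; the remainder is routine bookkeeping with pointwise limits in the functor category.
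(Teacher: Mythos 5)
Your proposal is correct and follows essentially the same route as the paper's proof: use \Cref{lem:gpd-conn-mono-ptwise} to reduce to pointwise monos, classify each component in $\McE$, and observe that the groupoid hypothesis makes every naturality square of $\alpha$ a pullback, which together with uniqueness of classifying maps in $\McE$ forces the pointwise classifiers to assemble into a natural transformation into the constant diagram at $\Omega$. The paper presents this in the ``existence plus uniqueness of the characteristic map'' form rather than explicitly phrasing it as a bijection of subobjects with maps into $\underline{\Omega}$, but that is only a cosmetic difference.
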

\begin{proof}
  Assume there is a mono $\alpha \colon F \hookrightarrow G \in \McE^\bG$ so
  that by \Cref{lem:gpd-conn-mono-ptwise} each of its components
  $\alpha_x \colon Fx \hookrightarrow Gx \in \McE$ for $x \in \bG$ are monos.
  So, each $\alpha_x$ admits a characteristic map
  $\chi_x \colon Gx \to \Omega \in \McE$.
  Furthermore, for $g \colon x \cong x' \in \bG$, the pullback of
  $\true \colon 1 \to \Omega \in \McE$ along
  \begin{tikzcd}[cramped]
    Gx \ar[r,"Gg","\cong"'] & Gx' \ar[r,"\alpha_{x'}"] & \Omega
  \end{tikzcd}
  is $\alpha_x \colon Fx \hookrightarrow Gx$
  \begin{equation*}
    \begin{tikzcd}
      Fx  \ar[r, "Fg", "\cong"'] \ar[d, hook, "\alpha_x"']
      \ar[rd, "\lrcorner"{pos=0}, phantom]
      &
      Fx' \ar[d, hook, "\alpha_{x'}"{description}] \ar[r, "!"]
      \ar[rd, "\lrcorner"{pos=0}, phantom]
      &
      1 \ar[d, "\true", hook]
      \\
      Gx \ar[r, "\cong", "Gg"']
      &
      Gx' \ar[r, "\chi_{x'}"']
      &
      \Omega
    \end{tikzcd}
  \end{equation*}
  And so by uniqueness of the characteristic map, $\chi_x = Gg \cdot \chi_{x'}$.
  In other words, this shows that one has a natural transformation
  $\chi = (\chi_x)_x \colon G \to \Omega \in \McE^\bC$, giving rise to
  \begin{equation*}
    \begin{tikzcd}
      F \ar[d, "\alpha"', hook] \ar[r, "!"]
      \ar[rd, "\lrcorner"{pos=0}, phantom]
      &
      1 \ar[d, "\true", hook]
      \\
      G \ar[r, "\chi"']
      &
      \Omega
    \end{tikzcd} \in \McE^\bG
  \end{equation*}
  which is a pullback because each of its components are.

  And clearly $\chi$ is the only possible characteristic map $G \to \Omega$ for
  $\alpha \colon F \hookrightarrow G$, because all other characteristic maps
  $G \to \Omega$ would have components $\chi_x$ for each $x \in \bG$.
\end{proof}

\subsection{Dependent Products}\label{subsec:gpd-Pi}
In this part, we show that dependent products in categories of diagrams indexed
by groupoids are likewise constructed pointwise.
However, we note that a groupoid $\bG$ is equivalent to $\bG^{-1}\bG$, the
category obtained by formally inverting all arrows in $\bG$ and generalise this
observation by not working with groupoids $\bG$ but with categories $\bC$ and
the groupoid $\bG \coloneqq \bC^{-1}\bC$ obtained by formally inverting all arrows in $\bC$.
We structure the construction in this part in such a way to also show that the
dependent product of homotopical diagrams is always their dependent product
viewed as ordinary diagrams.
Specifically, fixing a category $\McE$ and denoting by
$\gamma \colon \McC \to \bG = \McC^{-1}\McC$ to be the localisation,
$\gamma^* \colon \McE^\bG \hookrightarrow \McE^\McC$ is the inclusion of the
full subcategory of the functors that send all maps in $\McC$ to isomorphisms in
$\McE$ into the functor category $\McE^\McC$.
We show in \Cref{thm:all-invert-Pi-htpy} that for
$h \colon B \to A \in \McE^{\bG} = \McE^{\McC^{-1}\McC}$ and
$k \colon C \to B \in \McE^{\bG}/B$, one has an isomorphism
$\gamma^*(\Pi_BC) \simeq \Pi_{\gamma^*B}{\gamma^*C}$.
A type-theoretic version of this result is given in \cite[Proposition
5.13]{kl21}.

\begin{definition}{}
  A map $f \colon c \to d$ in a category $\McC$ is \emph{powerful} if pullback
  along $f$ exists and admits a right adjoint.
\end{definition}

\begin{construction}\label{constr:gpd-Pi}
  Fix a map $h \colon B \to A \in \McE^{\bG}$ such that each component is powerful.
  For each $k \colon C \to B \in \McE^{\bG}/B$, define
  $\Pi(B,k) \colon \Pi(B,k) \to A \in \McE^{\bG}/B$ whose actions an objects are
  $\Pi(B,C)x \coloneqq \Pi_{Bx}Cx$.
  Because $\bG$ is a groupoid, for each $g \colon x \to x' \in \bG$, the bottom
  square is a pullback.
  So, by the adjunction $h_{x'}^* \dashv \Pi_{Bx}'$, one may define
  $\Pi(B,C)g \colon \Pi_{Bx}Cx \to \Pi_{Bx'}Cx'$ as the unique map such that
  \begin{equation*}
    \begin{tikzcd}
      Cx
      \ar[rdd, "Cg"']
      \ar[rd, bend right=10, "k_x"{description}]
      &
      Bx \times_{Ax} \Pi_{Bx}{Cx}
      \ar[l, "\ev"']
      \ar[d]
      \ar[rr]
      \ar[rrd, phantom, "\lrcorner"{description, pos=0}]
      &
      &
      \Pi_{Bx}{Cx}
      \ar[d]
      \ar[rdd, "{\Pi(B,C)g}"]
      \\
      &
      Bx
      \ar[rr, "h_x"{description}]
      \ar[rdd, "Bg" {description, pos=0.25}]
      \ar[rrrdd, phantom, "\lrcorner"{description, pos=0.01}]
      &
      &
      Ax
      \ar[rdd, "Ag" {pos=0.25, description}]
      \\
      &
      Cx'
      \ar[rd, bend right=10, "k_{x'}"{description}]
      &
      Bx' \times_{Ax'} \Pi_{Bx'}{Cx'}
      \ar[l, "\ev"'{pos=0.7}, crossing over]
      \ar[d]
      \ar[rr, crossing over]
      \ar[rrd, phantom, "\lrcorner"{description, pos=0}]
      \ar[uul, leftarrow, crossing over, "{(Bg, \Pi(B,C)g)}" {description, pos=0.75}]
      &
      &
      \Pi_{Bx'}{Cx'}
      \ar[d]
      \\
      &
      &
      Bx'
      \ar[rr, "h_{x'}"{description}]
      &
      &
      Ax'
    \end{tikzcd}
  \end{equation*}
  Moreover, using component-wise counits
  $Bx \times_{Ax} \Pi_{Bx}(-) \dashv \id$, define a family
  $\epsilon_{C} \coloneqq (\epsilon_{C,x} \colon Bx \times_{Ax} \Pi(B,C)x \to
  Cx)_{x \in \bG}$ with each
  $\epsilon_{C,x} \coloneqq \ev \colon Bx \times_{Ax} \Pi_{Bx}{Cx} \to Cx$.
\end{construction}

\begin{lemma}\label{lem:gpd-Pi}
  \Cref{constr:gpd-Pi} defines a functor $\Pi(B,C) \colon \bG \to \McE$ and a natural transformation
  $\epsilon_C$.
  Moreover, the construction is functorial in $C$.
\end{lemma}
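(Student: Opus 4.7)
The plan is to verify the three claims (functoriality of $\Pi(B,C)$, naturality of $\epsilon_C$, and functoriality in $C$) by systematic appeal to the uniqueness clause in the adjunction $h_{x'}^* \dashv \Pi_{Bx'}$ that defines $\Pi(B,C)g$. The key observation is that $\Pi(B,C)g$ is characterized as the \emph{unique} map from $\Pi_{Bx}Cx$ to $\Pi_{Bx'}Cx'$ whose mate under $h_{x'}^* \dashv \Pi_{Bx'}$ equals the composite $Cg \circ \epsilon_{C,x}$ after reindexing along $Bg$; this uniqueness is what forces all coherences to hold.

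First I would verify that $\Pi(B,C)$ preserves identities by noting that the identity on $\Pi_{Bx}Cx$ satisfies the defining property of $\Pi(B,C)(\id_x)$, hence by uniqueness equals it. For composition, given $g \colon x \to x'$ and $g' \colon x' \to x''$, I would show that both $\Pi(B,C)(g') \circ \Pi(B,C)(g)$ and $\Pi(B,C)(g' \circ g)$ satisfy the universal property characterizing the latter; the verification amounts to pasting the two cubes constructed for $g$ and $g'$ together and observing that pullbacks paste, so that the resulting mate under $h_{x''}^* \dashv \Pi_{Bx''}$ is $C(g'g) \circ \epsilon_{C,x}$, whence uniqueness gives the equality.

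Next, naturality of $\epsilon_C$ is essentially immediate: the defining diagram for $\Pi(B,C)g$ contains the square relating $\epsilon_{C,x}$ and $\epsilon_{C,x'}$ along $Bg$ and $\Pi(B,C)g$ by construction, so one simply extracts this square.

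Finally, for functoriality in $C$, given $\alpha \colon C \to C'$ over $B$ in $\McE^\bG/B$, I would define the component $\Pi(B,\alpha)_x \colon \Pi_{Bx}Cx \to \Pi_{Bx}C'x$ as the image of $\alpha_x$ under the adjunction $h_x^* \dashv \Pi_{Bx}$, i.e., as the unique map such that $\epsilon_{C',x} \circ (Bx \times_{Ax} \Pi(B,\alpha)_x) = \alpha_x \circ \epsilon_{C,x}$. Naturality in $x$ (that these components assemble into a map in $\McE^\bG$) and functoriality of $\Pi(B,-)$ with respect to composition and identities of $\alpha$ then follow again from the uniqueness part of the same adjunction, by the familiar argument that mates of composites are composites of mates.

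I do not expect a serious obstacle: the entire argument is a routine application of uniqueness in the right-adjoint universal property, with the only mildly delicate bookkeeping being the pasting of the double-pullback cube in the composition step. The construction was set up precisely so that these checks reduce to formal manipulations of mates.
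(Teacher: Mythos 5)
Your proposal is correct and takes essentially the same route as the paper: everything is reduced to the uniqueness clause of the adjunction $h_{x'}^* \dashv \Pi_{Bx'}$ and the compatibility of transposes (mates) with composition. The paper dismisses functoriality of $\Pi(B,C)$ and naturality of $\epsilon_C$ as ``easy to observe'' and instead spells out the naturality-in-$C$ check by explicitly computing both transposes, whereas you give the pasting argument for the first two claims and invoke ``mates of composites are composites of mates'' for the third; these are the same computations, with detail allocated to different parts.
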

\begin{proof}
  It is easy to observe that each $\Pi(B,C)$ is a functor $\bG \to \McE$ and
  that $\epsilon_C$ is a natural transformation.

  Next, we note $\Pi(B,C)$ is natural in $C \in \McE^\bG/B$.
  If there is $k' \colon C' \to B \in \McE^\bG/B$ and $f \colon C \to C'$
  over $B$ then functoriality of each $\Pi_{Bx}$ defines maps
  $\Pi(B,C)x = \Pi_{Bx}{Cx} \xrightarrow{\Pi_{Bx}{f_x}} \Pi_{Bx}{C'x} =
  \Pi(B,C')x$.
  To observe naturality is to note that
  $\textcolor{blue0}{\Pi(B,C')g \cdot \Pi_{Bx}{f_x}} =
  \textcolor{red0}{\Pi_{Bx'}{f_{x'}} \cdot \Pi(B,C)g}$ for arbitrary
  $g \colon x \to x' \in \bG$.
  Under $h_{x'}^* \dashv \Pi_{Bx'}$, the transpose of
  $\textcolor{blue0}{\Pi(B,C')g \cdot \Pi_{Bx}{f_x}}$ is given by
  \begin{align*}
    \textcolor{magenta0}{\ev} \cdot \textcolor{blue0}{(Bg, \Pi(B,C')g) \cdot
    h^*_x(\Pi_{Bx}{f_x})}
  \end{align*}
  while the transpose of
  $\textcolor{red0}{\Pi_{Bx'}{f_{x'}} \cdot \Pi(B,C)g}$ is given by
  \begin{align*}
    \textcolor{magenta0}{\ev} \cdot \textcolor{red0}{h^*_{x'}(\Pi_{Bx'}{f_{x'}}) \cdot (Bg, \Pi(B,C)g)}
  \end{align*}
  By construction,
  $\textcolor{magenta0}{\ev} \cdot \textcolor{blue0}{(Bg, \Pi(B,C')g)} = C'g
  \cdot \ev$ and by naturality of $\ev \colon B \times_A \Pi_B(-) \to \id$,
  it follows that
  $\textcolor{magenta0}{\ev} \cdot \textcolor{red0}{(Bg,
    h^*_{x'}(\Pi_{Bx'}{f_{x'}}))} = f_{x'} \cdot \ev$.
  Hence,
  $\textcolor{magenta0}{\ev} \cdot \textcolor{red0}{h^*_{x'}(\Pi_{Bx'}{f_{x'}})
    \cdot (Bg, \Pi(B,C)g)} = f_{x'} \cdot \ev \cdot \textcolor{red0}{(Bg,
    \Pi(B,C)g)}$.
  By naturality of $f$ and $\ev$ once again, the result follows.
  \begin{equation*}
    \begin{tikzcd}[column sep=tiny]
      {C'x} && {Bx \times_{Ax} \Pi_{Bx}{C'x}} &&&& {\Pi_{Bx}{C'x}} \\
      & Cx && {Bx \times_{Ax} \Pi_{Bx}{Cx}} && {\Pi_{Bx}{Cx}} \\
      \\
      & Bx && Bx && Ax \\
      && {Cx'} && {Bx' \times_{Ax'} \Pi_{Bx'}{Cx'}} && {\Pi_{Bx'}{Cx'}} \\
      & {C'x'} && {Bx' \times_{Ax'} \Pi_{Bx'}{C'x'}} &&&&& {\Pi_{Bx'}{C'x'}} \\
      && {Bx'} && {Bx'} && {Ax'}
      \arrow[from=5-5, to=7-7, phantom, "\lrcorner"{pos=0}]
      \arrow[from=2-4, to=4-6, phantom, "\lrcorner"{pos=0}]
      \arrow[from=1-3, to=2-6, phantom, "\lrcorner"{pos=0}]
      \arrow[from=4-4, to=7-5, "Bg" {pos=0.15, description}]
      \arrow[from=4-6, to=7-7, "Ag" {description}]
      \arrow[from=7-5, to=7-7, "h_{x'}" {description}]
      \arrow[from=4-4, to=4-6, "h_x" {description}]
      \arrow[from=5-5, to=7-5]
      \arrow[from=5-7, to=7-7, "\Pi_{Bx'}{k_{x'}}" {description, pos=0.25}]
      \arrow[from=2-6, to=4-6, "\Pi_{Bx}{k_x}"']
      \arrow[from=2-4, to=2-6]
      \arrow[from=2-2, to=4-2, "k_x" {description}]
      \arrow[from=5-3, to=7-3, "k_{x'}" {description, pos=0.25}]
      \arrow[Rightarrow, no head, from=4-2, to=4-4]
      \arrow[Rightarrow, no head, from=7-3, to=7-5]
      \arrow[from=4-2, to=7-3, "Bg" {description, pos=0.25}]
      \arrow[from=1-1, to=6-2, "C'g"']
      \arrow[from=1-3, to=4-4]
      \arrow[from=2-2, to=1-1, "f_x" {description}]
      \arrow[from=1-1, to=4-2, "k'_x" {description}]
      \arrow[from=6-2, to=7-3, "k'_{x'}"']
      \arrow[from=1-3, to=1-1, "\ev"']
      \arrow[from=2-6, to=1-7, "\Pi_{Bx}{f_x}" {description}, color=blue0]
      \arrow[from=2-4, to=1-3, "h_x^*(\Pi_{Bx}f_x)"', color=blue0]
      \arrow[from=5-7, to=6-9, "\Pi_{Bx'}{f_{x'}}" {description}, color=red0]
      \arrow[from=1-7, to=4-6, "\Pi_{Bx}{k_x}"]
      \arrow[from=6-9, to=7-7, "\Pi_{Bx'}{k_x}"]
      \arrow[from=1-7, to=6-9, "{\Pi(B,C')g}", color=blue0]
      \arrow[from=1-3, to=1-7]
      \arrow[from=2-4, to=4-4]
      \arrow[from=5-5, to=5-7, crossing over]
      \arrow[from=2-6, to=5-7, crossing over, "{\Pi(B,C)g}"{description, pos=0.6}, color=red0]
      \arrow[from=5-5, to=6-4, crossing over, "h_{x'}^*(\Pi_{Bx'}{f_{x'}})"{description}, color=red0]
      \arrow[from=5-5, to=5-3, crossing over, "\ev" {description, pos=0.4}]
      \arrow[from=2-4, to=5-5, crossing over, "{(Bg, \Pi(B,C)g)}" {description}, color=red0]
      \arrow[from=2-4, to=2-2, crossing over, "\ev" {description}]
      \arrow[from=1-3, to=6-4, crossing over, "{(Bg, \Pi(B,C')g)}" {description}, color=blue0]
      \arrow[from=5-3, to=6-2, crossing over, "f_{x'}"{description, pos=0.25}]
      \arrow[from=6-4, to=6-2, crossing over, "\ev"' {pos=0.25}, color=magenta0]
      \arrow[from=6-4, to=6-9, crossing over]
      \arrow[from=6-4, to=7-5, crossing over]
      \arrow[from=2-2, to=5-3, crossing over, "Cg" {description}]
    \end{tikzcd}
  \end{equation*}

  Finally, functoriality of $\epsilon_C$ in $C$ amounts to the functoriality of
  each $\Pi_{Bx}$ because
  $\epsilon_{C',x} \cdot (Bx \times_{Ax} \Pi(B,f)_x)
  = \ev \cdot \textcolor{blue0}{h_x^*(\Pi_{Bx}{fx})} = f_x \cdot \ev = f_x \cdot \epsilon_{C,x}$.
\end{proof}

\begin{lemma}\label{lem:gpd-Pi-htpy}
  For $\Pi(B,C) \in \McE^\bG/A$ as constructed in \Cref{constr:gpd-Pi}, it image
  $\gamma^*(\Pi(B,C)) \in \McE^\McC/\gamma^*A$ under the inclusion
  $\gamma \colon \McE^\bG \hookrightarrow \McE^\McC$ is the dependent product
  $\Pi_{\gamma^*B}\gamma^*C \in \McE^\McC/\gamma^*A$.
\end{lemma}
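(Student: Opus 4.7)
The plan is to verify the universal property of $\gamma^*\Pi(B,C) \to \gamma^*A$ directly as the dependent product $\Pi_{\gamma^*B}\gamma^*C$ in $\McE^\McC/\gamma^*A$; that is, to exhibit a natural bijection
\[
\McE^\McC/\gamma^*A\bigl(E,\, \gamma^*\Pi(B,C)\bigr) \;\cong\; \McE^\McC/\gamma^*B\bigl((\gamma^*h)^*E,\, \gamma^*C\bigr)
\]
for every $E \to \gamma^*A$ in $\McE^\McC$. Since limits in $\McE^\McC$ are computed pointwise and each component $h_x$ is powerful, pullback along $\gamma^*h$ exists and $(\gamma^*h)^*E$ has $x$-component $Bx \times_{Ax} Ex$.

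Given $f \colon E \to \gamma^*\Pi(B,C)$ over $\gamma^*A$, each component $f_x \colon Ex \to \Pi_{Bx}Cx$ lies in $\McE/Ax$ and transposes under the pointwise adjunction $h_x^* \dashv \Pi_{Bx}$ to a map $\tilde f_x := \epsilon_{C,x} \cdot h_x^*(f_x) \colon Bx \times_{Ax} Ex \to Cx$ in $\McE/Bx$. The assignment $f \mapsto \tilde f$ is a pointwise bijection by the $\McE$-level adjunctions, so the content reduces to showing that this bijection restricts to natural transformations on either side; i.e.\ $\tilde f$ is natural in $x \in \McC$ exactly when $f$ is.

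The equivalence of these two naturality conditions is the mate of the identity that defines $\Pi(B,C)g$ in \Cref{constr:gpd-Pi}, namely that the family $(\epsilon_{C,x} = \ev)_x$ is natural, as already established in \Cref{lem:gpd-Pi}. Concretely, for $g \colon x \to x'$ in $\McC$, transposing the naturality square for $f$ along $g$ under the adjunctions $h_x^* \dashv \Pi_{Bx}$ and $h_{x'}^* \dashv \Pi_{Bx'}$ reproduces the naturality square for $\tilde f$ along $g$, using $Cg \cdot \ev = \ev \cdot (Bg, \Pi(B,C)g)$. Naturality of the bijection in $E$ is immediate from naturality of the $\McE$-level transpositions.

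The main obstacle is the mate bookkeeping: the components $f_x$ and $f_{x'}$ live over distinct bases $Ax$ and $Ax'$, so the pointwise transpositions happen in different slices of $\McE$ and must be related along $Ag$. This is a straightforward spelling-out of exactly the mate-style calculation already carried out in the proof of \Cref{lem:gpd-Pi} when establishing naturality of $\Pi(B,-)$ in its second argument, and presents no new conceptual difficulty beyond careful diagram chasing.
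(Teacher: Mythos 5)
Your proposal verifies the same universal property by pointwise transposition, and the naturality transfer is exactly the mate-style computation the paper spells out with its commuting-cube diagram; the approaches are essentially identical. The one thing to be slightly more careful about: your appeal to \Cref{lem:gpd-Pi} leaves implicit (and the paper makes explicit) that the step relating the slices over $Ax$ and $Ax'$ works because $Ag$ and $Bg$ are isomorphisms, which is where the groupoidal hypothesis on $\bG$ enters.
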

\begin{proof}
  It suffices to prove that $\gamma^*(\Pi(B,C))$ has the same universal property
  as $\Pi_{\gamma^*B}\gamma^*C$ by showing that $\gamma^*(\Pi(B,C))$ represents
  the functor
  \begin{equation*}
    \sfrac{\McE^\McC}{\gamma^*B}(- \times_{\gamma^*A} \gamma^*B, \gamma^*C)
    \colon (\sfrac{\McE^\McC}{\gamma^*A})^\op
    \to
    \Set
  \end{equation*}

  Fix an object $d \colon D \to \gamma^*A \in \McE^\McC/\gamma^*A$ and a natural
  transformation $t \colon D \to \gamma^*(\Pi(B,C)) \in \McE^\McC/\gamma^*A$.
  Such a natural transformation is a compatible family
  $(t_x \colon Dx \to \Pi_{Bx}Cx \in \McE/Ax)_{x \in \McC}$, in that for each
  $g \colon x \to x' \in \McC$, one has $t_{x'} \cdot Dg = \Pi(B,C)g \cdot t_x$.
  Thus, pulling back along $h_{x'}$ and noting that the bottom face is a
  pullback because $Bg$ and $Ag$ are isomorphisms by the fact that
  $A,B \in \McE^\bG$ are homotopic,
  %
% https://q.uiver.app/#q=WzAsMTQsWzQsMCwiRHgiXSxbMywwLCJcXFBpX3tCeH17Q3h9Il0sWzQsMiwiXFxQaV97QngnfXtDeCd9Il0sWzUsMiwiRHgnIl0sWzMsMSwiQXgiXSxbNCwzLCJBeCciXSxbMiwxLCJCeCJdLFszLDMsIkJ4JyJdLFsyLDIsIkJ4JyBcXHRpbWVzX3tBeCd9IFxcUGlfe0J4J317Q3gnfSJdLFszLDIsIkN4JyJdLFsxLDAsIkJ4IFxcdGltZXNfe0F4fSBcXFBpX3tCeH17Q3h9Il0sWzIsMCwiQ3giXSxbMCwwLCJCeCBcXHRpbWVzX3tBeH0gRHgiXSxbMSwyLCJCeCcgXFx0aW1lc197QXgnfSBEeCciXSxbMywyLCJ0X3t4J30iLDFdLFswLDEsInRfeCIsMV0sWzEsMiwiXFxQaShCLEMpZyIsMSx7ImxhYmVsX3Bvc2l0aW9uIjo3MH1dLFswLDMsIkRnIiwxXSxbNCw1LCJBZyIsMl0sWzIsNV0sWzEsNF0sWzMsNSwiZF97eCd9IiwxXSxbMCw0LCJkX3giLDFdLFs2LDQsImhfeCIsMV0sWzcsNSwiaF97eCd9IiwxXSxbNiw3LCJCZyIsMix7ImxhYmVsX3Bvc2l0aW9uIjoxMH1dLFs4LDksIlxcZXYiLDFdLFsxMSw5LCJDZyIsMSx7ImxhYmVsX3Bvc2l0aW9uIjozMH1dLFs5LDddLFsxMCwxMSwiXFxldiJdLFsxMCw4LCIoQmcsIFxcUGkoQixDKWcpIiwxXSxbMTEsNl0sWzgsN10sWzEwLDZdLFsxMiwxMCwiQnggXFx0aW1lc197QXh9IHRfeCIsMV0sWzEzLDgsIkJ4JyBcXHRpbWVzX3tBeCd9IHRfe3gnfSIsMV0sWzEzLDddLFsxMiw2XSxbMTIsMTMsIihCZyxEZykiLDFdXQ==&macro_url=https%3A%2F%2Fgist.githubusercontent.com%2Flim495062%2F61b94af9ef95c1c7b0763c937de29c2b%2Fraw%2F52ee19151488bca0a4e9eb671d502054a336c94d%2Facmhwmacros.sty
  \begin{equation*}
    \begin{tikzcd}
      {Bx \times_{Ax} Dx} & {Bx \times_{Ax} \Pi_{Bx}{Cx}} & Cx & {\Pi_{Bx}{Cx}} & Dx \\
      && Bx & Ax \\
      & {Bx' \times_{Ax'} Dx'} & {Bx' \times_{Ax'} \Pi_{Bx'}{Cx'}} & {Cx'} & {\Pi_{Bx'}{Cx'}} & {Dx'} \\
      &&& {Bx'} & {Ax'}
      \arrow["{t_{x'}}"{description}, from=3-6, to=3-5]
      \arrow["{t_x}"{description}, from=1-5, to=1-4]
      \arrow["Dg"{description}, from=1-5, to=3-6]
      \arrow["Ag"', "\cong", from=2-4, to=4-5]
      \arrow[from=3-5, to=4-5]
      \arrow[from=1-4, to=2-4]
      \arrow["{d_{x'}}"{description}, from=3-6, to=4-5]
      \arrow["{d_x}"{description}, from=1-5, to=2-4]
      \arrow["{h_{x'}}"{description}, from=4-4, to=4-5]
      \arrow["Bg"'{pos=0.2}, "\cong"{pos=0.2}, from=2-3, to=4-4]
      \arrow[from=3-4, to=4-4]
      \arrow["\ev"{description}, from=1-2, to=1-3]
      \arrow[from=1-3, to=2-3]
      \arrow[from=3-3, to=4-4]
      \arrow[from=1-2, to=2-3]
      \arrow["{Bx \times_{Ax} t_x}", from=1-1, to=1-2]
      \arrow["{Bx' \times_{Ax'} t_{x'}}", from=3-2, to=3-3]
      \arrow[from=3-2, to=4-4]
      \arrow[from=1-1, to=2-3]
      \arrow["{(Bg,Dg)}"{description}, from=1-1, to=3-2]
      \arrow["{h_x}"{description, pos=0.25}, from=2-3, to=2-4]
      \arrow["\ev"{description}, from=3-3, to=3-4, crossing over]
      \arrow["{(Bg, \Pi(B,C)g)}"{description}, from=1-2, to=3-3, crossing over]
      \arrow["{\Pi(B,C)g}"{description, pos=0.7}, from=1-4, to=3-5, crossing over]
      \arrow["Cg"{description, pos=0.3}, from=1-3, to=3-4, crossing over]
    \end{tikzcd}
  \end{equation*}
  Because $g \colon x \to x' \in \McC$ is an arbitrary map, the left side of the
  above diagram shows that
  $(t_x^\dagger \colon Bx \times_{Ax} Dx \to Cx \in \McE/Bx)_{x \in \McC}$
  assembles to form a natural transformation
  $t^\dagger \colon \gamma^*B \times_{\gamma^*A} D \to \gamma^*C \in \McE^\McC/B$.

  Likewise, given
  $s \colon \gamma^*B \times_{\gamma^*A} D \to C \in \McE^\McC/B$ given by a
  compatible family
  $(s_x \colon Bx \times_{Ax} Dx \to Cx \in \McE/Bx)_{x \in \McC}$, its
  pointwise transposes
  $(s_x^\dagger \colon Dx \to \Pi_{Bx}Cx = \Pi(B,C) \in \McE/Ax)_{x \in \McC}$
  assemble to form a natural transformation $s^\dagger \colon D \to \gamma^*(\Pi(B,C))$.
  And because transposes are taken pointwise, one has a bijection
  \begin{align*}
    \sfrac{\McE^\McC}{\gamma^*A}(D, \gamma^*(\Pi(B,C))) \cong
    \sfrac{\McE^\McC}{\gamma^*A}(\gamma^*B \times_{\gamma^*A} D, \gamma^*C)
  \end{align*}
  It is moreover easy to see that this bijection constructed by taking pointwise
  transposes is natural in $D$.
  Therefore, $\gamma^*(\Pi(B,C)) = \Pi_{\gamma^*B}{\gamma^*C}$.
\end{proof}

\begin{lemma}\label{lem:gpd-Pi-ptwise}
  $\Pi(B,C) \in \McE^\bG/A$ as constructed in \Cref{constr:gpd-Pi} is the
  dependent product $\Pi_BC$.
\end{lemma}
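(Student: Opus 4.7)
The plan is to bootstrap from \Cref{lem:gpd-Pi-htpy} by exploiting that the inclusion $\gamma^* \colon \McE^\bG \hookrightarrow \McE^\McC$ is a full subcategory inclusion and therefore fully faithful, while also preserving all limits (being a precomposition functor, it has both adjoints given by Kan extension along $\gamma$). In particular $\gamma^*$ preserves pullbacks, which is what will make the argument go through.

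First I would promote these properties to slice categories: for each $X \in \McE^\bG$, the induced functor $\McE^\bG/X \to \McE^\McC/\gamma^*X$ remains fully faithful and preserves pullbacks. Then, for an arbitrary $d \colon D \to A \in \McE^\bG/A$, I would assemble the chain of natural isomorphisms
\begin{align*}
  \sfrac{\McE^\bG}{A}(D, \Pi(B,C))
  &\cong \sfrac{\McE^\McC}{\gamma^*A}(\gamma^*D, \gamma^*\Pi(B,C)) \\
  &\cong \sfrac{\McE^\McC}{\gamma^*A}(\gamma^*D, \Pi_{\gamma^*B}\gamma^*C) \\
  &\cong \sfrac{\McE^\McC}{\gamma^*B}(\gamma^*D \times_{\gamma^*A} \gamma^*B,\, \gamma^*C) \\
  &\cong \sfrac{\McE^\McC}{\gamma^*B}(\gamma^*(D \times_A B),\, \gamma^*C) \\
  &\cong \sfrac{\McE^\bG}{B}(D \times_A B, C),
\end{align*}
where the outer two isomorphisms use full faithfulness on slices, the second invokes \Cref{lem:gpd-Pi-htpy}, the third is the defining adjunction of the dependent product $\Pi_{\gamma^*B} \dashv \gamma^*B \times_{\gamma^*A} (-)$ in $\McE^\McC$, and the fourth uses that $\gamma^*$ preserves pullbacks. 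Since each step is natural in $D$, the composite exhibits $\Pi(B,C)$ as representing $\sfrac{\McE^\bG}{B}((-) \times_A B, C)$ over $A$, which is exactly the universal property of $\Pi_B C$ in $\McE^\bG/A$.

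There is no real obstacle in this argument: the hard construction work was already packaged into \Cref{lem:gpd-Pi-htpy}, and the remainder is simply transport of a universal property along a fully faithful, limit-preserving embedding. If desired, one can additionally check that the counit of the resulting adjunction is precisely the natural transformation $\epsilon_C$ from \Cref{constr:gpd-Pi}; this follows by unwinding the chain above, but it is not logically needed since the universal property determines $\Pi(B,C)$ up to unique isomorphism over $A$.
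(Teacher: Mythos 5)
Your proof is correct, but it takes a genuinely different route from the paper's. The paper verifies the adjunction directly: having already constructed the candidate counit $\epsilon_C$ in \Cref{constr:gpd-Pi}, it shows the composite
\[
  \sfrac{\McE^\bG}{A}(D, \Pi(B,C)) \xrightarrow{\ k^*\ } \sfrac{\McE^\bG}{B}(B \times_A D, B \times_A \Pi(B,C)) \xrightarrow{(\epsilon_C)_*} \sfrac{\McE^\bG}{B}(B \times_A D, C)
\]
is a bijection by observing it sends a natural transformation $f$ to the family of pointwise transposes $f_x^\dagger$ under $h_x^* \dashv \Pi_{Bx}$. Your argument instead bootstraps from \Cref{lem:gpd-Pi-htpy} and transports the universal property across the fully faithful inclusion $\gamma^*$. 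Both ultimately rest on the same pointwise-transpose computation (it is the actual content of \Cref{lem:gpd-Pi-htpy}), but your packaging is cleaner and has the pleasant side effect of making the two lemmas essentially a single statement plus a formal corollary; the paper's version is more hands-on and exhibits the counit explicitly, which is what gets re-used downstream (e.g., the claim in \Cref{thm:glue-Pi} that the projection preserves counits).

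One small correction to the justification: $\gamma^*$ does not in general have both adjoints, since the left and right Kan extensions along $\gamma$ require (co)limits in $\McE$ that the paper never assumes. The clean reason $\gamma^*$ preserves the relevant pullbacks is simply that limits in both $\McE^\bG$ and $\McE^\McC$ are computed pointwise, and precomposition with $\gamma$ commutes with pointwise formation; combined with the powerfulness hypothesis on the components $h_x$, the pullbacks in question exist pointwise and $\gamma^*(D \times_A B) \cong \gamma^*D \times_{\gamma^*A} \gamma^*B$ follows. With that fix, the chain of natural isomorphisms goes through and indeed exhibits $\Pi(B,C)$ as $\Pi_B C$.
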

\begin{proof}
  By \Cref{lem:gpd-Pi}, one observes that \Cref{constr:gpd-Pi} defines a functor
  $\Pi(B,-) \colon \McE^\bG/B \to \McE^\bG/A$ equipped with a map
  $\epsilon \colon B \times_A \Pi(B,-) \to \id$.
  To realise the transpose $k^* \dashv \Pi(B,-)$, it suffices to observe that
  \begin{align*}
    \sfrac{\McE^\bG}{A}(D, \Pi(B,C)) \xrightarrow{k^*} \sfrac{\McE^\bG}{B}(B \times_A D, B \times_A
    \Pi(B,C)) \xrightarrow{(\epsilon_C)_*} \sfrac{\McE^\bG}{B}(B \times_A D, C)
  \end{align*}
  defines a bijection for each $d \colon D \to A \in \McE^\bG/A$.
  This is because a natural transformation
  $f \colon D \to \Pi(B,C) \in \McE^\bG$ over $A$ is mapped to
  $((\epsilon_C)_* \cdot k^*)f$, which has component
  $Bx \times_{Ax} Dx \xrightarrow{Bx \times_{Ax} f_x} Bx \times_{Ax}
  \Pi_{Bx}{Cx} \xrightarrow{\ev} Cx$ at $x \in \bG$.
  This is exactly under the transpose of $f_x$ under $h_x^* \dashv \Pi_{Bx}$.
\end{proof}

Hence, by \Cref{lem:gpd-Pi,lem:gpd-Pi-htpy,lem:gpd-Pi-ptwise}, we have shown:
\begin{theorem}[{cf. \cite[Proposition 5.13]{kl21}}]\label{thm:all-invert-Pi-htpy}
  If $h \colon B \to A \in \McE^\bG$ is such that each
  $h_x \colon Bx \to Ax \in \McE$ is powerful then
  $\Pi_Bk \colon \Pi_BC \to A \in \McE^\bG/A$ exists for all
  $k \colon C \to B \in \McE^\bG/B$.
  Moreover, the dependent product
  $\Pi_{\gamma^*B}\gamma^*k \colon \Pi_{\gamma^*B}{\gamma^*C} \to \gamma^*A \in
  \McE^\McC/\gamma^*A$ exists and is isomorphic to
  $\gamma^*(\Pi_Bk) \colon \gamma^*(\Pi_BC) \to \gamma^*A \in
  \McE^\McC/\gamma^*A$.
\end{theorem}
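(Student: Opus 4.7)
The plan is to invoke the three preceding lemmas (\Cref{lem:gpd-Pi}, \Cref{lem:gpd-Pi-ptwise}, and \Cref{lem:gpd-Pi-htpy}), which together isolate the technical content of the theorem. The overall strategy is: first, use the powerful hypothesis on each $h_x$ to assemble a pointwise candidate for $\Pi_B C$ in $\McE^\bG/A$; second, verify that this candidate is genuinely a right adjoint to pullback along $h$ inside the $\bG$-indexed setting; and third, verify that applying $\gamma^*$ transports this right adjoint structure to a right adjoint to pullback along $\gamma^*h$ in the $\McC$-indexed setting.

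Concretely, I would begin by invoking \Cref{constr:gpd-Pi} to form $\Pi(B,C) \in \McE^\bG/A$ together with the pointwise evaluation $\epsilon_C$. The powerful hypothesis supplies $\Pi_{Bx} Cx$ at each object, and the groupoid hypothesis on $\bG$ ensures that $Bg$ and $Ag$ are isomorphisms for each $g \colon x \to x'$, so the square built from $h_x$ and $h_{x'}$ is a pullback and each transition map $\Pi(B,C)g$ is uniquely determined by the adjunction $h_{x'}^* \dashv \Pi_{Bx'}$. \Cref{lem:gpd-Pi} then states that this makes $\Pi(B,C)$ a functor $\bG \to \McE$ and the assignment functorial in $C$. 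I would then invoke \Cref{lem:gpd-Pi-ptwise} to upgrade this to a genuine adjunction $h^* \dashv \Pi(B,-) \colon \McE^\bG/B \to \McE^\bG/A$, where the required hom-set bijection $\sfrac{\McE^\bG}{A}(D, \Pi(B,C)) \cong \sfrac{\McE^\bG}{B}(B \times_A D, C)$ is realized pointwise via the local adjunctions $h_x^* \dashv \Pi_{Bx}$. This establishes that $\Pi_B k$ exists and coincides with the construction.

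For the second assertion, I would invoke \Cref{lem:gpd-Pi-htpy}, which shows that $\gamma^*(\Pi(B,C))$ represents the functor $\sfrac{\McE^\McC}{\gamma^*A}(- \times_{\gamma^*A} \gamma^*B, \gamma^*C)$. The essential observation is that although $\gamma \colon \McC \to \bG$ introduces many non-invertible $g \colon x \to x'$ in $\McC$, the images $Bg$ and $Ag$ are still isomorphisms in $\McE$ (since $B$ and $A$ lie in the image of $\gamma^*$), so the squares used in \Cref{constr:gpd-Pi} remain pullbacks when evaluated along $\McC$-morphisms. Consequently, pointwise transposes of a compatible family over $\McC$ reassemble into a natural transformation over $\McC$ and vice versa, yielding the required hom-set bijection natural in $D$.

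The conceptual obstacle lies entirely in \Cref{lem:gpd-Pi-htpy}: one needs to check that the right adjoint character survives passage from $\bG$-indexing to the (generally much larger) $\McC$-indexing. This is not automatic from $\gamma^*$ being fully faithful on the functors into $\McE^\bG$; it hinges on the fact that the pullback squares along $h$ are already in the image of $\gamma^*$ and hence remain pullbacks for arbitrary $g \in \McC$. Once that is granted, everything else reduces to routine verification that the bijection of \Cref{lem:gpd-Pi-ptwise}, being pointwise, automatically extends to the $\McC$-indexed setting.
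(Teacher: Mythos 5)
Your proposal matches the paper's proof: the theorem is proved exactly by assembling \Cref{constr:gpd-Pi}, \Cref{lem:gpd-Pi}, \Cref{lem:gpd-Pi-ptwise}, and \Cref{lem:gpd-Pi-htpy}, and you invoke the same lemmas in the same roles. Your commentary correctly identifies the crux (that $Ag$ and $Bg$ remain isomorphisms for all $g \in \McC$, so the pullback squares in the construction survive restriction along $\gamma$), which is precisely the observation driving \Cref{lem:gpd-Pi-htpy}.
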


%%% Local Variables:
%%% TeX-master: "./main.tex"
%%% TeX-engine: default
%%% End:

\section{Logical Structure in Gluing Categories}\label{sec:gluing-logic}
Having dealt with the case of logical structures in $\McE^\McI$ where $\McI$
contains no non-trivial maps in \Cref{sec:gpd-logic}, one would next like to
investigate the case where $\McI$ has one single non-trivial arrow (i.e. the
free walking arrow category).
In this section, however, instead of taking $\McI = \set{\bullet \to \bullet}$,
we work instead with \emph{Artin gluing categories}, which generalise the
observation that arrow categories are simply comma categories of the identity
functor.

% In view of \Cref{prop:iter-glue}, for $(\McN,\McI,\partial\McI,\McI^\circ)$ the
% data for an iterated gluing diagram and a category $\McE$, to investigate the
% logical structure of $\McE^{\McI_n}$ for $n \in \McN$ is to investigate the
% logical structure of the gluing category $\Gl(\M_n)$.
% %
% Therefore, we now investigate logical structures in general gluing categories
% $\Gl(F) = \McE \downarrow F$, where $F \colon \McC \to \McE$ is a
% functor.
% %
% For all practical purposes within the scope of this paper, $F$ will be take to
% be the matching object functor.

\begin{definition}\label{def:artin-glue}
  The \emph{Artin gluing category} of a functor $F \colon \McC \to \McE$, also
  known as its \emph{gluing category}, $\Gl(F)$, is defined as the comma
  category $\Gl(F) \coloneqq \McE \downarrow F$.
\end{definition}
Indeed, we see that the arrow category $\McE^\to$ is equivalent to
$\Gl(\id_\McE\relax) = \id_\McE \downarrow \McE$.

\subsection{Subobject Classifiers}
In this section, we construct the subobject classifier in $\Gl(F)$.
Its construction is motivated by the following example of the construction of
subobject classifier in co-presheaf categories.

\begin{example}[{\cite[\S I.4]{mm94}}]\label{ex:glu-Omega-ex}
  For $\McJ$ a (small) indexing category, the
  co-presheaf category $\Set^{\McJ}$ admits a subobject classifier $\Omega$
  taking each $j \in \McJ$ to the sub-co-presheaves of the representable at
  $j$.
  Each such sub-co-presheaf is equivalently a sieve on $j$: a subset $S$ of all
  the arrows whose domain is $j$ such that if $j \xrightarrow{\alpha'} j' \in S$
  then $j \xrightarrow{\alpha} j' \xrightarrow{\alpha'} j'' \in S$ for all
  composable maps $\alpha'$.

  % Now, consider two categories $\McA_1,\McA_2$ .
  % %
  % It is clear that the collage of the terminal profunctor
  % $\McA_1 \sqcup \McA_2 \slashedrightarrow \mbbo{1}$ taking each
  % $(\bullet \in \mbbo{1}, a_1 \in \McA_1)$ and
  % $(\bullet \in \mbbo{1}, a_2 \in \McA_2)$ to the singleton set gives rise to
  % $\McB \coloneqq (\McA_1 \sqcup \McA_2)^\lhd$, the category obtained by
  % formally adjoining an initial object 0 to the disjoint union
  % $\McA_1 \sqcup \McA_2$.
  % %
  % Now, suppose $S$ is a sieve on 0 in $(\McA_1 \sqcup \McA_2)^\lhd$.
  % %
  % Then,
  %
  % A sieve on 0 in $(\McA_1 \squpc \McA_2)^\lhd$ either contains the identity at 0, in which case it must contain every single arrow in $\McA_1,\McA_2$ in addition to the unique map from 0 to every object in $\McA_1,\McA_2$; or, for every $$
  %
  %
  It is clear that the collage of the terminal profunctor $\McJ
  \slashedrightarrow \mbbo{1}$ taking each $(\bullet, j)$, where
  $\bullet$ is the unique object of the singleton category $\mbbo{1}$ and $j \in
  \McJ$, to the singleton set, is the category
  $\McJ^\lhd$ obtained by formally adjoining an initial object 0 to $\McJ$.
  Now, suppose $S$ is a sieve on 0 in $\McJ^\lhd$.
  Then, for each $j \in \McJ$ the restriction $S|_j \coloneqq \set{j
    \xrightarrow{\alpha} j' ~|~ 0 \xrightarrow{!} j \xrightarrow{\alpha}
    j'}$ is a sieve on $j$ in $\McJ$.
  By initiality of the 0, it is also clear that if $j_1 \xrightarrow{\phi}
  j_2$ then $(S|_{j_1})|_\phi \coloneqq \set{j_2 \xrightarrow{\alpha} j' ~|~ j_1
    \xrightarrow{\phi} j_2 \xrightarrow{\alpha} j' \in S|_{j_1}} = S|_{j_2}$.
  Hence, each sieve
  $S$ on 0 gives rise to a comaptible family of sieves $(S|_j)|_{j \in \McJ} \in
  \lim_{j \in \McJ} \Omega j$, where
  $\Omega$ is the subobject classifier of $\Set^\McJ$.
  Moreover, in the case that $\id_0 \in S$ then each
  $S|_j$ is the maximal sieve on $j$.
  Therefore, each sieve $S$ on 0 is an element $S \in 2 \times \lim_j \Omega
  j$ such that if $\pi_1 S = 1$ (i.e. $\id_0 \in S$) then $S|_j(\alpha)$ is the maximal sieve on
  $j$ for each $j \in \McJ$.

  Denote by $\chi_\true \colon \lim_j \Omega j \to
  2$ the characteristic function picking out the family of sieves whose
  component at each $j \in \McJ$ is the maximal sieve on $j$.
  Then, the sieve condition says that a sieve on $0 \in \McJ^\lhd$ is a pair $S
  \in 2 \times \lim_j \Omega_j$ such that if $\pi_1S = 1$ (i.e. $\id_0 \in
  S$) then $\chi_\true(\pi_2S) = 1$ (i.e. each
  $S|_j$ is the maximal sieve on $j$).

  From this, one observes that the sieves on $0 \in \McJ^\lhd$ is given by the
  equaliser
  \begin{equation*}
    \begin{tikzcd}[column sep=large]
      {\Omega^\lhd 0} & {2 \times \lim_j \Omega j} & {2 \times 2} & {2}
      \arrow["\wedge", from=1-3, to=1-4]
      \arrow["{\id \times \chi_{\true}}", from=1-2, to=1-3]
      \arrow["{\pi_1}"', bend right=10, from=1-2, to=1-4]
      \arrow[hook, "i", from=1-1, to=1-2]
    \end{tikzcd}
  \end{equation*}

  But note that the category of cones over $\McJ$-shaped diagrams in $\Set$ is
  given by
  $\Set^{\McJ^\lhd} \simeq \Gl(\Set^\McJ \xrightarrow{\lim\relax} \Set)$ by the
  universal property of the limit.
  By the equaliser defining $\Omega^\lhd 0$ above, one obtains a map
  $\Omega^\lhd 0 \hookrightarrow 2 \times \lim_j \Omega j \to \lim_j \Omega j$,
  which is equivalent to a cone over $(\Omega j)_j$ in $\Set$ at
  $\Omega^\lhd 0$.
  As a $\McJ^\lhd$-shaped diagram, this cone takes each object of $\McJ^\lhd$ to
  the sieves on said object, thus constructing the subobject classifier of
  $\Set^{\McJ^\lhd}$.
  Under the equivalence of categories
  $\Set^{\McJ^\lhd} \simeq \Gl(\Set^\McJ \xrightarrow{\lim\relax} \Set)$, the
  subobject classifier of the gluing category
  $\Gl(\Set^\McJ \xrightarrow{\lim\relax} \Set)$ is then
  $\Omega^\lhd 0 \hookrightarrow 2 \times \lim_j \Omega j \to \lim_j \Omega j$.
  %
  % %
  % This category has a subobject classifier $\Omega$, whose object at
  %
  % To gain some intuition for this construction, consider the strict inverse
  % category $\spn\relax = \set{0 \leftarrow 2 \rightarrow 1}$.
  % %
  % Then, the absolute matching object functor at degree 2 for $\spn\relax$s in
  % $\Set$ is just the binary product functor $\M_2 \colon \Set^2 \to \Set$, where
  % $2$ is the discrete category $2 = \set{0,1}$.
  % %
  % The subobject classifier $\Omega$ of the co-presheaf category
  % $\Set^{\spn\relax}$ sends each $i \in \spn\relax$ to the set of
  % sub-co-presheaves of the representable at $i$.
  % %
  % Explicitly, this means $\Omega_0,\Omega_1 \cong 2$ while
  %
  % It is also easy to see that $\Set^{\spn\relax} \simeq \Gl(\M_2)$: a
  % $\spn\relax$ $X$ consists of sets $X_0,X_1,X_2$ along with a map
  % $X_2 \to X_0 \times X_1$.
  %
\end{example}
%
% When $F$ is taken to be $\M_n \colon \McE^{\McI_{<n}} \to \McE^{\partial\McI_n}$

We now proceed to generalise the observation made in \Cref{ex:glu-Omega-ex} to
arbitrary gluing categories $\Gl(F \colon \McC \to \McE)$ in
\Cref{thm:glue-Omega}.
For this, we work under the following assumptions:
\begin{assumption}\label{asm:F-omega-cond} $ $
  \begin{itemize}
    \item Both $\McC$ and $\McE$ are equipped with subobject classifiers,
    respectively called $\Omega_\McC$ and$\Omega_\McE$.
    \item $\McE$ has all finite limits and $F$ preserves finite limits.
  \end{itemize}
\end{assumption}

First, we construct the subobject classifier and truth map in $\Gl(F)$ in the
following \Cref{constr:glue-true} and proceed to prove various properties about
them.

\begin{construction}\label{constr:glue-true}
  Because $\McE$ has all finite limits, the following equaliser exists
  \begin{equation}\label{eqn:E-Omega-eq}\tag{\textsc{$\Omega_{\Gl(F)}$-eq}}
    \begin{tikzcd}[column sep=large]
      % https://q.uiver.app/#q=WzAsNCxbMSwwLCJcXE9tZWdhX1xcTWNFIFxcdGltZXMgRlxcT21lZ2FfXFxNY0MiXSxbMiwwLCJcXE9tZWdhX1xcTWNFIFxcdGltZXMgXFxPbWVnYV9cXE1jRSJdLFszLDAsIlxcT21lZ2FfXFxNY0UiXSxbMCwwLCJFX1xcT21lZ2EiXSxbMSwyLCJcXHdlZGdlIl0sWzAsMSwiXFxpZCBcXHRpbWVzIFxcY2hpX3tGKFxcdHJ1ZSl9Il0sWzAsMiwiXFxwaV8xIiwyLHsiY3VydmUiOjJ9XSxbMywwLCIiLDIseyJzdHlsZSI6eyJ0YWlsIjp7Im5hbWUiOiJob29rIiwic2lkZSI6InRvcCJ9fX1dXQ==&macro_url=https%3A%2F%2Fgist.githubusercontent.com%2Flim495062%2F61b94af9ef95c1c7b0763c937de29c2b%2Fraw%2F52ee19151488bca0a4e9eb671d502054a336c94d%2Facmhwmacros.sty
      {\Omega_{\Gl(F)}} & {\Omega_\McE \times F\Omega_\McC} & {\Omega_\McE \times \Omega_\McE} & {\Omega_\McE}
      \arrow["\wedge", from=1-3, to=1-4]
      \arrow["{\id \times \chi_{F(\true)}}", from=1-2, to=1-3]
      \arrow["{\pi_1}"', bend right=10, from=1-2, to=1-4]
      \arrow[hook, "i", from=1-1, to=1-2]
    \end{tikzcd}
  \end{equation}
  where $\chi_{F(\true)} \colon F\Omega_\McC \to \Omega_\McE$ is the
  classifying map of $F(\true) \colon F1 \hookrightarrow F\Omega_\McC$
  (which is a mono because $F$ preserves pullbacks so it preserves monos).
  And because $F$ preserves finite limits, one has an isomorphism
  $1_\McE \cong F1_\McC$.
  Define the map $\true_{\Gl(F)} \colon 1 \to \Omega_{\Gl(F)}$ as the map induced by
  $(1 \xrightarrow{\true} \Omega, 1 \cong F1 \xrightarrow{F(\true)} F\Omega)
  \colon 1 \to \Omega_\McE \times F\Omega_\McC$.
\end{construction}

\begin{lemma}\label{lem:glue-true}
  The map
  $(1 \xrightarrow{\true} \Omega_\McE, 1 \cong F1 \xrightarrow{F(\true)}
  F\Omega_\McC) \colon 1 \to \Omega_\McE \times F\Omega_\McC$ from
  \Cref{constr:glue-true} equalises
  $\pi_1, \wedge \cdot (\id \times \chi_{F(\true)}) \colon \Omega_\McE \times
  F\Omega_\McC \rightrightarrows \Omega_\McE$.
  So, the map $\true_{\Gl(F)}$ in \Cref{constr:glue-true} actually exists.
  Moreover, one has
  \begin{equation}\label{eqn:gl-Omega}\tag{\textsc{gl-$\Omega$}}
    \begin{tikzcd}
      1 \ar[rr, "\cong"] \ar[d, "\true_{\Gl(F)}"'] && F1 \ar[d, "F(\true)"] \\
      \Omega_{\Gl(F)} \ar[r, hookrightarrow] & \Omega_\McE \times F\Omega_\McC \ar[r, "\pi_2"'] & F\Omega_\McC
    \end{tikzcd} \in \McE
  \end{equation}
\end{lemma}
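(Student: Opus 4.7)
The plan is to verify both claims by unfolding the definitions directly, using only the defining pullback of a characteristic map and that of~$\wedge$. Throughout I write $\phi \colon 1 \xrightarrow{\cong} F1$ for the canonical iso (available because $F$ preserves the terminal object, per~\Cref{asm:F-omega-cond}).

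For the equalising property, I would compute the two composites out of~$1$ applied to the pair $(\true, F(\true) \circ \phi) \colon 1 \to \Omega_\McE \times F\Omega_\McC$. The first composite, $\pi_1 \circ (\true, F(\true) \circ \phi)$, is plainly $\true$. For the second, the crucial input is that by the defining pullback of $\chi_{F(\true)}$ one has $\chi_{F(\true)} \circ F(\true) = \true \circ {!}_{F1}$, where ${!}_{F1} \colon F1 \to 1$ is the unique terminal map; then ${!}_{F1} \circ \phi = \id_1$ gives $\chi_{F(\true)} \circ F(\true) \circ \phi = \true$. The second composite therefore reduces to $\wedge \circ (\true, \true)$, which equals $\true$ by the very definition of $\wedge$ as the characteristic map of $(\true, \true) \colon 1 \hookrightarrow \Omega_\McE \times \Omega_\McE$. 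Since both composites agree, the universal property of~\eqref{eqn:E-Omega-eq} produces the required factorisation through $i$, which is by definition $\true_{\Gl(F)}$.

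Commutativity of~\eqref{eqn:gl-Omega} is then essentially tautological: by the uniqueness clause of the equaliser, $i \circ \true_{\Gl(F)}$ is precisely the pair $(\true, F(\true) \circ \phi)$, and post-composing with $\pi_2$ yields $\pi_2 \circ i \circ \true_{\Gl(F)} = F(\true) \circ \phi$, which is exactly the top-right composite of~\eqref{eqn:gl-Omega}.

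The lemma is essentially a bookkeeping exercise and I anticipate no substantive obstacle; the only point requiring a moment of care is the repeated use of $\phi$ to transport $F(\true)$ into a map with domain~$1$, together with the invocation of the universal property of~$\Omega_\McE$ to identify $\chi_{F(\true)} \circ F(\true)$ with $\true \circ {!}_{F1}$. No deeper property of~$F$ beyond finite-limit preservation is used.
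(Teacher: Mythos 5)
Your proposal is correct and follows essentially the same route as the paper's own proof: set $\varphi = F(\true)\circ\phi$, verify the equalising condition via $\chi_{F(\true)}\circ\varphi = \true$ and $\wedge\circ(\true,\true)=\true$, and read off the square from $\pi_2 \circ i \circ \true_{\Gl(F)} = F(\true)\circ\phi$. The only difference is cosmetic — you spell out the pullback identity $\chi_{F(\true)}\circ F(\true) = \true\circ {!}_{F1}$ where the paper simply cites ``by definition of the characteristic function.''
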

\begin{proof}
  Put $\varphi \coloneqq 1 \cong F1 \xrightarrow{F(\true)} F\Omega_\McC$ from
  \Cref{constr:glue-true}.
  Then, $\pi_1 \cdot (\true, \varphi) = \true \colon 1 \to \Omega_\McE$.
  Also,
  $\chi_{F(\true)} \cdot \varphi = 1 \cong F1 \xrightarrow{F(\true)} F\Omega_C
  \xrightarrow{\chi_{F(\true)}} \Omega = \true$ by definition of the
  characteristic function $\chi_{F(\true)}$.
  Therefore,
  $\wedge \cdot (\id \times \chi_{F(\true)}) \cdot (\true, \varphi) = \wedge
  (\true, \true) = \true$.
  Hence, $\true_{\Gl(F)}$ as above does indeed exist.

  In particular, the map
  $1 \xrightarrow{\true_{\Gl(F)}} \Omega_{\Gl(F)} \hookrightarrow \Omega_\McE \times
  F\Omega_C \xrightarrow{\pi_2} F\Omega_\McC = \varphi = 1 \cong F1
  \xrightarrow{F(\true)} F\Omega_\McC$.
  So the square in the statement above commutes.
\end{proof}

\begin{lemma}\label{lem:true-pb}
  For $\true_{\Gl(F)} \colon 1 \to \Omega_{\Gl(F)}$ from \Cref{constr:glue-true}, one
  has the pullback
  \begin{equation*}
    \begin{tikzcd}
      1 \cong F1 \ar[d, "\true_{\Gl(F)}"', hookrightarrow]
      \ar[r, "="]
      \ar[rd, "\lrcorner"{pos=0}, phantom]
      &
      1 \ar[d, "\true", hookrightarrow]
      \\
      \Omega_{\Gl(F)}
      \ar[r]
      &
      \Omega_\McE
    \end{tikzcd}
  \end{equation*}
  where the map $\Omega_{\Gl(F)} \to \Omega_\McE$ in the bottom row are the maps from
  from \Cref{eqn:E-Omega-eq}.
\end{lemma}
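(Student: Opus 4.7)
The plan is to verify the two defining pieces of a pullback square separately: commutativity of the outer square, and the universal property against arbitrary test cones. Commutativity is essentially a reading-off from the construction. By \Cref{constr:glue-true}, $\true_{\Gl(F)}$ is the unique factorisation through the equaliser $i \colon \Omega_{\Gl(F)} \hookrightarrow \Omega_\McE \times F\Omega_\McC$ of the pair $(\true,\varphi) \colon 1 \to \Omega_\McE \times F\Omega_\McC$, where $\varphi \coloneqq F(\true) \cdot (1 \cong F1)$. Postcomposing with the bottom map $\Omega_{\Gl(F)} \hookrightarrow \Omega_\McE \times F\Omega_\McC \xrightarrow{\pi_1} \Omega_\McE$ therefore recovers $\true \colon 1 \to \Omega_\McE$, which is exactly the commutativity assertion.

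For the universal property, I would fix a test object $X$ with the unique map $!_X \colon X \to 1$ and an arbitrary $g \colon X \to \Omega_{\Gl(F)}$ such that $X \xrightarrow{g} \Omega_{\Gl(F)} \to \Omega_\McE$ equals $\true \cdot !_X$. Composing $g$ with $i$ presents it as a pair $i \cdot g = (a,b)$ with $a \colon X \to \Omega_\McE$ and $b \colon X \to F\Omega_\McC$, and the hypothesis forces $a = \true \cdot !_X$. The equaliser condition on $(a,b)$ then reads $a = \wedge \cdot (a, \chi_{F(\true)} \cdot b)$; since the truth map is a right identity for conjunction (it classifies the maximal subobject), this collapses to $\chi_{F(\true)} \cdot b = \true \cdot !_X$.

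The next step is to invoke the defining universal property of $\chi_{F(\true)}$ as the classifier of the mono $F(\true) \colon F1 \hookrightarrow F\Omega_\McC$: this condition on $b$ forces it to factor uniquely through $F(\true)$ via a map $X \to F1 \cong 1$, which by terminality must be $!_X$. Hence $b = \varphi \cdot !_X$, and consequently $i \cdot g = (\true,\varphi) \cdot !_X = i \cdot \true_{\Gl(F)} \cdot !_X$. Since $i$ is monic, $g = \true_{\Gl(F)} \cdot !_X$, giving both existence and uniqueness of the filler $X \to F1$ (which is forced to be $!_X$).

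The only even mildly subtle point is recognising that the equaliser defining $\Omega_{\Gl(F)}$ was engineered precisely so that ``lying over $\true$ in the first coordinate'' coincides with ``being classified by $F(\true)$ in the second coordinate''; once that translation is made via $\wedge \cdot (\true, -) = \id$, the remainder is routine bookkeeping with the defining universal properties of $\Omega_\McE$ and of the equaliser. I expect the manipulation of the equaliser condition into the clean statement $\chi_{F(\true)} \cdot b = \true \cdot !_X$ to be the one place where care is required; everything else is immediate.
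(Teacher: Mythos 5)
Your proof is correct and takes essentially the same route as the paper: extract the two coordinates of $i \cdot g$, use the equaliser condition together with the fact that truth is neutral for $\wedge$ to reduce the second-coordinate constraint to $\chi_{F(\true)} \cdot b = \true \cdot !_X$, and then use the classifying pullback square for $F(\true)$ to factor $b$ as $F(\true) \cdot !_X$, giving $i\cdot g = i\cdot\true_{\Gl(F)}\cdot !_X$ and hence $g = \true_{\Gl(F)}\cdot !_X$ since $i$ is monic. The only cosmetic difference is that you invoke $\top \wedge p = p$ directly whereas the paper re-derives it in-line as a statement about intersections of subobjects (showing the intersection $p \times_e e$ classified by $\wedge\cdot(\varphi_1,\varphi_2)$ equals $\id_e$, hence $p = e$).
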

\begin{proof}
  We check the universal property.
  Suppose there is a map $\varphi \colon e \to \Omega_{\Gl(F)}$ such that the solid
  arrows commute.
  \begin{equation}\label{eqn:e-eqn}\tag{\textsc{$e$-eqn}}
    \begin{tikzcd}[column sep=huge]
      e
      \ar[rrrrd, "!", bend left=10]
      \ar[ddr, "\varphi"', bend right]
      \ar[rd, "!", dashed]
      \\
      &
      1 \cong F1 \ar[d, "\true_{\Gl(F)}"{description}, hookrightarrow]
      \ar[rrr, "="]
      \ar[rd, "{(\true, F(\true))}"{description}]
      &
      &
      &
      1 \ar[d, "\true", hookrightarrow]
      \\
      &
      \Omega_{\Gl(F)}
      \ar[r, hookrightarrow, "i"']
      &
      \Omega_\McE \times F\Omega_\McC
      \ar[r, "\id \times \chi_{F(\true)}"']
      &
      \Omega_\McE \times \Omega_\McE
      \ar[r, "\wedge"']
      &
      \Omega_\McE
    \end{tikzcd}
  \end{equation}
  It suffices to show that $\varphi = \true_{\Gl(F)} \cdot !$.
  Composing \begin{tikzcd}[cramped]
    e \ar[r, "\varphi"] & \Omega_{\Gl(F)} \ar[r, hookrightarrow] & \Omega_\McE \times F\Omega_\McC
    \ar[r, "{\id \times \chi_{F(\true)}}"] &[1em] \Omega_\McE \times \Omega_\McE
  \end{tikzcd}
  with the two projections, one obtains maps
  \begin{equation*}
    \varphi_1 =
    \begin{tikzcd}[cramped]
      e \ar[r, "\varphi"] & \Omega_{\Gl(F)} \ar[r, hookrightarrow] & \Omega_\McE \times F\Omega_\McC
      \ar[r, "\pi_1"] & \Omega_\McE
    \end{tikzcd}
  \end{equation*}
  and
  \begin{equation*}
    \varphi_2 =
    \begin{tikzcd}[cramped]
      e \ar[r, "\varphi"] & \Omega_{\Gl(F)} \ar[r, hookrightarrow] & \Omega_\McE \times F\Omega_\McC
      \ar[r, "\pi_2"] & F\Omega_\McC \ar[r, "\chi_{F(\true)}"] & \Omega_\McE
    \end{tikzcd}
  \end{equation*}
  But because the bottom row of \Cref{eqn:e-eqn} is also
  $\Omega_{\Gl(F)} \hookrightarrow \Omega_\McE \times F\Omega_\McC \xrightarrow{\pi_1}
  \Omega$, the commutativity of \Cref{eqn:e-eqn} indicates that
  $\varphi_1 = e \xrightarrow{!} 1 \xrightarrow{\true} \Omega_\McE$.
  Hence, the subobject of $e$ obtained by pulling back
  $\true \colon 1 \to \Omega_\McE$ along
  $\varphi_1 = e \xrightarrow{!} ! \xrightarrow{\true} \Omega_\McE$ is exactly
  $\id \colon e \to e$ itself.
  Now, pulling back $\true \colon 1 \to \Omega_\McE$ along $\varphi_2$ gives
  \begin{equation*}
    \begin{tikzcd}
      p \ar[d, hookrightarrow] \ar[rrr] \ar[rrrd, "\lrcorner"{pos=0}, phantom]
      &
      &
      &
      F1 \cong 1
      \ar[d, "F(\true)"{description}, hookrightarrow]
      \ar[r, "\cong"]
      \ar[rd, "\lrcorner"{pos=0}, phantom]
      &
      1
      \ar[d, "\true", hookrightarrow]
      \\
      e \ar[r, "\varphi"']
      &
      \Omega_{\Gl(F)} \ar[r, hookrightarrow, "i"']
      &
      \Omega_\McE \times F\Omega_\McC
      \ar[r, "\pi_2"']
      &
      F\Omega_\McC \ar[r, "\chi_{F(\true)}"']
      &
      \Omega_\McE
    \end{tikzcd}
  \end{equation*}
  And so $\wedge \cdot (\varphi_1, \varphi_2) \colon e \to \Omega_\McE$ is the
  characteristic map for the fibre product $p \times_e e \hookrightarrow e$.
  But also by \Cref{eqn:e-eqn}, one has
  $\wedge \cdot (\varphi_1, \varphi_2) = e \xrightarrow{!} 1 \xrightarrow{\true}
  \Omega_\McE$, which is the characteristic map for $\id \colon e \to e$.
  Therefore, $p \times_e e \hookrightarrow e$ is the identity.
  In particular, this means $p \hookrightarrow e$ is the identity.

  Hence, we have established
  $\pi_2 \cdot i \cdot \varphi = F(\true) \cdot ! = \pi_2 \cdot (\true,
  F(\true)) \cdot ! \colon e \rightrightarrows F\Omega_\McC$ and
  $\pi_1 \cdot i \cdot \varphi = \true \cdot ! = \pi_1 \cdot (\true, F(\true))
  \cdot ! \colon e \rightrightarrows \Omega_\McE$ for $\pi_1,\pi_2$ the
  projection maps of $\Omega_\McE \times F\Omega_\McC$.
  This means that $i \cdot \varphi = (\true, F(\true)) \cdot ! = i \cdot \true_{\Gl(F)} \cdot !$.
  But $i$ is a mono, so $\varphi = \true_{\Gl(F)} \cdot !$.

  Clearly, $! \colon e \to 1$ is the unique dashed map that makes all of
  \Cref{eqn:e-eqn} commutes, so the result follows.
\end{proof}

Now, we construct the indicator map in the following \Cref{constr:gl-indicator}
and verify in \Cref{thm:glue-Omega} that the constructed indicator map along
with the subobject classifier and truth map in \Cref{constr:glue-true} does
indeed have their requisite logical properties.
\begin{construction}\label{constr:gl-indicator}
  Suppose one has a monomorphism
  $(g \colon b \hookrightarrow a \in \McE, k \colon y \hookrightarrow x \in
  \McC) \in \Gl(F)$
  \begin{equation*}
    \begin{tikzcd}
      b \ar[r, dashed, hookrightarrow, "g"] \ar[d, "\beta"']
      &
      a \ar[d, "\alpha"]
      \\
      Fy \ar[r, dashed, hookrightarrow, "Fk"']
      &
      Fx
    \end{tikzcd}
  \end{equation*}
  Then, one has indicator $\chi_g \colon a \to \Omega_\McE$ and
  $\chi_k \colon x \to \Omega_\McC$.
  This gives rise to a cospan
  $F\Omega_\McC \xleftarrow{F\chi_k} Fx \xleftarrow{\alpha} a
  \xrightarrow{\chi_g} \Omega_\McE$ that induces a map
  $\chi_\beta \colon a \to \Omega_{\Gl(F)}$.
\end{construction}

\begin{lemma}\label{lem:gl-indicator}
  The map
  $(\chi_g, F\chi_k \cdot \alpha) \colon a \to \Omega_\McE \times F\Omega_\McC$
  from \Cref{constr:gl-indicator} equalises
  $\wedge \cdot \id \times \chi_{F(\true)}, \pi_1 \colon \Omega_\McE \times
  F\Omega_\McC \rightrightarrows \Omega_\McE$ so that the map
  $\chi_\beta \colon a \to \Omega_{\Gl(F)}$ from \Cref{constr:gl-indicator} exists.

  Furthermore, the following diagrams commute
  \begin{equation*}
    \begin{tikzcd}
      a \ar[d, "\chi_\beta"'] \ar[rr, "\alpha"]
      &
      &
      Fx \ar[d, "F\chi_k"]
      \\
      \Omega_{\Gl(F)} \ar[r, "i"', hook]
      &
      \Omega_\McE \times F\Omega_\McC
      \ar[r, "\pi_2"']
      &
      F\Omega_\McC
    \end{tikzcd}
    \hspace{5em}
    \begin{tikzcd}
      b \ar[r, "!"] \ar[d, "g"'] & 1 \ar[d, "\true_{\Gl(F)}"]
      \\
      a \ar[r, "\chi_\beta"'] & \Omega_{\Gl(F)}
    \end{tikzcd}
  \end{equation*}
\end{lemma}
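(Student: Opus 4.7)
The plan is to first verify the equaliser condition so that $\chi_\beta$ is well-defined, and then check the two commutativities, with the first step being the crux.

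For the equaliser condition, I must show $\chi_g = \wedge \cdot (\chi_g, \chi_{F(\true)} \cdot F\chi_k \cdot \alpha)$. Since $\wedge$ is the characteristic map of $(\true,\true) \colon 1 \hookrightarrow \Omega_\McE \times \Omega_\McE$, this equality amounts to saying that the subobject of $a$ classified by $\chi_g$, namely $g \colon b \hookrightarrow a$, is contained in the subobject of $a$ classified by $\chi_{F(\true)} \cdot F\chi_k \cdot \alpha$. I will identify the latter by pulling back $\true$ three times: along $\chi_{F(\true)}$ it recovers $F(\true) \colon F1 \hookrightarrow F\Omega_\McC$; along $F\chi_k$ it yields $Fk \colon Fy \hookrightarrow Fx$, using that $F$ preserves finite limits so the pullback defining $\chi_k$ is sent to a pullback; and along $\alpha$ it gives $\alpha^*(Fk) \hookrightarrow a$. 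The defining commutativity $\alpha \cdot g = Fk \cdot \beta$ of $(g,k)$ as a morphism in $\Gl(F)$ then provides, by the universal property of $\alpha^*(Fk)$, a factorisation of $g$ through $\alpha^*(Fk)$, i.e.\ the required containment of subobjects.

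The first commuting diagram then follows immediately from construction: by the definition of $\chi_\beta$ as the unique factorisation through the equaliser $i$, one has $i \cdot \chi_\beta = (\chi_g, F\chi_k \cdot \alpha)$, so post-composing with $\pi_2$ gives $F\chi_k \cdot \alpha$. For the second diagram $\chi_\beta \cdot g = \true_{\Gl(F)} \cdot !$, I post-compose with the mono $i$ and compare the two projections onto $\Omega_\McE \times F\Omega_\McC$. The $\pi_1$-component of $i \cdot \chi_\beta \cdot g$ is $\chi_g \cdot g = \true \cdot !$, by the defining pullback of $\chi_g$. The $\pi_2$-component is $F\chi_k \cdot \alpha \cdot g = F\chi_k \cdot Fk \cdot \beta = F(\chi_k \cdot k) \cdot \beta = F(\true) \cdot F(!) \cdot \beta = F(\true) \cdot !$, using $F1 \cong 1$. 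By \Cref{lem:glue-true}, these match the two projections of $i \cdot \true_{\Gl(F)} \cdot ! = (\true, F(\true)) \cdot !$, and since $i$ is mono, the equality follows.

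The main obstacle is the pullback bookkeeping in the equaliser step: one must carefully compose three pullback squares and invoke $F$'s preservation of finite limits to recognise $\alpha^*(Fk)$ as exactly the subobject of $a$ classified by the composite $\chi_{F(\true)} \cdot F\chi_k \cdot \alpha$. Once that identification is in hand, the rest is routine chasing of characteristic maps.
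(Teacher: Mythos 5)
Your proof is correct and follows essentially the same path as the paper's: you identify the subobject classified by $\chi_{F(\true)} \cdot F\chi_k \cdot \alpha$ as $\alpha^*(Fk) = a \times_{Fx} Fy$ by chaining three pullbacks (using $F$ left exact), and use the gluing-square commutativity $\alpha g = (Fk)\beta$ to produce the factorisation $(g,\beta) \colon b \to a \times_{Fx} Fy$ witnessing the containment, which is precisely the paper's argument phrased in the ``$U \cap V = U$'' language rather than via an explicit diagram chase. The two commutativity checks agree with the paper's verbatim.
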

\begin{proof}
  Clearly,
  $\pi_1 \cdot (\chi_g, \alpha \cdot F_{\chi_k}) = \chi_g \colon a \to
  \Omega_\McE$ is the characteristic map for $g \colon b \hookrightarrow a$.
  And
  $\wedge \cdot (\id \times \chi_{F(\true)}) \cdot (\chi_g, F_{\chi_k} \cdot
  \alpha) = \wedge \cdot (\chi_g, \chi_{F(\true)} \cdot F_{\chi_k} \cdot \alpha)
  \colon a \to \Omega_\McE \times \Omega_\McE \to \Omega_\McE$.
  %
  % https://q.uiver.app/#q=WzAsMTMsWzEsNCwiYSJdLFsyLDUsIkZ4Il0sWzIsMiwiRnkiXSxbNCw0LCJcXE9tZWdhX1xcTWNFIl0sWzUsNSwiRlxcT21lZ2FfXFxNY0MiXSxbOCw1LCJcXE9tZWdhX1xcTWNFIl0sWzgsMiwiMSJdLFs1LDIsIkYxIl0sWzEsMSwiYSBcXHRpbWVzX3tGeH0gRnkiXSxbMCwzLCJhIl0sWzMsMywiXFxPbWVnYV9cXE1jRSJdLFswLDAsImIiXSxbMywwLCIxIl0sWzAsMSwiXFxhbHBoYSIsMl0sWzIsMSwiRmsiLDEseyJzdHlsZSI6eyJ0YWlsIjp7Im5hbWUiOiJob29rIiwic2lkZSI6InRvcCJ9fX1dLFswLDMsIlxcY2hpX2ciLDEseyJsYWJlbF9wb3NpdGlvbiI6NjB9XSxbMSw0LCJGXFxjaGlfayIsMl0sWzQsNSwiXFxjaGlfe0YoXFx0cnVlKX0iLDJdLFs2LDUsIlxcdHJ1ZSJdLFs3LDQsIkYoXFx0cnVlKSIsMSx7InN0eWxlIjp7InRhaWwiOnsibmFtZSI6Imhvb2siLCJzaWRlIjoidG9wIn19fV0sWzcsNiwiXFxjb25nIiwxXSxbNyw1LCIiLDEseyJzdHlsZSI6eyJuYW1lIjoiY29ybmVyIn19XSxbMiw3XSxbMiw0LCIiLDEseyJzdHlsZSI6eyJuYW1lIjoiY29ybmVyIn19XSxbOCwwLCIiLDEseyJzdHlsZSI6eyJ0YWlsIjp7Im5hbWUiOiJob29rIiwic2lkZSI6InRvcCJ9fX1dLFs4LDJdLFs4LDEsIiIsMSx7InN0eWxlIjp7Im5hbWUiOiJjb3JuZXIifX1dLFs5LDAsIj0iLDFdLFsxMCwzLCI9IiwxXSxbOSwxMCwiXFxjaGlfZyIsMV0sWzExLDksImciLDEseyJzdHlsZSI6eyJ0YWlsIjp7Im5hbWUiOiJob29rIiwic2lkZSI6InRvcCJ9fX1dLFsxMiwxMCwiXFx0cnVlIiwwLHsibGFiZWxfcG9zaXRpb24iOjMwfV0sWzExLDEyLCIhIl0sWzExLDEwLCIiLDAseyJzdHlsZSI6eyJuYW1lIjoiY29ybmVyIn19XV0=&macro_url=https%3A%2F%2Fgist.githubusercontent.com%2Flim495062%2F61b94af9ef95c1c7b0763c937de29c2b%2Fraw%2F52ee19151488bca0a4e9eb671d502054a336c94d%2Facmhwmacros.sty
  \begin{equation*}
    \begin{tikzcd}
      b &&& 1 \\
      & {a \times_{Fx} Fy} \\
      && Fy &&& F1 &&& 1 \\
      a &&& {\Omega_\McE} \\
      & a &&& {\Omega_\McE} \\
      && Fx &&& {F\Omega_\McC} &&& {\Omega_\McE}
      \arrow["\alpha"', from=5-2, to=6-3]
      \arrow["{\chi_g}"{description, pos=0.6}, from=5-2, to=5-5]
      \arrow["{F\chi_k}"', from=6-3, to=6-6]
      \arrow["{\chi_{F(\true)}}"', from=6-6, to=6-9]
      \arrow["\true", from=3-9, to=6-9]
      \arrow["{F(\true)}"{description}, hook, from=3-6, to=6-6]
      \arrow["\cong"{description}, from=3-6, to=3-9]
      \arrow[from=3-3, to=3-6]
      \arrow[from=2-2, to=3-3]
      \arrow["{=}"{description}, from=4-1, to=5-2]
      \arrow["{=}"{description}, from=4-4, to=5-5]
      \arrow["{\chi_g}"{description}, from=4-1, to=4-4]
      \arrow["g"{description}, hook, from=1-1, to=4-1]
      \arrow["\true"{pos=0.3}, from=1-4, to=4-4]
      \arrow["{!}", from=1-1, to=1-4]
      \arrow["\lrcorner"{pos=0}, phantom, from=3-6, to=6-9]
      \arrow["\lrcorner"{pos=0}, phantom, from=3-3, to=6-6]
      \arrow["\lrcorner"{pos=0, rotate=-45}, phantom, from=2-2, to=6-3]
      \arrow["\lrcorner"{pos=0}, phantom, from=1-1, to=4-4]
      \arrow[hook, from=2-2, to=5-2, crossing over]
      \arrow["Fk"{description}, hook, from=3-3, to=6-3, crossing over]
      \arrow["{(g,\beta)}"{description}, from=1-1, to=2-2]
    \end{tikzcd}
  \end{equation*}
  But because $g \colon b \hookrightarrow a$ is a mono, the map
  $(g, \beta) \colon b \hookrightarrow a \times_{Fx} Fy$ induced by
  $\alpha g = (Fk) \beta$ is a mono.
  Therefore, the fibre product $b \times_a (a \times_{Fx} Fy) \to a$ is
  $g \colon b \hookrightarrow a$.
  In other words, pulling back $\true \colon 1 \to \Omega_\McE$ along
  $\wedge \cdot (\chi_g, \chi_{F(\true)} \cdot F_{\chi_k} \cdot \alpha) \colon a
  \to \Omega_\McE \times \Omega_\McE \to \Omega_\McE$ gives
  $g \colon b \hookrightarrow a$, which is also the pullback of $\true$ along
  $\chi_g$.
  This shows that
  $\pi_1 \cdot (\chi_g, \alpha \cdot F_{\chi_k}) = \chi_g = \wedge \cdot (\id
  \times \chi_{F(\true)}) \cdot (\chi_g, F_{\chi_k} \cdot \alpha) \colon a \to
  \Omega_\McE \times F\Omega_\McC \rightrightarrows \Omega_\McE$.

  Also, note that by construction,
  $i \cdot \chi_\beta = (\chi_g, F_{\chi_k} \cdot \alpha)$ and so
  $\pi_2 \cdot i \cdot \chi_\beta = F_{\chi_k} \cdot \alpha$.
  Finally, note that
  $i \cdot \true_{\Gl(F)} \cdot ! = i \cdot \chi_\beta \cdot g \colon b
  \rightrightarrows \Omega_{\Gl(F)} \hookrightarrow \Omega_\McE \times F\Omega_\McC$.
  This is because
  $i \cdot \true_{\Gl(F)} = (\true, F(\true)) \colon 1 \to \Omega_\McE \times
  F\Omega_\McC$ while
  $i \cdot \chi_\beta = (\chi_g, F_{\chi_k} \cdot \alpha) \colon a \to
  \Omega_\McE \times F\Omega_\McC$.
  And so
  $\pi_1 \cdot i \cdot \true_{\Gl(F)} \cdot ! = \true \cdot ! = \chi_g \cdot g =
  \pi_1 \cdot i \cdot \chi_\beta \cdot g$ for
  $\pi_1 \colon \Omega_\McE \times F\Omega_\McC \to \Omega_\McE$.
  And for $\pi_2 \colon \Omega_\McE \times F\Omega_\McC \to \Omega_\McE$, one has
  $\pi_2 \cdot i \cdot \true_{\Gl(F)} \cdot ! = F(\true) \cdot ! =
  F_{\chi_k} \cdot \alpha \cdot g = \pi_2 \cdot i \cdot \chi_\beta \cdot g$ because
  \begin{equation*}
    \begin{tikzcd}
      % https://q.uiver.app/#q=WzAsNixbMCwwLCJiIl0sWzEsMSwiRnkiXSxbMCwyLCJhIl0sWzEsMywiRngiXSxbMywyLCJGXFxPbWVnYV9cXE1jQyJdLFszLDAsIkYxIFxcY29uZyAxIl0sWzEsMywiRmsiLDEseyJsYWJlbF9wb3NpdGlvbiI6MzAsInN0eWxlIjp7InRhaWwiOnsibmFtZSI6Imhvb2siLCJzaWRlIjoidG9wIn19fV0sWzAsMiwiZyIsMSx7InN0eWxlIjp7InRhaWwiOnsibmFtZSI6Imhvb2siLCJzaWRlIjoidG9wIn19fV0sWzIsMywiXFxhbHBoYSIsMV0sWzAsMSwiXFxiZXRhIiwxXSxbMyw0LCJGXFxjaGlfayIsMV0sWzIsNCwiRl97XFxjaGlfa30gXFxjZG90IFxcYWxwaGEiLDFdLFs1LDQsIkYoXFx0cnVlKSIsMV0sWzAsNSwiISIsMV0sWzAsNCwiIiwxLHsic3R5bGUiOnsibmFtZSI6ImNvcm5lciJ9fV0sWzEsNSwiISIsMV1d&macro_url=https%3A%2F%2Fgist.githubusercontent.com%2Flim495062%2F61b94af9ef95c1c7b0763c937de29c2b%2Fraw%2F52ee19151488bca0a4e9eb671d502054a336c94d%2Facmhwmacros.sty
      b &&& {F1 \cong 1} \\
      & Fy \\
      a &&& {F\Omega_\McC} \\
      & Fx
      \arrow["g"{description}, hook, from=1-1, to=3-1]
      \arrow["\alpha"{description}, from=3-1, to=4-2]
      \arrow["\beta"{description}, from=1-1, to=2-2]
      \arrow["{F\chi_k}"{description}, from=4-2, to=3-4]
      \arrow["{F_{\chi_k} \cdot \alpha}"{description, pos=0.6}, from=3-1, to=3-4]
      \arrow["{F(\true)}"{description}, from=1-4, to=3-4]
      \arrow["{!}"{description}, from=1-1, to=1-4]
      \arrow["\lrcorner"{pos=0}, phantom, from=1-1, to=3-4]
      \arrow["{!}"{description}, from=2-2, to=1-4]
      \arrow["Fk"{description, pos=0.3}, hook, from=2-2, to=4-2, crossing over]
    \end{tikzcd}
  \end{equation*}
\end{proof}

\begin{theorem}\label{thm:glue-Omega}
  Under \Cref{asm:F-omega-cond}, the gluing category $\Gl(F)$ has a subobject
  classifier and a truth map, given as in \Cref{eqn:gl-Omega}.
  Explicitly, \Cref{asm:F-omega-cond} requires that $\McC,\McE$ to be equipped
  wit subobject classifiers, respectively $\Omega_\McC$ and $\McE$, and that
  $\McE$ admits all finite limits while $F$ is left exact.

  In this case, the bottom row of \Cref{eqn:gl-Omega} gives the subobject
  classifier in $\Gl(F)$ and the vertical maps gives the canonical truth map.
  Given a monomorphism
  \begin{equation*}
    \begin{tikzcd}
      b \ar[r, dashed, hookrightarrow, "g"] \ar[d, "\beta"']
      &
      a \ar[d, "\alpha"]
      \\
      Fy \ar[r, dashed, hookrightarrow, "Fk"']
      &
      Fx
    \end{tikzcd} \in \Gl(F)
  \end{equation*}
  its characteristic map is given by the dashed maps
  \begin{equation}\label{eqn:chi-glue}\tag{\textsc{$\chi$-glue}}
    \begin{tikzcd}
      a \ar[d, "\chi_\beta"', dashed] \ar[rr, "\alpha"]
      &
      &
      Fx \ar[d, "F\chi_k", dashed]
      \\
      \Omega_{\Gl(F)} \ar[r, "i"', hook]
      &
      \Omega_\McE \times F\Omega_\McC
      \ar[r, "\pi_2"']
      &
      F\Omega_\McC
    \end{tikzcd}
  \end{equation}
  where $\chi_\beta$ is from \Cref{constr:gl-indicator}.
\end{theorem}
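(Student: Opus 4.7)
The statement packages Lemmas~\ref{lem:glue-true}, \ref{lem:true-pb}, and~\ref{lem:gl-indicator} into a verification of the universal property of a subobject classifier. Recall that an object of $\Gl(F)$ is a triple $(a, x, \alpha \colon a \to Fx)$ and a morphism is a compatible pair. I would take as subobject classifier the triple $(\Omega_{\Gl(F)}, \Omega_\McC, \pi_2 \cdot i)$ given by the bottom row of \Cref{eqn:gl-Omega}, and as truth map the pair whose $\McE$-component is the map constructed in \Cref{constr:glue-true} and whose $\McC$-component is $\true \colon 1 \to \Omega_\McC$. The compatibility required for this pair to be a morphism in $\Gl(F)$ is exactly the commuting square \Cref{eqn:gl-Omega} supplied by \Cref{lem:glue-true}.

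Given a mono $(g, k)$ in $\Gl(F)$ as in the statement, the candidate characteristic map is the pair $(\chi_\beta, \chi_k)$, where $\chi_\beta$ is from \Cref{constr:gl-indicator}; its compatibility with the structure map of $\Omega_{\Gl(F)}$ is the first diagram of \Cref{lem:gl-indicator}. Since $F$ preserves finite limits by \Cref{asm:F-omega-cond}, pullbacks in $\Gl(F)$ are computed componentwise, so checking that pulling back the truth map along $(\chi_\beta, \chi_k)$ recovers $(g, k)$ reduces to the two components. The $\McC$-component holds by definition of $\chi_k$. For the $\McE$-component, I would paste together \Cref{lem:true-pb} with the pullback square defining $\chi_g$, using the identity $\pi_1 \cdot i \cdot \chi_\beta = \chi_g$ already recorded in the proof of \Cref{lem:gl-indicator}; the pasting lemma then yields $g$ as the pullback of $\true_{\Gl(F)}$ along $\chi_\beta$.

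For uniqueness, suppose $(\chi', \psi)$ is any morphism of $\Gl(F)$ along which the truth pair pulls back to $(g, k)$. The $\McC$-component forces $\psi = \chi_k$ by uniqueness of characteristic maps in $\McC$. On the $\McE$-side, the compatibility condition required of $(\chi', \psi)$ dictates $\pi_2 \cdot i \cdot \chi' = F\chi_k \cdot \alpha$, while \Cref{lem:true-pb} combined with the pullback property forces $\pi_1 \cdot i \cdot \chi' = \chi_g$. Since both projections of $i \cdot \chi'$ are pinned down and $i$ is a mono, we conclude $\chi' = \chi_\beta$.

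The main obstacle is not computational: essentially all the technical work has been absorbed into the preceding lemmas, and the theorem reduces to an assembly argument. The one subtlety requiring explicit mention is the fact that pullbacks in $\Gl(F)$ along morphisms into the proposed classifier are computed componentwise, which is precisely where $F$ preserving pullbacks---and hence the left-exactness hypothesis on $F$ from \Cref{asm:F-omega-cond}---is used.
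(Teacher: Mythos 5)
Your proposal is correct and takes essentially the same approach as the paper: both assemble Lemmas~\ref{lem:glue-true}, \ref{lem:true-pb}, and~\ref{lem:gl-indicator}, use the pasting law on \Cref{eqn:chi-beta-pb} to get the $\McE$-side pullback, and establish uniqueness by pinning down both projections of $i \cdot \chi'$ and appealing to $i$ being mono. The one genuine difference is cosmetic: you invoke the general fact that pullbacks in $\Gl(F) = \McE \downarrow F$ are computed componentwise when $F$ is left exact, whereas the paper builds the componentwise pullback by hand in \Cref{eqn:pb-glue} and notes $\beta$ arises as the unique comparison map; these are the same argument, and your formulation is slightly tidier, though it would be worth a sentence to confirm that the induced structure map on the componentwise pullback is indeed $\beta$ (which follows from the commuting truth square of \Cref{lem:gl-indicator} and the uniqueness part of the universal property of $F(y) \cong F(1 \times_{\Omega_\McC} x)$).
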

\begin{proof}
  By \Cref{lem:gl-indicator}, \Cref{eqn:chi-glue} commutes.
  It is moreover a pullback because by \Cref{lem:true-pb}, the right square
  below is a pullback and by the definition of $\chi_g$, the large rectangle
  below is a pullback.
  \begin{equation}\label{eqn:chi-beta-pb}\tag{\textsc{$\chi_\beta$-pb}}
    \begin{tikzcd}
      b \ar[r, "!"] \ar[d, "g"', hook]
      \ar[rd, "\lrcorner"{pos=0}, phantom]
      &
      1 \cong F1 \ar[d, "\true_{\Gl(F)}"{description}, hookrightarrow]
      \ar[rr, "="]
      \ar[rd, "\lrcorner"{pos=0}, phantom]
      &
      &
      1 \ar[d, "\true", hookrightarrow]
      \\
      a \ar[r, "\chi_\beta"] \ar[rrr, "\chi_g"', bend right=20]
      &
      \Omega_{\Gl(F)}
      \ar[r, hookrightarrow, "i"]
      &
      \Omega_\McE \times F\Omega_\McC
      \ar[r, "\pi_1"]
      &
      \Omega_\McE
    \end{tikzcd}
  \end{equation}
  Therefore, we have
  \begin{equation}\label{eqn:pb-glue}\tag{\textsc{pb-glue}}
    % https://q.uiver.app/#q=WzAsOCxbMCwwLCJiIl0sWzIsMCwiMSJdLFswLDEsImEiXSxbMiwxLCJFX1xcT21lZ2EiXSxbMSwzLCJGeCJdLFszLDMsIkZcXE9tZWdhX1xcTWNDIl0sWzEsMiwiRnkiXSxbMywyLCJGMSBcXGNvbmcgMSJdLFs0LDUsIkZcXGNoaV9rIiwxXSxbNiw0LCJGayIsMSx7InN0eWxlIjp7InRhaWwiOnsibmFtZSI6Imhvb2siLCJzaWRlIjoidG9wIn19fV0sWzcsNSwiXFx0cnVlIiwxXSxbMiwzLCJcXGNoaV9cXGJldGEiLDFdLFsyLDQsIlxcYWxwaGEiLDFdLFswLDYsIlxcYmV0YSIsMSx7ImxhYmVsX3Bvc2l0aW9uIjo3MH1dLFswLDIsImciLDEseyJzdHlsZSI6eyJ0YWlsIjp7Im5hbWUiOiJob29rIiwic2lkZSI6InRvcCJ9fX1dLFsxLDMsIlxcdHJ1ZV97XFxHbHtGfX0iLDFdLFswLDEsIiEiLDFdLFswLDMsIiIsMSx7InN0eWxlIjp7Im5hbWUiOiJjb3JuZXIifX1dLFszLDVdLFsxLDcsIj0iLDFdLFs2LDcsIiEiLDFdLFs2LDUsIiIsMSx7InN0eWxlIjp7Im5hbWUiOiJjb3JuZXIifX1dXQ==&macro_url=https%3A%2F%2Fgist.githubusercontent.com%2Flim495062%2F61b94af9ef95c1c7b0763c937de29c2b%2Fraw%2F52ee19151488bca0a4e9eb671d502054a336c94d%2Facmhwmacros.sty
    \begin{tikzcd}[row sep=large]
      b && 1 \\
      a && {\Omega_{\Gl(F)}} \\
      & Fy && {F1 \cong 1} \\
      & Fx && {F\Omega_\McC}
      \arrow["{F\chi_k}"{description}, from=4-2, to=4-4]
      \arrow["Fk"{description}, hook, from=3-2, to=4-2]
      \arrow["\true"{description}, from=3-4, to=4-4]
      \arrow["{\chi_\beta}"{description}, from=2-1, to=2-3]
      \arrow["\alpha"{description}, from=2-1, to=4-2]
      \arrow["g"{description}, hook, from=1-1, to=2-1]
      \arrow["{\true_{\Gl(F)}}"{description}, from=1-3, to=2-3]
      \arrow["{!}"{description}, from=1-1, to=1-3]
      \arrow[from=2-3, to=4-4]
      \arrow["{=}"{description}, from=1-3, to=3-4]
      \arrow["\beta"{description, pos=0.7}, from=1-1, to=3-2, crossing over]
      \arrow["{!}"{description}, from=3-2, to=3-4, crossing over]
      \arrow["\lrcorner"{pos=0}, phantom, from=1-1, to=2-3]
      \arrow["\lrcorner"{pos=0}, phantom, from=3-2, to=4-4]
    \end{tikzcd}
  \end{equation}
  where $\beta$ the unique map between the pullbacks.

  Now, suppose there are maps $\overline{\chi_\beta} \colon a \to \Omega_{\Gl(F)}$ and
  $\overline{\chi_k} \colon x \to \Omega_\McC$ such that $\chi_\beta$ replaced
  with $\overline{\chi_\beta}$ and $F\chi_k$ replaced with $F\overline{\chi_k}$
  in \Cref{eqn:pb-glue} still gives rise to the back face being a pullback in
  $\McE$ and the front face being the image of a pullback in $\McC$ under $F$.
  Then, $k \colon y \hookrightarrow x$ is a pullback of
  $\true \colon 1 \hookrightarrow \Omega_\McC$ along
  $\overline{\chi_k} \colon x \to \Omega_\McC$, which means
  $\overline{\chi_k} = \chi_k$.
  Also, by \Cref{eqn:chi-beta-pb}, $g \colon b \hookrightarrow a$ would be a
  pullback of $\true \colon 1 \hookrightarrow \Omega_\McE$ along
  $a \xrightarrow{\overline{\chi_\beta}} \Omega_{\Gl(F)} \xrightarrow{i}
  \Omega_\McE \times F\Omega_\McC \xrightarrow{\pi_2} \Omega_\McE$.
  Hence, $\pi_1 \cdot i \cdot \overline{\chi_\beta} = \chi_g$.
  And because the bottom face of \Cref{eqn:pb-glue} commutes, this means that
  $\pi_2 \cdot i \cdot \overline{\chi_\beta} = F\overline{\chi_k} \cdot \alpha =
  F\chi_k \cdot \alpha$.
  Hence,
  $i \cdot \overline{\chi_\beta} = (\chi_g, F\chi_k \cdot \alpha) = i \cdot
  \chi_\beta$, which is to say $\chi_\beta = \overline{\chi_\beta}$.
  Thus, the characteristic map is unique.
\end{proof}

\subsection{Dependent Products}
In this section, we construct dependent products in the gluing category
$\Gl(F)$.
For motivation, we first consider the construction of dependent products in the
category of $\Set$-valued $\spncat$s.

\begin{example}\label{ex:glu-Pi-ex}
  Fix a map of $\spncat$s $k \colon Y \to X \in \Set^{\spncat}$ and a map
  $f \colon Z \to Y \in \Set^{\spncat}/Y$ so that the goal is to construct
  the dependent product $\Pi_Yf \colon \Pi_YZ \to X \in \Set^{\spncat}/X$.
  %
  % https://q.uiver.app/#q=WzAsOSxbNCw1LCJYXzIiXSxbMSw1LCJZXzIiXSxbNSw3LCJYXzEiXSxbMywzLCJYXzAiXSxbMCwzLCJZXzAiXSxbMiw3LCJZXzEiXSxbMCwwLCJaXzAiXSxbMSwyLCJaXzIiXSxbMiw0LCJaXzEiXSxbMSwwLCJmXzIiLDFdLFs1LDIsImZfMSIsMV0sWzQsMywiZl8wIiwxLHsibGFiZWxfcG9zaXRpb24iOjcwfV0sWzEsNSwiWV97MjF9IiwxXSxbMSw0LCJZX3syMH0iLDFdLFswLDIsIlhfezIxfSIsMV0sWzAsMywiWF97MjB9IiwxXSxbNiw0LCJrXzAiLDFdLFs3LDEsImtfMiIsMV0sWzgsNSwia18xIiwxXSxbNyw2LCJaX3syMH0iLDFdLFs3LDgsIlpfezIxfSIsMSx7ImxhYmVsX3Bvc2l0aW9uIjo3MH1dXQ==
  \begin{equation*}
    \begin{tikzcd}
      {Z_0} \\
      \\
      & {Z_2} \\
      {Y_0} &&& {X_0} \\
      && {Z_1} \\
      & {Y_2} &&& {X_2} \\
      \\
      && {Y_1} &&& {X_1}
      \arrow["{Z_{20}}"{description}, from=3-2, to=1-1]
      \arrow["{f_0}"{description, pos=0.7}, from=4-1, to=4-4]
      \arrow["{Y_{20}}", from=6-2, to=4-1]
      \arrow["{f_2}"{description}, from=6-2, to=6-5]
      \arrow["{Y_{21}}"', from=6-2, to=8-3]
      \arrow["{X_{20}}"', from=6-5, to=4-4]
      \arrow["{X_{21}}", from=6-5, to=8-6]
      \arrow["{f_1}"{description}, from=8-3, to=8-6]
      \arrow["{k_0}"{description}, from=1-1, to=4-1]
      \arrow["{s_0}"{description}, crossing over, bend left, dashed, from=4-1, to=1-1]
      \arrow["{k_2}"{description}, crossing over, from=3-2, to=6-2]
      \arrow["{s_2}"{description}, crossing over, bend left, dashed, from=6-2, to=3-2]
      \arrow["{k_1}"{description}, crossing over, from=5-3, to=8-3]
      \arrow["{s_1}"{description}, crossing over, bend left, dashed, from=8-3, to=5-3]
      \arrow["{Z_{21}}"{description, pos=0.7}, crossing over, from=3-2, to=5-3]
    \end{tikzcd}
  \end{equation*}
  %
  % \begin{equation*}
  %   \begin{tikzcd}
  %     Z_0 & & Z_2 & & Z_1 \\
  %     \\
  %     Y_0 & & Y_2 & & Y_1 \\
  %     \\
  %     & X_0 & & X_2 & & X_1
  %     %
  %     \ar[from=1-1, to=3-1, "k_0"']
  %     \ar[from=3-1, to=1-1, bend right, dashed, "s_0"']
  %     \ar[from=1-3, to=3-3, "k_2"{description}]
  %     \ar[from=3-3, to=1-3, bend right, dashed, "s_2"{description}]
  %     \ar[from=1-5, to=3-5, "k_1"]
  %     \ar[from=3-5, to=1-5, bend left, dashed, "s_1"]
  %     %
  %     \ar[from=3-1, to=5-2, "f_0"']
  %     \ar[from=3-3, to=5-4, "f_2"{description}]
  %     \ar[from=3-5, to=5-6, "f_1"]
  %     %
  %     \ar[from=1-3, to=1-1, "Z_{20}"']
  %     \ar[from=1-3, to=1-5, "Z_{21}"]
  %     %
  %     \ar[from=3-3, to=3-1, "Y_{20}"{description}]
  %     \ar[from=3-3, to=3-5, "Y_{21}"{description}]
  %     %
  %     \ar[from=5-4, to=5-2, "X_{20}"]
  %     \ar[from=5-4, to=5-6, "X_{21}"']
  %   \end{tikzcd}
  % \end{equation*}
  %
  Examining the component at 0 of the bundle $\Pi_Yf \colon \Pi_YZ \to X$, one
  observes that at each $x_0 \in X_0$, the fibre of
  $(\Pi_Yf)_0 \colon (\Pi_YZ)_0 \to X_0$ consists of those sections $s_0$ of
  $k_0 \colon Y_0 \to Z_0$ restricted to $f_0^{-1}x_0 \hookrightarrow Y_0$.
  Likewise, the fibre at each $x_1 \in X_1$ of the component at 1 of the bundle
  $\Pi_Yf \colon \Pi_YZ \to X$ consists of those sections $s_1$ of
  $k_1 \colon Y_1 \to Z_1$ restricted to $f_1^{-1}x_1 \hookrightarrow Y_1$.
  In other words, $(\Pi_YZ)_0 = \Pi_{Y_0}Z_0$ and $(\Pi_YZ)_1 = \Pi_{Y_1}Z_1$.

  In a similar vein, the fibre at each $x_2 \in X_2$ of the component at 2 of
  the bundle $\Pi_yf \colon \Pi_YZ \to X$ is a section $s_2$ of
  $k_2 \colon Z_2 \to Y_2$ restricted to $f_2^{-1}x_2 \hookrightarrow Y_2$.
  However, by the above descriptions for $(\Pi_YZ)_0$ and $(\Pi_YZ)_1$, the
  functorial actions $(\Pi_YZ)_{20} \colon (\Pi_YZ)_2 \to (\Pi_YZ)_0$ and
  $(\Pi_YZ)_{21} \colon (\Pi_YZ)_2 \to (\Pi_YZ)_1$ must send $s_2$ to certain
  sections of $k_0 \colon Z_0 \to Y_0$ and $k_1 \colon Z_1 \to Y_1$ fibred along
  $f_0 \colon Y_0 \to X_0$ and $f_1 \colon Y_1 \to X_1$ respectively.
  Naturality dictates that $s_0 \coloneqq (\Pi_YZ)_{20}s_2$ must lie in the
  fibre of $(\Pi_Yf)_0 \colon (\Pi_YZ)_0 \to X_0$ over $X_{20}x_2 \in X_0$ and
  likewise $s_1 \coloneqq (\Pi_YZ)_{20}s_2$ must lie in the fibre of
  $(\Pi_Yf)_0 \colon (\Pi_YZ)_0 \to X_0$ over $X_{20}x_2 \in X_0$.
  Naturality further requires that the left faces of the above diagram commute,
  in that
  $Z_{20} \cdot s_2 = s_0 \cdot Y_{20} \colon f_2^{-1}x_2 \rightrightarrows Z_0$
  and
  $Z_{21} \cdot s_2 = s_1 \cdot Y_{21} \colon f_2^{-1}x_2 \rightrightarrows
  Z_1$.

  Equivalently, the fibre over each $x_2 \in X_2$ of the component at 2 of the
  bundle $\Pi_Yf \colon \Pi_YZ \to X$ is a section $s_2$ of the bundle
  \begin{align*}
    \Pi_{Y_2}k_2 \colon \Pi_{Y_2}Z_2 \to X_2
  \end{align*}
  over $x_2$ along with a pair of sections $(s_0,s_1)$ of the bundle
  $\Pi_{Y_1}k_1 \times \Pi_{Y_2}k_2 \colon \Pi_{Y_1}Z_1 \times \Pi_{Y_0}Z_0 \to
  X_1 \times X_0$ over $(X_{20} \times X_{21})x_2$ subject to naturality
  constraints.
  Internalising, one sees that $(s_0,s_1)$ is a fibre over $x_2$ of the bundle
  \begin{align*}
   X_2 \times_{(X_0 \times X_1)} (\Pi_{Y_0}Z_0 \times \Pi_{Y_1}Z_1) \to X_2
  \end{align*}
  The naturality constraint for commutativity of the left face is equivalently
  expressed by requiring that
  $(Z_{20}, Z_{21}) \cdot s_2 = (s_0, s_1) \cdot (Y_{20}, Y_{21}) \colon
  f_2^{-1}x_2 \rightrightarrows Z_1 \times Z_0$.
  These two (equal) compositions are sections of
  $Y_2 \times_{(Y_0 \times Y_1)} (Z_0 \times Z_1) \to Y_2$ fibred along
  $f_2 \colon Y_2 \to X_2$, so they can be internalised as fibres of
  \begin{align*}
    \Pi_{Y_2}(Y_2 \times_{(Y_0 \times Y_1)} (Z_0 \times Z_1)) \to X_2
  \end{align*}
  over $x_2$.
  The naturality condition
  $(Z_{20}, Z_{21}) \cdot s_2 = (s_0, s_1) \cdot (Y_{20}, Y_{21}) \colon
  f_2^{-1}x_2 \rightrightarrows Z_1 \times Z_0$ involves post-composing
  $(Z_{20}, Z_{21})$ with $s_2$ and pre-composing $(Y_{20},Y_{21})$ with
  $(s_0,s_1)$.

  Post-composing with $(Z_{20},Z_{21})$ is internalised as the functorial action
  of $\Pi_{Y_2} \colon \Set/Y_2 \to \Set/X_2$ on the map
  $(k_2, Z_{20} \times Z_{21}) \colon Z_2 \to Y_2 \times_{(Y_0 \times Y_1)}(Z_0
  \times Z_1)$.
  To internalise pre-composition with $(Y_{20},Y_{21})$, first note that the
  fibre $(s_0,s_1)$ over $x_2$ of the bundle
  $X_2 \times_{(X_0 \times X_1)}(\Pi_{Y_0}Z_0 \times \Pi_{Y_1}Z_1) \to X_2$
  canonically gives rise to a fibre also over $x_2$ of the bundle
  $X_2 \times_{(X_0 \times X_1)} \Pi_{(Y_0 \times Y_1)}(Z_0 \times Z_1) \to X_2$
  by pointwise application:
  each $(p_0,p_1) \in (f_0,f_1)^{-1}(X_{20},X_{21})x_2$ is mapped
  to $(s_0p_0,s_1p_1) \in (k_0,k_1)^{-1}(p_0,p_1)$.
  On the other hand, by the adjunction between fibred sections and fibred
  product, there is the evaluation counit
  $\ev\relax \colon (Y_0 \times Y_1) \times_{(X_0 \times X_1)} \Pi_{(Y_0 \times
    Y_1)}(Z_0 \times Z_1) \to Z_0 \times Z_1$ over $Y_0 \times Y_1$.
  Pulling back this evaluation counit $\ev\relax$ along
  $(Y_{20},Y_{21}) \colon Y_2 \to Y_{20} \times Y_{21}$ then gives a map
  $(Y_{20},Y_{21})^*\ev\relax \colon (Y_{20},Y_{21})^*((Y_0 \times Y_1)
  \times_{(X_0 \times X_1)} \Pi_{(Y_0 \times Y_1)}(Z_0 \times Z_1)) \to
  (Y_{20},Y_{21})^*(Z_0 \times Z_1)$ over $Y_2$.
  The fibres of
  $(Y_{20},Y_{21})^*((Y_0 \times Y_1) \times_{(X_0 \times X_1)} \Pi_{(Y_0 \times
    Y_1)}(Z_0 \times Z_1))$ over each $y_2 \in Y_2$ is a section of
  $(k_0,k_1) \colon Z_0 \times Z_1 \to Y_0 \times Y_1$ around the neighbourhood
  $(f_0,f_1)^{-1}(f_0,f_1)(Y_{20},Y_{21})y_2$ and
  $(Y_{20},Y_{21})^*\ev\relax$ evaluates this section at $(Y_{20},Y_{21})y_2$.
  But note that
  \begin{align*}
    (Y_{20},Y_{21})^*((Y_0 \times Y_1) \times_{(X_0 \times X_1)} \Pi_{(Y_0 \times
    Y_1)}(Z_0 \times Z_1))
    &=
    (Y_{20},Y_{21})^*(f_0,f_1)^*\Pi_{(Y_0 \times Y_1)}(Z_0 \times Z_1)
    \\
    &=
    f_2^*(X_{20},X_{21})^*\Pi_{(Y_0 \times Y_1)}(Z_0 \times Z_1)
    \\
    &=
    f_2^*(X_2 \times_{(X_0 \times X_1)} \Pi_{(Y_0 \times Y_1)}(Z_0 \times Z_1))
  \end{align*}
  and
  $(Y_{20}, Y_{21})^*(Z_0 \times Z_1) = Y_2 \times_{(Y_0 \times Y_1)} (Z_0
  \times Z_1)$.
  Thus, under the adjunction $f_2^* \dashv \Pi_{Y_2}$, the map
  $(Y_{20},Y_{21})^*\ev\relax$ transposes to a map
  \begin{align*}
    ((Y_{20},Y_{21})^*\ev\relax)^\ddagger \colon (Y_0 \times Y_1) \times_{(X_0
    \times X_1)} \Pi_{(Y_0 \times Y_1)}(Z_0 \times Z_1) \to \Pi_{Y_2}(Y_2
    \times_{(Y_0 \times Y_1)} (Z_0 \times Z_1))
  \end{align*}
  that internalises pre-composition with $(Y_{20},Y_{21})$.

  Putting everything together, one may therefore deduce that $(\Pi_YZ)_2$ is the
  pullback over $X_2$
  \begin{equation*}
    \begin{tikzcd}
      (\Pi_YZ)_2
      \ar[rr]
      \ar[d]
      \ar[rd, "\lrcorner"{pos=0}, phantom]
      &
      &
      \Pi_{Y_2}Z_2
      \ar[d]
      \\
      X_2 \times_{(X_0 \times X_1)} (\Pi_{Y_0}Z_0 \times \Pi_{Y_1}Z_1)
      \ar[r]
      &
      X_2 \times_{(X_0 \times X_1)} \Pi_{(Y_0 \times Y_1)}(Z_0 \times Z_1)
      \ar[r]
      &
      \Pi_{Y_2}(Y_2 \times_{(Y_0 \times Y_1)} (Z_0 \times Z_1))
    \end{tikzcd}
  \end{equation*}
  and the functorial actions to $(\Pi_YZ)_0 = \Pi_{Y_0}Z_0$ and
  $(\Pi_YZ)_1 = \Pi_{Y_1}Z_1$ are induced by
  $(\Pi_YZ)_2 \to X_2 \times_{(X_0 \times X_1)} (\Pi_{Y_0}Z_0 \times
  \Pi_{Y_1}Z_1)$.
  %
  % Further pulling back
  % $X_2 \times_{(X_0 \times X_1)} (\Pi_{Y_0}Z_0 \times \Pi_{Y_1}Z_1) \to X_2$
  % along $f_2 \colon Y_2 \to X_2$ gives
  % $Y_2 \times_{X_2} (X_2 \times_{(X_0 \times X_1)} (\Pi_{(Y_0 \times Y_1)}(Z_0 \times Z_1))) \to Y_2$.
  % %
  % Its fibre over each $y_2 \in Y_2$ internalises sections of
  % $k_0 \times k_1 \colon Z_0 \times Z_1 \to Y_0 \times Y_1$ restricted along
  % $(f_0,f_1)^{-1}(X_{20},X_{21})f_2y_2$.
  %
  % Naturally, $(\Pi_YZ)_{20}s_2$ is a section of $k_0$ fibred along $f_0$ and
  % likewise $(\Pi_YZ)_{21}s_2$ is a section of $k_1$ fibred along $f_1$.
  %
  % In order to gain some intuition for the construction of the construction of
  % fibred section, suppose that we are constructing the fibred section when
  % gluing along the identity functor $\id\relax \colon \Set \to \Set$ of $\Set$.
  % %
  % Then, $\Gl(\id\relax \colon \Set \to \Set) \simeq \Set^\to$ is just the arrow
  % category.
  %
\end{example}

We now proceed to generalise the observation made in \Cref{ex:glu-Pi-ex} to
arbitrary gluing categories $\Gl(F \colon \McC \to \McE)$.
%
%
% \TODO{Give some intuition for construction.}
%
% \begin{definition}\label{def:glue-power}
%   %
%   A map $(g \colon b \to a \in \McE, k \colon y \to x \in \McC)$ as below is
%   \emph{powerful} if it is component-wise powerful: $g \in \McE$ and $k \in \McC$ is
%   powerful.
%   %
%   \begin{equation*}
%     \begin{tikzcd}
%       b \ar[r, dashed, "g"] \ar[d, "\beta"']
%       &
%       a \ar[d, "\alpha"]
%       \\
%       Fy \ar[r, dashed, "Fk"']
%       &
%       Fx
%     \end{tikzcd}
%   \end{equation*}
%   %
% \end{definition}
%
Fix an object $(g \colon b \to a \in \McE, k \colon y \to x \in \McC) \in \Gl(F)$
Also, fix a map $(f,h)$ in $\Gl(F)$ as below:
\begin{equation*}
  \begin{tikzcd}
    |[alias=c]| c
    \\
    & |[alias=Fz]| Fz
    \\
    |[alias=b]| b
    & & |[alias=a]| a
    \\
    & |[alias=Fy]| Fy &
    & |[alias=Fx]| Fx
    \ar[from=c, to=b, "f"']
    \ar[from=b, to=a, "g"{pos=0.25}]
    \ar[from=b, to=Fy, "\beta"']
    \ar[from=a, to=Fx, "\alpha"]
    \ar[from=c, to=Fz, "\gamma"]
    \ar[from=Fz, to=Fy, crossing over, "Fh"{pos=0.75}]
    \ar[from=Fy, to=Fx, "Fk"']
  \end{tikzcd}
\end{equation*}
We further assume
\begin{assumption}\label{asm:F-Pi-cond} $ $
  \begin{itemize}
    \item The maps $g, Fk \in \McE$ and $k \in \McC$ are powerful.
    \item $F$ preserves pullbacks.
  \end{itemize}
\end{assumption}

Under the above assumptions, in the following \Cref{constr:glue-Pi}, we
construct the dependent product $\Pi_\beta\gamma$.
Then, in \Cref{constr:flat}, we construct the map
$\Hom(-,\Pi_\beta\gamma) \to \Hom(- \times_\alpha \beta, \gamma)$ and verify the
correctness of its construction in \Cref{lem:flat}.
The map in other direction
$\Hom(- \times_\alpha \beta, \gamma) \to \Hom(-,\Pi_\beta,\gamma)$ is then
constructed in two steps, with the main ingredients prepared in
\Cref{constr:sharp-components} and requisite properties verified in
\Cref{lem:sharp-components} before assembling them into the actual map
$\Hom(- \times_\alpha \beta, \gamma) \to \Hom(-,\Pi_\beta,\gamma)$ in
\Cref{constr:sharp}.
Finally, in \Cref{lem:sharp-flat-inverses} we check that these two constructions
in \Cref{constr:flat,constr:sharp} are mutual inverses, thus showing the
adjointness of the dependent product, which allows us to conclude the
correctness of our constructions in \Cref{thm:glue-Pi}.

\begin{construction}\label{constr:glue-Pi}
  Define the canonical comparison map $F(\Pi_yz) \to \Pi_{Fy}Fz$
  to correspond, under the transpose $(Fk)^* \dashv \Pi_{Fy}$, to be the map
  \begin{equation*}
    \begin{tikzcd}[column sep=large, row sep=large]
      |[alias=pb-F]| F(\Pi_yz) \times_{Fx} Fy
      &
      |[alias=F-pb]| F(\Pi_yz \times_x y)
      &
      |[alias=Fz]| Fz
      &
      |[alias=F-Pi]| F(\Pi_yz)
      &
      |[alias=Pi-F]| \Pi_{Fy}Fz
      \\
      &
      &
      |[alias=Fy]| Fy
      &
      |[alias=Fx]| Fx
      \ar[from=pb-F, to=F-pb, leftrightarrow, "\cong"{description}]
      \ar[from=F-pb, to=Fz, "F(\ev)"{description}]
      \ar[from=pb-F, to=Fz, bend left, "\theta^\dagger"]
      \ar[from=F-Pi, to=Pi-F, "\theta"]
      \ar[from=Fy, to=Fx, "Fk"']
      \ar[from=pb-F, to=Fy, bend right=10]
      \ar[from=F-pb, to=Fy]
      \ar[from=Fz, to=Fy, "Fh"]
      \ar[from=F-Pi, to=Fx, "F(\Pi_yh)"']
      \ar[from=Pi-F, to=Fx, bend left=10, "\Pi_{Fy}Fh"]
    \end{tikzcd}
  \end{equation*}
  where the isomorphism $F(\Pi_yz) \times_{Fx} Fy \cong F(\Pi_yz \times_x y)$
  over $Fy$ is because $F$ preserves pullbacks.
  For ease of understanding, we denote this map as
  $F(\ev)^\ddagger \colon F(\Pi_yz) \to \Pi_{Fy}Fz$.

  Pulling back $F(\ev)^\ddagger$ along $\alpha$ then gives a map
  $a \times_{Fx} F(\ev)^\ddagger \colon a \times_{Fx} F(\Pi_yz) \to a \times_{Fx}
  \Pi_{Fy}Fz$.
  Over $b$, one has a map
  $b \times_{Fy} \ev \colon b \times_a (a \times_{Fx} \Pi_{Fy}Fz) = b
  \times_{Fy}(Fy \times_{Fx} \Pi_{Fy}Fz) \to b \times_{Fy} Fz$ induced by the
  action of $\beta^*$ on the adjoint $(Fk)^* \dashv \Pi_{Fy}$.
  Transposing $b \times_{Fy} \ev$ along $g^* \dashv \Pi_b$ then gives a map
  $(b \times_{Fy} \ev)^\ddagger \colon a \times_{Fx} \Pi_{Fy}Fz \to \Pi_b(b
  \times_{Fy} Fz)$.
  Taking the composition then gives
  \begin{equation*}
    \begin{tikzcd}[column sep=huge]
      a \times_{Fx} F(\Pi_y z) \ar[r, "a \times_{Fx} F(\ev)^\ddagger", color=green0]
      &
      a \times_{Fx} \Pi_{Fy}Fz \ar[r, "(b \times_{Fy} \ev)^\ddagger", color=yellow0]
      &
      \Pi_b(b \times_{Fy} Fz)
    \end{tikzcd}
  \end{equation*}
  over $a$.

  On the other hand, the cospan \begin{tikzcd}[cramped, column sep=small]
    b & c \ar[l, "f"'] \ar[r, "\gamma"] & Fz
  \end{tikzcd}
  induces a map $(f, \gamma) \colon c \to b \times_{Fy} Fz$, so by functoriality
  of $\Pi_b$, one obtains a map
  \begin{equation*}
    \begin{tikzcd}[column sep=huge]
      \Pi_bc \ar[r, "{\Pi_b(f,\gamma)}", cyan0] & \Pi_b(b \times_{Fy} Fz)
    \end{tikzcd}
  \end{equation*}
  over $a$.
  In summary:
  \begin{equation*}
    \begin{tikzcd}[column sep=small]
      &
      &
      &
      &
      &
      |[alias=Pi-b-c]| \Pi_bc
      &
      |[alias=Pi-b-Fz]| \Pi_b(b \times_{Fy} Fz)
      \\
      |[alias=Pb-Pb-Pi-F]| b \times_a (a \times_{Fx} \Pi_{Fy}Fz)
      &
      |[alias=c]| c
      &
      &
      &
      &
      & |[alias=Pb-Pi-F]| a \times_{Fx} \Pi_{Fy}Fz
      \\
      &
      &
      |[alias=b-Fz]| b \times_{Fy} Fz
      &
      &
      & |[alias=Pb-F-Pi]| a \times_{Fx} F(\Pi_yz)
      &
      &
      \\
      &
      &
      & |[alias=Fz]| Fz
      &
      &
      & |[alias=F-Pi]| F(\Pi_yz)
      &
      \\
      &
      &
      |[alias=b]| b
      &
      &
      & |[alias=a]| a
      &
      &
      \\
      &
      |[alias=Pb2-Pi-F]| Fy \times_{Fx} \Pi_{Fy}Fz
      &
      &
      &
      &
      &
      & |[alias=Pi-F]| \Pi_{Fy}Fz
      \\
      &
      &
      & |[alias=Fy]| Fy
      &
      &
      & |[alias=Fx]| Fx
      \ar[from=c, to=b, "f" {description}, bend right=10]
      \ar[from=c, to=Fz, "\gamma"{description}, bend left=40]
      \ar[from=c, to=b-Fz, "{(f, \gamma)}" {description}, color=cyan0]
      \ar[from=Pi-b-c, to=Pi-b-Fz, "{\Pi_b(f,\gamma)}", cyan0]
      \ar[from=b-Fz, to=b]
      \ar[from=b-Fz, to=Fz]
      \ar[from=b-Fz, to=Fy, phantom, "\lrcorner"{pos=0, rotate=-30}]
      \ar[from=b, to=a, "g"{pos=0.75}]
      \ar[from=b, to=Fy, "\beta"' {pos=0.2, description}]
      \ar[from=a, to=Fx, "\alpha"{pos=0.2, description}]
      \ar[from=Fz, to=Fy, crossing over, "Fh"{pos=0.5, description}]
      \ar[from=Fy, to=Fx, "Fk"']
      \ar[from=F-Pi, to=Fx, "F(\Pi_yh)"'{pos=0.3}]
      \ar[from=Pb-F-Pi, to=a]
      \ar[from=Pb-F-Pi, to=Fx, phantom, "\lrcorner"{pos=0, rotate=-30}]
      \ar[from=Pb-F-Pi, to=Pb-Pi-F, "a \times_{Fx} F(\ev)^\ddagger"{description}, color=green0]
      \ar[from=Pi-F, to=Fx, color=red0, "\Pi_{Fy}Fh"]
      \ar[from=Pb-Pi-F, to=a, color=red0]
      \ar[from=Pb-Pi-F, to=Pi-F, color=red0, bend left=10]
      \ar[from=Pb-Pi-F, to=Fx, phantom, "\lrcorner"{pos=0, rotate=-60}]
      \ar[from=Pb-F-Pi, to=F-Pi, crossing over]
      \ar[from=F-Pi, to=Pi-F, "F(\ev)^\ddagger" {description}, color=green0]
      \ar[from=Pb-Pb-Pi-F, to=b,, color=red0]
      \ar[from=Pb-Pb-Pi-F, to=Pb2-Pi-F, color=red0]
      \ar[from=Pb-Pb-Pi-F, to=Pb-Pi-F, color=red0, bend left=10]
      \ar[from=Pb2-Pi-F, to=Pi-F, crossing over, color=red0]
      \ar[from=Pb2-Pi-F, to=Fy, crossing over, color=red0]
      \ar[from=Pb2-Pi-F, to=Fz, "\ev" {pos=0.3}, crossing over, bend left=20, color=yellow0]
      \ar[from=Pb2-Pi-F, to=Fx, phantom, "\lrcorner"{rotate=0,pos=0,fill=white0}]
      \ar[from=Pb-Pb-Pi-F, to=b-Fz, "b \times_{Fy} \ev"{description}, crossing over, color=yellow0]
      \ar[from=Pb-Pi-F, to=Pi-b-Fz, "(b \times_{Fy} \ev)^\ddagger"', color=yellow0]
      \ar[from=Pb-Pb-Pi-F, to=a, out=-10, phantom, "\lrcorner"{pos=0}]
    \end{tikzcd}
  \end{equation*}
  where all the rectangles with pink edges are pullbacks.

  Therefore, we may take the pullback over $a$:
  \begin{equation}\label{eqn:Pi-pullback}\tag{\textsc{$\Pi$-pullback}}
    \begin{tikzcd}[column sep=huge]
      p \coloneqq \Pi_bc \times_{\Pi_b(b \times_{Fy} Fz)} (a \times_{Fx} F(\Pi_yz))
      \ar[d] \ar[rr]
      \ar[rrd, phantom, "\lrcorner"{pos=0}]
      & &
      \Pi_bc \ar[d, "{\Pi_b(f,\gamma)}", color=cyan0]
      \\
      a \times_{Fx} F(\Pi_y z) \ar[r, "a \times_{Fx} F(\ev)^\ddagger"', color=green0]
      &
      a \times_{Fx} \Pi_{Fy}Fz \ar[r, "(b \times_{Fy} \ev)^\ddagger"', color=yellow0]
      &
      \Pi_b(b \times_{Fy} Fz)
    \end{tikzcd}
  \end{equation}
  and put
  $p = \Pi_bc \times_{\Pi_b(b \times_{Fy} Fz)} (a \times_{Fx} F(\Pi_yz)) \to a
  \times_{Fx} F(\Pi_yz) \to F(\Pi_y z)$ as the sections of
  $\gamma \colon c \to Fz$ fibred over $(g,k)$.
\end{construction}

\begin{construction}\label{constr:flat}
  Suppose now that one has another object $\delta \colon d \to Fw$ over
  $\alpha \colon a \to Fx$ in $\Gl(F)$.
  A map $\delta \to \Pi_\beta\gamma$ over $\alpha$ is a pair of dashed maps
  $(u \colon d \to p, v \colon w \to \Pi_yz)$ as below such that the following
  diagram commutes:
  \begin{equation*}
    \begin{tikzcd}
      |[alias=de]| d
      &
      & |[alias=p]| p
      \\
      & |[alias=Fw]| Fw
      &
      & |[alias=F-Pi]| F(\Pi_yz)
      \\
      & & |[alias=a]| a
      \\
      & & & |[alias=Fx]| Fx
      \ar[from=de, to=p, "u", dashed]
      \ar[from=de, to=Fw, "\delta"{description}]
      \ar[from=de, to=a, "i"', bend right]
      \ar[from=p, to=a]
      \ar[from=p, to=F-Pi, "\Pi_{\beta}\gamma"]
      \ar[from=Fw, to=F-Pi, "Fv" {description, pos=0.25}, crossing over, dashed]
      \ar[from=Fw, to=Fx, "Fj"', bend right, crossing over]
      \ar[from=a, to=Fx, "\alpha"{description}]
      \ar[from=F-Pi, to=Fx, "F(\Pi_yh)"]
    \end{tikzcd}
  \end{equation*}
  By \Cref{eqn:Pi-pullback}, this is equivalent to pairs of maps
  $u_1,u_2$ over $a$ such that
  \begin{equation*}\label{eqn:pi-right}\tag{\textsc{$\Pi$-right}}
    \begin{tikzcd}[column sep=normal]
      &
      &
      &
      &
      &
      |[alias=Pi-b-c]| \Pi_bc
      &
      |[alias=Pi-b-Fz]| \Pi_b(b \times_{Fy} Fz)
      \\
      % |[alias=Pb-Pb-Pi-F]| b \times_a (a \times_{Fx} \Pi_{Fy}Fz)
      &
      % |[alias=c]| c
      &
      &
      |[alias=de]| d
      &
      |[alias=p]| p
      &
      &
      |[alias=Pb-Pi-F]| a \times_{Fx} \Pi_{Fy}Fz
      \\
      &
      &
      % |[alias=b-Fz]| b \times_{Fy} Fz
      &
      &
      & |[alias=Pb-F-Pi]| a \times_{Fx} F(\Pi_yz)
      &
      &
      \\
      &
      &
      &
      % |[alias=Fz]| Fz
      &
      |[alias=Fw]| Fw
      &
      & |[alias=F-Pi]| F(\Pi_yz)
      &
      \\
      &
      &
      |[alias=b]| b
      &
      &
      & |[alias=a]| a
      &
      &
      \\
      &
      % |[alias=Pb2-Pi-F]| Fy \times_{Fx} \Pi_{Fy}Fz
      &
      &
      &
      &
      &
      & |[alias=Pi-F]| \Pi_{Fy}Fz
      \\
      &
      &
      &
      |[alias=Fy]| Fy
      &
      &
      & |[alias=Fx]| Fx
      %
      % \ar[from=c, to=b, "f" {description}, bend right=10]
      % \ar[from=c, to=Fz, "\gamma"{description}, bend left=40]
      % \ar[from=c, to=b-Fz, "{(f, \gamma)}" {description}, color=cyan0]
      %
      \ar[from=Pi-b-c, to=Pi-b-Fz, "{\Pi_b(f,\gamma)}", cyan0]
      %
      % \ar[from=b-Fz, to=b]
      % \ar[from=b-Fz, to=Fz]
      % \ar[from=b-Fz, to=Fy, phantom, "\lrcorner"{pos=0, rotate=-30}]
      %
      \ar[from=b, to=a, "g"{pos=0.5,description}]
      \ar[from=b, to=Fy, "\beta"' {pos=0.2, description}]
      \ar[from=a, to=Fx, "\alpha"{pos=0.2, description}]
      % \ar[from=Fz, to=Fy, crossing over, "Fh"{pos=0.5, description}]
      \ar[from=Fy, to=Fx, "Fk"']
      \ar[from=F-Pi, to=Fx, "F(\Pi_yh)"'{pos=0.3}]
      \ar[from=Pb-F-Pi, to=a]
      \ar[from=Pb-F-Pi, to=Fx, phantom, "\lrcorner"{pos=0, rotate=-30}]
      \ar[from=Pb-F-Pi, to=Pb-Pi-F, "a \times_{Fx} F(\ev)^\ddagger"{description}, color=green0]
      \ar[from=Pi-F, to=Fx, color=red0, "\Pi_{Fy}Fh"]
      % \ar[from=Pb-Pi-F, to=a, color=red0]
      % \ar[from=Pb-Pi-F, to=Pi-F, color=red0, bend left=10]
      % \ar[from=Pb-Pi-F, to=Fx, phantom, "\lrcorner"{pos=0, rotate=-60}]
      %
      \ar[from=Pb-F-Pi, to=F-Pi, crossing over, "F(\Pi_yh)^*\alpha = \Pi_\beta\gamma"{description}]
      \ar[from=F-Pi, to=Pi-F, "F(\ev)^\ddagger" {description}, color=green0]
      %
      % \ar[from=Pb-Pb-Pi-F, to=b,, color=red0]
      % \ar[from=Pb-Pb-Pi-F, to=Pb2-Pi-F, color=red0]
      % \ar[from=Pb-Pb-Pi-F, to=Pb-Pi-F, color=red0, bend left=10]
      %
      % \ar[from=Pb2-Pi-F, to=Pi-F, crossing over, color=red0]
      % \ar[from=Pb2-Pi-F, to=Fy, crossing over, color=red0]
      % \ar[from=Pb2-Pi-F, to=Fz, "\ev" {pos=0.3}, crossing over, bend left=20, color=yellow0]
      % \ar[from=Pb2-Pi-F, to=Fx, phantom, "\lrcorner"{rotate=0,pos=0,fill=white0}]
      %
      % \ar[from=Pb-Pb-Pi-F, to=b-Fz, "b \times_{Fy} \ev"{description}, crossing over, color=yellow0]
      %
      \ar[from=Pb-Pi-F, to=Pi-b-Fz, "(b \times_{Fy} \ev)^\ddagger"', color=yellow0]
      %
      % \ar[from=Pb-Pb-Pi-F, to=a, out=-10, phantom, "\lrcorner"{pos=0}]
      %
      \ar[from=p, to=Pi-b-c]
      \ar[from=p, to=Pb-F-Pi]
      \ar[from=p, to=Pi-b-Fz, out=-15, in=190, phantom, "\urcorner"{pos=0, rotate=-45}]
      \ar[from=de, to=p, "u" {description}, dashed]
      \ar[from=de, to=Pi-b-c, "u_2", dashed]
      \ar[from=de, to=Pb-F-Pi, "u_1"{description}, dashed]
      \ar[from=de, to=a, "i"{description}, bend left=10]
      \ar[from=de, to=Fw, "\delta"']
      \ar[from=Fw, to=F-Pi, "Fv" {description, pos=0.7}, crossing over, dashed]
      \ar[from=Fw, to=Fx, "Fj"{description}, bend right=10, crossing over]
    \end{tikzcd}
  \end{equation*}

  The second component $u_2 \colon d \to \Pi_bc$ of $u \colon d \to p$ over $a$
  transposes along $g^* \dashv \Pi_b$ to $u_2^\ddagger \colon b \times_a d \to c$.
  Similarly, $v \colon w \to \Pi_yz$ over $x$ in $\McC$ transposes to
  $v^\dagger \colon y \times_x w \to z$ over $y$ in $\McC$ by the adjunction
  $k^* \dashv \Pi_y$.
  Limits in $\Gl(F)$ are defined componentwise, so the pullback of
  $(i,j) \colon \delta \to \alpha$ along $(g,k) \colon \beta \to \alpha$ is the
  map
  $\delta \times_\alpha \beta \colon b \times_a d \to F(y \times_x w) \cong Fy \times_{Fx}
  Fw$.
  Define the transpose of $(u,v)$ as
  \begin{equation*}
    (u,v)^\flat \coloneqq (u_2^\ddagger, v^\dagger)
  \end{equation*}
\end{construction}

\begin{lemma}\label{lem:flat}
  $(u,v)^\flat$ as above is indeed a map $\delta \times_\alpha \beta \to \gamma$
  over $\beta$ in $\Gl(F)$.
\end{lemma}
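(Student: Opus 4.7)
A morphism $\delta \times_\alpha \beta \to \gamma$ over $\beta$ in $\Gl(F)$ amounts to three pieces of data on $(u_2^\ddagger, v^\dagger)$: the $\McE$-component $u_2^\ddagger \colon b \times_a d \to c$ must be over $b$; the $\McC$-component $v^\dagger \colon y \times_x w \to z$ must be over $y$; and the outer square
\[ \gamma \cdot u_2^\ddagger \;=\; F(v^\dagger) \cdot (\delta \times_\alpha \beta) \]
must commute, where we invoke the canonical isomorphism $F(y \times_x w) \cong Fy \times_{Fx} Fw$ arising from $F$ preserving pullbacks. I would dispatch the two ``over'' conditions first. Reading \eqref{eqn:pi-right}, $u_2 \colon d \to \Pi_b c$ is over $a$, and the counit $\ev \colon b \times_a \Pi_b c \to c$ of $g^* \dashv \Pi_b$ is a map over $b$. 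Hence $f \cdot u_2^\ddagger = f \cdot \ev \cdot (b \times_a u_2)$ is the projection $b \times_a d \to b$. The symmetric argument in $\McC$, using $k^* \dashv \Pi_y$, gives $h \cdot v^\dagger$ equal to the projection $y \times_x w \to y$.

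The main step is the outer square. I plan to get it by transposing, along $g^* \dashv \Pi_b$, the pullback-equalising identity implicit in \eqref{eqn:Pi-pullback}, namely
\[ \Pi_b(f,\gamma) \cdot u_2 \;=\; (b \times_{Fy} \ev)^\ddagger \cdot (a \times_{Fx} F(\ev)^\ddagger) \cdot u_1, \]
with $u_1 = (i,\, Fv \cdot \delta)$ as \eqref{eqn:pi-right} dictates. The left-hand side transposes to $(f,\gamma) \cdot u_2^\ddagger$, whose second projection is exactly $\gamma \cdot u_2^\ddagger$. The right-hand side transposes to $(b \times_{Fy} \ev) \cdot (b \times_a \Psi)$ with $\Psi = (i,\, F(\ev)^\ddagger \cdot Fv \cdot \delta)$, and my task is to recognise its second projection as $F(v^\dagger) \cdot (\delta \times_\alpha \beta)$.

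The pivotal calculation, which I expect to be the principal obstacle, is the identity
\[ F(\ev)^\ddagger \cdot Fv \;=\; (Fv^\dagger)^\ddagger \colon Fw \longrightarrow \Pi_{Fy} Fz, \]
where the right-hand $\ddagger$ is transposition along $(Fk)^* \dashv \Pi_{Fy}$. This follows by transposing both sides: the composite $\ev \cdot (Fy \times_{Fx} (F(\ev)^\ddagger \cdot Fv))$ equals $F(\ev) \cdot F(y \times_x v) = F(\ev \cdot (y \times_x v)) = F(v^\dagger)$, modulo the canonical pullback-preservation isomorphism for $F$. Substituting this identity, identifying $b \times_a (a \times_{Fx} \Pi_{Fy} Fz) \cong b \times_{Fy} (Fy \times_{Fx} \Pi_{Fy} Fz)$ via $\alpha \cdot g = Fk \cdot \beta$, and unfolding $b \times_{Fy} \ev = \beta^* \ev$, the second projection of the right transpose collapses exactly to $F(v^\dagger) \cdot (\delta \times_\alpha \beta)$.

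Beyond this key transpose identity, the remaining difficulty is routine but delicate: the bookkeeping of the iterated fibred products $b \times_a (a \times_{Fx} -)$ versus $b \times_{Fy}(Fy \times_{Fx} -)$, and tracking which counit is in play at each step. Once these coherences are in hand, the three required conditions for $(u_2^\ddagger, v^\dagger)$ to be a morphism $\delta \times_\alpha \beta \to \gamma$ over $\beta$ in $\Gl(F)$ all follow, completing the proof.
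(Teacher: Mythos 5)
Your proposal is correct and follows essentially the same route as the paper's proof: transpose the equalities encoded in \eqref{eqn:pi-right} under $g^* \dashv \Pi_b$, project to $Fz$, and chase through the naturality of the evaluation counits and the pullback-preservation isomorphisms for $F$. Isolating the interchange identity $F(\ev)^\ddagger \cdot Fv = (F v^\dagger)^\ddagger$ as a stated lemma is a clean way of packaging the computation that the paper carries out implicitly by pulling back $F(\Pi_y h)^*\alpha \cdot u_1$ and $Fv \cdot \delta$ along $Fk$ and post-composing with $F(\ev)$, but the underlying argument is the same.
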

\begin{proof}
  To show that $(u,v)^\flat$ as above is indeed a map
  $\delta \times_\alpha \beta \to \gamma$ over $\beta$ is to show that if
  \Cref{eqn:pi-right} commutes then \Cref{eqn:flat-left} below commutes.
  \begin{equation*}\label{eqn:flat-left}\tag{\textsc{$\flat$-left}}
    \begin{tikzcd}[column sep=normal]
      &
      &
      &
      &
      &
      % |[alias=Pi-b-c]| \Pi_bc
      &
      % |[alias=Pi-b-Fz]| \Pi_b(b \times_{Fy} Fz)
      \\
      % |[alias=Pb-Pb-Pi-F]| b \times_a (a \times_{Fx} \Pi_{Fy}Fz)
      &
      |[alias=c]| c
      &
      |[alias=b-d]| b \times_a d
      &
      |[alias=de]| d
      &
      % |[alias=p]| p
      &
      &
      % |[alias=Pb-Pi-F]| a \times_{Fx} \Pi_{Fy}Fz
      \\
      |[alias=b-Fz]| b \times_{Fy} Fz
      &
      &
      &
      &
      &
      % |[alias=Pb-F-Pi]| a \times_{Fx} F(\Pi_yz)
      &
      &
      \\
      &
      |[alias=Fz]| Fz
      &
      &
      |[alias=F-y-w]| F(y \times_x w)
      &
      |[alias=Fw]| Fw
      &
      &
      % |[alias=F-Pi]| F(\Pi_yz)
      &
      \\
      &
      &
      |[alias=b]| b
      &
      &
      &
      |[alias=a]| a
      &
      &
      \\
      &
      % |[alias=Pb2-Pi-F]| Fy \times_{Fx} \Pi_{Fy}Fz
      &
      &
      &
      &
      &
      &
      % |[alias=Pi-F]| \Pi_{Fy}Fz
      \\
      &
      &
      &
      |[alias=Fy]| Fy
      &
      &
      &
      |[alias=Fx]| Fx
      \ar[from=c, to=b, "f" {description}, bend right=10]
      \ar[from=c, to=Fz, "\gamma"{description}]
      \ar[from=c, to=b-Fz, "{(f, \gamma)}" {description}, color=cyan0]
      %
      % \ar[from=Pi-b-c, to=Pi-b-Fz, "{\Pi_b(f,\gamma)}", cyan0]
      %
      \ar[from=b-Fz, to=b, bend right]
      \ar[from=b-Fz, to=Fz, "(Fh)^*\beta"{description}]
      \ar[from=b-Fz, to=Fy, phantom, "\lrcorner"{pos=0, rotate=-30}, out=-43]
      \ar[from=b, to=a, "g"{pos=0.5,description}]
      \ar[from=b, to=Fy, "\beta"' {pos=0.2, description}]
      \ar[from=a, to=Fx, "\alpha"{pos=0.2, description}]
      \ar[from=Fz, to=Fy, crossing over, "Fh"{pos=0.5, description}, bend right=20]
      \ar[from=Fy, to=Fx, "Fk"']
      %
      % \ar[from=F-Pi, to=Fx, "F(\Pi_yh)"'{pos=0.3}]
      % \ar[from=Pb-F-Pi, to=a]
      % \ar[from=Pb-F-Pi, to=Fx, phantom, "\lrcorner"{pos=0, rotate=-30}]
      % \ar[from=Pb-F-Pi, to=Pb-Pi-F, "a \times_{Fx} F(\ev)^\ddagger"{description}, color=green0]
      %
      % \ar[from=Pi-F, to=Fx, color=red0, "\Pi_{Fy}Fh"]
      % \ar[from=Pb-Pi-F, to=a, color=red0]
      % \ar[from=Pb-Pi-F, to=Pi-F, color=red0, bend left=10]
      % \ar[from=Pb-Pi-F, to=Fx, phantom, "\lrcorner"{pos=0, rotate=-60}]
      %
      % \ar[from=Pb-F-Pi, to=F-Pi, crossing over]
      % \ar[from=F-Pi, to=Pi-F, "F(\ev)^\ddagger" {description}, color=green0]
      %
      % \ar[from=Pb-Pb-Pi-F, to=b,, color=red0]
      % \ar[from=Pb-Pb-Pi-F, to=Pb2-Pi-F, color=red0]
      % \ar[from=Pb-Pb-Pi-F, to=Pb-Pi-F, color=red0, bend left=10]
      %
      % \ar[from=Pb2-Pi-F, to=Pi-F, crossing over, color=red0]
      % \ar[from=Pb2-Pi-F, to=Fy, crossing over, color=red0]
      % \ar[from=Pb2-Pi-F, to=Fz, "\ev" {pos=0.3}, crossing over, bend left=20, color=yellow0]
      % \ar[from=Pb2-Pi-F, to=Fx, phantom, "\lrcorner"{rotate=0,pos=0,fill=white0}]
      %
      % \ar[from=Pb-Pb-Pi-F, to=b-Fz, "b \times_{Fy} \ev"{description}, crossing over, color=yellow0]
      %
      % \ar[from=Pb-Pi-F, to=Pi-b-Fz, "(b \times_{Fy} \ev)^\ddagger"', color=yellow0]
      %
      % \ar[from=Pb-Pb-Pi-F, to=a, out=-10, phantom, "\lrcorner"{pos=0}]
      %
      % \ar[from=p, to=Pi-b-c]
      % \ar[from=p, to=Pb-F-Pi]
      % \ar[from=p, to=Pi-b-Fz, out=-15, in=190, phantom, "\urcorner"{pos=0}]
      %
      % \ar[from=de, to=p, "u" {description}, dashed]
      % \ar[from=de, to=Pi-b-c, "u_2", dashed]
      % \ar[from=de, to=Pb-F-Pi, "u_1"{description}, dashed]
      %
      \ar[from=de, to=a, "i"{description}, bend left=20]
      \ar[from=de, to=Fw, "\delta"']
      %
      % \ar[from=Fw, to=F-Pi, "Fv" {description, pos=0.7}, crossing over, dashed]
      \ar[from=Fw, to=Fx, "Fj"{description}, bend left=30, crossing over]
      \ar[from=b-d, to=c, "u_2^\ddagger"', dashed]
      \ar[from=b-d, to=de]
      \ar[from=b-d, to=b]
      \ar[from=b-d, to=a, phantom, "\lrcorner"{pos=0}]
      \ar[from=F-y-w, to=Fw]
      \ar[from=F-y-w, to=Fy, crossing over]
      \ar[from=F-y-w, to=Fx, phantom, "\lrcorner"{pos=0}]
      \ar[from=F-y-w, to=Fz, "F(v^\dagger)"{description, pos=0.25}, dashed, crossing over]
      \ar[from=b-d, to=F-y-w, dashed, "{\beta \times_\alpha \delta}"{description}]
    \end{tikzcd}
  \end{equation*}

  In particular, commutativity of \Cref{eqn:flat-left} amounts to
  $F(v^\dagger) \cdot \beta \times_\alpha \delta = \gamma \cdot u_2^\ddagger$ while
  commutativity of \Cref{eqn:pi-right} amounts to
  $\textcolor{yellow0}{(b \times_{Fy} \ev)^\ddagger} \cdot
  \textcolor{green0}{a \times_{Fx} F(\ev)^\ddagger} \cdot u_1 = \textcolor{cyan0}{\Pi_b(f,\gamma)}
  \cdot u_2$ and $Fv \cdot \delta = F(\Pi_yh)^*\alpha \cdot u_1$.

  Taking the transpose of
  \begin{equation*}
    \begin{tikzcd}[column sep=huge]
      d \ar[r, "u_1", dashed]
      &
      a \times_{Fx} F(\Pi_y z) \ar[r, "a \times_{Fx} F(\ev)^\ddagger", color=green0]
      &
      a \times_{Fx} \Pi_{Fy}Fz \ar[r, "(b \times_{Fy} \ev)^\ddagger", color=yellow0]
      &
      \Pi_b(b \times_{Fy} Fz)
    \end{tikzcd}
  \end{equation*}
  over $a$ under the adjunction $g^* \dashv \Pi_b$ then further composing with the
  projection $(Fh)^*\beta \colon b \times_{Fy} Fz \to Fz$ yields
  % https://q.uiver.app/#q=WzAsOCxbMCwwLCJiIFxcdGltZXNfYSBkIl0sWzMsMCwiYiBcXHRpbWVzX2EgKGEgXFx0aW1lc197Rnh9IEYoXFxQaV95eikpIl0sWzcsMCwiYiBcXHRpbWVzX2EgKGEgXFx0aW1lc197Rnh9IFxcUGlfe0Z5fSBGeikiXSxbOSwwLCJiIFxcdGltZXNfYSBcXFBpX2IoYiBcXHRpbWVzX3tGeX0gRnopIl0sWzksMSwiYiBcXHRpbWVzX3tGeX0gRnoiXSxbMywxLCJiIFxcdGltZXNfe0Z5fSAoRnkgXFx0aW1lc197Rnh9IEYoXFxQaV95IHopKSJdLFs3LDEsImIgXFx0aW1lc197Rnl9IChGeSBcXHRpbWVzX3tGeH0gXFxQaV97Rnl9IEZ6KSJdLFs0LDIsImIgXFx0aW1lc197Rnl9IEYoeSBcXHRpbWVzX3ggXFxQaV95eikiXSxbMCwxLCJiIFxcdGltZXNfYSB1XzEiXSxbMSwyLCJiIFxcdGltZXNfYSAoYSBcXHRpbWVzX3tGeH0gRihldileXFxzaGFycCkiXSxbMyw0LCJldiJdLFsyLDMsImIgXFx0aW1lc19hIChiIFxcdGltZXNfe0Z5fSBldileXFxzaGFycCJdLFsyLDQsImIgXFx0aW1lc197Rnl9IGV2IiwxXSxbMSw1LCIiLDIseyJsZXZlbCI6Miwic3R5bGUiOnsiaGVhZCI6eyJuYW1lIjoibm9uZSJ9fX1dLFsyLDYsIiIsMSx7ImxldmVsIjoyLCJzdHlsZSI6eyJoZWFkIjp7Im5hbWUiOiJub25lIn19fV0sWzUsNiwiYiBcXHRpbWVzX3tGeX0gKEZ5IFxcdGltZXNfe0Z4fSBGKGV2KV5cXHNoYXJwKSIsMV0sWzUsNywiYiBcXHRpbWVzX3tGeX0gXFxjb25nIiwxLHsic3R5bGUiOnsidGFpbCI6eyJuYW1lIjoiYXJyb3doZWFkIn19fV0sWzcsNCwiYiBcXHRpbWVzX3tGeX0gRihldikiLDIseyJjdXJ2ZSI6M31dXQ==
  %
  \begin{equation*}
    \begin{tikzcd}[every label/.append style = {font = \tiny}, column sep=huge]
      |[alias=b-d]| b \times_a d
      &
      |[alias=Pb-Pb-F-Pi]| b \times_a (a \times_{Fx} F(\Pi_yz))
      &
      |[alias=Pb-Pb-Pi-F]| b \times_a (a \times_{Fx} \Pi_{Fy}Fz)
      &
      |[alias=Pb-Pi-b-Fz]| b \times_a \Pi_b(b \times_{Fy} Fz)
      \\
      &
      |[alias=2-Pb-Pb-F-Pi]| b \times_{Fy} (Fy \times_{Fx} F(\Pi_yz))
      &
      |[alias=2-Pb-Pb-Pi-F]| b \times_{Fy} (Fy \times_{Fx} \Pi_{Fy}Fz)
      &
      |[alias=b-Fz]| b \times_{Fy} Fz
      \\
      &
      |[alias=2-Pb-F-Pi]| Fy \times_{Fx} F(\Pi_yz)
      &
      |[alias=Pb-F-y-Pi]| b \times_{Fy} F(y \times_x \Pi_yz)
      &
      |[alias=Fz]| Fz
      \\
      & &
      |[alias=F-y-Pi]| F(y \times_x \Pi_yz)
      \ar[from=b-d, to=Pb-Pb-F-Pi, "b \times_a u_1", dashed]
      \ar[from=Pb-Pb-F-Pi, to=Pb-Pb-Pi-F, "b \times_a (a \times_{Fx} F(\ev)^\ddagger)", color=green0]
      \ar[from=Pb-Pb-Pi-F, to=Pb-Pi-b-Fz, "b \times_a (b \times_{Fy} \ev)^\ddagger", color=yellow0]
      \ar[from=Pb-Pi-b-Fz, to=b-Fz, "\ev"]
      \ar[from=Pb-Pb-Pi-F, to=b-Fz, "b \times_{Fy} \ev"{description}]
      \ar[from=Pb-Pb-Pi-F, to=2-Pb-Pb-Pi-F, equal]
      \ar[from=Pb-Pb-F-Pi, to=2-Pb-Pb-F-Pi, equal]
      \ar[from=2-Pb-Pb-F-Pi, to=2-Pb-Pb-Pi-F, "{\scriptsize b \times_{Fy} (Fy \times_{Fx} F(\ev)^\ddagger)}"]
      \ar[from=2-Pb-Pb-F-Pi, to=Pb-F-y-Pi, "b \times_{Fy} \cong" {description}]
      \ar[from=Pb-F-y-Pi, to=b-Fz, "b \times_{Fy} F(\ev)" {description}]
      \ar[from=2-Pb-Pb-F-Pi, to=2-Pb-F-Pi, "(Fy \times_{Fx} F(\Pi_yh))^*\beta"']
      \ar[from=Pb-F-y-Pi, to=F-y-Pi, "F(y \times \Pi_yh)^*\beta" {description}]
      \ar[from=b-Fz, to=Fz, "(Fh)^*\beta"]
      \ar[from=2-Pb-F-Pi, to=F-y-Pi, "\cong"']
      \ar[from=F-y-Pi, to=Fz, "F(\ev)"']
    \end{tikzcd}
  \end{equation*}
  Next, computing the transpose of
  \begin{equation*}
    \begin{tikzcd}[column sep=huge]
      d \ar[r, "u_2", dashed]
      & \Pi_bc \ar[r, "{\Pi_b(f,\gamma)}", color=cyan0]
      & \Pi_b(b \times_{Fy} Fz)
    \end{tikzcd}
  \end{equation*}
  over $a$ under the adjunction $g^* \dashv \Pi_b$ then further composing with the
  projection $(Fh)^*\beta \colon b \times_{Fy} Fz \to Fz$ gives
  \begin{equation*}
    \begin{tikzcd}[column sep=huge]
      b \times_a d
      \ar[r, "b \times_a u_2"]
      \ar[rd, "u_2^\ddagger"', dashed]
      &
      b \times_a \Pi_bc
      \ar[r, "{b \times \Pi_b(f,\gamma)}"]
      \ar[d, "\ev" {description}]
      &
      b \times_a \Pi_b(b \times_{Fy} Fz)
      \ar[d, "\ev"]
      \\
      &
      c
      \ar[r, "{(f,\gamma)}"{description}, color=cyan0]
      \ar[rr, "\gamma"', bend right=20]
      &
      b \times_{Fy} Fz
      \ar[r, "(Fh)^*\beta"]
      &
      Fz
    \end{tikzcd}
  \end{equation*}

  % https://q.uiver.app/#q=WzAsMTUsWzQsMCwiZCJdLFs0LDIsImEgXFx0aW1lc197Rnh9IEYoXFxQaV95eikiXSxbNCw0LCJhIl0sWzUsNSwiRngiXSxbNSwzLCJGKFxcUGlfeXopIl0sWzMsMywiRnkgXFx0aW1lc197Rnh9IEYoXFxQaV95eikiXSxbMyw1LCJGeSJdLFsyLDQsImIiXSxbMiwyLCJiIFxcdGltZXNfYSAoYSBcXHRpbWVzX3tGeH0gRihcXFBpX3l6KSkiXSxbMiwwLCJiIFxcdGltZXNfYSBkIl0sWzMsMl0sWzEsMywiRih5IFxcdGltZXNfeCBcXFBpX3l6KSJdLFswLDIsImIgXFx0aW1lc197Rnl9IEYoeSBcXHRpbWVzX3t4fSBcXFBpX3l6KSJdLFswLDQsImIiXSxbMSw1LCJGeSJdLFs1LDRdLFs1LDYsIkZ5IFxcdGltZXNfe0Z4fSBGKFxcUGlfeSBoKSIsMSx7ImxhYmVsX3Bvc2l0aW9uIjoyMH1dLFs2LDMsIkZrIiwyXSxbNCwzLCJGKFxcUGlfeSBoKSJdLFs1LDMsIiIsMSx7InN0eWxlIjp7Im5hbWUiOiJjb3JuZXIifX1dLFsxLDQsIkYoXFxQaV95aCleKlxcYWxwaGEiLDFdLFsxLDJdLFsyLDMsIlxcYWxwaGEiLDFdLFsxLDMsIiIsMSx7InN0eWxlIjp7Im5hbWUiOiJjb3JuZXIifX1dLFswLDEsInVfMSIsMV0sWzcsMiwiZyIsMV0sWzgsN10sWzgsMV0sWzcsNiwiXFxiZXRhIiwxXSxbOCw1LCIoRnkgXFx0aW1lc197Rnh9IEYoXFxQaV95aCkpXipcXGJldGEiLDFdLFs5LDgsImIgXFx0aW1lc19hIHVfMSIsMV0sWzksMF0sWzksMTAsIiIsMix7InN0eWxlIjp7Im5hbWUiOiJjb3JuZXIifX1dLFsxMiwxMSwiRih5IFxcdGltZXNfeCBcXFBpX3loKV4qXFxiZXRhIiwxXSxbNSwxMSwiXFxjb25nIiwxLHsibGFiZWxfcG9zaXRpb24iOjcwfV0sWzgsMTIsImIgXFx0aW1lc197Rnl9IFxcY29uZyIsMV0sWzEyLDEzXSxbMTMsNywiIiwxLHsibGV2ZWwiOjIsInN0eWxlIjp7ImhlYWQiOnsibmFtZSI6Im5vbmUifX19XSxbMTEsMTQsIkYoeSBcXHRpbWVzX3ggXFxQaV95aCkiLDEseyJsYWJlbF9wb3NpdGlvbiI6MzB9XSxbMTMsMTQsIlxcYmV0YSIsMV0sWzE0LDYsIiIsMSx7ImxldmVsIjoyLCJzdHlsZSI6eyJoZWFkIjp7Im5hbWUiOiJub25lIn19fV0sWzEyLDE0LCIiLDEseyJzdHlsZSI6eyJuYW1lIjoiY29ybmVyIn19XSxbOCwyNSwiIiwyLHsibGV2ZWwiOjEsInN0eWxlIjp7Im5hbWUiOiJjb3JuZXIifX1dXQ==
  %
  On the other hand, note that the pullback of
  \begin{tikzcd}[cramped]
    d \ar[r, "u_1", dashed]
    & a \times_{Fx} F(\Pi_yz) \ar[r, "F(\Pi_yh)^*\alpha"]
    &[2em] F(\Pi_yz)
  \end{tikzcd}
  along $Fk \colon Fy \to Fx$ is
  \begin{equation*}
    \begin{tikzcd}[column sep=huge]
      b \times_a d \ar[r, "b \times_a u_1"]
      & b \times_a (a \times_{Fx} F(\Pi_yz)) \ar[r, "(Fy \times_{Fx} F(\Pi_yh))^*\beta"]
      &[2em] Fy \times_{Fx} F(\Pi_yz)
    \end{tikzcd}
  \end{equation*}
  as observed:
  %
  % and $(Fy \times_{Fx} F(\Pi_yh))^*\beta = (Fk)^*(F(\Pi_yh)^*\alpha)$ as observed:
  %
  \begin{equation*}
    \begin{tikzcd}[column sep=small]
      &
      &
      |[alias=b-d]| b \times_a d
      &
      &
      |[alias=de]| d
      \\
      \\
      |[alias=Pb-Pb-cong]| b \times_{Fy} F(y \times_x \Pi_yz)
      &
      &
      |[alias=Pb-Pb-F-Pi]| b \times_a (a \times_{Fx} F(\Pi_yz))
      &
      &
      |[alias=Pb-F-Pi-a]| a \times_{Fx} F(\Pi_yz)
      \\
      &
      |[alias=Pb-cong]| F(y \times_x \Pi_y z)
      &
      &
      |[alias=Pb-F-Pi-Fy]| Fy \times_{Fx} F(\Pi_yz)
      &
      &
      |[alias=F-Pi]| F(\Pi_yz)
      \\
      |[alias=b1]| b
      &
      &
      |[alias=b2]| b
      &
      &
      |[alias=a]| a
      \\
      &
      |[alias=Fy-1]| Fy
      &
      &
      |[alias=Fy-2]| Fy
      &
      &
      |[alias=Fx]| Fx
      \ar[from=b-d, to=de]
      \ar[from=b-d, to=Pb-Pb-F-Pi, "b \times_a u_1"']
      \ar[from=b-d, to=Pb-F-Pi, phantom, "\lrcorner"{pos=0}]
      \ar[from=de, to=Pb-F-Pi-a, "u_1"]
      \ar[from=Pb-Pb-cong, to=b1]
      \ar[from=Pb-Pb-cong, to=Pb-cong, "F(y \times_x \Pi_yh)^*\beta" {description}]
      \ar[from=Pb-Pb-F-Pi, to=Pb-F-Pi-a]
      \ar[from=Pb-Pb-F-Pi, to=b2]
      \ar[from=Pb-Pb-F-Pi, to=Pb-Pb-cong, leftrightarrow, "b \times_{Fy} \cong"']
      \ar[from=Pb-Pb-F-Pi, to=Pb-F-Pi-Fy, "(Fy \times_{Fx} F(\Pi_yh))^*\beta"{description}]
      \ar[from=Pb-Pb-F-Pi, to=a, phantom, "\lrcorner"{pos=0}]
      \ar[from=Pb-F-Pi-a, to=a]
      \ar[from=Pb-F-Pi-a, to=F-Pi, "F(\Pi_yh)^*\alpha"]
      \ar[from=Pb-F-Pi-a, to=Fx, phantom, "\lrcorner"{pos=0, rotate=-30}]
      \ar[from=b2, to=a, "g"{description, pos=0.75}]
      \ar[from=b2, to=Fy-2, "\beta"{description}]
      \ar[from=Pb-F-Pi-Fy, to=Pb-cong, leftrightarrow, crossing over, "\cong"{description, pos=0.6}]
      \ar[from=Pb-F-Pi-Fy, to=Fy-2, "Fy \times_{Fx} F(\Pi_yh)" {description, pos=0.25}, crossing over]
      \ar[from=Pb-F-Pi-Fy, to=F-Pi, crossing over]
      \ar[from=Pb-F-Pi-Fy, to=Fx, phantom, "\lrcorner"{pos=0}]
      \ar[from=b1, to=Fy-1, "\beta"']
      \ar[from=b1, to=b2, equal]
      \ar[from=a, to=Fx, "\alpha"{description}]
      \ar[from=F-Pi, to=Fx, "F(\Pi_yh)"]
      \ar[from=Fy-1, to=Fy-2, equal]
      \ar[from=Fy-2, to=Fx, "Fk"']
      \ar[from=Pb-cong, to=Fy-1, "F(y \times_x \Pi_yh)" {description, pos=0.25}, crossing over]
    \end{tikzcd}
  \end{equation*}
  A similar computation shows that the pullback under $Fk$ of
  \begin{tikzcd}[cramped]
    d \ar[r, "\delta"] & Fw \ar[r, "Fv", dashed] & F(\Pi_yz)
  \end{tikzcd}
  along $Fk \colon Fy \to Fx$ is given by
  \begin{equation*}
    \begin{tikzcd}[]
      b \times_a d \ar[r, "{\beta \times_\alpha \delta}", dashed]
      & Fy \times_{Fx} Fw \ar[r, "\cong"]
      & F(y \times_x w) \ar[r, "F(y \times_x v)"]
      &[2em] F(y \times_x \Pi_yz) \ar[r, "\cong"]
      & Fy \times_{Fx} F(\Pi_yz)
    \end{tikzcd}
  \end{equation*}
  as observed:
  \begin{equation*}
    \begin{tikzcd}[column sep=normal]
      &
      &
      |[alias=b-d]| b \times_a d
      &
      &
      |[alias=de]| d
      \\
      &
      |[alias=Pb-Fw-cong]| F(y \times_x w)
      &
      &
      |[alias=Pb-Fw]| Fy \times_{Fx} Fw
      &
      &
      |[alias=Fw]| Fw
      \\
      % |[alias=Pb-Pb-cong]| b \times_{Fy} F(y \times_x \Pi_yz)
      &
      &
      % |[alias=Pb-Pb-F-Pi]| b \times_a (a \times_{Fx} F(\Pi_yz))
      &
      &
      % |[alias=Pb-F-Pi-a]| a \times_{Fx} F(\Pi_yz)
      \\
      &
      |[alias=Pb-cong]| F(y \times_x \Pi_y z)
      &
      &
      |[alias=Pb-F-Pi-Fy]| Fy \times_{Fx} F(\Pi_yz)
      &
      &
      |[alias=F-Pi]| F(\Pi_yz)
      \\
      |[alias=b1]| b
      &
      &
      |[alias=b2]| b
      &
      &
      |[alias=a]| a
      \\
      &
      |[alias=Fy-1]| Fy
      &
      &
      |[alias=Fy-2]| Fy
      &
      &
      |[alias=Fx]| Fx
      %
      % \ar[from=b-d, to=Pb-Pb-F-Pi, "b \times_a u_1"']
      % \ar[from=b-d, to=Pb-F-Pi, phantom, "\lrcorner"{pos=0}]
      \ar[from=b-d, to=de]
      \ar[from=b-d, to=Pb-Fw, "{\beta \times_\alpha \delta}"{description}]
      \ar[from=b-d, to=b2]
      \ar[from=b-d, to=a, phantom, "\lrcorner"{pos=0}]
      \ar[from=de, to=Fw, "\delta"]
      \ar[from=de, to=a, "i"]
      % \ar[from=de, to=Pb-F-Pi-a, "u_1"]
      %
      \ar[from=Fw, to=F-Pi, "Fv"]
      \ar[from=Pb-Fw, to=Pb-F-Pi-Fy, "Fy \times_{Fx} Fv"{description}]
      \ar[from=Pb-Fw, to=Fw, crossing over]
      \ar[from=Pb-Fw, to=F-Pi, phantom, "\lrcorner"{pos=0}]
      \ar[from=Pb-Fw, to=Pb-Fw-cong, leftrightarrow, crossing over, "\cong"{description, pos=0.6}]
      \ar[from=Pb-Fw-cong, to=Pb-cong, "F(y \times_x v)"{description}]
      %
      % \ar[from=Pb-Pb-cong, to=b1]
      % \ar[from=Pb-Pb-cong, to=Pb-cong, "F(y \times_x \Pi_yh)^*\beta" {description}]
      %
      % \ar[from=Pb-Pb-F-Pi, to=Pb-F-Pi-a]
      % \ar[from=Pb-Pb-F-Pi, to=b2]
      % \ar[from=Pb-Pb-F-Pi, to=Pb-Pb-cong, leftrightarrow, "b \times_{Fy} \cong"']
      % \ar[from=Pb-Pb-F-Pi, to=Pb-F-Pi-Fy, "(Fy \times_{Fx} F(\Pi_yh))^*\beta"{description}]
      % \ar[from=Pb-Pb-F-Pi, to=a, phantom, "\lrcorner"{pos=0}]
      %
      % \ar[from=Pb-F-Pi-a, to=a]
      % \ar[from=Pb-F-Pi-a, to=F-Pi, "F(\Pi_yh)^*\alpha"]
      % \ar[from=Pb-F-Pi-a, to=Fx, phantom, "\lrcorner"{pos=0, rotate=-30}]
      %
      \ar[from=b2, to=a, "g"{description, pos=0.75}]
      \ar[from=b2, to=Fy-2, "\beta"{description}]
      \ar[from=Pb-F-Pi-Fy, to=Pb-cong, leftrightarrow, crossing over, "\cong"{description, pos=0.6}]
      \ar[from=Pb-F-Pi-Fy, to=Fy-2, "Fy \times_{Fx} F(\Pi_yh)" {description, pos=0.25}, crossing over]
      \ar[from=Pb-F-Pi-Fy, to=F-Pi, crossing over]
      \ar[from=Pb-F-Pi-Fy, to=Fx, phantom, "\lrcorner"{pos=0}]
      \ar[from=b1, to=Fy-1, "\beta"']
      \ar[from=b1, to=b2, equal]
      \ar[from=a, to=Fx, "\alpha"{description}]
      \ar[from=F-Pi, to=Fx, "F(\Pi_yh)"]
      \ar[from=Fy-1, to=Fy-2, equal]
      \ar[from=Fy-2, to=Fx, "Fk"']
      \ar[from=Pb-cong, to=Fy-1, "F(y \times_x \Pi_yh)" {description, pos=0.25}, crossing over]
    \end{tikzcd}
  \end{equation*}

  We have assumed \Cref{eqn:pi-right} commutes, so
  \begin{tikzcd}[cramped]
    d \ar[r, "u_1", dashed] &[-1em] a \times_{Fx} F(\Pi_yz) \ar[r, "{F(\Pi_yh)^*\alpha}"] & F(\Pi_yz)
  \end{tikzcd}
  agrees with
  \begin{tikzcd}[cramped, column sep=small]
    d \ar[r, "\delta"] & Fw \ar[r, "Fv", dashed] & F(\Pi_yz)
  \end{tikzcd}.
  Hence, pulling back both maps along $Fk \colon Fy \to Fx$ and then composing
  with $Fy \times_{Fx} F(\Pi_yz) \cong F(y \times_x \Pi_yz) \xrightarrow{F(\ev)} Fz$ gives
  \begin{equation*}
    \begin{tikzcd}
      &
      b \times_a (a \times_{Fx} F(\Pi_yz))
      \ar[r, equal]
      &
      b \times_{Fy} (Fy \times_{Fx} F(\Pi_yz))
      \ar[rrd, "(Fy \times_{Fx} F(\Pi_yh))^*\beta"]
      \\
      b \times_a d
      \ar[r, "{\beta \times_\alpha \delta}"', dashed]
      \ar[ur, dashed, "b \times_a u_1"]
      &
      Fy \times_{Fx} Fw
      \ar[r, "\cong"']
      &
      F(y \times_x w)
      \ar[r, "{F(y \times_x v)}"]
      \ar[rrdd, "F(v^\dagger)"', dashed]
      &
      F(y \times_x \Pi_yz)
      \ar[r, "\cong"{description}]
      \ar[rd, equal]
      &
      Fy \times_{Fx} F(\Pi_yz)
      \ar[d, "\cong"]
      \\
      &
      &
      &
      &
      F(y \times_x \Pi_yz)
      \ar[d, "F(\ev)"]
      \\
      &
      &
      &
      &
      Fz
    \end{tikzcd}
  \end{equation*}
  Commutativity of \Cref{eqn:pi-right} also means
  $\textcolor{cyan0}{\Pi_b(f,\gamma)} \cdot u_2 = \textcolor{yellow0}{(b
    \times_{Fy} \ev)^\ddagger} \cdot \textcolor{green0}{a \times_{Fx} F(\ev)^\ddagger}
  \cdot u_1$.
  So, transposing under $g^* \dashv \Pi_b$ and composing with
  $(Fh)^*\beta \colon b \times_{Fy} Fz \to Fz$ gives the same map.
  But performing this operation to
  $\textcolor{yellow0}{(b \times_{Fy} \ev)^\ddagger} \cdot \textcolor{green0}{a
    \times_{Fx} F(\ev)^\ddagger} \cdot u_1$ is exactly the boundary of the above
  diagram, while performing this operation to
  $\textcolor{cyan0}{\Pi_b(f,\gamma)} \cdot u_2$ gives
  \begin{tikzcd}
    b \times_a d \ar[r, "u_2^\ddagger", dashed] & c \ar[r, "\gamma"] & Fz
  \end{tikzcd}.
  Hence, we have proved that
  \begin{equation*}
    \begin{tikzcd}
      c \ar[d, "\gamma"'] & b \times_a d \ar[l, "u_2^\ddagger"', dashed] \ar[d, "{\beta \times_\alpha \delta}"]
      \\
      Fz & F(y \times_x w) \ar[l, "F(v^\dagger)", dashed]
    \end{tikzcd}
  \end{equation*}
  and so \Cref{eqn:flat-left} commutes.
\end{proof}

\begin{construction}\label{constr:sharp-components}
  Next, assume there is some $(i,j) \colon \delta \to \alpha$ and a map
  $(\widetilde{u},\widetilde{v}) \colon \beta \times_\alpha \delta \to \gamma$ over $\beta$ in $\Gl(F)$,
  as follows.
  Because $\widetilde{v} \colon y \times_x w \to z$ is over $y$, it transposes to
  $v \coloneqq \widetilde{v}^\dagger \colon w \to \Pi_yz$ over $x$.
  Hence, it induces a unique map $\widetilde{u}_1 \colon d \to a \times_{Fx} F(\Pi_yz)$ that
  factors $(i, F(\widetilde{v}^\dagger) \cdot \delta)$ via $F(\Pi_yh)^*\alpha$.
  Further, the map
  $u$ over $b$ transposes along $g^* \dashv \Pi_b$ to a map
  $\widetilde{u}_2 \coloneqq \widetilde{u}^\ddagger \colon d \to \Pi_bc$ over $a$.
  \begin{equation*}\label{eqn:sharp-left}\tag{\textsc{$\sharp$-left}}
    \begin{tikzcd}[column sep=small]
      &
      &
      &
      &
      &
      % |[alias=Pi-b-c]| \Pi_bc
      &
      % |[alias=Pi-b-Fz]| \Pi_b(b \times_{Fy} Fz)
      \\
      % |[alias=Pb-Pb-Pi-F]| b \times_a (a \times_{Fx} \Pi_{Fy}Fz)
      &
      |[alias=c]| c
      &
      |[alias=b-d]| b \times_a d
      &
      |[alias=de]| d
      &
      % |[alias=p]| p
      &
      &
      % |[alias=Pb-Pi-F]| a \times_{Fx} \Pi_{Fy}Fz
      \\
      |[alias=b-Fz]| b \times_{Fy} Fz
      &
      &
      &
      &
      &
      |[alias=Pb-F-Pi]| a \times_{Fx} F(\Pi_yz)
      &
      &
      \\
      &
      |[alias=Fz]| Fz
      &
      &
      |[alias=F-y-w]| F(y \times_x w)
      &
      |[alias=Fw]| Fw
      &
      &
      |[alias=F-Pi]| F(\Pi_yz)
      &
      \\
      &
      &
      |[alias=b]| b
      &
      &
      &
      |[alias=a]| a
      &
      &
      \\
      &
      % |[alias=Pb2-Pi-F]| Fy \times_{Fx} \Pi_{Fy}Fz
      &
      &
      &
      &
      &
      &
      % |[alias=Pi-F]| \Pi_{Fy}Fz
      \\
      &
      &
      &
      |[alias=Fy]| Fy
      &
      &
      &
      |[alias=Fx]| Fx
      \ar[from=c, to=b, "f" {description}, bend right=10]
      \ar[from=c, to=Fz, "\gamma"{description}]
      \ar[from=c, to=b-Fz, "{(f, \gamma)}" {description}, color=cyan0]
      %
      % \ar[from=Pi-b-c, to=Pi-b-Fz, "{\Pi_b(f,\gamma)}", cyan0]
      %
      \ar[from=b-Fz, to=b, bend right]
      \ar[from=b-Fz, to=Fz, "(Fh)^*\beta"{description}]
      \ar[from=b-Fz, to=Fy, phantom, "\lrcorner"{pos=0, rotate=-30}, out=-43]
      \ar[from=b, to=a, "g"{pos=0.5,description}]
      \ar[from=b, to=Fy, "\beta"' {pos=0.2, description}]
      \ar[from=a, to=Fx, "\alpha"{pos=0.2, description}]
      \ar[from=Fz, to=Fy, crossing over, "Fh"{pos=0.5, description}, bend right=20]
      \ar[from=Fy, to=Fx, "Fk"']
      \ar[from=F-Pi, to=Fx, "F(\Pi_yh)"{description,pos=0.3}]
      \ar[from=Pb-F-Pi, to=a]
      \ar[from=Pb-F-Pi, to=Fx, phantom, "\lrcorner"{pos=0, rotate=-30}]
      % \ar[from=Pb-F-Pi, to=Pb-Pi-F, "a \times_{Fx} F(\ev)^\ddagger"{description}, color=green0]
      %
      % \ar[from=Pi-F, to=Fx, color=red0, "\Pi_{Fy}Fh"]
      % \ar[from=Pb-Pi-F, to=a, color=red0]
      % \ar[from=Pb-Pi-F, to=Pi-F, color=red0, bend left=10]
      % \ar[from=Pb-Pi-F, to=Fx, phantom, "\lrcorner"{pos=0, rotate=-60}]
      %
      \ar[from=Pb-F-Pi, to=F-Pi, crossing over, "F(\Pi_yh)^*\alpha"]
      % \ar[from=F-Pi, to=Pi-F, "F(\ev)^\ddagger" {description}, color=green0]
      %
      % \ar[from=Pb-Pb-Pi-F, to=b,, color=red0]
      % \ar[from=Pb-Pb-Pi-F, to=Pb2-Pi-F, color=red0]
      % \ar[from=Pb-Pb-Pi-F, to=Pb-Pi-F, color=red0, bend left=10]
      %
      % \ar[from=Pb2-Pi-F, to=Pi-F, crossing over, color=red0]
      % \ar[from=Pb2-Pi-F, to=Fy, crossing over, color=red0]
      % \ar[from=Pb2-Pi-F, to=Fz, "\ev" {pos=0.3}, crossing over, bend left=20, color=yellow0]
      % \ar[from=Pb2-Pi-F, to=Fx, phantom, "\lrcorner"{rotate=0,pos=0,fill=white0}]
      %
      % \ar[from=Pb-Pb-Pi-F, to=b-Fz, "b \times_{Fy} \ev"{description}, crossing over, color=yellow0]
      %
      % \ar[from=Pb-Pi-F, to=Pi-b-Fz, "(b \times_{Fy} \ev)^\ddagger"', color=yellow0]
      %
      % \ar[from=Pb-Pb-Pi-F, to=a, out=-10, phantom, "\lrcorner"{pos=0}]
      %
      % \ar[from=p, to=Pi-b-c]
      % \ar[from=p, to=Pb-F-Pi]
      % \ar[from=p, to=Pi-b-Fz, out=-15, in=190, phantom, "\urcorner"{pos=0}]
      %
      % \ar[from=de, to=p, "u" {description}, dashed]
      % \ar[from=de, to=Pi-b-c, "\widetilde{u}_2", dashed]
      % \ar[from=de, to=Pb-F-Pi, "\widetilde{u}_1"{description}, dashed]
      %
      \ar[from=de, to=a, "i"{description}, bend left=20]
      \ar[from=de, to=Fw, "\delta"']
      \ar[from=de, to=Pb-F-Pi, "\widetilde{u}_1", dashed]
      %
      % \ar[from=Fw, to=F-Pi, "Fv" {description, pos=0.7}, crossing over, dashed]
      \ar[from=Fw, to=Fx, "Fj"{description}, bend left=30, crossing over]
      \ar[from=Fw, to=F-Pi, "F(\widetilde{v}^\dagger)" {pos=0.7, description}, dashed, crossing over]
      \ar[from=b-d, to=c, "\widetilde{u}"', dashed]
      \ar[from=b-d, to=de]
      \ar[from=b-d, to=b]
      \ar[from=b-d, to=a, phantom, "\lrcorner"{pos=0}]
      \ar[from=F-y-w, to=Fw]
      \ar[from=F-y-w, to=Fy, crossing over]
      \ar[from=F-y-w, to=Fx, phantom, "\lrcorner"{pos=0}]
      \ar[from=F-y-w, to=Fz, "F\widetilde{v}"{description, pos=0.25}, dashed, crossing over]
      \ar[from=b-d, to=F-y-w, dashed, "{\beta \times_\alpha \delta}"{description}]
    \end{tikzcd}
  \end{equation*}
\end{construction}

\begin{lemma}\label{lem:sharp-components}
  The maps $\widetilde{u}_1,\widetilde{u}_2$ constructed above is such that
  \begin{equation*}
    \begin{tikzcd}
      & \Pi_bc \ar[r, "{\Pi_b(f,\gamma)}", color=cyan0] & \Pi_b(b \times_{Fy} Fz) \\
      d \ar[ur, "\widetilde{u}_2", dashed]  \ar[rd, "\widetilde{u}_1"', dashed]
      & & a \times_{Fx} \Pi_{Fy}Fz \ar[u, "(b \times_{Fy} \ev)^\ddagger"', color=yellow0] \\
      & a \times_{Fx} F(\Pi_yz) \ar[ur, "a \times_{Fx} F(\ev)^\ddagger"', color=green0]
    \end{tikzcd}
  \end{equation*}
\end{lemma}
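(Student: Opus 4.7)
The plan is to reduce to an equality of maps $b \times_a d \to b \times_{Fy} Fz$ in $\McE/b$ via the adjunction $g^* \dashv \Pi_b$ (recall $g$ is powerful), and then split the check along the two projections of the pullback $b \times_{Fy} Fz$. Transposing the left-hand side yields $(f,\gamma) \cdot \widetilde{u}$ because $\widetilde{u}_2 = \widetilde{u}^\ddagger$ by definition and $\Pi_b(f,\gamma) \cdot (-)$ commutes with transposition; transposing the right-hand side yields $(b \times_{Fy} \ev) \cdot (b \times_a (a \times_{Fx} F(\ev)^\ddagger)) \cdot (b \times_a \widetilde{u}_1)$ since $(b \times_{Fy} \ev)^\ddagger$ is by construction the transpose of $b \times_{Fy} \ev$.

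The projection to $b$ is routine: on the left it is $f \cdot \widetilde{u}$, which equals the canonical projection because $\widetilde{u}$ is a morphism over $b$; on the right every map in the composite is of the form $b \times_a (-)$ or $b \times_{Fy} (-)$, hence trivially factors through the projection to $b$. For the $Fz$-projection, the left side gives $\gamma \cdot \widetilde{u}$. For the right, I would use the canonical identification $b \times_a (a \times_{Fx} -) \cong b \times_{Fy}(Fy \times_{Fx} -)$ to reduce to precomposing $\ev \cdot (Fy \times_{Fx} F(\ev)^\ddagger)$ with the ``second coordinate'' of $b \times_a \widetilde{u}_1$, namely the map with components $\beta \cdot \pi_1$ and $F(\widetilde{v}^\dagger) \cdot \delta \cdot \pi_2$ (invoking the defining equation $F(\Pi_y h)^* \alpha \cdot \widetilde{u}_1 = F(\widetilde{v}^\dagger) \cdot \delta$ from \Cref{constr:sharp-components}).

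Two further ingredients then collapse everything: since $F$ preserves pullbacks, the iso $Fy \times_{Fx} F(\Pi_y z) \cong F(y \times_x \Pi_y z)$ rewrites that second coordinate as $F(y \times_x \widetilde{v}^\dagger) \cdot (\beta \times_\alpha \delta)$; and by the very defining property of $F(\ev)^\ddagger$ (transpose of $F(\ev)$ under $(Fk)^* \dashv \Pi_{Fy}$), the composite $\ev \cdot (Fy \times_{Fx} F(\ev)^\ddagger)$ becomes $F(\ev)$ under this iso. The right $Fz$-projection thus simplifies to $F(\ev) \cdot F(y \times_x \widetilde{v}^\dagger) \cdot (\beta \times_\alpha \delta) = F(\widetilde{v}) \cdot (\beta \times_\alpha \delta)$, using $\widetilde{v} = \ev \cdot (y \times_x \widetilde{v}^\dagger)$ and functoriality of $F$; finally, commutativity in $\Gl(F)$ of the given morphism $(\widetilde{u}, \widetilde{v}) \colon \beta \times_\alpha \delta \to \gamma$ yields $F\widetilde{v} \cdot (\beta \times_\alpha \delta) = \gamma \cdot \widetilde{u}$. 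The only real obstacle is bookkeeping: keeping the various canonical pullback isomorphisms aligned with the correct defining equations so that, in sequence, $\widetilde{u}_1$ exposes $\widetilde{v}^\dagger \cdot \delta$, the iso migrates it into $\McC$-level data, and $F(\ev)^\ddagger$ undoes the transposition to recover $F(\widetilde{v})$.
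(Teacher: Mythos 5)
Your proposal matches the paper's argument: both transpose the triangle along $g^*\dashv\Pi_b$ and then reduce the equality of maps into $\Pi_b(b\times_{Fy}Fz)$ over $a$ to an equality after postcomposing with the projection to $Fz$, invoking the defining equations $F(\Pi_yh)^*\alpha\cdot\widetilde u_1=F(\widetilde v^\dagger)\cdot\delta$ and $\gamma\cdot\widetilde u=F\widetilde v\cdot(\beta\times_\alpha\delta)$, the pullback-preservation of $F$, and the characterization of $F(\ev)^\ddagger$ as the transpose of $F(\ev)$. The paper compresses the middle of this computation by citing the chain of pullback identifications already worked out in \Cref{lem:flat}; you redo those steps explicitly (including the routine check of the $b$-projection), but the route and key ingredients are identical.
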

\begin{proof}
  %
  % To show that two maps $d \to \Pi_b(b \times_{Fy} Fz)$ over $a$ are identical,
  % it suffices to show that their transposes over $g^* \dashv \Pi_b$ composed
  % with the projection $(Fh)^*\beta \colon b \times_{Fy} Fz \to Fz$ are
  % identical.
  %
  % As in the proof of \Cref{lem:flat}, the transpose of
  % $\textcolor{yellow0}{b \times_{Fy} \ev}^\ddagger \cdot \textcolor{green0}{a
  %   \times_{Fx} F(\ev)^\ddagger} \cdot \widetilde{u}_1$ and
  % $\textcolor{cyan0}{\Pi_b(f,\gamma)} \cdot \widetilde{u}_2$ respectively composed with
  % $b \times_{Fy} Fz \to Fz$ are:
  %
  %
  As in the proof of \Cref{lem:flat}, pulling back the commutativity of
  $F(\Pi_yh)^*\alpha \cdot \widetilde{u}_1 = F(\widetilde{v}^\dagger) \cdot \delta$ along
  $Fk$ then composing with
  $Fy \times_{Fx} F(\Pi_yz) \cong F(y \times_x \Pi_yz) \xrightarrow{F(\ev)} Fz$
  and then using the commutativity of
  $\gamma \cdot \widetilde{u} = F\widetilde{v} \cdot (\beta \times_\alpha \delta)$ gives
  \begin{equation*}
    \begin{tikzcd}
      &
      b \times_a (a \times_{Fx} F(\Pi_yz))
      \ar[r, equal]
      &
      b \times_{Fy} (Fy \times_{Fx} F(\Pi_yz))
      \ar[rrd, "(Fy \times_{Fx} F(\Pi_yh))^*\beta"]
      \\
      b \times_a d
      \ar[r, "{\beta \times_\alpha \delta}"{description}, dashed]
      \ar[ur, "b \times_a \widetilde{u}_1"]
      \ar[rdd, "\widetilde{u}_2^\ddagger = \widetilde{u}"', dashed]
      &
      Fy \times_{Fx} Fw
      \ar[r, "\cong"{description}]
      &
      F(y \times_x w)
      \ar[r, "{F(y \times_x v^\dagger)}"]
      \ar[rrdd, "F(\widetilde{v}^{\dagger\dagger}) = F\widetilde{v}"{description}, dashed]
      &
      F(y \times_x \Pi_yz)
      \ar[r, "\cong"{description}]
      \ar[rd, equal]
      &
      Fy \times_{Fx} F(\Pi_yz)
      \ar[d, "\cong"]
      \\
      &
      &
      &
      &
      F(y \times_x \Pi_yz)
      \ar[d, "F(\ev)"]
      \\
      &
      c \ar[rrr, "\gamma"']
      &
      &
      &
      Fz
    \end{tikzcd}
  \end{equation*}

  Reusing the calculations from \Cref{lem:flat}, the transpose of
  $\textcolor{yellow0}{(b \times_{Fy} \ev^\ddagger)} \cdot \textcolor{green0}{a
    \times_{Fx} F(\ev)^\ddagger} \cdot \widetilde{u}_1$ and
  $\textcolor{cyan0}{\Pi_b(f,\gamma)} \cdot \widetilde{u}_2$ respectively composed with
  $b \times_{Fy} Fz \to Fz$ are exactly the top and bottom boundaries above respectively.
  To show that two maps $d \rightrightarrows \Pi_b(b \times_{Fy} Fz)$ over $a$
  are identical is to show that their transposes over $g^* \dashv \Pi_b$
  composed with the projection $(Fh)^*\beta \colon b \times_{Fy} Fz \to Fz$ are
  identical, the result follows.
\end{proof}

\begin{construction}\label{constr:sharp}
  By \Cref{lem:sharp-components}, the maps $\widetilde{u}_1,\widetilde{u}_2$ from
  \Cref{constr:sharp-components} gives rise to a unique map $u \colon d \to p$
  factoring $(\widetilde{u}_1,\widetilde{u}_2)$ through
  $(\textcolor{yellow0}{(b \times_{Fy} \ev)^\ddagger} \cdot \textcolor{green0}{(a
  \times_{Fx} F(\ev)^\ddagger)}, \textcolor{cyan0}{\Pi_b(f,\gamma)})$
  as in \Cref{eqn:pi-right}.
  In particular, this gives a map
  \begin{equation*}
    (\widetilde{u},\widetilde{v})^\sharp \coloneqq (u,v) \colon \delta \to
    \Pi_\beta\gamma
  \end{equation*}
  in $\Gl(F)$ over $\alpha$.
\end{construction}

\begin{lemma}\label{lem:sharp-flat-inverses}
  The maps
  $(-)^\flat \colon \sfrac{\Gl(F)}{\alpha}(\delta, \Pi_\beta\gamma)
  \leftrightarrows \sfrac{\Gl(F)}{\beta}(\beta \times_\alpha \delta, \gamma)
  \cocolon (-)^\sharp$ from \Cref{constr:sharp,constr:flat} are mutual inverses.
\end{lemma}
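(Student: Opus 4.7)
My plan is to establish the two round-trip identities
$((u,v)^\flat)^\sharp = (u,v)$ and
$((\widetilde{u},\widetilde{v})^\sharp)^\flat = (\widetilde{u},\widetilde{v})$
separately.  The engine for both is the same: the transposes $(-)^\ddagger$ under
$g^* \dashv \Pi_b$ in $\McE$ and $(-)^\dagger$ under $k^* \dashv \Pi_y$ in $\McC$ are
mutually inverse bijections, so $v^{\dagger\dagger} = v$,
$\widetilde{v}^{\dagger\dagger} = \widetilde{v}$, $u_2^{\ddagger\ddagger} = u_2$, and
$\widetilde{u}^{\ddagger\ddagger} = \widetilde{u}$.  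Once these cancellations are in hand, both
directions reduce to checking that the data entering \Cref{constr:flat} and
\Cref{constr:sharp} is reconstructed uniquely.

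For $((u,v)^\flat)^\sharp = (u,v)$, I would begin with
$(u,v) \colon \delta \to \Pi_\beta\gamma$, decompose $u = (u_1, u_2) \colon d \to p$
using the pullback \Cref{eqn:Pi-pullback}, and compute
$(u,v)^\flat = (u_2^\ddagger, v^\dagger)$.  Feeding this into
\Cref{constr:sharp-components} gives new components
$\widetilde{v}^\dagger = (v^\dagger)^\dagger = v$ and
$\widetilde{u}_2 = (u_2^\ddagger)^\ddagger = u_2$.  The one delicate point is
identifying the freshly-built $\widetilde{u}_1$ with the original $u_1$: by
\Cref{constr:sharp-components}, the new $\widetilde{u}_1$ is the unique factorization
of $(i,\, F(v^\dagger)^\dagger \cdot \delta) = (i,\, Fv \cdot \delta)$ through
$F(\Pi_yh)^*\alpha$; but commutativity of \Cref{eqn:pi-right} for the original
$(u,v)$ records exactly the equations $F(\Pi_yh)^*\alpha \cdot u_1 = Fv \cdot \delta$
and that $u_1$ lies over $a$ via $i$.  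Uniqueness of the pullback factorization
forces $\widetilde{u}_1 = u_1$, and uniqueness of the induced map into $p$
in \Cref{constr:sharp} then assembles $(u_1, u_2)$ back into $u$, recovering $(u,v)$.

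Conversely, for $((\widetilde{u},\widetilde{v})^\sharp)^\flat = (\widetilde{u},\widetilde{v})$,
applying $\sharp$ produces $(u,v)$ with $v = \widetilde{v}^\dagger$ and $u \colon d \to p$
induced from the pair $(\widetilde{u}_1,\, \widetilde{u}^\ddagger)$.  Taking $\flat$ then
extracts the $\Pi_bc$-component $u_2 = \widetilde{u}^\ddagger$ and returns
$(u_2^\ddagger, v^\dagger) = (\widetilde{u}^{\ddagger\ddagger}, \widetilde{v}^{\dagger\dagger})
= (\widetilde{u}, \widetilde{v})$, again purely by the adjunctions.

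The only step that carries any real content is the identification
$\widetilde{u}_1 = u_1$ in the first round-trip, which is an application of the
uniqueness of a pullback-induced factorization together with the equations
extracted from the commutativity of \Cref{eqn:pi-right}.  I expect no genuine
obstacle beyond that single bookkeeping step; everything else is formal
unit--counit cancellation for the two adjunctions $g^* \dashv \Pi_b$ and
$k^* \dashv \Pi_y$.
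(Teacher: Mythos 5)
Your proposal is correct and follows essentially the same argument as the paper: both directions reduce to the involutivity of the transposes under $g^* \dashv \Pi_b$ and $k^* \dashv \Pi_y$, plus the uniqueness of the pullback-induced factorization determining $u_1$. You also correctly isolate the one nontrivial step, namely extracting $\widetilde{u}_1 = u_1$ from the commutativity of \Cref{eqn:pi-right}, which is exactly where the paper's proof invests its (small) amount of work.
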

\begin{proof}
  Given a pair of maps $(u,v)$ as in \Cref{constr:flat}, we obtain maps
  $(u_1,u_2)$ as in \Cref{eqn:pi-right} and have
  $(u,v)^\flat = (u_2^\ddagger, v^\dagger)$.
  Then, by \Cref{constr:sharp-components}, with $\widetilde{u} = u_2^\ddagger$
  and $\widetilde{v} = v^\dagger$, the second component of
  $(u,v)^{\flat\sharp} = (\widetilde{u}, \widetilde{v})^\sharp$ is
  $\widetilde{v}^\dagger = v^{\dagger\dagger} = v$.
  Its first component is constructed by factoring
  $(\widetilde{u}_1,\widetilde{u}_2)$ through the pullback in
  \Cref{eqn:Pi-pullback}, where
  $\widetilde{u}_2 = \widetilde{u}^\ddagger = u_2^{\ddagger\ddagger} = u_2$ and
  $\widetilde{u}_1$ is the unique map factoring
  $(\alpha \cdot i, F(\widetilde{v}^\dagger) \cdot \delta) =
  (\alpha \cdot i, Fv \cdot \delta)$ through
  $F(\Pi_yh)^*\alpha$ as in \Cref{eqn:sharp-left}.
  By \Cref{eqn:pi-right}, it follows that $\widetilde{u}_1 = u_1$.
  Hence, the first component of $(\widetilde{u}, \widetilde{v})^\sharp$ is $u$,
  thus showing $(u,v)^{\flat\sharp} = (u,v)$.

  Conversely, assume there is a pair of maps $(\widetilde{u},\widetilde{v})$ as
  in \Cref{constr:sharp-components}.
  Then, as before, the second component of
  $(\widetilde{u},\widetilde{v})^{\sharp\flat}$ is easily seen to be
  $\widetilde{v}$.
  Set $\widetilde{u}_1$ as in \Cref{eqn:sharp-left} and
  $u_2 \coloneqq \widetilde{u}^\ddagger$ like in \Cref{constr:sharp-components}
  so that by \Cref{constr:sharp}, there is a unique map
  $u \coloneqq (\widetilde{u}_1, \widetilde{u}_2) \colon d \to p$ arising from
  the pullback \Cref{eqn:Pi-pullback}.
  In particular,
  $d \xrightarrow{u} p \to \Pi_bc = \widetilde{u}_2 = \widetilde{u}^\ddagger$ so
  following \Cref{constr:flat}, it follows that
  $(\widetilde{u},\widetilde{v})^{\sharp\flat} =
  (u,\widetilde{v}^\dagger)^\sharp = (\widetilde{u}_2^\ddagger,\widetilde{v}^{\dagger\dagger})
  = (\widetilde{u}^{\ddagger\ddagger},\widetilde{v}^{\dagger\dagger})
  = (\widetilde{u},\widetilde{v})$.
\end{proof}

Therefore, to summarise, we have proved:

\begin{theorem}\label{thm:glue-Pi}
  If $F \colon \McC \to \McE$ preserves pullbacks and
  $(g \colon b \to a, k \colon y \to x) \colon \beta \to \alpha$ is a map in
  $\Gl(F)$ between $\beta \colon b \to Fy$ and $\alpha \colon a \to Fx$ such
  that $g,k$ and $Fk$ is powerful then so is $(g,k)$, with the dependent product
  along $(g,k)$ constructed in \Cref{constr:glue-Pi}.
  Further, the transposes are constructed in \Cref{constr:flat,constr:sharp} and
  the projection $\Gl(F) \to \McE$ preserves the counit.
\end{theorem}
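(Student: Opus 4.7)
The plan is to assemble the dependent product from the preceding constructions in a modular fashion. First, I would declare the candidate object to be $\Pi_\beta\gamma$ as in \Cref{constr:glue-Pi}: its $\McC$-component is the dependent product $\Pi_yh \colon \Pi_yz \to x$ in $\McC$ (which exists because $k$ is powerful), while its $\McE$-component is the pullback $p$ over $a$ in \Cref{eqn:Pi-pullback} that forces agreement between the ``pointwise section'' map $\Pi_b(f,\gamma)$ and the ``comparison'' map obtained by transposing evaluation twice through $F(\ev)^\ddagger \colon F(\Pi_yz) \to \Pi_{Fy}Fz$. The fact that $F(\ev)^\ddagger$ even makes sense is exactly where \Cref{asm:F-Pi-cond} is used: $F$ preserves the pullback $\Pi_yz \times_x y$, so $(Fk)^* F(\Pi_yz) \cong F(\Pi_yz \times_x y)$ and one can transpose $F(\ev)$ under $(Fk)^* \dashv \Pi_{Fy}$ (using that $Fk$ is powerful).

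Next, I would exhibit the hom-set bijection between maps $\delta \to \Pi_\beta\gamma$ over $\alpha$ and maps $\beta \times_\alpha \delta \to \gamma$ over $\beta$. In the $(-)^\flat$ direction, given $(u,v)$, I would use \Cref{eqn:Pi-pullback} to decompose $u$ into components $u_1,u_2$ and take pointwise transposes $u_2^\ddagger$ along $g^* \dashv \Pi_b$ and $v^\dagger$ along $k^* \dashv \Pi_y$, as in \Cref{constr:flat}; \Cref{lem:flat} certifies that the resulting pair lies over $\beta$. In the $(-)^\sharp$ direction, given $(\widetilde{u}, \widetilde{v})$, I would first produce $v = \widetilde{v}^\dagger$, then construct $\widetilde{u}_1$ by the universal property of the pullback $a \times_{Fx} F(\Pi_yz)$ and set $\widetilde{u}_2 = \widetilde{u}^\ddagger$, as in \Cref{constr:sharp-components}; \Cref{lem:sharp-components} supplies the key compatibility that allows $(\widetilde{u}_1, \widetilde{u}_2)$ to factor through \Cref{eqn:Pi-pullback}, yielding the desired map in \Cref{constr:sharp}.

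The main obstacle — already handled in \Cref{lem:flat} and \Cref{lem:sharp-components} — is verifying the compatibility of the two transpose operations with the defining pullback in \Cref{eqn:Pi-pullback}. Both proofs run on the same principle: transpose back along $g^* \dashv \Pi_b$, compose with the projection $(Fh)^*\beta \colon b \times_{Fy} Fz \to Fz$, and then unwind both composites into explicit pullback diagrams whose equality relies crucially on $F$ preserving pullbacks (so that $Fy \times_{Fx} F(\Pi_yz) \cong F(y \times_x \Pi_yz)$). Once both directions are in place, \Cref{lem:sharp-flat-inverses} shows they are mutually inverse essentially by inspection, since $(-)^\dagger$ and $(-)^\ddagger$ are themselves involutions. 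Naturality in $\delta$ holds because each ingredient (pullback decomposition and pointwise transposition) is natural. Finally, the statement that $\Gl(F) \to \McE$ preserves the counit is immediate from construction: the $\McE$-component of the counit $\beta \times_\alpha \Pi_\beta\gamma \to \gamma$ arises by projecting $p \to \Pi_bc$ through the top edge of \Cref{eqn:Pi-pullback} and postcomposing with the counit $\ev \colon b \times_a \Pi_bc \to c$, which is exactly the counit in $\McE$.
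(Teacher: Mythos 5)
Your proposal reproduces the paper's own route faithfully: the object $\Pi_\beta\gamma$ with $\McC$-component $\Pi_yz$ and $\McE$-component the pullback $p$ of \Cref{eqn:Pi-pullback}, the identification of where $F$ preserving pullbacks is needed (so that $F(\ev)^\ddagger$ typechecks), the transpose operations $(-)^\flat$ and $(-)^\sharp$ as assembled from \Cref{constr:flat}, \Cref{constr:sharp-components} and \Cref{constr:sharp}, and the role of \Cref{lem:flat}, \Cref{lem:sharp-components} and \Cref{lem:sharp-flat-inverses} in closing the hom-set bijection. That is all exactly what the paper does (the proof environment attached to \Cref{thm:glue-Pi} is merely a wrapper around those constructions and lemmas).

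The one place your argument goes wrong is the final paragraph on preservation of the counit. You assert that the $\McE$-component of the counit $\beta \times_\alpha \Pi_\beta\gamma \to \gamma$, namely $b \times_a p \to b \times_a \Pi_bc \xrightarrow{\ev} c$, ``is exactly the counit in $\McE$.'' It is not: the counit of $g^* \dashv \Pi_b$ at $c$ is $\ev \colon b \times_a \Pi_bc \to c$, whose domain is $b \times_a \Pi_bc$, whereas the $\McE$-component of the gluing counit has domain $b \times_a p$. These differ because the $\McE$-component of $\Pi_\beta\gamma$ is $p$, not $\Pi_bc$; the gluing counit is obtained from the $\McE$-counit by precomposition with $b \times_a (p \to \Pi_bc)$, so the projection to $\McE$ only relates the two through a comparison map, it does not preserve the counit on the nose. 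What does preserve the counit on the nose is the other projection $\Gl(F) \to \McC$: taking $\delta = \Pi_\beta\gamma$ in \Cref{constr:flat}, the $\McC$-component $v$ is $\id_{\Pi_yz}$ and $v^\dagger$ is precisely the counit $\ev \colon y \times_x \Pi_yz \to z$ of $k^* \dashv \Pi_y$, with no mismatch of domains. This is also the version needed downstream: in \Cref{thm:E-iter-glue-Pi}, the paper invokes preservation of the counit by $\Gl(\M_n) \to \McE^{\McI_{<n}}$, i.e.\ by the projection to the \emph{domain} category of $\M_n$, which in the notation of \Cref{thm:glue-Pi} is $\McC$, not $\McE$. (The ``$\Gl(F) \to \McE$'' in the theorem statement itself appears to be a slip.) You should verify preservation of the counit for the $\McC$-projection along the lines just sketched, rather than for the $\McE$-projection.
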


%%% Local Variables:
%%% TeX-engine: default
%%% TeX-master: "./main.tex"
%%% End:

\section{Logical Structure in Limits of Categories}\label{sec:limit-logic}
We now shift our attention to the problem of constructing the logical structure in diagram categories from the corresponding logical structure in subdiagram categories.
That is, suppose an indexing category $\McI$ is built up as $\McI = \colim_n \McI_n$.
Then, for any category $\McE$, one has $\McE^\McI \simeq \colim_n\McE^{\McI_n}$,
and one would like to assemble the logical structure in each $\McE^{\McI_n}$ to
form corresponding logical structures in $\McE^\McI$.
Abstracting this goal, we investigate in this section, for a diagram of
categories $\bD \colon \McJ \to \Cat$, how compatible logical structures
(specifically the subobject classifier and dependent products) in each $\bD_j$
assemble to form logical structure in the limit of categories
$\overline\bD = \lim_{j \in \McJ}\bD_j$.

\subsection{Subobject Classifier}
In this part, we show in \Cref{lem:limit-Omega} that if each $\bD_j$ is equipped
with subobject classifiers $\Omega_j$ along with truth maps
$\true_j \colon 1 \to \Omega_j$ and the functorial action of $\bD$ preserves
these subobject classifiers and truth maps then these subobject classifiers and
truth maps assemble to form subobject classifiers and truth maps in
$\overline\bD$.

\begin{lemma}\label{lem:limit-jointly-reflects}
  The limiting legs
  $((-)|_j \colon \overline\bD \to \bD_j)_{j \in \McJ}$ of the limit
  $\overline{\bD} \coloneqq \lim_{j \in \McJ}\bD_j$ of a diagram
  $\bD \colon \McJ \to \Cat$ jointly reflects limits.
  This means that if there is a diagram $F \colon \bC \to \bD$ and a cone
  $\lambda = (d \xrightarrow{\lambda_c} Fc \in \overline\bD ~|~ c \in \bC)$ such
  that each $\lambda|_j = (d|_j \xrightarrow{\lambda_c|_j} (Fc)|_j \in \bD_j)_c$
  is a limiting cone for
  $\bC \xrightarrow{F} \overline\bD \xrightarrow{(-)|_j} \bD_j$ then
  $\lambda$ is a limiting cone for $F$.
\end{lemma}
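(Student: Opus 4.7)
The strategy is the standard unwinding of limits in a limit of categories: construct the mediating map componentwise and use uniqueness in each $\bD_j$ to get compatibility across the transition functors.

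Concretely, I would start with an arbitrary test cone $\mu = (e \xrightarrow{\mu_c} Fc)_{c \in \bC}$ in $\overline{\bD}$ for the diagram $F$. For each $j \in \McJ$, applying the projection functor $(-)|_j \colon \overline{\bD} \to \bD_j$ yields a cone $\mu|_j = (e|_j \xrightarrow{\mu_c|_j} (Fc)|_j)_c$ over $(-)|_j \circ F$ in $\bD_j$. By hypothesis, $\lambda|_j$ is a limiting cone for this diagram, so there is a unique map $m_j \colon e|_j \to d|_j$ in $\bD_j$ with $\lambda_c|_j \cdot m_j = \mu_c|_j$ for every $c \in \bC$.

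The key step is to show that the family $(m_j)_{j \in \McJ}$ assembles into a morphism $m \colon e \to d$ in $\overline{\bD}$. For any $\phi \colon j \to j'$ in $\McJ$, let $\bD\phi \colon \bD_j \to \bD_{j'}$ denote the transition functor. Because $e$ and $d$ lie in $\overline{\bD}$, we have $(\bD\phi)(e|_j) = e|_{j'}$ and $(\bD\phi)(d|_j) = d|_{j'}$, and similarly for $\lambda_c$ and $\mu_c$. Applying $\bD\phi$ to the equation defining $m_j$ yields
\begin{equation*}
  \lambda_c|_{j'} \cdot (\bD\phi)(m_j) = (\bD\phi)(\lambda_c|_j \cdot m_j) = (\bD\phi)(\mu_c|_j) = \mu_c|_{j'}
\end{equation*}
for every $c \in \bC$, so $(\bD\phi)(m_j)$ satisfies the universal property that uniquely characterises $m_{j'}$. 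Hence $(\bD\phi)(m_j) = m_{j'}$, which is exactly the compatibility required for $(m_j)_j$ to define a morphism $m \colon e \to d$ in $\overline{\bD}$.

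Finally, $\lambda_c \cdot m = \mu_c$ holds in $\overline{\bD}$ since it can be checked componentwise, where it holds by construction of each $m_j$. For uniqueness, any other mediating $m' \colon e \to d$ in $\overline{\bD}$ satisfying $\lambda \cdot m' = \mu$ must, upon projecting along $(-)|_j$, satisfy $\lambda|_j \cdot m'|_j = \mu|_j$, forcing $m'|_j = m_j = m|_j$ by the uniqueness clause of the limit in $\bD_j$; since equality in $\overline{\bD}$ is checked componentwise, $m' = m$. The only subtle point is the compatibility verification in the middle paragraph, which relies crucially on the uniqueness half of the limit property in each $\bD_j$.
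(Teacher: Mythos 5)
Your proof is correct and follows essentially the same route as the paper's: build the mediating morphism componentwise using the limit property in each $\bD_j$, observe that the components cohere into a morphism of $\overline\bD$, and derive uniqueness componentwise. The one place you add value is that you actually verify the compatibility $(\bD\phi)(m_j) = m_{j'}$ via the uniqueness clause of the limit in $\bD_{j'}$, whereas the paper simply declares this "clear."
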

\begin{proof}
  Suppose there is another cone
  $\lambda' = (\lambda'_c \colon d' \to Fc \in \overline\bD)_c$.
  Then, for each $j$, one has a cone
  $\lambda'|_j = (\lambda'_c|_j \colon d'|_j \to (Fc)|_j \in \bD_j)$ for
  $\bC \xrightarrow{F} \overline\bD \xrightarrow{(-)|_j} \bD_j$.
  Because $\lambda|_j$ is limiting for
  $\bC \xrightarrow{F} \overline\bD \xrightarrow{(-)|_j} \bD_j$, there is a
  unique map $f_j \colon d'|_j \to d|_j$ that is a map of cones from
  $\lambda'|_j$ to $\lambda|_j$.
  Clearly the family of maps $(f_j \colon d'|_j \to d|_j \in \bD_j)_j$ is
  compatible so they induce a unique map $f \colon d' \to d \in \overline\bD$
  such that $f|_j = f_j$.
  So, $f$ is a map of cones $\lambda' \to \lambda$.
  It is clearly the unique map of cones because if there is
  $f' \colon \lambda' \to \lambda$ then by the fact that each $\lambda|_j$ is
  limiting, each $f'|_j = f_j = f|_j$.
  And this means $f' = f$.
\end{proof}

\begin{lemma}\label{lem:limit-Omega}
  Fix a diagram $\bD \colon \McJ \to \Cat$ and put
  $\overline\bD \coloneqq \lim_{j \in \McJ}\bD_j$ with limiting legs
  $((-)|_j \colon \overline\bD \to \bD_j)_{j \in \McJ}$.
  Suppose:
  \begin{itemize}
    \item Each limiting leg $(-)|_j \colon \overline\bD \to \bD_j$ preserves
    pullbacks.
    \item Each $\bD_j$ is equipped with a subobject classifier $\Omega_j$ and
    terminal object $1_j$ and truth map $\true_j \colon 1_j \to \Omega_j$.
    \item For each $\alpha \colon j \to j' \in \McJ$, the functorial action
    $\alpha^* \coloneqq D_\alpha \colon \bD_j \to \bD_{j'}$ preserves pullbacks and moreover
    $\alpha^*\true_j = \true_{j'}$ (in particular $\alpha^*$ preserves the terminal
    object and subobject classifier).
  \end{itemize}
  Then, $\overline\bD$ admits:
  \begin{itemize}
    \item A terminal object $1$ such that $1|_j = 1_j$.
    \item An unique object $\Omega \in \overline\bD$ such that each
    $\Omega|_j = \Omega_j$.
    \item An unique map $\true \colon 1 \to \Omega$ such that $\true|_j = \true_j$ which
    serves as the truth map making $\Omega$ into a subobject classifier.
  \end{itemize}
\end{lemma}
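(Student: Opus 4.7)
The plan is to construct $1$, $\Omega$, and $\true$ by assembling their pointwise data, and then to verify the universal properties by leveraging the joint reflection of limits by the legs (Lemma~\ref{lem:limit-jointly-reflects}).

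First, since each functorial action $\alpha^* \colon \bD_j \to \bD_{j'}$ carries $\true_j$ to $\true_{j'}$ and preserves pullbacks, it preserves terminal objects and subobject classifiers strictly. Hence the families $(1_j)_j$, $(\Omega_j)_j$, and $(\true_j \colon 1_j \to \Omega_j)_j$ are strictly compatible across $\McJ$, so they define unique objects $1,\Omega \in \overline\bD$ and a unique morphism $\true \colon 1 \to \Omega$ in $\overline\bD$ satisfying $1|_j = 1_j$, $\Omega|_j = \Omega_j$, and $\true|_j = \true_j$. Terminality of $1$ is then immediate: for any $d \in \overline\bD$, the family of unique maps $d|_j \to 1_j$ is compatible (as each $\alpha^*$ preserves terminal objects) and so assembles into a unique morphism $d \to 1$.

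The substantive step is to show that $\true$ is a subobject classifier. Given a mono $m \colon e' \hookrightarrow e$ in $\overline\bD$, I first observe that each $m|_j$ is mono in $\bD_j$: being mono amounts to the kernel pair of $m$ coinciding with $e'$, a pullback statement that each pullback-preserving $(-)|_j$ transports to $\bD_j$. Each $m|_j$ then admits a unique characteristic map $\chi_j \colon e|_j \to \Omega_j$. The key compatibility check is that $\alpha^* \chi_j = \chi_{j'}$ for each $\alpha \colon j \to j'$: applying $\alpha^*$ to the defining pullback of $\chi_j$ and using preservation of pullbacks together with $\alpha^* \true_j = \true_{j'}$, one finds that $\alpha^* \chi_j$ classifies $m|_{j'}$, hence coincides with $\chi_{j'}$ by uniqueness of characteristic maps in $\bD_{j'}$.

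Therefore the $\chi_j$ assemble into a morphism $\chi \colon e \to \Omega$ in $\overline\bD$. The square
\begin{equation*}
  \begin{tikzcd}
    e' \ar[r] \ar[d, "m"', hook] & 1 \ar[d, "\true", hook] \\
    e \ar[r, "\chi"'] & \Omega
  \end{tikzcd}
\end{equation*}
maps under each $(-)|_j$ to the defining pullback of $\chi_j$ in $\bD_j$, so Lemma~\ref{lem:limit-jointly-reflects} guarantees that it is a pullback in $\overline\bD$. Uniqueness of $\chi$ reduces to pointwise uniqueness: any competing $\chi' \colon e \to \Omega$ classifying $m$ has each $\chi'|_j$ classifying $m|_j$, forcing $\chi'|_j = \chi_j$ and hence $\chi' = \chi$. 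The main obstacle I anticipate is purely bookkeeping — consistently tracking the strict equalities that the $1$-categorical limit $\overline\bD$ demands, particularly inside the compatibility check $\alpha^* \chi_j = \chi_{j'}$ — rather than any genuinely hard conceptual step.
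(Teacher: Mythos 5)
Your proposal is correct and follows essentially the same route as the paper: assemble $1$, $\Omega$, $\true$ pointwise from the strictly compatible data, restrict a mono along the legs using pullback-preservation, classify pointwise, verify the family $(\chi_j)_j$ is compatible via $\alpha^*$ preserving pullbacks and truth maps, and then invoke joint reflection of limits to see the assembled square is a pullback, with uniqueness checked pointwise. The only cosmetic difference is that you spell out the kernel-pair argument for why each $m|_j$ is mono, which the paper leaves implicit.
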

\begin{proof}
  It is easy to note the existence of a terminal object $1 \in \overline\bD$
  such that each $1|_j = 1_j$, a unique object $\Omega \in \overline\bD$ such
  that each $\Omega|_j = \Omega_j$ and a unique $\true \colon 1 \to \Omega$ such
  that each $\true|_j = \true_j$.
  It remains to show that $\true \colon 1 \to \Omega$ is indeed the truth map.

  Fix a mono $i \colon c \hookrightarrow d \in \overline\bD$.
  Because each limiting leg $(-)|_j \colon \overline\bD \to \bD_j$ preserves
  pullbacks, $i|_j \colon c|_j \hookrightarrow d|_j$ is a mono.
  So, there is a unique map $\chi_j \colon d|_j \to \Omega_j \in \bD_j$ such
  that
  \begin{equation}\label{eqn:chi-Dj}\tag{\textsc{$\chi$-$\bD_j$}}
    \begin{tikzcd}
      c|_j \ar[r, "!"] \ar[d, "i|_j"', hook]
      \ar[rd, "\lrcorner"{pos=0}, phantom]
      &
      1_j \ar[d, "\true_j"]
      \\
      d|_j \ar[r, "\chi_j"']
      &
      \Omega_j
    \end{tikzcd} \in \bD_j
  \end{equation}
  is a pullback.
  Moreover, for each $\alpha \colon j \to j' \in \bD$, one has
  $\alpha^*\chi_j = \chi_{j'}$.
  This is because $\alpha^* \colon \bD_j \to \bD_{j'}$ preserves pullbacks and
  truth values, so
  \begin{equation*}
    \begin{tikzcd}
      c|_{j'} = \alpha^*c|_j
      \ar[d, hook, "{i|_{j'} = \alpha^*i|_j}"']
      \ar[r, "!"]
      \ar[rd, "\lrcorner"{pos=0},, phantom]
      &
      \alpha^*1_j = 1_{j'}
      \ar[d, "\alpha^*\true_j = \true_{j'}"]
      \\
      d|_{j'} = \alpha^*d|_j
      \ar[r, "\alpha^*\chi_j"']
      &
      \alpha^*\Omega_j = \Omega_{j'}
    \end{tikzcd} \in \bD_{j'}
  \end{equation*}
  Because $\true_{j'} \colon 1_{j'} \to \Omega_{j'}$ is the truth map, which
  exists uniquely, it follows that $\chi_{j'} = (\bD f)\chi_j$.

  Hence, the family of maps $(\chi_j \colon d|_j \to \Omega_j)_j$ assemble to
  form a unique map $\chi \colon d \to \Omega$ such that each
  $\chi|_j = \chi_j$.
  This means that
  \begin{equation}\label{eqn:chi-D}\tag{\textsc{$\chi$-$\bD$}}
    \begin{tikzcd}
      c \ar[r, "!"] \ar[d, "i"', hook]
      &
      1 \ar[d, "\true"]
      \\
      d \ar[r, "\chi"']
      &
      \Omega
    \end{tikzcd} \in \overline\bD
  \end{equation}
  And because the image of \Cref{eqn:chi-D} under each
  $(-)|_j \colon \overline\bD \to \bD_j$ is the pullback \Cref{eqn:chi-Dj},
  \Cref{lem:limit-jointly-reflects} shows that \Cref{eqn:chi-D} is also a
  pullback.

  Moreover, $\chi \colon d \to \Omega$ is the unique map making \Cref{eqn:chi-D}
  into a pullback, because if there is another such map
  $\chi' \colon d \to \Omega$ then $\chi'|_j \colon d|_j \to \Omega$ makes
  \Cref{eqn:chi-Dj} into a pullback, so $\chi'|_j = \chi_j = \chi|_j$ for each
  $j$.
\end{proof}

\subsection{Dependent Products}
In this part, we show in \Cref{cor:limit-Pi} that a map
$g \colon b \to a \in \overline\bD$ is powerful when each
$g|_j \colon b|_j \to a|_j \in \bD_J$ is powerful and the functorial action of
$\bD$ preserves these dependent products.
In this case, the dependent product along each $g|_j \colon b|_j \to a|_j$ in
$\bD_j$ assemble to form a dependent product along $g \colon b \to a$ in $\bD$.
The argument proceeds by showing in \Cref{lem:limit-adjunction} that limits of
adjunctions assemble to form adjunction between limits of categories and in
\Cref{lem:limit-slices} that limit of slices are slices in limit of categories.

\begin{lemma}\label{lem:limit-adjunction}
  Fix diagrams $\bD,\bE \colon \McJ \rightrightarrows \Cat$ and put
  \begin{align*}
    \overline\bD \coloneqq \lim_{j \in \McJ} \bD_j &&
    \overline\bE \coloneqq \lim_{j \in \McJ} \bE_j
  \end{align*}
  Suppose there are functors
  $F \colon \overline\bD \leftrightarrows \overline\bE \cocolon U$ and
  $(F_j \colon \bD_j \leftrightarrows \bE_j \cocolon U_j ~|~ j \in \McJ)$ such that
  $F_j \dashv U_j$ and for all $\alpha \colon j \to j' \in \McJ$,
  \begin{equation*}
    \begin{tikzcd}[row sep=large, column sep=large]
      & |[alias=limD]| \overline\bD & & |[alias=limE]| \overline\bE \\
      & & |[alias=Djp]| \bD_{j'} & & |[alias=Ejp]| \bE_{j'} \\
      |[alias=Dj]| \bD_j & & |[alias=Ej]| \bE_j
      \ar[from=limD, to=limE, yshift=3, "F"]
      \ar[from=limE, to=limD, yshift=-3, "U"]
      \ar[from=Djp, to=Ejp, yshift=3, "F_{j'}"]
      \ar[from=Ejp, to=Djp, yshift=-3, "U_{j'}"]
      \ar[from=Djp, to=Ejp, phantom, "{\vdash}"{decoration,rotate=90}]
      \ar[from=Dj, to=Ej, yshift=3, "F_{j}"]
      \ar[from=Ej, to=Dj, yshift=-3, "U_{j}"]
      \ar[from=Dj, to=Ej, phantom, "{\vdash}"{decoration,rotate=90}]
      \ar[from=Dj, to=Djp, "\alpha^*"]
      \ar[from=Ej, to=Ejp, "\alpha^*"']
      \ar[from=limD, to=Dj, "(-)|_j"']
      \ar[from=limE, to=Ej, crossing over, "(-)|_j"{description,pos=0.25}]
      \ar[from=limD, to=Djp, "(-)|_{j'}"{description}]
      \ar[from=limE, to=Ejp, crossing over, "(-)|_{j'}"]
    \end{tikzcd}
  \end{equation*}
  where the maps $(-)|_j$ are the legs of the respective limiting cones and
  $\alpha^*$ are the respective actions on maps that give rise to a map of
  adjunctions from $F_j \dashv U_j$ to $F_{j'} \dashv U_{j'}$.
  Then, $F \dashv U$ and the slanted faces above are maps of adjunctions.
\end{lemma}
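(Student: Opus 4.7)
The plan is to construct the unit and counit of $F \dashv U$ componentwise from those of each $F_j \dashv U_j$ and then verify the adjunction axioms by exploiting the joint faithfulness of the limiting legs $(-)|_j$, which follows directly from the strict $1$-limit presentation of $\overline\bD$ and $\overline\bE$ in $\Cat$: if two parallel morphisms in $\overline\bE$ agree after restriction to each $\bE_j$, they are equal.

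First, for each $e \in \overline\bE$ I would consider the component counits $\epsilon_{j,\,e|_j} \colon F_j U_j(e|_j) \to e|_j$ in $\bE_j$. By the commuting slanted faces $F_j U_j(e|_j) = (FUe)|_j$, so each $\epsilon_{j,\,e|_j}$ is a morphism $(FUe)|_j \to e|_j$. The hypothesis that every $\alpha^*$ gives a \emph{map of adjunctions} from $F_j \dashv U_j$ to $F_{j'} \dashv U_{j'}$ supplies $\alpha^* \epsilon_{j,\,e|_j} = \epsilon_{j',\,e|_{j'}}$, so the family $(\epsilon_{j,\,e|_j})_j$ is compatible and assembles to a unique morphism $\epsilon_e \colon FUe \to e$ in $\overline\bE$ with $\epsilon_e|_j = \epsilon_{j,\,e|_j}$. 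Naturality in $e$ holds componentwise in each $\bE_j$ and transfers to $\overline\bE$ by joint faithfulness. I would construct the unit $\eta_d \colon d \to UFd$ symmetrically from the component units, obtaining $\eta_d|_j = \eta_{j,\,d|_j}$.

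Next I would verify the triangle identities. The composite $(\epsilon F) \cdot (F\eta) \colon F \to F$ restricts under each $(-)|_j$ to $(\epsilon_j F_j) \cdot (F_j \eta_j) = \id_{F_j} = (\id_F)|_j$, so by joint faithfulness this identity lifts to $\overline\bE$; the other triangle follows in the same way, establishing $F \dashv U$. The final claim then comes for free: the construction yields $\eta|_j = \eta_j$ and $\epsilon|_j = \epsilon_j$, and $F, U$ strictly commute with the restrictions, which is exactly the data required to exhibit the slanted faces as maps of adjunctions.

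I do not anticipate any serious obstacle. The only conceptual point to identify is that the hypothesis ``each $\alpha^*$ is a map of adjunctions'' supplies precisely the compatibility needed for the component units and counits to glue into natural transformations on $\overline\bD$ and $\overline\bE$; once this is pinned down, the remainder will be routine limit-of-categories bookkeeping, with joint faithfulness of the legs playing the usual role of an extensionality principle for morphisms.
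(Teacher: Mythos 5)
Your proposal is correct and takes essentially the same approach as the paper: both construct the unit and counit componentwise using the map-of-adjunctions hypothesis to glue the pieces, then verify naturality and the triangle identities componentwise via the universal property of the strict limit in $\Cat$. The paper phrases the gluing and verification steps via explicit functors out of the interval categories $\mbbo{2}$ and $\mbbo{3}$, whereas you appeal to the joint faithfulness of the legs, but this is the same underlying argument.
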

\begin{proof}
  By the universal property of the limit,
  $UF \colon \overline\bD \to \overline\bD$ is the unique functor such that
  $(UFd)|_j = U_jF_jd|_j$.
  Because each
  $(\alpha^* \colon \bD_j \to \bD_{j'}, \alpha^* \colon \bE_j \to \bE_{j'})$ is
  a map of adjunctions from $F_j \dashv U_j$ to $F_{j'} \dashv U_{j'}$, for each
  $d \in \overline\bD$, let $\eta_d \colon \mbbo{2} \to \overline\bD$ be the
  unique functor from the walking arrow category $\mbbo{2} = {\to}$ to
  $\overline\bD$ such that $(\eta_d)|_j$ picks out the unit
  $(\eta_j)_{d|_j} \colon d|_j \to U_jF_jd|_j \in \bD_j$ of $F_j \dashv U_j$ so
  that $\eta_d \colon d \to UFd$.
  Fixing $f \colon d \to d' \in \overline\bD$, define
  $\phi \colon \mbbo{3} = \set{\to\to} \to \overline\bD$ to be the unique map
  such that $\phi|_j$ selects
  $d|_j \xrightarrow{(\eta_j)_{d|_j}} U_jF_jd|_j \xrightarrow{U_jF_jf|_j}
  U_jF_jd'|_j$ and define $\psi \colon \mbbo{3} = \set{\to\to} \to \overline\bD$
  to be the unique map such that
  $\psi|_j = \set{d|_j \xrightarrow{f|_j} d'|_j \xrightarrow{(\eta_j)_{d'|_j}}
    U_jF_jd'|_j}$.
  Naturality of each $\eta_j$ means that $\phi|_j = \psi|_j$, so $\phi = \psi$,
  verifying naturality.
  This constructs the unit $\eta \colon \id_{\overline\bD} \to UF$ as the unique
  natural transformation with components $\eta_d$ such that
  $(\eta_d)|_j = (\eta_j)_{d|_j} \colon d|_j \to U_jF_jd|_j$.
  A similar construction gives the counit
  $\epsilon \colon FU \to \id_{\overline\bE}$ as the unique natural
  transformation with components $\epsilon_e$ such that
  $(\epsilon_e)|_j = (\epsilon_j)_{e|_j} \colon F_jU_je|_j \to e|_j$.

  The triangle identity says for each $F_j \dashv U_j$,
  \begin{align*}
    (\epsilon_{Fd} \cdot F\eta_d)|_j = F_jd|_j \xrightarrow{(F\eta_d)|_j g= F|_j (\eta_d)|_j = F|_j(\eta_j)|_{d_j}}
    F_jU_jF_jd|_j \xrightarrow{(\epsilon_{Fd})|_j = (\epsilon_j)_{F_jd|_j}} F_jd|_j = \id_{(Fd)|_j}
  \end{align*}
  Hence, by the universal property of the limit, $\epsilon_{Fd} \cdot F\eta_d = \id_{Fd}$.
  A similar verification shows $U\epsilon_e \cdot \eta_{Ue} = \id_{Ue}$.
  Hence, $F \dashv U$.
\end{proof}

\begin{lemma}\label{lem:limit-slices}
  Fix a diagram $\bD \colon \McJ \to \Cat$ and put
  $\overline\bD \coloneqq \lim_{j \in \McJ}\bD_j$ with limiting cone
  $((-)|_j \colon \overline\bD \to \bD_j)_{j \in \McJ}$.
  Let $d \in \overline\bD$ be an object of $\overline\bD$.
  Define $\bD_{(d)} \colon \McJ \to \Cat$ whose action on objects is
  $\bD_{(d)}(j) \coloneqq \sfrac{\bD_j}{d|_j}$ and whose action on a map
  $\alpha \colon j \to j'$ takes $d' \to d|_j \in \sfrac{\bD_j}{d|_j}$ to
  $\alpha^*d' \to \alpha^*d|_j = d|_{j'}$.
  Then,
  \begin{align*}
    \lim_{j \in \McJ} \bD_{(d)}(j) = \lim_{j \in \McJ} \sfrac{\bD_j}{d|_j} =
    \bD/d
  \end{align*}
\end{lemma}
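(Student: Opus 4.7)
The plan is to exhibit mutually inverse functors between $\overline\bD/d$ and $\lim_{j \in \McJ}\sfrac{\bD_j}{d|_j}$, relying on the explicit description of limits in $\Cat$: an object (respectively, morphism) of a limit of categories is a compatible family of objects (respectively, morphisms) with respect to the transition functors $\alpha^*$ for each $\alpha \colon j \to j' \in \McJ$.

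In the forward direction, each limiting leg $(-)|_j \colon \overline\bD \to \bD_j$ induces a slice functor $\overline\bD/d \to \sfrac{\bD_j}{d|_j}$ sending $(f \colon d' \to d)$ to $(f|_j \colon d'|_j \to d|_j)$. Since $\alpha^* \cdot (-)|_j = (-)|_{j'}$ for every $\alpha$ in $\McJ$, and $\alpha^* d|_j = d|_{j'}$, these slice functors form a cone over $\bD_{(d)}$, inducing the comparison functor $\Phi \colon \overline\bD/d \to \lim_j \sfrac{\bD_j}{d|_j}$.

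For the inverse, I would unpack an object of $\lim_j \sfrac{\bD_j}{d|_j}$ as a compatible family $(f_j \colon e_j \to d|_j)_j$ with $\alpha^* f_j = f_{j'}$ (using the identification $\alpha^* d|_j = d|_{j'}$). The family of domains $(e_j)_j$ is then compatible in $\bD$ and so, by the explicit description of $\overline\bD$, corresponds uniquely to an object $e \in \overline\bD$ with $e|_j = e_j$; the family $(f_j)_j$ then assembles into a unique morphism $f \colon e \to d$ in $\overline\bD$ with $f|_j = f_j$. Morphisms of $\lim_j \sfrac{\bD_j}{d|_j}$ are treated analogously: a compatible family $(g_j \colon e_j \to e'_j)_j$ over $d|_j$ assembles into a morphism $g \colon e \to e'$ in $\overline\bD$ satisfying $f' \cdot g = f$, i.e.~a morphism in $\overline\bD/d$. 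This defines the inverse functor $\Psi$.

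That $\Phi$ and $\Psi$ are mutually inverse will follow by construction: both composites act as the identity on each component under $(-)|_j$, and since the family of limiting legs jointly determines objects and morphisms of $\overline\bD$ (a consequence of the same explicit description, and in the same spirit as \Cref{lem:limit-jointly-reflects}), both composites must be the identity. I do not anticipate a genuine obstacle; the whole argument is an unwinding of the definition of $\lim_j \bD_j$ in $\Cat$, and the only bookkeeping worth attention is separating the two compatibility conditions — on the underlying domains $(e_j)_j$ in $\bD$, and on the slice morphisms $(f_j)_j$ into $d|_j$ — so that each can be discharged using the universal property of $\overline\bD$ in turn.
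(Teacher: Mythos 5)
Your proposal is correct and takes essentially the same route as the paper: both arguments rest on the explicit description of a limit in $\Cat$ as the category of compatible families, and the bookkeeping you describe (assembling compatible $(e_j)_j$ and then $(f_j)_j$ via the universal property of $\overline\bD$, for objects and morphisms separately) is precisely what the paper does when it lifts a cone $(F_j \colon \bE \to \bD_{(d)}(j))_j$ to a unique $F \colon \bE \to \overline\bD/d$. The only cosmetic difference is that you package the result as a pair of mutually inverse functors $\Phi,\Psi$ while the paper verifies the universal property of the cone $(\pi_j)_j$ directly; for this lemma those two presentations are interchangeable.
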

\begin{proof}
  The fact that $((-)|_j \colon \bD \to \bD_j)_{j \in \McJ}$ is a cone ensures
  the above definition of $\bD_{(d)}(\alpha)$ does send a map over $d|_j$ to a
  map over $d|_{j'}$.
  It is also clear from functoriality of $\bD \colon \McJ \to \Cat$ that
  $\bD_{(d)}$ does define a functor $\McJ \to \Cat$.

  Note that we have a cone $(\pi_j \colon \bD/d \to \bD_{(d)}(j))_{j\in\McJ}$
  whose each leg at $j \in \McJ$ takes $d' \to d \in \bD$ to
  $d'|_j \to d|_j \in \bD_{(d)}(j)$ by functorial action of the limiting legs of
  $((-)|_j \colon \overline\bD \to \bD_j)_{j \in\McJ}$.
  Assume now that there is another cone
  $(F_j \colon \bE \to \bD_{(d)}(j))_{j \in \McJ}$.
  Fix an object $e \in \bE$.
  The cone condition of $(F_j)_j$ says that one has a cone
  $(\mbbo{2} \to \bD_j)_j$ with legs at $j \in \McJ$ picking out the arrow
  $F_je \colon \dom{F_je} \to d|_j \in \bD_j$.
  This cone lifts uniquely to a map in $\bD$ with codomain $d$ and domain whose
  restrictions at $j$ are $\dom{F_je}$.
  Let this unique lift be $Fe$ so that $(Fe)|_j = F_je$.
  It is clear that this defines a map $F \colon \bE \to \bD/d$ such that
  $(F_j)_j = (\pi_j)_j \cdot F$.
  Uniqueness follows from the uniqueness of the lifts arising from the universal
  property of $\overline\bD = \lim_j\bD_j$ used to define $F$.
\end{proof}

\begin{corollary}\label{cor:limit-Pi}
  Fix a diagram $\bD \colon \McJ \to \Cat$ and put
  $\overline\bD \coloneqq \lim_{j \in \McJ}\bD_j$ with limiting legs
  $((-)|_j \colon \overline\bD \to \bD_j)_{j \in \McJ}$.
  If $g \colon b \to a \in \overline\bD$ is such that for any $j \in \McJ$, the
  restriction $g|_j \colon b|_j \to a|_j \in \bD_j$ is powerful and such that
  for any $\alpha \colon j \to j' \in \McJ$, the solid maps commute with the
  bottom face being a map of adjunctions:
  \begin{equation*}
    \begin{tikzcd}[row sep=large, column sep=large]
      & |[alias=D-a]| \overline\bD/a & & |[alias=D-b]| \overline\bD/b \\
      & & |[alias=Djp-a]| \sfrac{\bD_{j'}}{a|_{j'}} & & |[alias=Djp-b]| \sfrac{\bD_{j'}}{b|_{j'}} \\
      |[alias=Dj-a]| \sfrac{\bD_j}{a|_j} & & |[alias=Dj-b]| \sfrac{\bD_j}{b|_j}
      \ar[from=D-a, to=D-b, yshift=3, "g^*"]
      \ar[from=D-b, to=D-a, yshift=-3, "\Pi_g", dashed]
      \ar[from=Djp-a, to=Djp-b, yshift=3, "(g|_{j'})^*"]
      \ar[from=Djp-b, to=Djp-a, yshift=-3, "\Pi_{g|_{j'}}"]
      \ar[from=Djp-a, to=Djp-b, phantom, "{\vdash}"{decoration,rotate=90}]
      \ar[from=Dj-a, to=Dj-b, yshift=3, "(g|_{j})^*"]
      \ar[from=Dj-b, to=Dj-a, yshift=-3, "\Pi_{g|_{j'}}"]
      \ar[from=Dj-a, to=Dj-b, phantom, "{\vdash}"{decoration,rotate=90}]
      \ar[from=Dj-a, to=Djp-a, "\alpha^*"]
      \ar[from=Dj-b, to=Djp-b, "\alpha^*"']
      \ar[from=D-a, to=Dj-a]
      \ar[from=D-b, to=Dj-b, crossing over]
      \ar[from=D-a, to=Djp-a]
      \ar[from=D-b, to=Djp-b, crossing over]
    \end{tikzcd}
  \end{equation*}
  where $\alpha^*$ arise from the functorial action of the diagram $\bD$.
  Then, $g$ is powerful with the right adjoint of $g^*$ given by the unique map
  such that
  $(\Pi_g(c \xrightarrow{f} b))|_j = \Pi_{g|_j}(c|_j \xrightarrow{f|_j} b|_j)$.
  Furthermore, each of the slanted faces above is a map of adjunctions.
\end{corollary}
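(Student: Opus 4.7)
The plan is to assemble everything from the two preceding lemmas. First I would apply \Cref{lem:limit-slices} to the objects $a, b \in \overline\bD$, which gives diagrams $\bD_{(a)}, \bD_{(b)} \colon \McJ \to \Cat$ satisfying $\overline\bD/a \simeq \lim_j \sfrac{\bD_j}{a|_j}$ and $\overline\bD/b \simeq \lim_j \sfrac{\bD_j}{b|_j}$, with limiting legs given by restriction.

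Next, I would promote the pointwise data into functors between these limits. The commutativity $\alpha^* \cdot (g|_j)^* = (g|_{j'})^* \cdot \alpha^*$ given in the hypothesis says the family $\bigl((g|_j)^* \colon \bD_{(a)}(j) \to \bD_{(b)}(j)\bigr)_{j \in \McJ}$ is a natural transformation of $\McJ$-diagrams, so by the universal property of the limit it induces a unique functor $G \colon \overline\bD/a \to \overline\bD/b$ with $(Gc)|_j = (g|_j)^*(c|_j)$. The assumption that the bottom face is a map of adjunctions supplies the analogous compatibility $\alpha^* \cdot \Pi_{g|_j} \cong \Pi_{g|_{j'}} \cdot \alpha^*$ for the right adjoints, inducing a unique functor $H \colon \overline\bD/b \to \overline\bD/a$ with $(Hf)|_j = \Pi_{g|_j}(f|_j)$.

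I would then check that $G$ is genuinely pullback along $g$ in $\overline\bD$. By \Cref{lem:limit-jointly-reflects} the restriction functors jointly reflect limits, and pullbacks in each $\bD_j$ of $c|_j \to a|_j$ along $g|_j$ are the $(g|_j)^*(c|_j)$; hence the cone they assemble into in $\overline\bD$ is a pullback square whose apex is $Gc$. So $G = g^*$ exists and is the displayed pointwise-defined functor, and in particular the limiting legs $(-)|_j$ preserve pullbacks along $g$.

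Finally I would invoke \Cref{lem:limit-adjunction} on the diagrams of adjunctions $\bigl((g|_j)^* \dashv \Pi_{g|_j}\bigr)_{j \in \McJ}$, whose maps of adjunctions are precisely the slanted faces (commutativity on the left and Beck--Chevalley on the right). The lemma yields an adjunction between the limit categories, which by uniqueness must be $G \dashv H = g^* \dashv \Pi_g$, and asserts that every slanted face is a map of adjunctions. The main obstacle is a bookkeeping one: confirming that the functor induced on the limit by the pullback functors is actually the pullback in $\overline\bD$. This is the one step not formally contained in the two preceding lemmas, but it is immediate from joint reflection of limits by the restrictions.
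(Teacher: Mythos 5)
Your proposal is correct and takes essentially the same route as the paper: identify $\overline\bD/a$ and $\overline\bD/b$ as limits of slices via \Cref{lem:limit-slices} and then conclude via \Cref{lem:limit-adjunction}. You additionally spell out the construction of the candidate functor $\Pi_g$ on the limit (and the verification that the induced functor is pullback along $g$), steps which the paper's two-line proof leaves implicit but which are needed before \Cref{lem:limit-adjunction} can be applied.
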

\begin{proof}
  By \Cref{lem:limit-slices}, we have
  $\overline\bD/a = \lim_j\sfrac{\bD_j}{a|_j}$ and
  $\overline\bD/b = \lim_j\sfrac{\bD_j}{b|_j}$, so by
  \Cref{lem:limit-adjunction}, it follows that $g^* \dashv \Pi_g$.
\end{proof}

%%% Local Variables:
%%% TeX-master: "./main.tex"
%%% TeX-engine: default
%%% End:

\section{Iterated Gluing Diagrams}\label{sec:iter-glue}
In this section, we aim to develop a framework in which one can combine the results from the previous \Cref{sec:gpd-logic,sec:gluing-logic,sec:limit-logic}.
%
% This framework should characterise those categories obtained from iterated Artin
% gluing along certain structure-preserving functors into groupoids.
%
Namely, our goal is to define categories $\McI$ that may be constructed as a
colimit $\McI = \colim_n \McI_{\leq n}$ where each ``$n$-th stage''
$\McI_{\leq n}$ is obtained from some notion of ``previous stages'' $\McI_{<n}$
by attaching an ``$n$-th boundary'' $\partial\McI_n$, so that each
$\McI_{\leq n}$ arises as an Artin gluing category along some functor from
$\McI_{<n}$ into $\partial\McI_n$.
An already well-established class of categories with this property are the
\emph{inverse categories}, which are special cases of Reedy categories where all
maps lower degrees.
We will therefore take inverse categories as inspiration for formulating our
framework of iterated Artin gluing.

Much like how the simplex category $\bDelta$ is a canonical example of a Reedy
category, a canonical example of an inverse category is the semi-simplex
category $\bDelta_{\mathsf{inj}}$ of finite linear orders and order-preserving
surjections.
Semi-simplicial sets can be constructed dimension-wise in an iterative manner by
specifying its $n$-simplicies and the faces of each of its $n$-simplicies in
suitably compatible manners.
With inverse categories and semi-simplicial sets as concrete examples to guide
our intuition in mind, we now proceed to motivate and formulate our framework of
iterated gluing categories.

\begin{definition}\label{def:gen-inv}
  A \emph{generalised inverse structure} on a category $\McI$ is a function
  $\deg\relax \colon \ob\McI \to \bN$ such that if $f \colon i \to j \in \McI$
  is not an isomorphism, $\deg i > \deg j$; and if $f \colon i \cong j \in \McI$
  is an isomorphism then $\deg{i} = \deg{j}$.

  When equipped with such a structure, for each $i \in \McI$, put $\McI^-(i)$ as
  the full subcategory of $i \downarrow \McI$ spanned by the strictly
  degree-decreasing maps.
  And for each $n \in \bN$, put:
  \begin{itemize}
    \item $\bG_n(\McI)$ to be the full subgroupoid of $\McI$ spanned by the
    isomorphisms whose source and target are all degree $n$.
    \item $\McI_{\leq n}$ to be the full subcategory of $\McI$ spanned by
    objects not exceeding degree $n$.
    Often, we also write $\McI_{<n}$ for $\McI_{\leq n}$.
  \end{itemize}

  A \emph{strict inverse structure} on $\McI$ is a generalised inverse structure
  on $\McI$ where each $\bG_n(\McI)$ consists only of identity maps.
\end{definition}
Indeed, the opposite category of the semi-simplex category
$\bDelta_{\mathsf{inj}}$ of the simplex category $\bDelta$ spanned by the face
maps is an example of a strict inverse category.

If $X$ is a semi-simplicial set then each of its $n$-simplex $x_n$ is uniquely
determined by its $n$ (compatible) faces $(\delta_i^nx_n)_{i=0,...,n-1}$, where
each $\delta_i^nx_n$ is a simplex of dimension strictly less than $n$.
Inverse categories generalise this in that diagrams indexed by a strict inverse
category $\McI$ valued in a category $\McE$ can be constructed by induction on
the degree (i.e. dimension) provided that $\McE$ has enough limits \cite[Lemma
3.10]{rv14}.
To see this, first note that to construct a diagram
$X_{\leq n} \in \McE^{\McI_{\leq n}}$ is precisely to specify:
\begin{itemize}
  \item A diagram $X_{<n} \in \McE^{\McI_{<n}}$; and
  \item For each each $i \in \bG_n(\McI)$ of exactly degree $n$, an object
  $X_{=n}i \in \McE$; and
  \item A compatible family of maps
  $(X_{\leq n}f^- \colon X_{=n}i \to X_{<n}\underline{i})_{f^- \colon i
    \to \underline{i} \in \McI^-(i)}$, which suffices because all non-identity
  maps strict lower degrees
\end{itemize}
The compatible family of maps
$(X_{\leq n}f^- \colon X_{=n}i \to X_{\leq n-1}\underline{i})_{f^- \colon i \to
  \underline{i} \in \McI^-(i)}$ is, by the universal property of the weighted
limit, exactly a map $X_{=n}i \to \set{\McI^-(i), X_{\leq n-1}}$, where
$\McI^-(i)$ is the slice under $i$ spanned by those maps excluding the identity.
In the case of semi-simplicial sets, the weighted limit
$\set{\bDelta_{\mathsf{inj}}^-([n]), X}$ is precisely the usual $n$-th
coskeleton of $X$.

Assembling each these maps
$(X_{=n}i \to \set{\McI^-(i), X_{\leq n-1}})_{i \in \bG_n(\McI)}$ into weighted
limits functorially, we arrive at the definition of absolute matching objects
and matching maps.

\begin{definition}\label{def:inv-mat}
  Let $\McI$ be a category equipped with an inverse structure
  $\deg\relax \colon \ob\McI \to \bN$.
  Fix a category $\McE$ admitting limits indexed by each $\McI^-(i)$, the
  underslice of $i$ spanned by those maps other than the initial object.

  Denote by $\res_{< n} \colon \McE^{\McI_{\leq n}} \to \McE^{\McI_{<n}}$
  precomposition with $\McI_{<n} \hookrightarrow \McI_{\leq n}$ and
  $\cosk_n \colon \McE^{\McI_{<n}} \to \McE^{\McI_{\leq n}}$ its right Kan
  extension (which exists as $\McE$ is sufficiently complete as described above).
  Further put $t_n \colon \bG_n(\McI) \hookrightarrow \McI_{\leq n}$ as the
  inclusion.
  \begin{equation*}
    \begin{tikzcd}
      \McE^{\bG_n(\McI)}
      &
      \McE^{\McI_{\leq n}}
      \ar[r, "\res_{<n}", yshift=3]
      \ar[r, "\vdash"{rotate=90}, phantom]
      \ar[l, "(t_n)^*"']
      &
      \McE^{\McI_{<n}}
      \ar[l, "\cosk_n", yshift=-3]
    \end{tikzcd}
  \end{equation*}
  Denote by
  $\M_n \coloneqq (t_n)^* \cdot \cosk_n \colon \McE^{\McI_{<n}} \to
  \McE^{\McI_{\leq n}} \to \McE^{\bG_n(\McI)}$ the \emph{$n$-th matching object
    functor} and
  $\m_n \colon (t_n)^* \to \M_n \cdot \res_{<n} = (t_n)^* \cdot \cosk_n \cdot
  \res_{<n}$ the unit of the adjunction $\res_{<n} \dashv \cosk_n$ composed with
  $(t_n)^*$ the \emph{$n$-th matching map}.
\end{definition}

To understand the behaviour of the matching object and matching map as well as
how they relate to construction of inverse diagrams, consider the 3-horn
$\Lambda^3_2 \in \Set^{\bDelta^\op}$ given by subobject of the standard
3-simplex $\Delta^3$ spanned by all of the faces containing the vertex 2, or
equivalently the standard 3-simplex with the face opposite to the vertex 2
removed, restricted to $\Set^{\bDelta_{\mathsf{inj}}^\op}$.
Because $\bDelta_{\mathsf{inj}}^\op$ is spanned by the face maps, the
restriction $\Set^{\bDelta^\op} \to \Set^{\bDelta_{\mathsf{inj}}^\op}$ forgets
the degeneracies of a simplicial set.
Hence, one may picture $\Lambda^3_2$ as a semi-simplicial set in the picture on
the left as below, with the purple faces representing the faces present, the
blue face representing the absent face and the black lines representing the
edges.
The restriction of $\Lambda^3_2$ to $(\bDelta_{\mathsf{inj}}^\op)^-([2])$ is
then $\res_{<2}\Lambda^3_2$, which is the semi-simplicial set with the same
1-simplices as $\Lambda^3_2$ but no 2- or 3-simplices.
In other words, it is just all the edges of $\Lambda^3_2$, as given by the
right diagram below, where the blue faces represent empty faces.
\begin{center}
  \begin{tikzpicture}[decoration={markings,mark=at position 0.5 with {\arrow{To}}}]
    \coordinate [label=above:3] (3) at (0,{sqrt(2)},0);
    \coordinate [label=left:0] (0) at ({-.5*sqrt(3)},0,-.5);
    \coordinate [label=below:1] (1) at (0,0,1);
    \coordinate [label=right:2] (2) at ({.5*sqrt(3)},0,-.5);
    \draw[fill=magenta0,fill opacity=.5] (1)--(2)--(3)--cycle;
    \draw[fill=magenta0,fill opacity=.5] (0)--(1)--(2)--cycle;
    \draw[fill=cyan0,fill opacity=.25] (0)--(1)--(3)--cycle;
    \draw[fill=magenta0,fill opacity=.5] (0)--(2)--(3)--cycle;
    \draw[postaction={decorate}] (0)--(1);
    \draw[postaction={decorate}] (1)--(2);
    \draw[postaction={decorate}] (2)--(3);
    \draw[postaction={decorate}] (0)--(2);
    \draw[postaction={decorate}] (0)--(3);
    \draw[postaction={decorate}] (1)--(3);
  \end{tikzpicture}
  \hfil
  \begin{tikzpicture}[decoration={markings,mark=at position 0.5 with {\arrow{To}}}]
    \coordinate [label=above:3] (3) at (0,{sqrt(2)},0);
    \coordinate [label=left:0] (0) at ({-.5*sqrt(3)},0,-.5);
    \coordinate [label=below:1] (1) at (0,0,1);
    \coordinate [label=right:2] (2) at ({.5*sqrt(3)},0,-.5);
    \draw[fill=cyan0,fill opacity=.25] (1)--(2)--(3)--cycle;
    \draw[fill=cyan0,fill opacity=.25] (0)--(2)--(3)--cycle;
    \draw[fill=cyan0,fill opacity=.25] (0)--(1)--(2)--cycle;
    \draw[fill=cyan0,fill opacity=.25] (0)--(1)--(3)--cycle;
    \draw[postaction={decorate}] (0)--(1);
    \draw[postaction={decorate}] (1)--(2);
    \draw[postaction={decorate}] (2)--(3);
    \draw[postaction={decorate}] (0)--(2);
    \draw[postaction={decorate}] (0)--(3);
    \draw[postaction={decorate}] (1)--(3);
  \end{tikzpicture}
\end{center}

Next, consider the coskeleton $\cosk_{2}\res_{<2}\Lambda^3_2$.
Each 2-simplex of the coskeleton
$\cosk_{2}\res_{<2}\Lambda^3_2$ is a choice of an element in $(\cosk_{2}\res_{<2}\Lambda^3_2)_2$.
By the universal property of the weighted limit, this is the same as choosing a
compatible tuple
$(x_d \in (\Lambda^3_2)_1)_{d \colon [1] \to [2] \text{ face map}}$.
Examples of such tuples are $(1 \to 2, 2 \to 3, 1 \to 3)$ and
$(0 \to 1, 1 \to 2, 0 \to 2)$.
Observing that the tuple $(1 \to 2, 2 \to 3, 1 \to 3)$ may be encoded as a
formal filling of the area in the diagram on the right above spanned by the maps
$1 \to 2$ and $2 \to 3$ and $2 \to 3$, one concludes that the 2-simplices of the
coskeleton $\cosk_{2}\res_{<2}\Lambda^3_2$ is obtained by formally filling in
the 2-dimensional gaps along the 1-dimensional edges.
Likewise, the 3-simplices of $\cosk_{2}\res_{<2}\Lambda^3_2$ is obtained by
formally filling in the 3-dimensional gaps along the 1-dimensional edges.
Therefore, one may picture the coskeleton $\cosk_{2}\res_{<2}\Lambda^3_2$ below
as the pink simplex, where the pink faces represent the formal fillers.
In particular, the matching object at $[2]$ of $\Lambda^3_2$ is exactly
$\M_2\res_{<2}\Lambda^3_2 = (\cosk_{2}\res_{<2}\Lambda^3_2)_2$, the set of
2-simplices of the coskeleton, which we have concluded to be all of the formal
face fillers along the edges of the skeleton.
We also note that in terms of physical intuition, the coskeleton
$\cosk_{2}\res_{<2}\Lambda^3_2$ also admits an pink 3$d$-filler representing
the only 3-simplex it has which corresponds to the identity map on $[3]$.
In other words, the skeleton $\cosk_2\res_{<2}\Lambda_2^3 = \Delta^3$ is the
standard 3-simplex and the matching object $\M_2\Lambda^3_2$ is the set of its
2-simplices.

\begin{center}
  \begin{tikzpicture}[decoration={markings,mark=at position 0.5 with {\arrow{To}}}]
    \coordinate [label=left:0] (0) at ({-.5*sqrt(3)},0,-.5);
    \coordinate [label=below:1] (1) at (0,0,1);
    \coordinate [label=right:2] (2) at ({.5*sqrt(3)},0,-.5);
    \coordinate [label=above:3] (3) at (0,{sqrt(2)},0);
    \coordinate [label=left:0] (0b) at ({-.5*sqrt(3)},-1.5,-.5);
    \coordinate [label=below:1] (1b) at (0,-1.5,1);
    \coordinate [label=right:2] (2b) at ({.5*sqrt(3)},-1.5,-.5);
    \coordinate [label=above:3] (3f) at (2,{sqrt(2)},0);
    \coordinate [label=below:1] (1f) at (2,0,1);
    \coordinate [label=right:2] (2f) at ({.5*sqrt(3)+2},0,-.5);
    \coordinate [label=left:0] (0u) at ({-.5*sqrt(3)-1},2,-.5);
    \coordinate [label=right:2] (2u) at ({.5*sqrt(3)-1},2,-.5);
    \coordinate [label=above:3] (3u) at (-1,{sqrt(2)+2},0);
    \draw[fill=red0,fill opacity=.5] (1)--(2)--(3)--cycle;
    \draw[fill=red0,fill opacity=.5] (0)--(2)--(3)--cycle;
    \draw[fill=red0,fill opacity=.5] (0)--(1)--(2)--cycle;
    \draw[fill=red0,fill opacity=.5] (0)--(1)--(3)--cycle;
    \draw[postaction={decorate}] (0)--(1);
    \draw[postaction={decorate}] (1)--(2);
    \draw[postaction={decorate}] (0)--(2);
    \draw[postaction={decorate}] (2)--(3);
    \draw[postaction={decorate}] (0)--(3);
    \draw[postaction={decorate}] (1)--(3);
    \draw[postaction={decorate}] (0b)--(1b);
    \draw[postaction={decorate}] (1b)--(2b);
    \draw[postaction={decorate}] (2b)--(0b);
    \draw[fill=magenta0,fill opacity=.5] (0b)--(1b)--(2b)--cycle;
    \draw[dashed, |->] (0,-1,0) to[bend right] (0,0,0);
    \draw[postaction={decorate}] (1f)--(2f);
    \draw[postaction={decorate}] (2f)--(3f);
    \draw[postaction={decorate}] (3f)--(1f);
    \draw[fill=magenta0,fill opacity=.5] (1f)--(2f)--(3f)--cycle;
    \draw[dashed, |->] (1.5,{0.3*sqrt(2)},-.25) to[bend right] (0.25,{0.3*sqrt(2)},-.25);
    \draw[postaction={decorate}] (0u)--(2u);
    \draw[postaction={decorate}] (2u)--(3u);
    \draw[postaction={decorate}] (3u)--(0u);
    \draw[fill=magenta0,fill opacity=.5] (0u)--(2u)--(3u)--cycle;
    \draw[dashed, |->] (-1,1.75,-.5) to[bend right=10] (-.15,{.25*sqrt(2)},-.5);
  \end{tikzpicture}
\end{center}

Therefore, the matching map is the expected inclusion
$(\Lambda^3_2)_2 \to \Delta^3_2$, as determined by the dashed arrows above.
Putting these all together, the fact that $\Lambda^3_2$ is uniquely determined
by its restriction $\res_{<2}\Lambda^3_2$ along with the matching map
$(\Lambda^3_2)_2 \to \M_2\Lambda^3_2 = \Delta_2^3$ simply says that the horn
$\Lambda^3_2$ is constructed from the blue skeleton of edges above by first
freely filling in the faces (and the 3$d$-interior) and then selecting which of
the freely filled-in faces (and 3$d$-interior) to keep as its simplices of
dimension 2 (and 3).

Generalising this example of the 3-horn to arbitrary semi-simplicial sets, we
observe that for each inverse category $\McI$ and category $\McE$ along with
$n \in \bN$, diagrams $X_{\leq n} \in \McE^{\McI_{\leq n}}$ are in unique
correspondence with diagrams $X_{<n} \in \McE^{\McI_{<n}}$, a functor
$X_{=n} \in \McE^{\bG_n(\McI)}$ and a map $X_{=n} \to \M_nX_{<n}$.
Packaging everything together, we note that a triple of data
$(X_{<n} \in \McE^{\McI_{<n}}, X_{=n} \in \McE^{\bG_n(\McI)}, X_{=n} \to
\M_nX_{<n})$ is exactly an element of the comma category
$\McE^{\bG_n(\McI)} \downarrow \M_n$, which is precisely the Artin gluing
category $\Gl(\M_n)$.

In this vocabulary, as observed by \textcite{shu15}, $\McE^{\McI_{\leq n}}$ is
the Artin gluing category along the $n$-th matching object functor and diagrams
$\McE^\McI$ are constructed by iterated Artin gluing.
In the case where $\McR$ is a Reedy category such as in the case of the simplex
category, diagrams $\McE^\McR$ can be likewise constructed by induction on the
degree, but $\McE^{\McR_{\leq n}}$ is instead a \emph{bigluing category} in the
sense of \cite[Definition 3.1]{shu15}.
The notion of \emph{c-Reedy categories} of \cite[Definition 8.5]{shu15}
generalises categories where one may construct diagrams by way of iterated
bigluing.
Following \textcite{shu15}, we work in the framework of \emph{iterated gluing
  categories} defined below.
Iterated gluing categories are adaptations of \emph{c-Reedy categories} of
\textcite{shu15} to the case of inverse categories.

\begin{definition}\label{def:prof-collage}
  A profunctor $H \colon \McC \slashedrightarrow \McD$ between categories $\McC$
  and $\McD$ is a functor $H \colon \McC^\op \times \McD \to \Set$.
  Its collage $[H]$ consists of the same objects of the disjoint union
  $\McC \sqcup \McD$ and all maps of both $\McC$ and $\McD$ included in this
  disjoint union along with the $\Hom\relax$-sets
  \begin{align*}
    [H](c,d) = H(c,d) \text{ for } (c,d) \in (\ob\McC) \times (\ob\McD)
  \end{align*}
  Identities and composition are given by those in $\McC$ and $\McD$, as well as
  the functoriality of $H$.
\end{definition}

\begin{definition}\label{def:iter-glue}
  The data for an \emph{iterated gluing diagram} is given by
  $(\McN,\McI,\partial\McI,\McI^\circ)$, where:
  \begin{itemize}
    \item $\McN$ is a strict inverse category.
    \item $\McI \colon \McN^\op \to \Cat$ is a diagram of categories and
    $\partial\McI \colon \ob\McN \to \Cat$ is a family of categories indexed
    by the objects of $\McN$.
    Here, $\partial\McI_n$ is called the \emph{$n$-th strata} of $\McI$.
    \item
    $\McI^\circ = (\McI^\circ_n \colon \McI_{<n} \slashedrightarrow
    \partial\McI_n)_{n \in \ob\McN}$ is a family profunctors where each
    $\McI^\circ_n$, also called the \emph{$n$-th attaching map}, is a profunctor
    from the weighted colimit
    $\McI_{<n} = \McN(n,-)^\circ \otimes_{\McN^\op} \McI$, where
    $\McN(n,-)^\circ$ is $\McN(n,-)$ with the identity removed, to the $n$-th
    boundary $\partial\McI_n$.
    $\McI_{<n}$ is also called the \emph{$n$-th interior}.
  \end{itemize}
  subject to the condition that for each $n \in \McN$, the category $\McI_n$ is
  the collage of $\McI_n^\circ$
  \begin{align*}
    \McI_n = [\McI_n^\circ]
  \end{align*}
  The \emph{iterated gluing category induced by
    $(\McN,\McI,\partial\McI,\McI^\circ)$} is then given by
  $\McI_\infty \coloneqq \colim_{n \in \McN} \McI_n$
\end{definition}

The above definition says that each $\McI_n \in \Cat$ is constructed inductively
by attaching a boundary $\partial\McI_n \in \Cat$ onto the interior
$\McI_{< n} \in \Cat$ as specified by
$\McI_n^\circ \colon \McI_{< n} \slashedrightarrow \partial\McI_n$.
Objects in $\McI_{< n}$ are viewed as objects of strictly smaller than degree
$n$, while objects in $\partial\McI_n$ are viewed as objects of exactly degree
$n$.
By having $\McI_n = [\McI_n^\circ]$, the objects of $\McI_n$ consist of the
disjoint union $\ob{\McI_{< n}} \sqcup \ob{\partial\McI_n}$.
For $j_1,j_2 \in \ob{\McI_{<n}}$, one has the Hom-set
$\McI_n(j_1,j_2) = \McI_{< n}(j_1,j_2)$ and likewise for
$i_1,i_2 \in \ob{\partial\McI_n}$, one has the Hom-set
$\McI_n(i_1,i_2) = \partial\McI_n(i_1,i_2)$.
Furthermore, for $j \in \ob{\McI_{<n}}$ and $i \in \ob{\partial\McI_n}$, one has
\begin{align*}
  \McI_n(i,j) = \McI_n^\circ(i,j)
\end{align*}
while $\McI_n(j,i) = \emptyset$.
In other words each set $\McI_n(i,j) = \McI_n^\circ(i,j)$ is the set of maps
strictly lowering degree from $i$ to $j$, while $\McI_n(j,i) = \emptyset$ means
that there are no strictly degree-raising maps.
However, given $i,i' \in \partial\McI_n$, the set $\partial\McI_n(i,i')$ are the
degree-preserving maps.

Pictorially, we may view each $n$-th stage $\McI_n$ as obtained from
$\McI_{<n}$, which is itself the amalgamation of all of the smaller stages
$\McI_{n'}$ with $n'<n$, by attaching the $n$-th boundary $\partial\McI_n$ along
the $n$-th interior specified formally by $\McI^\circ$.
Given an element $f^- \in \McI^\circ(i,j)$, the composition $hf^-g$ for
$g \colon i' \to i \in \partial\McI_n$ and $h \colon j \to j' \in \McI_{<n}$ is
given by the functorial action $\McI^\circ(g,h)f^- \in \McI^\circ(i',j')$.
\begin{center}
  \begin{tikzpicture}
    \draw[fill=gray, fill opacity=.25] (0,3) ellipse (3 and 1);
    \node (ipn) at (4, 3) {$\partial\McI_n$};
    \draw[fill=gray, fill opacity=.25] (0,0) ellipse (2 and .75);
    \node (iltn) at (4, 0) {$\McI_{<n}$};
    \draw[->] (ipn) to node[]{--} node[pos=0.5, right]{$\McI_n^\circ$} (iltn);
    \draw [fill=black] (-2,3) node (i') {} node [anchor=east] {$i'$} ellipse (.025 and .025);
    \draw [fill=black] (-1,3) node (i) {} node [anchor=west] {$i$} ellipse (.025 and .025);
    \draw [fill=black] (-1,0) node (j) {} node [anchor=east] {$j$} ellipse (.025 and .025);
    \draw [fill=black] (-.25,0) node (j') {} node [anchor=west] {$j'$} ellipse (.025 and .025);
    \draw[->] (i') -- (i) node[pos=0.5, above] {$g$};
    \draw[->] (j) -- (j') node[pos=0.5, below] {$h$};
    \draw[->] (i) -- (j) node[pos=0.5, right] {$f^- \in \McI^\circ(i,j)$};
    \draw[->] (i') to[bend right=10] node[pos=0.5, left]{$\McI^\circ(i',j')\ni \McI^\circ(g,h)f^- = hf^-g$} (j');
  \end{tikzpicture}
\end{center}

For example, by taking $\McN$ to be an ordinal and each $\partial\McI_n$ to be a
discrete set $\McI_\infty$ is an inverse category and by taking each
$\partial\McI_n$ to be a groupoid, $\McI_\infty$ is a generalised inverse
category.

\begin{proposition}\label{prop:inv-iter-glue}
  Let $(\McI,\deg)$ be the data for a generalised inverse category.
  Then, $(\bN, \McI_{\leq -}, \bG_-(\McI), \McI_{\leq -}(-,-))$ forms the data
  of an iterated gluing diagram.
  Furthermore, $\McI = \McI_{\leq\infty}$ is the iterated gluing category
  induced by the data for the iterated gluing diagram.
\end{proposition}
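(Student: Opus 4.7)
The verification amounts to checking the four data items and the collage/colimit conditions of \Cref{def:iter-glue}. I would take $\bN$ as the poset category with a (unique) arrow $n \to m$ iff $n \geq m$, equipped with $\deg n = n$: non-identity maps strictly lower degree, and the only isomorphisms (being a poset) are identities, so this is a strict inverse structure. Then $\McI_{\leq -} \colon \bN^\op \to \Cat$ is functorial via the full inclusions $\McI_{\leq m} \hookrightarrow \McI_{\leq n}$ for $m \leq n$, and each stratum $\bG_n(\McI)$ is a (sub)category of $\McI$ as required.

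Next, I would compute the $n$-th interior prescribed by \Cref{def:iter-glue}. Since $\bN(n, -)^\circ$ takes value $\emptyset$ at $m \geq n$ and a singleton at $m < n$, the weighted colimit $\bN(n,-)^\circ \otimes_{\bN^\op} \McI_{\leq -}$ reduces to the ordinary colimit of the chain $\McI_{\leq 0} \hookrightarrow \cdots \hookrightarrow \McI_{\leq n-1}$, which is $\McI_{\leq n-1}$ --- matching the $\McI_{<n}$ notation of \Cref{def:gen-inv}. I would then define the attaching profunctor by $\McI^\circ_n(i,j) \coloneqq \McI(i,j)$ for $i \in \bG_n(\McI)$ and $j \in \McI_{<n}$, with functoriality given by composition in $\McI$. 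The central verification is $\McI_{\leq n} = [\McI^\circ_n]$. Objects agree by the degree function, and the generalised inverse structure gives the two non-trivial hom-set identifications: (a) for $i, i' \in \bG_n(\McI)$, any $f \colon i \to i'$ in $\McI$ must be an isomorphism, since a non-iso would force $\deg i > \deg i'$, impossible when both equal $n$, hence $\McI(i, i') = \bG_n(\McI)(i, i')$; and (b) for $j \in \McI_{<n}$ and $i \in \bG_n(\McI)$, the set $\McI(j, i)$ is empty, since a non-iso would demand $\deg j > \deg i = n$ and an iso would demand $\deg j = n$, both contradicting $\deg j < n$. Composition matches by inheritance from $\McI$.

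Finally, $\McI = \colim_n \McI_{\leq n}$, because every object $i$ lies in $\McI_{\leq \deg i}$ and every map $f \colon i \to j$ lies in $\McI_{\leq \max(\deg i,\, \deg j)}$; since the inclusions $\McI_{\leq n} \hookrightarrow \McI$ are full, the sequential colimit recovers $\McI$ on the nose. The only genuine obstacle is bookkeeping: reconciling the two uses of $\McI_{<n}$, as a full subcategory (\Cref{def:gen-inv}) and as a weighted colimit (\Cref{def:iter-glue}), by explicitly evaluating the latter as above.
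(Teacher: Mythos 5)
Your proof is correct, but it takes a genuinely different route from the paper's. The paper's proof is a one-line citation: it observes that the data $(\bN, \McI_{\leq-}, \bG_-(\McI), \McI_{\leq-}(-,-))$ makes $\McI$ into a \emph{stratified category of height $\omega$} in the sense of \cite[Definition 4.10]{shu15}, and that this is a special case of \Cref{def:iter-glue} with $\McN$ an ordinal, invoking \cite[Theorem 4.11]{shu15} to conclude. Your proof instead unfolds \Cref{def:iter-glue} from scratch: you exhibit $\bN$ as a strict inverse poset, compute the weighted colimit $\bN(n,-)^\circ\otimes_{\bN^\op}\McI_{\leq -}$ explicitly as the sequential colimit $\McI_{\leq n-1}$, take the attaching profunctor to be the restriction of $\McI(-,-)$, and verify the collage identity $\McI_{\leq n}=[\McI^\circ_n]$ by the three hom-set computations forced by the degree function (maps within $\bG_n(\McI)$ are isomorphisms; no maps from $\McI_{<n}$ into $\bG_n(\McI)$; maps out of $\bG_n(\McI)$ into $\McI_{<n}$ come from the profunctor). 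Both approaches are sound; yours is self-contained and makes the mechanism visible, while the paper's is more economical by offloading the verification to Shulman's framework, of which iterated gluing categories are an adaptation. One minor thing worth making explicit if you flesh this out: when arguing $\McI=\colim_n\McI_{\leq n}$, the generalised inverse structure actually gives $\deg i\ge\deg j$ for any map $i\to j$, so your $\max(\deg i,\deg j)$ is just $\deg i$; the argument is unchanged.
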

\begin{proof}
  By \cite[Theorem 4.11]{shu15}, the above data makes $\McI$ into a stratified
  category of height $\omega$ as in \cite[Definition 4.10]{shu15}, which is a
  special case of \Cref{def:iter-glue} where $\McN$ is taken to be an ordinal.
\end{proof}

The primary reason we are interested in iterated gluing categories is because
they generalise the construction of inverse diagrams by way of induction on the
degree.
In particular, we may adapt the matching objects of \Cref{def:inv-mat} from the
case of inverse categories to our present setting like so.
\begin{definition}\label{def:iter-glue-mat}
  Let $(\McN,\McI,\partial\McI,\McI^\circ)$ be the data for an iterated gluing
  diagram $\McI$ stratified by $\McN$.
  Fix a category $\McE$ such that for each $n \in \McN$ and $i \in \partial\McI_n$, all
  limits indexed by $\oint_{j \in \McI_{<n}} \McI_n^\circ(i,j)$ exists in
  $\McE$.

  Denote by $\res_{<n} \colon \McE^{\McI_n} \to \McE^{\McI_{<n}}$ precomposition
  with the inclusion $\McI_{<n} \hookrightarrow \McI_n$ and
  $\cosk_n \colon \McE^{\McI_{<n}} \to \McE^{\McI_n}$ its right Kan extension
  (which exists as $\McE$ is sufficiently complete as described above).
  Further put $t_n \colon \partial\McI_n \hookrightarrow \McI_n$ as the
  inclusion.
  %
  % Define $\M_n \colon \McE^{\McI_{<n}} \to \McE^{\partial\McI_n}$ as
  % %
  \begin{equation*}
    \begin{tikzcd}
      \McE^{\partial\McI_n}
      &
      \McE^{\McI_n}
      \ar[r, "\res_{<n}", yshift=3]
      \ar[r, "\vdash"{rotate=90}, phantom]
      \ar[l, "(t_n)^*"']
      &
      \McE^{\McI_{<n}}
      \ar[l, "\cosk_n", yshift=-3]
    \end{tikzcd}
  \end{equation*}
  Denote by
  $\M_n \coloneqq (t_n)^* \cdot \cosk_n \colon \McE^{\McI_{<n}} \to
  \McE^{\McI_n} \to \McE^{\partial\McI_n}$ the \emph{$n$-th matching object
    functor} and
  $\m_n \colon (t_n)^* \to \M_n \cdot \res_{<n} = (t_n)^* \cdot \cosk_n \cdot
  \res_{<n}$ the unit of the adjunction $\res_{<n} \dashv \cosk_n$ composed with
  $(t_n)^*$ the \emph{$n$-th matching map}.
  For $i \in \McI_n$ and $X_{<n} \in \McE^{\McI_{<n}}$, we often write
  $\M_iX_{<n}$ to mean $(\M_nX_{<n})i$.
  Also, for $X_n \in \McE^{\McI_n}$ we often write $\M_nX_n$ to mean
  $\M_n(\res_{<n}X)$ and $\m_iX_n$ to mean $(\m_nX_n) \colon X_ni \to \M_iX_n$.
\end{definition}
%
% \begin{theorem*}{}[{\cite[Theorem 4.5]{shu15}}]\label{thm:shu15-4.5}
%   %
%   For any profunctor $W \colon \McC \slashedrightarrow \McD$, gluing along
%   $\set{W,-}^\McC \colon \McM^\McC \to \McM^\McD$ gives the equivalence of
%   categories $\Gl(\set{W,-}^\McC) \simeq \McM^{[W]}$, where $\set{W,-}^\McC$ is
%   defined, for each $X \in \McM^\McC$ and $d \in \McD$ as the weighted limit
%   $\set{W(d,-), X}^\McC$ and $\Gl(\set{W,-}^\McC)$ is its gluing category.
%   %
% \end{theorem*}

In particular, by the formula for the right Kan extension, we see
that in \Cref{def:iter-glue-mat}, if $X_{<n} \in \McE^{\McI_{<n}}$ and
$i \in \partial\McI_n \hookrightarrow \McI_n$ then
\begin{align*}
  (\cosk_nX_{<n})i &=  \lim((\McI_{<n} \hookrightarrow \McI_n) \downarrow i \to \McI_{<n} \xrightarrow{X_{<n}} \McE) \\
                   &\cong \lim(\oint_{j \in \McI_{<n}} \McI_n^\circ(i,j) \to\McI_{<n} \xrightarrow{X_{<n}} \McE) \\
  (\cosk_nX_{<n})i &\cong \set{\McI_n^\circ(i,-), X_{<n}}
\end{align*}
And so \cite[Theorem 4.5]{shu15} implies the following.
\begin{proposition}\label{prop:iter-glue}
  Let $(\McN,\McI,\partial\McI,\McI^\circ)$ be the data for an iterated gluing
  diagram $\McI$ stratified by $\McN$.
  Fix a category $\McE$.
  If, for $n \in \McN$, all limits indexed by
  $\oint_{j \in \McI_{<n}} \McI^\circ_n(i,j)$ exists in $\McE$ for all
  $i \in \partial\McI_n$ then
  \begin{align*}
    \McE^{\McI_n} \simeq \Gl(\McE^{\McI_{<n}} \xrightarrow{\cosk_n} \McE^{\McI_n} \xrightarrow{(t_n)^*} \McE^{\partial\McI_n}) = \Gl(\M_n)
  \end{align*}
  This equivalence of categories sends each $X_n \in \McE^{\McI_n}$ to
  $\m_nX_n \colon (t_n)^*X_n \to \M_n(\res_{<n}X_n) \in \Gl(\M_n)$ and each
  $X_{=n} \to \M_nX_{<n} \in \McE^{\partial_n\McI}$ for
  $X_{<n} \in \McE^{\McI_{<n}}$ to the unique $X_n \in \McE^{\McI_n}$ that
  extend both $X_{=n}$ and $X_{<n}$.
  \def\endingmark{\qedsymbol}
\end{proposition}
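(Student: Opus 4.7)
The plan is to exploit the collage description $\McI_n = [\McI_n^\circ]$ to decompose a functor $X_n \colon \McI_n \to \McE$ into its data on the interior and the boundary. Concretely, since the object set of $\McI_n$ is $\ob\McI_{<n} \sqcup \ob\partial\McI_n$ with Hom-sets $\McI_n(j,j') = \McI_{<n}(j,j')$, $\McI_n(i,i') = \partial\McI_n(i,i')$, $\McI_n(i,j) = \McI_n^\circ(i,j)$, and $\McI_n(j,i) = \emptyset$, giving $X_n$ is equivalent to giving three pieces of data: the restriction $X_{<n} \coloneqq \res_{<n} X_n \in \McE^{\McI_{<n}}$, the restriction $X_{=n} \coloneqq (t_n)^* X_n \in \McE^{\partial\McI_n}$, and for each $i \in \partial\McI_n$ and each $f \in \McI_n^\circ(i,j)$ a map $X_nf \colon X_{=n}i \to X_{<n}j$ that is compatible with postcomposition by maps of $\McI_{<n}$ and precomposition by maps of $\partial\McI_n$ through the profunctorial action of $\McI_n^\circ$.

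Next I would package the third piece of data using the hypothesis that $\McE$ admits the relevant limits. For each fixed $i \in \partial\McI_n$, postcomposition-compatibility of the family $(X_nf)_{f \in \McI_n^\circ(i,-)}$ says that it defines a cone over $\oint_{j \in \McI_{<n}} \McI_n^\circ(i,j) \to \McI_{<n} \xrightarrow{X_{<n}} \McE$, hence is equivalently a single map $X_{=n}i \to (\cosk_n X_{<n})i = \M_i X_{<n}$ via the formula for $\cosk_n$ given just before the statement. Then the remaining precomposition-compatibility under the $\partial\McI_n$-action is precisely naturality of the assembled map $X_{=n} \to \M_n X_{<n}$ in $\McE^{\partial\McI_n}$, i.e.\ a morphism in $\Gl(\M_n)$. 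This correspondence is implemented by the matching map: the forward functor sends $X_n$ to $\m_n X_n \colon (t_n)^* X_n \to \M_n \res_{<n} X_n$, and the inverse functor sends a triple $(X_{<n}, X_{=n}, X_{=n} \to \M_n X_{<n})$ to the unique $X_n \in \McE^{\McI_n}$ whose restrictions are $X_{<n}$ and $X_{=n}$ and whose action on maps in $\McI_n^\circ$ is read off, componentwise, from the transpose of $X_{=n} \to \M_n X_{<n}$ under the adjunction $\res_{<n} \dashv \cosk_n$ composed with $(t_n)^*$.

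The remainder is bookkeeping: one checks that these two assignments extend to functors (morphisms in $\McE^{\McI_n}$ decompose under restriction in the same way objects do, and naturality of the matching map is immediate from the universal property of $\cosk_n$), and that they are mutually inverse, which reduces to the universal property of the limit defining $\M_iX_{<n}$. Since the hypotheses here (viewing $\McN$ as stratifying $\McI$, and the limits indexed by $\oint_{j} \McI_n^\circ(i,j)$ existing in $\McE$) are exactly those of \cite[Theorem 4.5]{shu15} specialised from bigluing to gluing (reflecting that iterated gluing categories are the inverse analogue of c-Reedy categories), the result is essentially a direct appeal to that theorem. The one place where care is needed — and which I would flag as the main obstacle if one were to reprove it from scratch — is verifying that the translation between morphisms in $\Gl(\M_n)$ and morphisms in $\McE^{\McI_n}$ correctly handles the interaction between the $\partial\McI_n$-action on the boundary component and the functorial action of $\cosk_n$ on the interior component, i.e.\ that the square of a morphism in $\Gl(\M_n)$ is exactly the naturality constraint on maps in $\McI_n^\circ$ under the adjoint transpose.
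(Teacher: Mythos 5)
Your proposal is correct and takes essentially the same route as the paper: the paper also identifies $(\cosk_nX_{<n})i$ with the weighted limit $\set{\McI_n^\circ(i,-),X_{<n}}$ via the right Kan extension formula and then cites \cite[Theorem 4.5]{shu15} directly. Your additional unpacking of the collage decomposition into interior/boundary/connecting-map data and the adjoint-transpose bookkeeping is exactly the content that the cited theorem encapsulates, so the two are the same argument at different levels of detail.
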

%
% \TODO{Introduce notation for restricting diagrams here.}

%%% Local Variables:
%%% TeX-master: "./main.tex"
%%% TeX-engine: default
%%% End:

\section{Logical Structure in Inverse Diagrams}\label{sec:diagrams-logic}
With all the necessary results established in the previous sections, we are now
ready to tackle the main problem of this paper: construct the subobject
classifier and dependent products in diagram categories indexed by iterated
gluing diagrams, and therefore, by extension, generalised inverse categories.
Additionally, we also investigate conditions under which the dependent product
functor is homotopical.

Throughout this section and the next, we fix
$(\McN,<,\McI,\partial\McI,\McI^\circ)$ data for an iterated gluing diagram and
$\McE$ a category admitting enough limits in the following sense so that the
matching object functors exist.
\begin{assumption}\label{asm:E-enough-limits}
  Limits indexed by each $\oint_{j \in \McI_{<n}} \McI^\circ(i,j)$ for
  $n \in \McN$ and $i \in \partial\McI_n$ exists in $\McE$.
\end{assumption}

We further adopt the following notational conventions throughout this section.
\begin{itemize}
  \item For $n \in \McN$, write $(-)|_n \colon \McE^{\McI_\infty} \to \McE^{\McI_n}$
  for the restriction along $\McI_n \to \McI_\infty$.
  Further, write $(-)|_{<n} \colon \McE^{\McI_n} \to \McE^{\McI_{<n}}$
  for restriction along $\McI_{<n} \hookrightarrow \McI_n$
  and $(-)|_{=n} \colon \McE^{\McI_n} \to \McE^{\partial\McI_n}$
  for restriction along $\partial\McI_n \hookrightarrow \McI_n$.
  When it is obvious from context, we abuse notation by writing $(-)|_{<n}$ and
  $(-)|_{=n}$ for $((-)|_n)|_{<n}$ and $((-)|_n)|_{=n}$ respectively.
  \item For each $\alpha \colon n \to n' \in \McN$, write
  $(-)|_\alpha \colon \McE^{\McI_{<n}} \to \McE^{\McI_{n'}}$ for
  restriction along the colimiting leg
  $\McI_{n'} \to \McN(n,-)^\circ \otimes_{\McN^\op} \McI = \McI_{<n}$.
\end{itemize}

\subsection{Subobject Classifiers}\label{subsec:inv-diag-Omega}
We now construct the subobject classifier and truth map in
$\McE^{\McI_\infty} \simeq \lim_{n \in \McN}\McE^{\McI_n}$.
To do so, we aim to use \Cref{lem:limit-Omega} by constructing subobject
classifiers and truth maps in each $\McE^{\McI_n}$ is a suitably compatible way
so that they assemble into the corresponding logical structure in
$\McE^{\McI_\infty}$.

Specifically, we proceed by induction on $n \in \McN$, so that the task is to
construct the subobject classifier and truth map in each $\McE^{\McI_n}$ with
the assumption that there is already a compatible family of subobject classifier
and truth map constructed for each $\McE^{\McI_{n'}}$ with $\deg{n'} < \deg{n}$.
In order to do so, \Cref{prop:iter-glue} states that
$\McE^{\McI_n} \simeq \Gl(\McE^{\McI_{<n}} \xrightarrow{\M_n}
\McE^{\partial\McI_n})$.
The construction of subobject classifiers and truth maps in gluing categories is
provided in \Cref{thm:glue-Omega}.
In order to apply \Cref{thm:glue-Omega} to the $n$-th absolute matching object
functor $\McE^{\McI_{<n}} \xrightarrow{\M_n} \McE^{\partial\McI_n}$, the
categories $\McE^{\McI_{<n}}$ and $\McE^{\partial\McI_n}$ must admit subobject
classifiers and truth maps.
Because $\McI_{<n}$ is a (weighted) colimit of all those $\McI_{n'}$ with
$\deg{n'} < \deg{n}$, one may express $\McE^{\McI_{<n}}$ as a (weighted) limit
consisting of those $\McE^{\McI_{n'}}$ with $\deg{n'} < \deg{n}$.
By the induction hypothesis, the subobject classifiers and truth maps for each
of these $\McE^{\McI_{n'}}$ are already constructed in a suitably compatible
manner, so \Cref{lem:limit-Omega} assembles them into subobject classifiers and
truth maps in $\McE^{\McI_{<n}}$.
The subobject classifier and truth map of $\McE^{\partial\McI_n}$, on the other
hand, cannot be constructed with general procedures like these because the
$n$-th boundary $\partial\McI_n$ may be any category.
Therefore, in the fully general case of iterated gluing diagrams, we work under
the assumption of the existence of subobject classifier and truth map in
$\McE^{\partial\McI_n}$ to complete the induction step using
\Cref{thm:glue-Omega}.
However, in the generalised inverse case, each $\McE^{\partial\McI_n}$ is a
groupoid, so \Cref{prop:gpd-Omega-ptwise} provides a construction (and therefor existence) of the
subobject classifier and truth map in $\McE^{\partial\McI_n}$.

\begin{theorem}\label{thm:E-iter-glue-Omega}
  Suppose \Cref{asm:E-enough-limits} holds.
  Further assume that $\McE$ has all finite limits and each
  $\McE^{\partial\McI_n}$ is equipped with a subobject classifier
  $\overline{\Omega}_n$ along with a truth value
  $\overline{\true}_n \colon 1 \to \overline{\Omega}_n$.
  Then, $\McE^{\McI_\infty}$ has a subobject classifier $\Omega_\infty$ and a
  truth value $\true_\infty \colon 1 \to \Omega_\infty$.

  For each $i \in \partial\McI_n$, one has the equaliser
  \begin{equation*}
    \begin{tikzcd}[column sep=large]
      % https://q.uiver.app/#q=WzAsNCxbMSwwLCJcXE9tZWdhX1xcTWNFIFxcdGltZXMgRlxcT21lZ2FfXFxNY0MiXSxbMiwwLCJcXE9tZWdhX1xcTWNFIFxcdGltZXMgXFxPbWVnYV9cXE1jRSJdLFszLDAsIlxcT21lZ2FfXFxNY0UiXSxbMCwwLCJFX1xcT21lZ2EiXSxbMSwyLCJcXHdlZGdlIl0sWzAsMSwiXFxpZCBcXHRpbWVzIFxcY2hpX3tGKFxcdHJ1ZSl9Il0sWzAsMiwiXFxwaV8xIiwyLHsiY3VydmUiOjJ9XSxbMywwLCIiLDIseyJzdHlsZSI6eyJ0YWlsIjp7Im5hbWUiOiJob29rIiwic2lkZSI6InRvcCJ9fX1dXQ==&macro_url=https%3A%2F%2Fgist.githubusercontent.com%2Flim495062%2F61b94af9ef95c1c7b0763c937de29c2b%2Fraw%2F52ee19151488bca0a4e9eb671d502054a336c94d%2Facmhwmacros.sty
      {\Omega_\infty [i]_n}
      &
      {\overline{\Omega}_ni \times \M_i\Omega_\infty}
      &[2em]
      {\overline{\Omega}_n i \times \overline{\Omega}_n i}
      &
      {\overline{\Omega}_n i}
      \arrow["\wedge_i", from=1-3, to=1-4]
      \arrow["{\id \times (\chi_{\M_n(\true_\infty)})_{[i]_n}}", from=1-2, to=1-3]
      \arrow["{\pi_1}"', bend right=10, from=1-2, to=1-4]
      \arrow[hook, from=1-1, to=1-2]
    \end{tikzcd} \in \McE
  \end{equation*}
  where $\chi_{\M_n(\true_\infty)}$ is the characteristic map of
  $\M_n(\true_\infty) \colon \M_n1 \cong 1 \hookrightarrow \M_n\Omega_\infty
  \in \McE^{\partial\McI_n}$ and
  \begin{equation*}
    \begin{tikzcd}
      & 1 \ar[d, "(\true_\infty)_{[i]_n}"{description}]
      \ar[ddl, "(\overline{\true}_n)_i"', bend right] \ar[ddr, "\M_i(\true_\infty)", bend left]
      \\
      & \Omega_\infty [i]_n \ar[d, hook]
      \\
      \overline{\Omega}_ni
      &
      \overline{\Omega}_ni \times \M_i\Omega_\infty \ar[r, "\pi_2"'] \ar[l, "\pi_1"]
      &
      \M_i\Omega_\infty
    \end{tikzcd} \in \McE
  \end{equation*}
\end{theorem}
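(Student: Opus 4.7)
The proof proceeds by well-founded recursion on the strict inverse structure of $\McN$, producing a coherent family $(\Omega_n, \true_n)_{n \in \McN}$ with each $\Omega_n \in \McE^{\McI_n}$ a subobject classifier and truth map, so that the final step is an application of \Cref{lem:limit-Omega} to the presentation $\McE^{\McI_\infty} \simeq \lim_{n \in \McN} \McE^{\McI_n}$.

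For the recursive step at stage $n$, we use \Cref{prop:iter-glue} to identify $\McE^{\McI_n} \simeq \Gl(\M_n)$ and then construct $(\Omega_n, \true_n)$ via \Cref{thm:glue-Omega} applied to $\M_n \colon \McE^{\McI_{<n}} \to \McE^{\partial\McI_n}$. The hypotheses of \Cref{asm:F-omega-cond} are verified as follows: $\McE^{\partial\McI_n}$ has finite limits since $\McE$ does and limits in functor categories are computed pointwise; $\M_n = (t_n)^* \cdot \cosk_n$ preserves finite limits, being the composition of a right adjoint with a restriction; and $\McE^{\partial\McI_n}$ has subobject classifier $\overline\Omega_n$ with truth map $\overline\true_n$ by hypothesis. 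The remaining ingredient, a subobject classifier on $\McE^{\McI_{<n}}$, is supplied by the recursion hypothesis: since $\McI_{<n}$ is the weighted colimit in $\Cat$ of the subdiagram of $\McI$ strictly under $n$, the category $\McE^{\McI_{<n}}$ is the corresponding weighted limit of the $\McE^{\McI_{n'}}$, and the already-constructed compatible family $(\Omega_{n'}, \true_{n'})$ assembles via \Cref{lem:limit-Omega} into a classifier $\Omega_{<n}$ with truth map $\true_{<n}$. Then \Cref{thm:glue-Omega} yields $\Omega_n$ whose restriction $\Omega_n|_{<n}$ is $\Omega_{<n}$ under the equivalence of \Cref{prop:iter-glue} and whose value at each $i \in \partial\McI_n$ is the equaliser from \Cref{constr:glue-true}
\begin{equation*}
  \Omega_n i \hookrightarrow \overline\Omega_n i \times \M_i\Omega_{<n} \rightrightarrows \overline\Omega_n i ,
\end{equation*}
with $\true_n \colon 1 \to \Omega_n$ constructed analogously.

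The principal obstacle is verifying the coherence required by \Cref{lem:limit-Omega} at the end, namely that for every $\alpha \colon n \to n'$ in $\McN$ the transition functor $\McE^{\McI_n} \to \McE^{\McI_{n'}}$ sends $(\Omega_n, \true_n)$ to $(\Omega_{n'}, \true_{n'})$ and preserves pullbacks. Preservation of pullbacks is immediate since this functor is a restriction and pullbacks in both categories are pointwise in $\McE$. Preservation of the classifier and truth reduces, through the gluing description, to the naturality under restriction of the defining equalisers, which in turn follows from the fact that restriction commutes, up to canonical isomorphism, with the formation of matching objects along the attaching profunctor $\McI^\circ$, so that $\M_i\Omega_{<n}$ is transported to $\M_{i'}\Omega_{<n'}$ for the corresponding $i'$ while $\overline\Omega_n$ restricts to $\overline\Omega_{n'}$ by the compatibility already built into the recursion. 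Once this coherence is in hand, \Cref{lem:limit-Omega} produces the unique $\Omega_\infty$ and $\true_\infty \colon 1 \to \Omega_\infty$ in $\McE^{\McI_\infty}$ satisfying $\Omega_\infty|_n = \Omega_n$ and $\true_\infty|_n = \true_n$; reading off the equaliser at each boundary component $[i]_n$, and using that $\Omega_\infty|_{<n} = \Omega_{<n}$ implies $\M_n\Omega_\infty = \M_n\Omega_{<n}$, yields the pointwise formulas displayed in the statement.
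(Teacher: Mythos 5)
Your overall decomposition is the same as the paper's: well-founded recursion on the degree in $\McN$, using \Cref{lem:limit-Omega} to assemble the classifier at the interior $\McE^{\McI_{<n}}$ from the recursion hypothesis, then \Cref{prop:iter-glue} and \Cref{thm:glue-Omega} to extend across the $n$-th stratum, and finally \Cref{lem:limit-Omega} again at $\lim_{n}\McE^{\McI_n}$. Your explicit verification that $\McE^{\partial\McI_n}$ has finite limits and that $\M_n=(t_n)^*\cdot\cosk_n$ is left exact (as a composite of a right adjoint with a restriction) fills in details the paper leaves implicit and is worth having.

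However, your coherence argument is confused and, as stated, does not work. You claim that preservation of the classifier and truth map by the transition $\McE^{\McI_\alpha} \colon \McE^{\McI_n}\to\McE^{\McI_{n'}}$ (for $\alpha \colon n\to n'$ in $\McN$) ``reduces to the naturality under restriction of the defining equalisers'' with $\M_i\Omega_{<n}$ being transported to $\M_{i'}\Omega_{<n'}$ and $\overline\Omega_n$ restricting to $\overline\Omega_{n'}$. Neither claim makes sense. Since $\deg n > \deg n'$, the functor $\McI_\alpha$ lands in the subdiagram $\McI_{<n}$ and never touches the boundary $\partial\McI_n$: restriction along $\McI_\alpha$ simply does not see the equaliser that defines $\Omega_n$ at objects $[i]_n$, so there is nothing to compare. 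There is also no ``corresponding $i'\in\partial\McI_{n'}$'' for $i\in\partial\McI_n$, and there is no restriction functor $\McE^{\partial\McI_n}\to\McE^{\partial\McI_{n'}}$ under which $\overline\Omega_n$ could restrict to $\overline\Omega_{n'}$---these are separate, unrelated boundary categories. The actual (and simpler) argument is the one used in the paper: $\McI_\alpha$ factors as $\McI_{n'} \to \McI_{<n} \to \McI_n$ through the colimiting leg, hence $\McE^{\McI_\alpha}$ factors as $\McE^{\McI_n}\to\McE^{\McI_{<n}}\to\McE^{\McI_{n'}}$; the gluing equivalence of \Cref{prop:iter-glue} gives $\Omega_n|_{<n}=\Omega_{<n}$ by construction, and $\Omega_{<n}|_\alpha=\Omega_{n'}$ by construction of $\Omega_{<n}$ via \Cref{lem:limit-Omega}. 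No naturality of the equaliser under restriction is needed. You should replace your coherence paragraph with this factorization argument.
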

\begin{proof}
  By definition, $\McI_\infty = \colim_{n \in \McN} \McI_n$ and so
  $\McE^{\McI_\infty} = \lim_{n \in \McN}\McE^{\McI_n}$.
  We aim to use \Cref{lem:limit-Omega} by constructing subobject classifiers
  $\Omega_n \in \McE^{\McI_n}$ and truth values
  $\true_n \colon 1 \to \Omega_n \in \McE^{\McI_n}$ for each $n \in \McN$
  in such a way that if $\alpha \colon n \to n'$ then
  $\McE^{\McI_\alpha} \colon \McE^{\McI_n} \to \McE^{\McI_{n'}}$ is such that
  $\McE^{\McI_\alpha}(\true_n) = \true_{n'}$.

  Because $\McN$ is inverse, we proceed by induction on the degree of objects.
  Assume that, for $n \in \McN$ fixed, the required subobject classifiers
  $\Omega_{n'}$ and truth maps
  $\true_{n'} \colon 1 \to \Omega_{n'} \in \McE^{\McI_{n'}}$ have all been
  constructed for each $n' \in \McN$ with $\deg{n'} < \deg{n}$ such that if
  $\alpha \colon n_2' \to n_1' \in \McN$ for $n_1',n_2' < n$ then
  $\McE^{\McI_\alpha}(\true_{n_2'}) = \true_{n_1'}$.
  Then, by \Cref{lem:limit-Omega},
  \begin{align*}
    \McE^{\McI_{<n}} = \McE^{\colim((n/\McN)^\op \to \McN^\op \xrightarrow{\McI}
    \Cat)} \simeq \lim(n/\McN \to \McN \xrightarrow{\McE^{\McI_-}} \Cat)
  \end{align*}
  admits a subobject classifier $\Omega_{<n}$ and truth map
  $\true_{<n} \colon 1 \to \Omega_{<n}$ such that for each
  $\alpha \colon n \to n' \in \McN$, one has $\true_{<n}|_\alpha = \true_{n'}$.
  By assumption, $\McE^{\partial\McI_n}$ has a subobject classifier
  $\overline{\Omega}_n$ and truth map
  $\overline{\true}_n \colon 1 \to \overline{\Omega}_n$.
  Hence, by \Cref{thm:glue-Omega}, the gluing category
  $\Gl(\M_n \colon \McE^{\McI_{<n}} \to \McE^{\partial\McI_n})$ has a subobject
  classifier and truth map.
  The subobject classifier is given by
  $\Omega_{=n} \hookrightarrow \overline{\Omega}_n \times \M_n\Omega_{<n} \xrightarrow{\pi_2}
  \M_n\Omega_{<n}$ in which the first map is the equalising map
  \begin{equation*}
    \begin{tikzcd}[column sep=huge]
      % https://q.uiver.app/#q=WzAsNCxbMSwwLCJcXE9tZWdhX1xcTWNFIFxcdGltZXMgRlxcT21lZ2FfXFxNY0MiXSxbMiwwLCJcXE9tZWdhX1xcTWNFIFxcdGltZXMgXFxPbWVnYV9cXE1jRSJdLFszLDAsIlxcT21lZ2FfXFxNY0UiXSxbMCwwLCJFX1xcT21lZ2EiXSxbMSwyLCJcXHdlZGdlIl0sWzAsMSwiXFxpZCBcXHRpbWVzIFxcY2hpX3tGKFxcdHJ1ZSl9Il0sWzAsMiwiXFxwaV8xIiwyLHsiY3VydmUiOjJ9XSxbMywwLCIiLDIseyJzdHlsZSI6eyJ0YWlsIjp7Im5hbWUiOiJob29rIiwic2lkZSI6InRvcCJ9fX1dXQ==&macro_url=https%3A%2F%2Fgist.githubusercontent.com%2Flim495062%2F61b94af9ef95c1c7b0763c937de29c2b%2Fraw%2F52ee19151488bca0a4e9eb671d502054a336c94d%2Facmhwmacros.sty
      {\Omega_{=n}} & {\overline{\Omega}_n \times \M_n\Omega_{<n}} & {\overline{\Omega}_n \times \overline{\Omega}_n} & {\overline{\Omega}_n}
      \arrow["\wedge", from=1-3, to=1-4]
      \arrow["{\id \times \chi_{\M_n(\true_{<n})}}", from=1-2, to=1-3]
      \arrow["{\pi_1}"', bend right=10, from=1-2, to=1-4]
      \arrow[hook, from=1-1, to=1-2]
    \end{tikzcd}
  \end{equation*}
  % %
  where $\chi_{\M_n(\true_{<n})}$ is the classifying map of
  $\M_n(\true_{<n}) \colon \cong 1 \M_n1 \hookrightarrow \M_n\Omega_{<n}$.
  And the truth map is given by $\true_{=n} \colon 1 \to \Omega_{=n}$ induced by
  $(1 \xrightarrow{\overline{\true}_n} \overline{\Omega}_n, 1 \cong \M_n1
  \xrightarrow{\M_n(\true_{<n})} \M_n(\Omega_{<n}))$.
  Therefore, as in \Cref{eqn:gl-Omega}, we have
  \begin{equation*}
    \begin{tikzcd}[column sep=large]
      1 \ar[d, "\cong"] \ar[r, dashed, "\true_{=n}"] & \Omega_{=n} \ar[d] \\
      \oM_n1 \ar[r, dashed, "\M_n(\true_{<n})"'] & \M_n\Omega_{<n}
    \end{tikzcd} \in \Gl(\M_n)
  \end{equation*}
  By \Cref{prop:iter-glue}, the subobject classifier and truth map above gives
  rise to a subobject classifier and truth map
  $\true_n \colon 1 \to \Omega_n \in \McE^{\McI_n}$ that extends
  $\true_{<n} \colon 1 \to \Omega_{<n} \in \McE^{\McI_{<n}}$.
  But for a map $\alpha \colon n \to n' \in \McN$ one has
  $\McI_\alpha = \McI_{n'} \to \McI_{<n} \to \McI_n$ where the first map is the
  colimiting leg, because $\McI_{<n}$ is the weighted colimit
  $\McI_n = \colim((n/\McN)^\op \to \McN^\op \xrightarrow{\McI} \Cat)$.
  Hence,
  $\McE^{\McI_\alpha} = \McE^{\McI_n} \to \McE^{\McI_{<n}} \to
  \McE^{\McI_{n'}}$.
  But by construction, $\true_n \colon 1 \to \Omega_n \in \McE^{\McI_n}$ extends
  $\true_{<n} \colon 1 \to \Omega_{<n} \in \McE^{\McI_n}$, and
  $(\true_{<n})|_\alpha = \true_{n'} \colon 1 \to \Omega_{n'}$.
  Hence, this completes the inductive argument.
\end{proof}

\begin{corollary}\label{cor:E-inv-Omega}
  Suppose
  $(\McN,\McI,\partial\McI,\McI^\circ) = (\bN,\McI_{\leq -}, \bG_-(\McI),
  \McI_{\leq-}(-,-))$ is the iterated gluing data for a generalised inverse
  category $\McI$.
  Further assume that each groupoid $\bG_n(\McI)$ is connected or $\McE$ has an
  initial object.
  If $\McE$ has all finite limits, a subobject classifier $\Omega_\McE$ and a
  truth value $\true_\McE \colon 1_\McE \to \Omega_\McE$ then $\McE^\McI$ has a
  subobject classifier $\Omega$ and a truth value $\true \colon 1 \to \Omega$.

  Moreover, at each $i \in \McI$, one has the equaliser
  \begin{equation*}
    \begin{tikzcd}[column sep=large]
      % https://q.uiver.app/#q=WzAsNCxbMSwwLCJcXE9tZWdhX1xcTWNFIFxcdGltZXMgRlxcT21lZ2FfXFxNY0MiXSxbMiwwLCJcXE9tZWdhX1xcTWNFIFxcdGltZXMgXFxPbWVnYV9cXE1jRSJdLFszLDAsIlxcT21lZ2FfXFxNY0UiXSxbMCwwLCJFX1xcT21lZ2EiXSxbMSwyLCJcXHdlZGdlIl0sWzAsMSwiXFxpZCBcXHRpbWVzIFxcY2hpX3tGKFxcdHJ1ZSl9Il0sWzAsMiwiXFxwaV8xIiwyLHsiY3VydmUiOjJ9XSxbMywwLCIiLDIseyJzdHlsZSI6eyJ0YWlsIjp7Im5hbWUiOiJob29rIiwic2lkZSI6InRvcCJ9fX1dXQ==&macro_url=https%3A%2F%2Fgist.githubusercontent.com%2Flim495062%2F61b94af9ef95c1c7b0763c937de29c2b%2Fraw%2F52ee19151488bca0a4e9eb671d502054a336c94d%2Facmhwmacros.sty
      {\Omega i} & {\Omega_\McE \times \M_i\Omega} & {\Omega_\McE \times \Omega_\McE} & {\Omega_\McE}
      \arrow["\wedge", from=1-3, to=1-4]
      \arrow["{\id \times \chi_{\M_i(\true)}}", from=1-2, to=1-3]
      \arrow["{\pi_1}"', bend right=10, from=1-2, to=1-4]
      \arrow[hook, from=1-1, to=1-2]
    \end{tikzcd}
  \end{equation*}
  where $\chi_{\M_i(\true)}$ is the characteristic map of
  $\M_i(\true) \colon \M_i1 \cong 1 \hookrightarrow \M_i\Omega_\infty
  \in \McE$ and
  \begin{equation*}
    \begin{tikzcd}
      & 1 \ar[d, "\true_i"{description}]
      \ar[ddl, "\true_\McE"', bend right] \ar[ddr, "\M_i(\true)", bend left]
      \\
      & \Omega i \ar[d, hook]
      \\
      \Omega_i
      &
      \Omega_\McE \times \M_i\Omega \ar[r, "\pi_2"'] \ar[l, "\pi_1"]
      &
      \M_i\Omega
    \end{tikzcd}
  \end{equation*}
\end{corollary}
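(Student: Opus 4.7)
The plan is to obtain this corollary as a direct instantiation of the general existence result \Cref{thm:E-iter-glue-Omega}, once the hypotheses have been checked using \Cref{prop:inv-iter-glue} and \Cref{prop:gpd-Omega-ptwise}.

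First, I would invoke \Cref{prop:inv-iter-glue} to confirm that $(\bN, \McI_{\leq-}, \bG_-(\McI), \McI_{\leq-}(-,-))$ is genuinely the data of an iterated gluing diagram, and that $\McI = \McI_{\leq\infty}$ is the induced iterated gluing category. Then \Cref{asm:E-enough-limits} holds because $\McE$ is assumed to have all finite limits and each $\oint_{j \in \McI_{<n}}\McI^\circ(i,j)$ indexes a limit over a set of slices in $\McI$, which reduces to a finite limit in the generalised inverse case (equivalently, one may appeal directly to the standing assumption that $\McE$ has all finite limits, which implies the required absolute matching object functors exist).

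Next, I need to check that each $\McE^{\partial\McI_n} = \McE^{\bG_n(\McI)}$ admits a subobject classifier and truth map. Here is where the extra hypothesis that each $\bG_n(\McI)$ is connected or $\McE$ has an initial object enters: it is precisely the hypothesis of \Cref{prop:gpd-Omega-ptwise}, which then supplies a subobject classifier $\overline{\Omega}_n$ on $\McE^{\bG_n(\McI)}$ given by the constant diagram at $\Omega_\McE$, together with the constant truth map $\overline{\true}_n \colon 1 \to \overline{\Omega}_n$ whose component at each $i \in \bG_n(\McI)$ is $\true_\McE$. Together with the fact that $\McE$ has all finite limits, this verifies every hypothesis of \Cref{thm:E-iter-glue-Omega}, which then yields the existence of a subobject classifier $\Omega \in \McE^\McI$ and truth value $\true \colon 1 \to \Omega$.

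Finally, the explicit pointwise description follows by substituting the constant diagrams into the equaliser supplied by \Cref{thm:E-iter-glue-Omega}. For each $i \in \partial\McI_n \hookrightarrow \McI$, the values $\overline{\Omega}_n i = \Omega_\McE$ and $(\overline{\true}_n)_i = \true_\McE$ turn the equaliser
\[
\Omega[i]_n \hookrightarrow \overline{\Omega}_n i \times \M_i\Omega \rightrightarrows \overline{\Omega}_n i
\]
of \Cref{thm:E-iter-glue-Omega} into the stated equaliser $\Omega i \hookrightarrow \Omega_\McE \times \M_i\Omega \rightrightarrows \Omega_\McE$, and identifies the characteristic map $\chi_{\M_n(\true_\infty)}$ componentwise with $\chi_{\M_i(\true)}$. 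Since every $i \in \McI$ lies in a unique $\partial\McI_n$, this covers all of $\McI$.

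There is no serious obstacle here: the work has been packaged into the two preceding results, and the only step demanding care is the bookkeeping that identifies the stratum-wise constant structures with their pointwise values so that the equaliser of \Cref{thm:E-iter-glue-Omega} reads as stated.
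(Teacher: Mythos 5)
Your proof takes essentially the same route as the paper: establish a subobject classifier and truth map in each $\McE^{\bG_n(\McI)}$ via \Cref{prop:gpd-Omega-ptwise}, then apply \Cref{thm:E-iter-glue-Omega}; the explicit equaliser description then specializes by substituting the constant diagram. One caveat: your justification of \Cref{asm:E-enough-limits} is not right --- the categories $\oint_{j \in \McI_{<n}} \McI^\circ(i,j)$ (equivalently the under-slices $\McI^-(i)$) need not be finite for a general generalised inverse category, so ``$\McE$ has all finite limits'' does not by itself deliver the required matching-object functors, and the parenthetical alternative mis-states what the standing assumption actually is. This does not invalidate the argument because \Cref{asm:E-enough-limits} is a standing hypothesis fixed at the start of \Cref{sec:diagrams-logic} and is simply inherited by the corollary (as it is in the paper's proof, which does not try to re-derive it), but you should not claim to obtain it from finite limits alone.
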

\begin{proof}
  For each $n \in \bN$, because either $\bG_n(\McI)$ is connected or $\McE$ has
  an initial object, and $\McE$ has a subobject classifier $\Omega_\McE$ along
  with a truth map $\true_\McE$, it follows by \Cref{prop:gpd-Omega-ptwise} that
  $\McE^{\bG_n(\McI)}$ has a subobject classifier and truth map given
  respectively by the constant diagram at $\Omega_\McE$ and constant natural
  transformation at $\true_\McE$.
  Thus, \Cref{thm:E-iter-glue-Omega} applies to conclude the result.
\end{proof}

\subsection{Dependent Products}
We now construct the dependent products in
$\McE^{\McI_\infty} \simeq \lim_{n \in \McN}\McE^{\McI_n}$.
The approach is much similar to the one for the subobject classifier and truth
map in \Cref{subsec:inv-diag-Omega} by
constructing dependent products in each $\McE^{\McI_n}$ is a suitably compatible way
so that they assemble into dependent products in
$\McE^{\McI_\infty}$ using \Cref{cor:limit-Pi}.

Like before, we proceed by induction on $n \in \McN$, so that the task is to
construct dependent products in each $\McE^{\McI_n}$ with the assumption that
there is already a compatible family of dependent products constructed for each
$\McE^{\McI_{n'}}$ with $\deg{n'} < \deg{n}$.
In order to do so, \Cref{prop:iter-glue} states that
$\McE^{\McI_n} \simeq \Gl(\McE^{\McI_{<n}} \xrightarrow{\M_n}
\McE^{\partial\McI_n})$.
The construction of dependent products in gluing categories is
provided in \Cref{thm:glue-Pi}.
In order to apply \Cref{thm:glue-Pi} to the $n$-th absolute matching object
functor $\McE^{\McI_{<n}} \xrightarrow{\M_n} \McE^{\partial\McI_n}$, the
categories $\McE^{\McI_{<n}}$ and $\McE^{\partial\McI_n}$ must dependent products.
Because $\McI_{<n}$ is a (weighted) colimit of all those $\McI_{n'}$ with
$\deg{n'} < \deg{n}$, one may express $\McE^{\McI_{<n}}$ as a (weighted) limit
consisting of those $\McE^{\McI_{n'}}$ with $\deg{n'} < \deg{n}$.
By the induction hypothesis, the dependent products for each of these
$\McE^{\McI_{n'}}$ are already constructed in a suitably compatible manner, so
\Cref{cor:limit-Pi} assembles them into dependent products in
$\McE^{\McI_{<n}}$.
The dependent product of $\McE^{\partial\McI_n}$, on the other
hand, cannot be constructed with general procedures like these because the
$n$-th boundary $\partial\McI_n$ may be any category.
Therefore, in the fully general case of iterated gluing diagrams, we work under
the assumption of the existence of dependent products in
$\McE^{\partial\McI_n}$ to complete the induction step using
\Cref{thm:glue-Pi}.
However, in the generalised inverse case, each $\partial\McI_n$ is a groupoid,
so \Cref{thm:all-invert-Pi-htpy} provides a construction (and therefore
existence) of the dependent product in $\McE^{\partial\McI_n}$.

\begin{theorem}\label{thm:E-iter-glue-Pi}
  Fix $f \colon B \to A \in \McE^{\McI_\infty}$.
  Suppose \Cref{asm:E-enough-limits} holds and further assume that for each
  $n \in \McN$, the maps
  $f|_{=n} \colon B|_{=n} \to A|_{=n} \in \McE^{\partial\McI_n}$ and
  $\M_nf \colon \M_nB \to \M_nA \in \McE^{\partial\McI_n}$ are powerful.
  Then, the dependent product $\Pi_B$ exists in $\McE^{\McI_\infty}$.

  For $g \colon C \to B \in \McE^{\McI_\infty}/B$ and $i \in \partial\McI_n$
  value of $\Pi_Bg \colon \Pi_BC \to A \in \sfrac{\McE^{\McI_\infty}}{A}$ at
  $[i]_n \in \McI_\infty$ is given by the pullback
  \begin{equation*}
    \begin{tikzcd}[column sep=huge]
      {(\Pi_BC)[i]_n}
      \ar[d] \ar[rr]
      \ar[rd, phantom, "\lrcorner"{pos=0}]
      & &
      (\Pi_{B|_{=n}}{C|_{=n}})i
      \ar[d, "{(\Pi_{B|_n}(g|_n,\m_nC))_i}"]
      \\
      {A[i]_n \times_{\M_i A} \M_i(\Pi_B C)}
      \ar[r, "{A[i]_n \times_{\M_i A} \M_i(\ev)^\ddagger}"']
      &
      {A[i]_n \times_{\M_i A} (\Pi_{\M_n B}{\M_nC})i}
      \ar[r, "{(f_{[i]_n} \times_{\M_i B} \ev_i)^\ddagger}"']
      &
      (\Pi_{B|_{=n}}(B|_{=n} \times_{\M_n B} \M_nC))i
    \end{tikzcd}
    \in \McE
  \end{equation*}
\end{theorem}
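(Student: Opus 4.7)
The plan is to mirror the inductive structure of \Cref{thm:E-iter-glue-Omega}, replacing the appeals to \Cref{thm:glue-Omega} with appeals to \Cref{thm:glue-Pi} and the appeal to \Cref{lem:limit-Omega} with an appeal to \Cref{cor:limit-Pi}. Concretely, writing $\McE^{\McI_\infty} \simeq \lim_{n \in \McN} \McE^{\McI_n}$ via the colimit presentation $\McI_\infty = \colim_{n \in \McN} \McI_n$, the goal is to produce, for each $n \in \McN$, a dependent product $\Pi_{B|_n}(g|_n) \colon \Pi_{B|_n}(C|_n) \to A|_n \in \McE^{\McI_n}$ such that these are strictly compatible along the transition functors $(-)|_\alpha$ for $\alpha \colon n \to n' \in \McN$, and then assemble them via \Cref{cor:limit-Pi}.

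First I would set up the induction on the degree in $\McN$. Fix $n \in \McN$ and assume that compatible dependent products $\Pi_{B|_{n'}}$ have been constructed in each $\McE^{\McI_{n'}}$ for $\deg n' < \deg n$. By \Cref{cor:limit-Pi} applied to the diagram $n/\McN \to \McN \xrightarrow{\McE^{\McI_-}} \Cat$, these assemble into a dependent product $\Pi_{B|_{<n}}$ in $\McE^{\McI_{<n}}$ with $(\Pi_{B|_{<n}}(g|_{<n}))|_\alpha = \Pi_{B|_{n'}}(g|_{n'})$. Using \Cref{prop:iter-glue}, the equivalence $\McE^{\McI_n} \simeq \Gl(\M_n)$ presents $f|_n$ as the map of gluing data $(f|_{=n}, \M_n f) \colon (B|_{=n} \to \M_n B) \to (A|_{=n} \to \M_n A)$. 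Powerfulness of $f|_{=n}$, $\M_n f$, and $\M_n k$ (the latter following from the induction, since $\M_n$ factors through $\McE^{\McI_{<n}} \xrightarrow{\cosk_n}\McE^{\McI_n} \xrightarrow{(t_n)^*} \McE^{\partial\McI_n}$ and dependent products in $\McE^{\McI_{<n}}$ restrict appropriately) together with preservation of pullbacks by $\M_n$ are exactly the hypotheses of \Cref{thm:glue-Pi}, yielding a dependent product $\Pi_{B|_n}(g|_n)$ in $\McE^{\McI_n}$, whose value at any $i \in \partial\McI_n$ is the pullback described in \Cref{constr:glue-Pi}.

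The main obstacle is the compatibility check: one needs to verify that the restriction functors $(-)|_\alpha \colon \McE^{\McI_n} \to \McE^{\McI_{n'}}$ intertwine the $\Pi$-structures, i.e.\ that they give maps of adjunctions in the sense required by \Cref{cor:limit-Pi}. Because the dependent product produced by \Cref{constr:glue-Pi} is a canonical pullback built from evaluation-counit transposes, and because restriction along $\alpha$ both preserves these pullbacks (as it is a limit of restrictions, which are left exact) and commutes with the matching object functors up to the canonical isomorphisms induced by $\McI_{n'} \to \McI_{<n} \to \McI_n$, the resulting square of adjunctions does commute; but tracing this through the several transposes and pullbacks of \Cref{constr:glue-Pi} is the technically delicate point. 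Once compatibility is established, \Cref{cor:limit-Pi} yields the dependent product $\Pi_B \colon \McE^{\McI_\infty}/B \to \McE^{\McI_\infty}/A$ with $(\Pi_B g)|_n = \Pi_{B|_n}(g|_n)$.

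Finally, the explicit formula for $(\Pi_B C)[i]_n$ at $i \in \partial\McI_n$ reads off directly from \Cref{eqn:Pi-pullback} in \Cref{constr:glue-Pi} applied to $\M_n \colon \McE^{\McI_{<n}} \to \McE^{\partial\McI_n}$: the role of $a, b, x, y$ is played by $A|_{=n}i, B|_{=n}i, \M_i A, \M_i B$, that of $c$ by $C|_{=n}i$, that of $F$ by $\M_n$, and $F(\ev)^\ddagger$ becomes $\M_i(\ev)^\ddagger$. Substituting into the pullback square of \Cref{eqn:Pi-pullback} and identifying $\Pi_b c$ with $(\Pi_{B|_{=n}} C|_{=n})i$ via pointwise dependent products in $\McE^{\partial\McI_n}$ (justified either by \Cref{thm:all-invert-Pi-htpy} in the generalised inverse case or by hypothesis in general) gives precisely the formula in the statement.
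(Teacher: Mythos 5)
Your proposal follows essentially the same route as the paper's proof: induction on degree in $\McN$, appealing to \Cref{cor:limit-Pi} to assemble the dependent products constructed on lower strata into a dependent product in $\McE^{\McI_{<n}}$, then applying \Cref{thm:glue-Pi} under the equivalence $\McE^{\McI_n} \simeq \Gl(\M_n)$, and finally invoking \Cref{cor:limit-Pi} again to pass to the limit $\McE^{\McI_\infty}$. The formula reads off from \Cref{eqn:Pi-pullback} exactly as you say. However, there are two points to flag.

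First, your identification of the powerfulness hypotheses of \Cref{thm:glue-Pi} is garbled. That theorem asks for three maps to be powerful: $g$, $k$, and $Fk$. Here $g = f|_{=n}$, $k = f|_{<n}$, and $Fk = \M_n f$. You list $f|_{=n}$, $\M_n f$, and $\M_n k$; since $\M_n k = \M_n(f|_{<n}) = \M_n f$, this repeats the same map twice and omits $k = f|_{<n}$. Moreover the parenthetical justification you offer --- that $\M_n$ factors through $\cosk_n$ and dependent products in $\McE^{\McI_{<n}}$ restrict --- is both unnecessary and not a valid argument for preservation of powerfulness by $\M_n$; the powerfulness of $\M_n f$ is a direct \emph{hypothesis} of the theorem and needs no derivation, while the powerfulness of $k = f|_{<n}$ is exactly what the inductive hypothesis (via \Cref{cor:limit-Pi}) delivers. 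Once these are correctly assigned, no extra argument is needed.

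Second, your compatibility discussion gestures at the right phenomenon but misses the clean mechanism. The paper does not reason about restrictions preserving pullbacks; instead it uses the last clause of \Cref{thm:glue-Pi}, which states that the projection $\Gl(F) \to \McE$ preserves the counit of the dependent-product adjunction. Translated through \Cref{prop:iter-glue}, this says precisely that $\Pi_{B|_n}C$ extends $\Pi_{B|_{<n}}(C|_{<n})$ and the counit $\ev_n$ extends $\ev_{<n}$, which together with the inductive compatibility gives the maps of adjunctions required by \Cref{cor:limit-Pi}. This is the point you should invoke rather than the ``technically delicate'' left-exactness heuristic, which does not by itself establish that the adjunction squares commute.

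Finally, a minor slip: in reading off the formula you identify $a,b$ with the components $A|_{=n}i, B|_{=n}i \in \McE$, but in \Cref{constr:glue-Pi} applied to $\Gl(\M_n)$ the roles of $a,b$ are played by the objects $A|_{=n}, B|_{=n} \in \McE^{\partial\McI_n}$, and one evaluates the resulting pullback at $i$ afterwards. The appeal to \Cref{thm:all-invert-Pi-htpy} is also out of place here --- the dependent product $\Pi_{B|_{=n}}$ exists simply because $f|_{=n}$ is assumed powerful; \Cref{thm:all-invert-Pi-htpy} enters only in \Cref{cor:I-Pi}.
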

\begin{proof}
  We follow an argument similar to \Cref{thm:E-iter-glue-Omega}.
  Because $\McE^{\McI_\infty} = \lim_{n \in \McN} \McE^{\McI_n}$, we aim to use
  \Cref{cor:limit-Pi} by constructing a compatible family of functors
  $(\Pi_{B|_n} \colon \sfrac{\McE^{\McI_n}}{B|_n} \to
  \sfrac{\McE^{\McI_n}}{A|_n})_{n \in \McN}$.
  Specifically, we construct such a family $(\Pi_{B|_n})_{n \in \McN}$ of
  functors each of which is right adjoint to the pullback
  $((f|_n)^* \colon \sfrac{\McE^{\McI_n}}{A|_n} \to
  \sfrac{\McE^{\McI_n}}{B|_n})_{n \in \McN}$ such that for each
  $\alpha \colon n \to n'$, one has $\McE^{\McI_\alpha}(\Pi_{B|_n}) = \Pi_{B|_{n'}}$ and
  $\McE^{\McI_\alpha}(\ev_n) = \ev_{n'} \colon (f|_{n'})^*\Pi_{B|_{n'}} \to \id \in
  \sfrac{\McE^{\McI_{n'}}}{B|_{n'}}$, where $\ev_n$ is the counit.

  Since $\McN$ is inverse, we proceed by induction on the degree of objects.
  Assume that, for $n \in \McN$ fixed, the required dependent product functors
  $\Pi_{B|_{n'}}$ and counits $\ev_{n'}$ have all been constructed for each
  $n' < n$ such that if $\alpha \colon n_2' \to n_1' \in \McN$ for $n_1',n_2' < n$
  then $\McE^{\McI_\alpha}(\Pi_{B|_{n_2'}}) = \Pi_{B|_{n_1'}}$ and
  $\McE^{\McI_\alpha}(\ev_{n_2'}) = \ev_{n_1'}$.
  Then, by \Cref{cor:limit-Pi},
  \begin{align*}
    \McE^{\McI_{<n}} = \McE^{\colim((n/\McN)^\op \to \McN^\op \xrightarrow{\McI}
    \Cat)} \simeq \lim(n/\McN \to \McN \xrightarrow{\McE^{\McI_-}} \Cat)
  \end{align*}
  admits dependent products along
  $f|_{<n} \colon B|_{<n} \to A|_{<n}$ given by the dependent product functor
  $\Pi_{B|_{<n}} \colon \sfrac{\McE^{\McI_{<n}}}{A|_{<n}} \to
  \sfrac{\McE^{\McI_{<n}}}{B|_{<n}}$.
  % , where
  % $(-)|_{<n} \colon \McE^{\McI_\infty} \to \McE^{\McI_{<n}}$ is the
  % precomposition with
  % $\McI_{<n} \hookrightarrow [\McI_n^\circ] = \McI_n \to \McI_\infty$ the
  % inclusion and colimiting leg.
  %
  Moreover, one has that $(\Pi_{B|_{<n}})|_\alpha = \Pi_{B|_{n'}}$ for each
  $\alpha \colon n \to n'$ and the counit
  $\txtev|_{<n} \colon (f|_{<n})^*\Pi_{B|_{<n}} \to \id$ is such that
  $(\txtev|_{<n})|_\alpha = \ev_{n'}$ for each $\alpha \colon n \to n'$.
  % , where
  % $(-)|_\alpha \colon \McE^{\McI_{<n}} \to \McE^{\McI_{n'}}$ is precomposition
  % with the colimiting leg $\McI_{n'} \to \McI_{<n}$ for $\alpha \colon n \to n$.
  %
  And by assumption,
  $f|_{=n} \colon B|_{=n} \to A|_{=n} \in \McE^{\partial\McI_n}$ is powerful.
  Hence, by \Cref{thm:glue-Pi}, the gluing category
  $\Gl(\M_n \colon \McE^{\McI_{<n}} \to \McE^{\partial\McI_n})$ admits
  dependent products along
  \begin{equation*}
    \begin{tikzcd}
      B|_{=n} \ar[r, "{f|_{=n}}",dashed] \ar[d, "\m_nB"'] & A|_{=n} \ar[d, "\m_nA"] \\
      \M_nB \ar[r, "\M_nf"',dashed] & \M_nA
    \end{tikzcd} \in \Gl(\M_n \colon \McE^{\McI_{<n}} \to \McE^{\partial\McI_n})
  \end{equation*}
  such that the projection $\Gl(\M_n) \to \McE^{\McI_{<n}}$ preserves the
  counit.
  In particular, under $\McE^{\McI_n} \simeq \Gl(\M_n)$ as from \Cref{prop:iter-glue},
  $f|_n \colon B|_n \to A|_n \in \McE^{\McI_n}$ is powerful.
  For each $g \colon C \to B \in \McE^{\McI_n}$ , one has, by
  \Cref{eqn:Pi-pullback}, the pullback
  \begin{equation*}
    \begin{tikzcd}[column sep=huge]
      (\Pi_{B|_n}C)|_{=n}
      \ar[d] \ar[rr]
      \ar[rd, phantom, "\lrcorner"{pos=0}]
      & &
      \Pi_{B|_{=n}}{C|_{=n}}
      \ar[d, "{\Pi_{B|_{=n}}(g|_{=n},\m_nC)}"]
      \\
      A|_{=n} \times_{\M_n A} \M_n(\Pi_{B|_{<n}}C|_{<n})
      \ar[r, "A|_{=n} \times_{\M_n A} \M_n(\ev)^\ddagger"']
      &
      A|_{=n} \times_{\M_n A} \Pi_{\M_n B}{\M_nC}
      \ar[r, "(f|_{=n} \times_{\M_n B} \ev)^\ddagger"']
      &
      \Pi_{B|_{=n}}(B|_{=n} \times_{\M_n B} \M_nC)
    \end{tikzcd}
    \in \McE^{\partial\McI_n}
  \end{equation*}
  with the matching map for $\Pi_{B|_n}C \in \McE^{\McI_n}$ being
  $(\Pi_{B|_n}C)|_{=n} \to A|_{=n} \times_{\M_n A} \M_n(\Pi_{B|_{<n}}C|_{<n})
  \to \M_n(\Pi_{B|_{<n}}C|_{<n})$.
  In particular, $\Pi_{B|_n}C \in \McE^{\McI_n}$ extends
  $\Pi_{B|_{<n}}C|_{<n}\McE^{\McI_{<n}}$.
  Because for $\alpha \colon n \to n'$ the map
  $\McI_\alpha = \McI_{n'} \to \McI_{<n} \to \McI_n$ where the first map is the
  colimiting leg, it follows that
  $\McE^{\McI_\alpha} = \McE^{\McI_n} \to \McE^{\McI_{<n}} \to \McE^{\McI_{n'}}$,
  where the second map is the limiting leg formed by precomposing with the
  colimiting leg $\McI_{n'} \to \McI_{<n}$.
  But by construction, $\Pi_{B|_n}C \in \McE^{\McI_n}$ extends
  $\Pi_{B|_{<n}}C|_{<n} \in \McE^{\McI_{<n}}$, which means
  $\McE^{\McI_\alpha}(\Pi_{B|_n}C) = ((\Pi_{B|_n}C)|_{<n})|_\alpha =
  (\Pi_{B|_{<n}}C|_{<n})|_\alpha = \Pi_{B|_{n'}}C|_{n'}$.
  Furthermore, by the fact that the projection $\Gl(\M_n) \to \McE^{\McI_{<n}}$
  preserves the counit, the counit $\ev_n \colon (f|_n)^*\Pi_{B|_n} \to \id$ in
  $\McE^{\McI_n}$ extends the counit
  $\ev_{<n} \colon (f|_{<n})^*\Pi_{B|_{<n}} \to \id$.
  Therefore, $\McE^{\McI_\alpha}(\ev_n) = (\ev|_{<n})|_\alpha = \ev_{n'}$,
  completing the inductive argument.
\end{proof}

\begin{corollary}\label{cor:I-Pi}
  Suppose
  $(\McN,\McI,\partial\McI,\McI^\circ) = (\bN, \McI_{\leq
    -},\bG_-(\McI),\McI_{\leq -}(-,-))$ is the iterated gluing data for a
  generalised inverse category $\McI$.
  Fix $f \colon B \to A \in \McE^\McI$.
  Further assume that
  \begin{enumerate}
    \item Each component $f_i \colon B_i \to A_i \in \McE$ for each $i \in \McI$ is
    powerful.
    \item The map
    $\M_if \colon \M_iB|_n \to \M_iA|_n \in \McE$ is
    is powerful for each $n \in \bN$ and $i \in \bG_n(\McI)$.
    \item Each $\bG_n(\McI)$ is connected or $\McE$ has an initial object.
  \end{enumerate}
  Then, pulling back along $f$ admits a right adjoint $\Pi_B$.
  For each $g \colon C \to B \in \McE^{\McI}/B$, the value of
  $\Pi_Bg \colon \Pi_BC \to A \in \McE^{\McI}/A$ at $i \in \McI$ with degree $n$
  is given by the pullback
  \begin{equation}\label{eqn:I-Pi-pb}\tag{\textsc{$\McI$-$\Pi$-pb}}
    \begin{tikzcd}[column sep=huge]
      (\Pi_BC)i
      \ar[d] \ar[rr]
      \ar[rd, phantom, "\lrcorner"{pos=0}]
      & &
      \Pi_{Bi}{Ci}
      \ar[d, "{\Pi_{Bi}(g_i,\m_iC)}"]
      \\
      A_i \times_{\M_i A} \M_i(\Pi_B C)
      \ar[r, "A_i \times_{\M_i A} \M_i(\ev)^\ddagger"']
      &
      A_i \times_{\M_i A} \Pi_{\M_i B}{\M_iC}
      \ar[r, "(f_i \times_{\M_i B} \ev)^\ddagger"']
      &
      \Pi_{Bi}(B_i \times_{\M_i B} \M_iC)
    \end{tikzcd}
    \in \McE
  \end{equation}
  %
  % and the matching map for $\Pi_Bg \colon \Pi_BC \to A$ at $i$ is such that
  % %
  % \begin{equation}\label{eqn:I-Pi-mat}\tag{\textsc{$\McI$-$\Pi$-mat}}
  %   \begin{tikzcd}
  %     (\Pi_BC)i
  %     \ar[r]
  %     \ar[rr, bend left=20, "\m_i(\Pi_fC)"]
  %     \ar[rd, "(\Pi_Bg)_i"']
  %     &
  %     Ai \times_{\M_iA} \M_i(\Pi_BC)
  %     \ar[r]
  %     \ar[d]
  %     \ar[rd, phantom, "\lrcorner"{pos=0}]
  %     &
  %     \M_i(\Pi_BC)
  %     \ar[d, "\M_i(\Pi_Bg)"] \\
  %     & Ai \ar[r, "\m_iA"'] & \M_iA
  %   \end{tikzcd} \in \McE
  % \end{equation}
  %
\end{corollary}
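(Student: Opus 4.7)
The plan is to deduce Corollary \ref{cor:I-Pi} as an essentially direct specialisation of Theorem \ref{thm:E-iter-glue-Pi}. Under the iterated gluing data $(\bN,\McI_{\leq -},\bG_-(\McI),\McI_{\leq -}(-,-))$ of Proposition \ref{prop:inv-iter-glue} for a generalised inverse category $\McI$, the strata $\partial\McI_n$ are the groupoids $\bG_n(\McI)$ and evaluation of a diagram $X \in \McE^\McI$ at $[i]_n$ for $i \in \bG_n(\McI)$ coincides with $X_i$. So the pullback formula \Cref{eqn:I-Pi-pb} for $(\Pi_B C)i$ at $i \in \McI$ of degree $n$ is literally the formula from \Cref{thm:E-iter-glue-Pi} evaluated at $i \in \partial\McI_n$ once we have established that $\Pi_B$ exists in $\McE^{\McI_\infty} = \McE^\McI$.

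To invoke \Cref{thm:E-iter-glue-Pi}, I need to verify \Cref{asm:E-enough-limits} and that, for each $n$, both $f|_{=n} \colon B|_{=n} \to A|_{=n}$ and $\M_n f \colon \M_n B \to \M_n A$ are powerful as maps in $\McE^{\bG_n(\McI)}$. The enough-limits assumption is in force throughout the section. For powerfulness, since $\bG_n(\McI)$ is a groupoid, I appeal to \Cref{thm:all-invert-Pi-htpy} (taking $\McC = \bG = \bG_n(\McI)$, so that the localisation is an equivalence): a map in $\McE^{\bG_n(\McI)}$ is powerful provided each of its pointwise components is powerful in $\McE$. The components of $f|_{=n}$ at $i \in \bG_n(\McI)$ are precisely the $f_i$, powerful by hypothesis (1); the components of $\M_n f$ at $i$ are the $\M_i f$, powerful by hypothesis (2).

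Condition (3), that each $\bG_n(\McI)$ is connected or $\McE$ has an initial object, enters to legitimise the pointwise construction of the dependent product in the groupoid categories $\McE^{\bG_n(\McI)}$ at the level of the existence of evaluation functors $\ev_i$ admitting left adjoints, matching the hypothesis used earlier in \Cref{subsec:gpd-Pi} and paralleling the role it plays in \Cref{cor:E-inv-Omega}; it is what ties together the groupoid piece of the induction with the rest. With these verifications in hand, \Cref{thm:E-iter-glue-Pi} yields $\Pi_B$ together with its defining pullback in $\McE^{\partial\McI_n} = \McE^{\bG_n(\McI)}$, and pointwise evaluation at $i$ produces \Cref{eqn:I-Pi-pb}.

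The bulk of the work is therefore already packaged in the earlier theorems; the only mild obstacle is ensuring the translation between the abstract iterated gluing notation (matching objects at strata, evaluation at $[i]_n$) and the concrete inverse-category notation (matching objects $\M_i$, evaluation at $i \in \McI$) is correct, which is handled by Proposition \ref{prop:inv-iter-glue} identifying $\McI = \McI_{\leq \infty}$ and \Cref{def:iter-glue-mat} recovering the classical matching object formula via the right Kan extension.
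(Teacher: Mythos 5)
Your proposal follows essentially the same route as the paper: use \Cref{thm:all-invert-Pi-htpy} to promote pointwise powerfulness of $f_i$ and $\M_i f$ into powerfulness of $f|_{=n}$ and $\M_n f$ in $\McE^{\bG_n(\McI)}$, then invoke \Cref{thm:E-iter-glue-Pi} and read off the pullback formula. That core deduction is correct and complete.

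One thing to flag: your explanation of the role of hypothesis~(3) is misattributed. The ``each $\ev_x$ admits a left adjoint $\Delta_x$'' argument you invoke lives in \Cref{lem:gpd-conn-mono-ptwise}, which is part of the subobject-classifier development in \Cref{subsec:gpd-Omega}, not the dependent-product development in \Cref{subsec:gpd-Pi}. The construction \Cref{constr:gpd-Pi} and \Cref{thm:all-invert-Pi-htpy}, which is what you actually cite for powerfulness, make no use of connectedness of $\bG$ or an initial object of $\McE$. (The paper's own one-line proof also mentions hypothesis~(3) in the same breath as \Cref{thm:all-invert-Pi-htpy}, so this imprecision is shared.) This does not damage the correctness of your argument, since \Cref{thm:all-invert-Pi-htpy} supplies the powerfulness you need without hypothesis~(3); it only affects the parenthetical gloss on why that hypothesis appears in the statement.
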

\begin{proof}
  Because each $\M_if \colon \M_iB \to \M_iA \in \McE$ for $i \in \bG_n(\McI)$
  is powerful and either $\bG_n(\McI)$ is a connected groupoid or $\McE$ has an
  initial object, each $\M_nf \colon \M_nB \to \M_nA \in \McE^{\bG_n(\McI)}$ is
  powerful by \Cref{thm:all-invert-Pi-htpy}.
  Therefore, the result follows by \Cref{thm:E-iter-glue-Pi}.
\end{proof}

\section{Homotopical Dependent Products in Inverse Diagrams}\label{sec:homotopical-Pi}
In the final part of the paper, we equip $\McI_\infty$ with a wide subcategory
of weak equivalences $\McW$ (satisfying the 2-of-3 property) and compare the
behaviour of dependent products in $\McE^{\Ho\McI_\infty}$ and
$\McE^{\McI_\infty}$, where $\Ho\McI_\infty$ is the homotopical category
$\McW^{-1}\McI_\infty$.
Namely, given a map of homotopical diagrams
$f \colon B \to A \in \McE^{\Ho\McI_\infty}$ along with a homotopical diagram
$g \colon C \to B \in \McE^{\Ho\McI_\infty}/B$, we aim to answer when one has an
isomorphism
\begin{align*}
  \gamma^*(\Pi_BC) \cong \Pi_{\gamma^*B}{\gamma^*C} \in \McE^{\Ho\McI}/A
\end{align*}
As it turns out in \Cref{thm:pi-comp-iso}, when $\McE$ is sufficiently complete,
the key is to have the homotopical localisation restrict to an initial functor
$\gamma|_i \colon i/\McI_{<n} \to (i/\Ho\McI_\infty)^\circ$ for each
$i \in \partial\McI_n$, where $(i/\Ho\McI_\infty)^\circ$ is the underslice with
the identity removed.

This is because for any category $\bC$ and $f \colon B \to A \in \McE^\bC$ with
$g \colon C \to B \in \McE^\bC/B$, the dependent product $\Pi_BC$ at each
$c \in \bC$ internalises those compatible families
$(\alpha_f \in \Pi_{Bd}{Cd} ~|~ f \colon c \to d \in c/\bC)$.
Applying this observation to $\McE^{\Ho\McI_\infty}$ and $\McE^{\McI_\infty}$,
one sees that at each
$i \in \partial\McI_n \hookrightarrow \McI_n \to \McI_\infty \to
\Ho\McI_\infty$, one may think of $\Pi_BC \in \McE^{\Ho\McI_\infty}$ and
$\Pi_{\gamma^*B}{\gamma^*C} \in \McE \in \McE^{\McI_\infty}$ as roughly
\begin{align*}
  (\Pi_BC)i
  &\approx \set{\text{compatible families } (\alpha_f \in \Pi_{Bj}{Cj} ~|~ f \colon i \to j \in i/\Ho\McI_\infty)} \\
  (\Pi_{\gamma^*B}{\gamma^*C})i
  &\approx \set{\text{compatible families } (\alpha_f \in \Pi_{Bj}{Cj} ~|~ f \colon i \to j \in i/\McI_\infty)}
\end{align*}
Because $\Ho\McI_\infty$ is $\McI_\infty$ but with the maps $\McW$ formally
inverted, $i/\Ho\McI_\infty$ may contain more maps than $i/\McI_\infty$.
In order to have an isomorphism $(\Pi_BC)i \cong (\Pi_{\gamma^*B}{\gamma^*C})i$,
each compatible family
$(\alpha_f \in \Pi_{Bj}{Cj} ~|~ f \colon i \to j \in i/\McI_\infty)$ must
uniquely determine a compatible family
$(\alpha_f \in \Pi_{Bj}{Cj} ~|~ f \colon i \to j \in i/\Ho\McI_\infty)$.

To see when this is the case, consider the concrete example when
$\McI = \set{0 \leftarrow 2 \rightarrow 1}$ is the inverse $\spncat$ category,
$\McE = \Set$ and $f \colon B \to A$ is the terminal map $! \colon B \to !$.
Then, in $\Set^\spncat$, because there are no non-identity maps with domain 1 in
$\spncat$, one sees that
$(\Pi_{\gamma^*B}{\gamma^*C})1 = (\gamma^*B)^{\gamma^*C}1 = \set{\text{all maps
  } \alpha_{\id\relax} \colon C1 \to B1}$.
On the other hand, because $2 \to 1$ is marked as a weak equivalence, in
$(21)^{-1}\spncat$, there are non-identity maps with domain 1, namely
\begin{tikzcd}[cramped] 1 \ar[r, dashed, "\cong"] & 2 \end{tikzcd} and
\begin{tikzcd}[cramped] 1 \ar[r, dashed, "\cong"] & 2 \ar[r] & 0 \end{tikzcd},
where \begin{tikzcd}[cramped] 1 \ar[r, dashed, "\cong"] & 2 \end{tikzcd} is the
formal inverse to $2 \to 1$.
Therefore, $(\Pi_BC)1 = (B^C)1$ consists of those tuples of maps
$(\alpha_{\id\relax}, \alpha_{(21)^{-1}}, \alpha_{(20)(21)^{-1}})$ such that
\begin{equation*}
  \begin{tikzcd}[column sep=large]
    C1 \ar[d, "C_{12}"', dashed, "\cong"] \ar[r, "\alpha_{\id\relax}"] & B1 \ar[d, "B_{12}", dashed, "\cong"'] \\
    C2 \ar[d, "C_{20}"'] \ar[r, "\alpha_{(21)^{-1}}"{description}] & B2 \ar[d, "B_{20}"] \\
    C0 \ar[r, "\alpha_{(20)(21)^{-1}}"'] & B0
  \end{tikzcd}
\end{equation*}
Because $B,C$ are homotopic, the maps $C_{12}, B_{12}$ are isomorphisms, so a
choice of $\alpha_{\id} \colon C1 \to B1$ fixes
$\alpha_{(21)^{-1}} \colon C2 \to B2$.
However, the issue arises in determining
$\alpha_{(20)(21)-{-1}} \colon C0 \to B0$.
There is no guarantee that $C_{20},B_{20}$ are also isomorphisms, so in general
a choice of $\alpha_{\id\relax}$ does not uniquely fix a compatible family
$(\alpha_{\id\relax}, \alpha_{(21)^{-1}}, \alpha_{(20)(21)^{-1}})$ as above.
The issue arises because 0 is now reachable from 1 after inverting $2 \to 1$
with the new map $(20)(21)^{-1}$ failing to factor through any old map with
domain 1, therefore resulting in $\spncat \downarrow (20)(21)^{-1}$ to be empty.
However, if $2 \to 0$ were also to be marked as an weak equivalence, then each
choice of $\alpha_{\id\relax}$ does uniquely fix a compatible family
$(\alpha_{\id\relax}, \alpha_{(21)^{-1}}, \alpha_{(20)(21)^{-1}})$ (although
doing so means that all maps in $\spncat$ are inverted, and it is already proved
in \Cref{thm:all-invert-Pi-htpy} that in this case, dependent products are
homotopical).

Generalising this example, one sees that exponentials are preserved when each
new map $f \colon i \to j$ in the homotopical category factors as some old map
$i \to j'$ followed by an isomorphism (in the homotopical category)
$j' \cong j$.
Formally, this is encoded by the initiality condition for
$\gamma|_i \colon i/\McI_{<n} \to (i/\Ho\McI_\infty)^\circ$.

For the rest of this section, we structure our approach as follows.
First, we observe in \Cref{def:local-boundary,lem:local-gluing} that for any
category $\bC$ and any $c \in \bC$, diagrams $\McE^{c/\bC}$ are similarly
constructed by gluing along an analogue of the matching object functor.
We also observe in \Cref{lem:local-Pi} conditions such that dependent products
in each $\McE^{c/\bC}$ assemble to form dependent products in $\McE^\bC$.
Specialising to the case of $\bC \coloneqq \Ho\McI_\infty$, we give a
description of the dependent product in \Cref{lem:ho-I-pi} in terms of the
matching object functors like in \Cref{thm:E-iter-glue-Pi}.
In the diagram below, the pullback in the front face is the construction of the
dependent product in $\McI_\infty$ from \Cref{thm:E-iter-glue-Pi} while the
pullback is the construction of the dependent product to be obtained in
\Cref{lem:ho-I-pi}.
\begin{equation*}
  \begin{tikzcd}[column sep=-4em, nodes={font=\small}]
    (\Pi_BC)[i]_n
    \ar[dd] \ar[rrrr]
    \ar[rrdd, phantom, "\lrcorner"{pos=0}]
    \ar[rd, "\phi_{C,i}"{description}]
    &
    &
    &
    &
    \Pi_{Bi}{Ci}
    \ar[dd]
    \ar[rd, "="]
    \\
    &[-4em]
    (\Pi_{\gamma^*B}{\gamma^*C})i
    \ar[rrrr, crossing over]
    \ar[rrdd, phantom, "\lrcorner"{pos=0}]
    &
    &
    &
    &
    \Pi_{B[i]_n}{C[i]_n}
    \ar[dd]
    \\
    A[i]_n \times_{\oM_{[i]_n} A} \oM_{[i]_n}(\Pi_B C)
    \ar[rr]
    \ar[rd, "{\psi_{C,i}}"{description}]
    &
    &
    A[i]_n \times_{\oM_{[i]_n} A} \Pi_{\oM_{[i]_n} B}{\oM_{[i]_n}C}
    \ar[rr]
    \ar[rd, "\trho_{C,i}", "\cong"']
    &
    &
    \Pi_{B[i]_n}(B[i]_n \times_{\oM_{[i]_n} B} \oM_{[i]_n}C)
    \ar[rd, "\sigma_{C,i}", "\cong"']
    \\
    &
    Ai \times_{\M_i{\gamma^*A}} \M_i(\Pi_{\gamma^*B}\gamma^*C)
    \ar[uu, crossing over, leftarrow]
    \ar[rr]
    &
    &
    A \times_{\M_i{(\gamma^*A)}} \Pi_{\M_i(\gamma^*B)}\M_i(\gamma^*C)
    \ar[rr]
    &
    &
    \Pi_{Bi}(Bi \times_{\M_i(\gamma^*B)} \M_i(\gamma^*C))
  \end{tikzcd}
\end{equation*}
To homotopicality of the dependent product, we construct a canonical natural
comparison map $\varphi$ in \Cref{constr:pi-comp} and show that it is an
isomorphism.
To do so, we construct maps $\psi$, $\trho$ and $\sigma$ in
\Cref{constr:psiCi,constr:pi-bd-iso,constr:sigmaCi} and show that they give rise
to natural isomorphisms between the pullbacks.
This is achieved in \Cref{thm:pi-comp-iso} by using
\begin{itemize}
  \item \Cref{lem:left-face} for the commutativity of the left face
  \item \Cref{lem:bot-face-left} for the commutativity of the bottom left face
  \item \Cref{lem:bot-face-right} for the commutativity of the bottom right face
  \item \Cref{lem:right-face} for the commutativity of the right face
\end{itemize}
Construction of $\trho$ and $\sigma$ as well as verification of the
commutativity of the bottom right face relies on \Cref{asm:bd-initial}, which
states that the homotopical localisation restricts to an initial map in each
underslice.
This assumption allows us to obtain an isomorphism in \Cref{lem:mat-comp-iso}
between matching objects in $\Ho\McI_\infty$ and in $\McI_\infty$, which is a
crucial ingredient in our argument.

% To examine the compatibility condition, suppose that $\McE = \Set$ and that
% $f \colon B \to A$ is the terminal map
% $! \colon B \to 1 \in \Set^{\Ho\McI_\infty}$ so that $\Pi_BC$ is the exponential
% $B^C \in \Ho\McI_\infty$ and each $\Pi_{Bj}Cj = (Bj)^{Cj} = \Set(Cj,Bj)$ for
% each $j \in \Ho\McI_\infty$.
% %
% The compatibility condition then amounts to
%
% To examine the compatible condition, one may assume that $A = 1 \in \McE^\bC$ is
% the terminal object so that $\Pi_BC = B^C$ is the exponential of functors and
%
% Taking a fibred section point of view of the dependent product, each $(\Pi_BC)i$
% is a family of sections of $g_j \colon Cj \to Bj$ fibred along
% $f_j \colon Bj \to Aj$ ranging over $f \colon i \to j \in \Ho\McI_\infty$.

\begin{definition}\label{def:local-boundary}
  For a category $\bC$ and $c \in \bC$, put $(c/\bC)^\circ$ as the full
  subcategory of $c/\bC$ spanned by everything except the initial object
  $\id_c$.
  Then, for any category $\McE$, set the coskeleton functor $\ocosk_c$ as the
  right adjoint to $\ores_c \colon \McE^{c/\bC} \to \McE^{(c/\bC)^\circ}$
  defined as precomposition of the inclusion
  $(c/\bC)^\circ \hookrightarrow c/\bC$, provided that the necessary limits in
  $\McE$ exists.
  For $X \in \McE^{c/\bC}$, write $\oM_c \colon \McE^{c/\bC} \to \McE$ for the composition
  \begin{equation*}
    \oM_c \coloneqq
    \begin{tikzcd}
      \McE^{c/\bC} \ar[r, "\ores_c"]
      &
      \McE^{(c/\bC)^\circ} \ar[r, "\ocosk_c"]
      &
      \McE^{c/\bC} \ar[r, "\overline{\id_c}^*"]
      &
      \McE
    \end{tikzcd}
  \end{equation*}
  where the last map is precomposition with
  $\overline{\id_c} \colon \mbbo{1} \to c/\bC$.
  And put $\om_c \colon \oM_c \to \overline{\id_c}^*$ as the composition with
  the counit of $\ores_c \dashv \ocosk_c$ and $\overline{\id_c}^*$.

  Often, when $X \in \McE^\bC$, we omit $\ores_c$ and write $\oM_cX$ and
  $\om_cX$ for $\oM_c(\ores_c X)$ and $\om_c(\ores_c X)$.
\end{definition}

\begin{lemma}\label{lem:local-gluing}
  Let $\bC$ be any category and $c \in \bC$.
  Then, for any category $\McE$ such that all limits indexed by
  $(c/\bC)^\circ$ exists,
  \begin{align*}
    \McE^{c/\bC} \simeq \Gl(\McE^{(c/\bC)^\circ} \xrightarrow{\ocosk_c} \McE^{c/\bC} \xrightarrow{\overline{\id_c}^*} \McE)
  \end{align*}
  given by mapping $X \in \McE^{c/\bC}$ to
  $\om_cX \colon Xc \to \oM_cX \in \Gl(\overline{\id_c}^* \cdot \ocosk_c)$ is an
  equivalence of categories when the right adjoint $\ocosk_c$ exists.
\end{lemma}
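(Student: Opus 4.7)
The plan is to exhibit an equivalence by constructing mutually inverse functors $\Phi \colon \McE^{c/\bC} \to \Gl(\oM_c)$ and $\Psi \colon \Gl(\oM_c) \to \McE^{c/\bC}$, mirroring the proof of \Cref{prop:iter-glue}. The key structural observation is that $\id_c$ is the initial object of $c/\bC$, so $c/\bC$ is obtained from $(c/\bC)^\circ$ by formally adjoining an initial object whose outgoing hom-sets into $(c/\bC)^\circ$ are singletons; equivalently $c/\bC$ is the collage of the terminal profunctor $\mbbo{1} \slashedrightarrow (c/\bC)^\circ$ picking out the initial cone structure.

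First I would define $\Phi$ by sending $X \in \McE^{c/\bC}$ to the triple consisting of the object $X(\overline{\id_c}) \in \McE$, the functor $\ores_c X \in \McE^{(c/\bC)^\circ}$, and the map $\om_c X \colon X(\overline{\id_c}) \to \oM_c X$ induced by the counit of $\ores_c \dashv \ocosk_c$ as in \Cref{def:local-boundary}. Functoriality is immediate since all three assignments are given by restriction or by post-composition with a natural transformation.

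For $\Psi$, given $(e, Y, \mu \colon e \to \oM_c Y) \in \Gl(\oM_c)$, I would use the universal property. By the pointwise formula for the right Kan extension $\ocosk_c$, the object $\oM_c Y = (\ocosk_c Y)(\overline{\id_c})$ is the limit $\lim((c/\bC)^\circ \xrightarrow{Y} \McE)$, since $\overline{\id_c}$ is initial in $c/\bC$ and hence $\overline{\id_c}/(c/\bC)^\circ \cong (c/\bC)^\circ$. A map $\mu \colon e \to \oM_c Y$ is therefore a cone from $e$ to $Y$, i.e.\ a compatible family $(\mu_f \colon e \to Y(f))_{f \in (c/\bC)^\circ}$. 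I would then define $\Psi(e, Y, \mu)$ to be the unique functor $c/\bC \to \McE$ whose value at $\overline{\id_c}$ is $e$, whose restriction to $(c/\bC)^\circ$ is $Y$, and whose action on the unique map $\overline{\id_c} \to f$ in $c/\bC$ (for $f \in (c/\bC)^\circ$) is $\mu_f$. Well-definedness as a functor reduces exactly to the cone condition on $\mu$, together with the functoriality of $Y$ on $(c/\bC)^\circ$. Functoriality of $\Psi$ on morphisms of $\Gl(\oM_c)$ is analogous: a morphism in $\Gl(\oM_c)$ is precisely a pair of maps of the same shape as a natural transformation between functors out of $c/\bC$.

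Finally I would check $\Phi \cdot \Psi \cong \id$ and $\Psi \cdot \Phi \cong \id$. Both are forced: starting from $X$, the restriction gives back $\ores_c X$, the evaluation gives back $X(\overline{\id_c})$, and the canonical cone $\om_c X$ transports back via the universal property to the original structure maps $X(\overline{\id_c}) \to X(f)$; conversely, starting from a gluing datum, the round-trip recovers it by the very universal property used to define $\Psi$. The only genuine content, and hence the closest thing to an obstacle, is recognising that $\oM_c$ coincides with $\lim \cdot \ores_c$ via the right Kan extension formula so that maps into $\oM_c Y$ are indeed cones to $Y$; once this is in place the rest is bookkeeping identical in spirit to \Cref{prop:iter-glue}.
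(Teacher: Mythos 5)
Your proposal is correct and takes essentially the same route as the paper: identify $\oM_c Y = (\ocosk_c Y)(\overline{\id_c})$ as the limit of $Y$ over $(c/\bC)^\circ$ via the pointwise Kan-extension formula and initiality of $\overline{\id_c}$, so a map $e \to \oM_c Y$ is exactly a cone over $Y$, which extends a gluing datum uniquely to a diagram on $c/\bC$ and conversely. The paper packs this into a single back-and-forth observation, while you carefully spell out the two functors and the round-trip checks, but there is no substantive difference.
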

\begin{proof}
  An object of $\Gl(\oM_c)$ is an object $X_{\id} \in \McE$, a diagram
  $X|_c \colon (c/\bC)^\circ \to \McE$ and a map
  $\om_X \colon X_{\id} \to \oM_cX|_c \in \McE$, where $\oM_cX|_c \in \McE$ is given
  by the end
  \begin{align*}
    \oM_cX|_c = \int_{f \colon c \to c' \neq \id} X|_cf
  \end{align*}
  Hence, the map $\om_X \colon X_{\id} \to \oM_cX|_c$ composed with each of the
  limiting legs $\oM_cX|_c \to X|_cf$ for $f \colon c \to c' \neq \id$ gives a
  map compatible family of maps
  $(X_{\id} \to X|_cf)_{f \colon c \to c' \neq \id\relax \in c/\bC}$.
  Compatibility of this family of maps gives rise to a diagram $X \colon c/\bC \to \McE$
  with $X\id_c = X_{\id} \in \McE$ and $Xf = X|_cf$ for each
  $f \colon c \to c' \neq \id$.
  Conversely, each $X \colon c/\bC \to \McE$ gives rise to a cone
  $(X\id_c \to Xf ~|~ f \colon c \to c' \neq \id)$ and hence to a map
  $\om_cX \colon X_{\id} \to Xf$.

  It is also clear that the operation mapping diagrams $X$ in $\McE^{c/\bC}$ to
  their matching maps $\om_cX \in \Gl(\oM_cX)$ is an equivalence of categories.
\end{proof}

\begin{lemma}\label{lem:local-Pi}
  Let $\bC$ be a category and $e \colon c \twoheadrightarrow c' \in \bC$ be an epi.
  For a category $\McE$, denote by
  $\res_e \colon \McE^{c/\bC} \to \McE^{c'/\bC}$ to be precomposition with $m$.
  Suppose $F \colon B \to A \in \McE^{c/\bC}$ and
  $\res_eF \colon \res_eB \to \res_eA \in \McE^{c'/\bC}$ are both powerful.
  Further assume that $\McE$ has an initial object 0.
  Then, one has
  \begin{equation*}
    \begin{tikzcd}[column sep=large]
      \sfrac{\McE^{c/\bC}}{B}
      \ar[r, "F^*", yshift=3]
      \ar[r, phantom, "\vdash"{rotate=90}]
      \ar[d, "\res_e"']
      & \sfrac{\McE^{c/\bC}}{A}
      \ar[l, "\Pi_B", yshift=-3]
      \ar[d, "\res_e"]
      \\
      \sfrac{\McE^{c'/\bC}}{\res_eB}
      \ar[r, "(\res_eF)^*", yshift=3]
      \ar[r, phantom, "\vdash"{rotate=90}]
      & \sfrac{\McE^{c'/\bC}}{\res_eA}
      \ar[l, "\Pi_{\res_eB}", yshift=-3]
    \end{tikzcd}
  \end{equation*}
  %
  % Then, for all $g \colon C \to B \in \McE^{c/\bC}$, one has
  % %
  % If $\McE$ has an initial object then $\res_e$ admits a left adjoint
  % $L \colon \McE^{c'/\bC} \to \McE^{c/\bC}$ such that for all
  % $X \in \McE^{c'/\bC}$, one has $\res_eLX = X$ and $Xf = 0$ if $f$ does not
  %
\end{lemma}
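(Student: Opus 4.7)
The plan is to show that the displayed square of adjunctions is a map of adjunctions in the sense of \Cref{lem:limit-adjunction}, i.e., that the vertical restrictions $\res_e$ intertwine both the pullback functors and their dependent-product right adjoints. By the mate correspondence, it suffices to verify compatibility on the left-adjoint (pullback) side, which is direct.

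To this end, I would observe that $\res_e$ is precomposition along the functor $\overline{e} \colon c'/\bC \to c/\bC$ sending $(f \colon c' \to d)$ to $(fe \colon c \to d)$; the assumption that $e$ is an epi makes $\overline{e}$ fully faithful.  Since pullbacks in a functor category are computed pointwise in $\McE$ and precomposition preserves pointwise limits, a direct calculation yields a canonical natural isomorphism
$\res_e \cdot F^* \cong (\res_e F)^* \cdot \res_e$
as functors $\sfrac{\McE^{c/\bC}}{A} \to \sfrac{\McE^{c'/\bC}}{\res_e B}$: either composite, evaluated at $(X \to A)$ and $(f \colon c' \to d)$, returns the same pullback $X(fe) \times_{A(fe)} B(fe) \in \McE$.

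The powerful assumptions on $F$ and $\res_e F$ guarantee existence of the right adjoints $\Pi_B$ and $\Pi_{\res_e B}$, so the mate correspondence converts the above isomorphism of left adjoints into a canonical natural transformation $\res_e \cdot \Pi_B \to \Pi_{\res_e B} \cdot \res_e$ which is itself a natural isomorphism, exhibiting the square as a map of adjunctions.

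I expect the main subtlety to be careful bookkeeping at the slice-category level: both the pullback-preservation isomorphism and its mate must be exhibited as natural transformations between slice categories rather than just between ambient functor categories, so one must check everything respects the maps into $A$, $B$, $\res_e A$, and $\res_e B$.  The assumption that $\McE$ has an initial object $0$ is not essential to the mate argument itself; it instead affords an alternative direct route by realising an explicit left adjoint $\mathrm{Lan}_{\overline{e}} \dashv \res_e$ via extension by $0$ on the (automatically empty) comma categories $\overline{e} \downarrow g$ for those $g$ which fail to factor through $e$, thereby exhibiting $\res_e$ as a right adjoint and hence as limit-preserving without recourse to pointwise arguments.
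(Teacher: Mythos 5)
There is a genuine gap at the heart of the argument.  You assert that, having exhibited the natural isomorphism $\res_e \cdot F^* \cong (\res_e F)^* \cdot \res_e$, the mate correspondence produces a natural transformation $\res_e \cdot \Pi_B \to \Pi_{\res_e B} \cdot \res_e$ that ``is itself a natural isomorphism.''  The mate correspondence gives you the \emph{existence} of that canonical transformation, but it does \emph{not} tell you it is invertible --- that is precisely the content of a Beck--Chevalley condition, which does not come for free from commutativity on the left-adjoint side.  In general, a square of categories and functors for which pullback functors commute with a pair of restrictions need not have its dependent products commuting with the same restrictions, so this step needs an actual argument.  Full faithfulness of $\overline{e}$ (which your epi observation does buy) is also not enough on its own to conclude invertibility of the mate.

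The paper closes this gap, and the mechanism is exactly the one you relegated to ``an alternative direct route'': it constructs the left adjoint $L \dashv \res_e$ explicitly via left Kan extension and uses the initial object of $\McE$ to describe $L$ pointwise --- $(LD)g$ is $Dg'$ when $g$ factors (necessarily uniquely, since $e$ is epi) as $g = g'e$, and is $0$ otherwise.  That explicit description is then fed into a four-step chain of natural hom-set isomorphisms $\sfrac{\McE^{c'/\bC}}{\res_e A}(D, \res_e(\Pi_B C)) \cong \cdots \cong \sfrac{\McE^{c'/\bC}}{\res_e A}(D, \Pi_{\res_e B}\res_e C)$, whose crucial middle link uses the shape of $LD$ (identity data on maps that factor through $e$, the unique map out of $0$ elsewhere) to identify $\sfrac{\McE^{c/\bC}}{B}(LD \times_A B, C)$ with $\sfrac{\McE^{c'/\bC}}{\res_e B}(D \times_{\res_e A} \res_e B, \res_e C)$.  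It then traces this isomorphism to verify that the counit is preserved.  So the initial object is not an optional alternative; it is what makes $L$ exist with the precise shape that powers the Yoneda calculation establishing the isomorphism your mate argument merely postulates.  If you want to keep the mate-correspondence framing, you still have to supply this computation (or an equivalent one) to show the mate is invertible.
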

\begin{proof}
  It is clear that pulling back commutes with the restriction because limits are
  computed pointwise.
  It remains to check that the right adjoints commute and that the above diagram
  is a map of adjunctions.

  To do so, first note that $\res_e \colon \McE^{c/\bC} \to \McE^{c'/\bC}$
  admits a left adjoint $L \colon \McE^{c'/\bC} \to \McE^{c/\bC}$.
  By the formula for the left Kan extension, for $D \in \McE^{c'/\bC}$ the
  functor $LD \in \McE^{c/\bC}$ must send $g \colon c \to d$ to the colimit
  \begin{align*}
    (LD)g = \colim(e^* \downarrow g \to c'/\bC \xrightarrow{D} \McE)
  \end{align*}
  where $e^* \colon c'/\bC \to c/\bC$ is precomposition with
  $e \colon c \twoheadrightarrow c'$.
  Objects in the comma category $e^* \downarrow g$ are pairs
  $(c' \xrightarrow{f} x, x \xrightarrow{k} d)$ such that $kfe = g$, like in the
  back face of the diagram below.
  And a map
  $(f \colon c' \to x, k \colon x \to d) \to (f' \colon c' \to x', k' \colon x'
  \to d)$ is a map $h \colon x \to x'$ such that $f' = hf$ and $k = k'h$, like
  the dotted map $h$ below.
  \begin{equation*}
    \begin{tikzcd}
      & & c \ar[ld, "e"', two heads] \ar[dd, "g"] \\
      & c' \ar[ld, "f"', dashed] \\
      x \ar[rr, "k"{description, pos=0.7}, dashed] \ar[rd, "h"', dotted] & & d \\
      & x' \ar[ru, "k'"', dashed]
      \ar[from=2-2, to=4-2, crossing over, "f'"{description, pos=0.3}, dashed]
    \end{tikzcd}
  \end{equation*}
  If $g$ factors through $e$ as $g = g'e$ for some $g' \colon c' \to d$ then
  because $e$ is an epi, $g'$ is unique.
  Thus, $(g' \colon c' \to d, \id \colon d \to d) \in e^* \downarrow g$ is
  terminal.
  This is observed by noting that given any other
  $(c' \xrightarrow{f} x, x \xrightarrow{k} d) \in e^* \downarrow g$, one has
  $kfe = g = g'e$ so $kf = g'$ and clearly $k$ is the only map such that
  $\id k = k$.
  Hence, $e^* \downarrow g$ contains a terminal object when it is not empty and
  because $\McE$ has an initial object 0, one has that
  \begin{align*}
    (LD)g = \begin{cases}
      Dg' & \text{ when $g$ factors via $e$ as $g = g'e$ for some unique map $g'$} \\
      0 & \text{ otherwise }
    \end{cases}
  \end{align*}

  Now, fix $G \colon C \to B \in \McE^{c/\bC}/B$ and the goal is to show that
  \begin{align*}
    \res_e(\Pi_BG \colon \Pi_BC \to A) \cong \Pi_{\res_eB}\res_eG \colon
    \Pi_{\res_eB}\res_eC \to \res_eA
  \end{align*}
  We first note that
  \begin{align*}
    \sfrac{\McE^{c/\bC}}{B}(LD \times_A B, C) \cong
    \sfrac{\McE^{c'/\bC}}{\res_eB}(D \times_{\res_eA} \res_eB, \res_eC)
  \end{align*}
  This is because a natural transformation
  $K \colon LD \times_A B \to C \in \sfrac{\McE^{c/\bC}}{B}$ is a compatible
  family
  $(Kg \colon (LD)g \times_{Ag} Bg \to Cg \in \McE/Bg ~|~ g \colon c \to d \in
  c/\bC)$.
  If $g \colon c \to d \in c/\bC$ factors through
  $e \colon c \twoheadrightarrow c'$ as $g = g'e$ (uniquely) then
  $Kg \colon (LD)g \times_{Ag} Bg \to Cg$ is
  $Kg \colon Dg' \times_{A(g'm)} B(g'm) = Dg' \times_{(\res_eA)g'} (\res_eB)g'
  \to (\res_eC)g'= C(g'm)$.
  And if $g \colon c \to d$ does not factor through
  $m \colon c \twoheadrightarrow c'$ then $(LD)g = 0$ and so
  $Kg \colon (LD)g \times_{Ag} Bg \to Cg$ is just the unique map
  $! \colon 0 \to Cg$.

  Hence, there is the following chain of isomorphisms, natural in $D \in \McE^{c/\bC}/B$:
  \begin{align*}
    \sfrac{\McE^{c'/\bC}}{\res_eA}(D, \res_e(\Pi_BC))
    &\cong
    \sfrac{\McE^{c/\bC}}{A}(LD, \Pi_BC) \\
    &\cong
    \sfrac{\McE^{c/\bC}}{B}(LD \times_A B, C) \\
    &\cong
    \sfrac{\McE^{c'/\bC}}{\res_eB}(D \times_{\res_eA} \res_eB, \res_eC) \\
    \sfrac{\McE^{c'/\bC}}{\res_eA}(D, \res_e(\Pi_BC))
    &\cong
    \sfrac{\McE^{c'/\bC}}{\res_eA}(D, \Pi_{\res_eB}\res_eC)
  \end{align*}
  Which shows that $\res_e(\Pi_BC) \cong \Pi_{\res_eB}{\res_eC}$ as claimed.

  Tracing through the isomorphism
  $\sfrac{\McE^{c'/\bC}}{\res_eA}(D, \res_e(\Pi_BC)) \cong
  \sfrac{\McE^{c'/\bC}}{\res_eA}(D, \Pi_{\res_eB}\res_eC)$ computed above, one
  observes that the identity map at $\res_e(\Pi_BC)$ is first sent to the map
  $L \cdot \res_e(\Pi_BC) \to \Pi_BC \in \sfrac{\McE^{c/\bC}}{A}$ whose
  components at $g$ which factor through $e$ is the identity and whose component
  at $g$ that does not factor through $e$ is the unique map $0 \to \Pi_BC$.
  Because pullbacks are computed pointwise, pulling this map
  $L \cdot \res_e(\Pi_BC) \to \Pi_BC \in \sfrac{\McE^{c/\bC}}{A}$ back along
  $F \colon B \to A \in \McE^{c/\bC}$ and then composing with the counit
  $\epsilon \colon B \times_A \Pi_BC$ gives a map
  $L \cdot \res_e(\Pi_BC) \times_A B \to \Pi_BC \times_A B \to C \in
  \sfrac{\McE^{c/\bC}}{B}$ whose component at $g$ that factor uniquely via $e$
  as $g = g'e$ is the component of the counit
  $\epsilon_{g'} \colon (\Pi_BC)g' \times_{Ag'} Bg' \to Cg'$.
  Under the isomorphism
  $\sfrac{\McE^{c/\bC}}{B}(L \cdot \res_e(\Pi_BC) \times_A B, C) \cong
  \sfrac{\McE^{c'/\bC}}{\res_eB}(\res_e(\Pi_BC) \times_{\res_eA} \res_eB,
  \res_eC)$, this map
  $L \cdot \res_e(\Pi_BC) \times_A B \to \Pi_BC \times_A B \to C \in
  \sfrac{\McE^{c/\bC}}{B}$, whose component at $g$ that factor uniquely via $e$
  as $g = g'e$ is the component of the counit
  $\epsilon_{g'} \colon (\Pi_BC)g' \times_{Ag'} Bg' \to Cg'$, corresponds to the
  map $\res_eB \times_{\res_eA} \res_e(\Pi_BC) \to \res_eC$ whose component at
  $g' \colon c' \to x$ is the component of the counit
  $\epsilon_{g'} \colon (\Pi_BC)g' \times_{Ag'} Bg' \to Cg'$.
  This computation shows that $\res_e$ preserves the counit of the adjunction,
  so one has a map of adjunction, as claimed.
\end{proof}

Now, fix a wide subcategory of weak equivalences $\McW \subseteq \McI_\infty$
satisfying the 2-of-3 property.
Denote by $\Ho{\McI_\infty}$ the homotopical category $\McW^{-1}\McI_\infty$, and from
now, we further work under the following assumptions:
\begin{assumption}\label{asm:Ho-I-epi-E}
  In addition to the limits indexed by each
  $\oint_{j \in \McI_{<n}} \McI^\circ(i,j)$ for $n \in \McN$ and
  $i \in \partial\McI_n$ existing in $\McE$ as in \Cref{asm:E-enough-limits}, we
  further assume that $\McE$ admits all limits indexed by
  $([i]_n/\Ho\McI)^\circ$.
  Moreover, we assume that all maps in $\Ho\McI_\infty$ are epis.
\end{assumption}

Next, we fix a map $f \colon B \to A \in \McE^{\Ho\McI_\infty}$ which we aim to
take dependent products along and assume it has the following properties:
\begin{assumption}\label{asm:ho-powerful}
  Assume that for each $n \in \McN$ and $i \in \partial\McI_n$,
  \begin{itemize}
    \item The restriction
    $f|_{=n} \colon B|_{=n} \to A|_{=n} \in \McE^{\partial\McI_n}$
    \item The functorial action of the $n$-th absolute matching object functor
    $\M_n(\gamma^*f) \colon \M_n(\gamma^*B) \to \M_n(\gamma^*A) \in
    \McE^{\partial\McI_n}$
    \item The component $f_{[i]_n} \colon B[i]_n \to A[i]_n \in \McE$
    \item The map $\oM_{[i]_n}f \colon \oM_{[i]_n}B \to \oM_{[i]_n}A \in \McE$
    as from \Cref{def:local-boundary}
  \end{itemize}
  are powerful.
\end{assumption}

\begin{lemma}\label{lem:ho-I-pi}
  Under \Cref{asm:ho-powerful}, pulling back along $f$ admits a right adjoint
  $\Pi_B$.
  For each $g \colon C \to B \in \McE^{\Ho\McI_\infty}/B$, the value of
  $\Pi_Bg \colon \Pi_BC \to A \in \McE^{\Ho\McI_\infty}/A$ at each
  $[i]_n \in \McI_\infty$ where $i \in \partial\McI_n$ is given by the pullback
  in $\McE$
  \begin{equation*}
    \begin{tikzcd}[column sep=huge]
      (\Pi_BC)[i]_n
      \ar[d] \ar[rr]
      \ar[rd, phantom, "\lrcorner"{pos=0}]
      & &
      \Pi_{B[i]_n}{C[i]_n} \ar[d, "{\Pi_{B[i]_n}(g_{[i]_n},\om_{[i]_n}C)}"']
      \\
      A[i]_n \times_{\oM_{[i]_n} A} \oM_{[i]_n}(\Pi_B C)
      \ar[r, "{A[i]_n \times_{\oM_{[i]_n} A} \oM_{[i]_n}(\ev)^\ddagger}"']
      &[1em]
      A[i]_n \times_{\oM_{[i]_n} A} \Pi_{\oM_{[i]_n} B}{\oM_{[i]_n}C}
      \ar[r, "(f_{[i]_n} \times_{\oM_{[i]_n} B} \ev)^\ddagger"']
      &
      \Pi_{B[i]_n}(B[i]_n \times_{\oM_{[i]_n} B} \oM_{[i]_n}C)
    \end{tikzcd}
  \end{equation*}
  %
  % and the matching map for $\Pi_Bg \colon \Pi_BC \to A$ at $i$ is such that
  % %
  % \begin{equation*}
  %   \begin{tikzcd}
  %     (\Pi_BC)i
  %     \ar[r]
  %     \ar[rr, bend left=20, "\om_i(\Pi_BC)"]
  %     \ar[rd, "(\Pi_Bg)_i"']
  %     &
  %     Ai \times_{\oM_iA} \oM_i(\Pi_BC)
  %     \ar[r]
  %     \ar[d]
  %     \ar[rd, phantom, "\lrcorner"{pos=0}]
  %     &
  %     \oM_i(\Pi_BC)
  %     \ar[d, "\oM_i(\Pi_Bg)"] \\
  %     & Ai \ar[r, "\om_iA"'] & \oM_iA
  %   \end{tikzcd}
  %   \in\McE
  % \end{equation*}
  %
\end{lemma}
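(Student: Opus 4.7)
The plan is to construct $\Pi_B$ by working locally at each object $[i]_n$ of $\Ho\McI_\infty$: the slice $[i]_n/\Ho\McI_\infty$ falls within the scope of the local gluing decomposition of \Cref{lem:local-gluing}, so \Cref{thm:glue-Pi} produces the required dependent product pointwise, and \Cref{lem:local-Pi}, applicable thanks to \Cref{asm:Ho-I-epi-E}, glues these pointwise constructions together into a functor on all of $\McE^{\Ho\McI_\infty}$.

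First, for each $n \in \McN$ and $i \in \partial\McI_n$, I would invoke \Cref{lem:local-gluing} to obtain the equivalence
\begin{equation*}
  \McE^{[i]_n/\Ho\McI_\infty} \simeq \Gl\bigl(\oM_{[i]_n}\bigr).
\end{equation*}
Under this equivalence, the restriction of $f\colon B\to A$ to the slice corresponds to the square with top edge $f_{[i]_n}\colon B[i]_n \to A[i]_n$ and bottom edge $\oM_{[i]_n}f\colon \oM_{[i]_n}B \to \oM_{[i]_n}A$, and by \Cref{asm:ho-powerful} both edges are powerful in $\McE$. Because $\oM_{[i]_n}$ factors as $\overline{\id_{[i]_n}}^{*}\cdot \ocosk_{[i]_n}\cdot \ores_{[i]_n}$ and each of these preserves pullbacks (the coskeleton as a right adjoint, the two restrictions pointwise), \Cref{thm:glue-Pi} applies to yield a dependent product along this square in $\Gl(\oM_{[i]_n})$. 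Unpacking \Cref{constr:glue-Pi} (i.e.\ \Cref{eqn:Pi-pullback}) at the component indexed by $\id_{[i]_n}$ recovers exactly the pullback formula for $(\Pi_BC)[i]_n$ asserted in the statement.

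Next, I would promote this pointwise collection of dependent products into a single functor $\Pi_B\colon \McE^{\Ho\McI_\infty}/B \to \McE^{\Ho\McI_\infty}/A$ right adjoint to $f^{*}$. Any map $\phi\colon [j]_m \to [i]_n$ in $\Ho\McI_\infty$ induces a restriction functor $\McE^{[i]_n/\Ho\McI_\infty} \to \McE^{[j]_m/\Ho\McI_\infty}$ by precomposition; by \Cref{asm:Ho-I-epi-E} every such $\phi$ is epic, so \Cref{lem:local-Pi} guarantees that this restriction intertwines pullback and dependent product along $f$ as a map of adjunctions. Since a diagram on $\Ho\McI_\infty$ is determined by its compatible system of restrictions to the slices $[i]_n/\Ho\McI_\infty$, and since the adjunction bijection is itself preserved, the locally constructed data descends to a single pair $f^{*}\dashv \Pi_B$ on $\McE^{\Ho\McI_\infty}$ with the claimed pointwise formula.

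The main obstacle is this local-to-global step: unlike $\McI_\infty$, the homotopical localisation $\Ho\McI_\infty$ carries no intrinsic inverse stratification, so \Cref{cor:limit-Pi} cannot be invoked verbatim to assemble the dependent products. The epi hypothesis of \Cref{asm:Ho-I-epi-E} is exactly the ingredient needed to bypass this: it turns every comparison map between slices into one to which \Cref{lem:local-Pi} applies, so that the family of slicewise right adjoints is genuinely compatible (not merely up to isomorphism) and thereby determines a bona fide right adjoint on the full diagram category. The remaining verifications — functoriality of $\Pi_B$ in $C$ and naturality of the adjunction bijection — then follow from the pointwise naturality supplied by \Cref{thm:glue-Pi} together with the uniqueness clause in \Cref{lem:local-Pi}.
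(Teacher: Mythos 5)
Your proposal is essentially the paper's own argument: decompose $\McE^{\Ho\McI_\infty}$ as a limit of slice-diagram categories $\McE^{[i]_n/\Ho\McI_\infty}$, apply \Cref{lem:local-gluing} and \Cref{thm:glue-Pi} in each slice to produce the pointwise dependent product with the stated pullback formula, then use \Cref{lem:local-Pi} (enabled by the epi hypothesis of \Cref{asm:Ho-I-epi-E}) to secure the compatibility of the slicewise adjunctions under restriction. Those are exactly the ingredients and the order of the paper's proof.

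One clarification on your closing remarks: you claim that \Cref{cor:limit-Pi} ``cannot be invoked verbatim'' because $\Ho\McI_\infty$ carries no inverse stratification, but this misreads the corollary. \Cref{cor:limit-Pi} makes no assumption at all on the shape $\McJ$ of the $\Cat$-valued diagram; it only asks that the restriction functors be compatible with pullback and the slicewise dependent-product adjunctions. What the inverse stratification buys in \Cref{thm:E-iter-glue-Pi} is a way to \emph{verify} those hypotheses by induction. In the present lemma that verification route is unavailable, and it is precisely \Cref{lem:local-Pi} together with the epi assumption that plays the role of the inductive step. Once that compatibility is in hand, \Cref{cor:limit-Pi} applies exactly as stated, and this is indeed how the paper concludes the assembly step. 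So your ``the locally constructed data descends'' is unnecessary: you can cite \Cref{cor:limit-Pi} directly.
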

\begin{proof}
  Noting that
  $\McE^{\Ho\McI_\infty} = \lim_{n \in \McN}\lim_{i \in
    \partial\McI_n}{\McE^{[i]_n/\Ho\McI_\infty}}$ and by \Cref{lem:local-gluing}
  each $\McE^{[i]_n/\Ho\McI_\infty}$ is equivalent to a the gluing category
  \begin{align*}
    \McE^{[i]_n/\Ho\McI_\infty} \simeq
    \Gl(\McE^{([i]_n/\Ho\McI_\infty)^\circ} \xrightarrow{\ocosk_{[i]_n}} \McE^{[i]_n/\Ho\McI_\infty}
    \xrightarrow{\overline{\id_{[i]_n}}^*} \McE)
  \end{align*}
  a similar argument to \Cref{thm:E-iter-glue-Pi} applies.
  This argument relies on \Cref{cor:limit-Pi}.
  To repeat the argument using \Cref{cor:limit-Pi}, instead of proceeding by
  induction to show that the dependent product constructed in each step agrees
  with the dependent product in the restriction, we use the assumption that all
  maps in $\Ho\McI$ are epis as in \Cref{asm:Ho-I-epi-E} and apply
  \Cref{lem:local-Pi} instead.

  In particular, the assumptions in the statement ensure, by \Cref{thm:glue-Pi},
  that fibred sections along $\om_{[i]_n}f$ in each $\Gl(\oM_{[i]_n})$ for
  $i \in \partial\McI_n$ are constructed as described in the statement.
  By \Cref{lem:local-gluing}, it now follows that each
  $\McE^{[i]_n/\Ho\McI_\infty}$ admits dependent products along
  $f|_{[i]} \colon B|_{[i]} \to A|_{[i]}$.
  By \Cref{lem:local-Pi}, one has that these dependent products and their
  counits commute with restrictions, so by \Cref{cor:limit-Pi}, the result
  follows.
\end{proof}

\begin{construction}\label{constr:mat-comp}
  Fix $i \in \partial\McI_n \hookrightarrow \McI_n$ and $X \in \McE^{\Ho\McI}$.
  Then, using the formula for the right Kan extension for $\ocosk_i$ in
  \Cref{def:local-boundary}, the matching object of $X$ at $[i]_n \in \Ho\McI_\infty$ is
  computed as
  \begin{align*}
    \oM_{[i]_n}X = \lim(([i]_n/\Ho\McI_\infty)^\circ \hookrightarrow [i]_n/\Ho\McI_\infty
    \to \Ho\McI_\infty \xrightarrow{X} \McE)
  \end{align*}
  and the matching object of $(\gamma^*X) \in \McE^{\McI}$ at $i \in \McI$ is computed as
  \begin{align*}
    \M_i(\gamma^*X) = \lim(i/\McI_{<n} \to [i]_n/\McI_\infty \to \McI_\infty
    \xrightarrow{\gamma} \Ho\McI_\infty \xrightarrow{X} \McE)
  \end{align*}
  Denote by
  $\overline\pi = (\oM_{[i]_n}X \xrightarrow{\pi_f} Xj ~|~ f \colon [i]_n \to j
  \in ([i]_n/\Ho\McI)^\circ)$ the limiting cone of $\oM_{[i]_n}X$.
  This cone restricts along
  $i/\McI_{<n} \to ([i]_n/\McI_\infty)^\circ \to ([i]_n/\Ho\McI)^\circ$ to a
  cone $\overline\pi|_{i/\McI_{<n}}$ of shape
  $i/\McI_{<n} \to [i]_n/\McI_\infty \to \McI_\infty \xrightarrow{\gamma}
  \Ho\McI_\infty \xrightarrow{X} \McE$.
  By the universal property, this induces a unique comparison map
  $\kappa_{X,i} \colon \oM_{[i]_n}X \to \M_i(\gamma^*X)$ such that when composed with
  the $(f^-)$-th limiting leg $\pi_{f^-} \colon \M_i(\gamma^*X) \to Xj$ for
  $f^- \colon i \to j \in i/\McI_{<n}$ of $\M_i(\gamma^*X)$ gives
  $\overline\pi_{f^-} \colon \oM_iX \to Xj$.
  \begin{equation*}
    \begin{tikzcd}
      \oM_{[i]_n}X \ar[rr, "\kappa_{X,i}"] \ar[rd, "\overline\pi_{[f^-]_n}"']
      & & \M_i(\gamma^*X) \ar[ld, "\pi_{f^-}"] \\
      & Xj
    \end{tikzcd}
  \end{equation*}

  In particular, the diagram whose limit gives rise to $\oM_{[i]_n}X$ is given
  as the top row below, while the diagram whose limit gives rise to $\M_iX$ is
  given as the bottom-right edge below.
  And the map $\kappa_{X,i} \colon \oM_{[i]_n} \to \M_i(\gamma^*X)$ is due to the
  functoriality of the limit via the factorisation along
  $i/\McI_{<n} \to ([i]_n/\Ho\McI_\infty)^\circ$.
  \begin{equation}\label{eqn:mat-comp-fact}\tag{\textsc{mat-comp}}
    \begin{tikzcd}
      ([i]_n/\Ho\McI_\infty)^\circ
      \ar[r, hookrightarrow]
      &
      {[i]_n/\Ho\McI_\infty}
      \ar[r]
      &
      \Ho\McI_\infty
      \ar[r, "X"]
      &
      \McE
      \\
      i/\McI_{<n}
      \ar[u]
      \ar[r]
      &
      [i]_n/\McI_\infty
      \ar[r]
      &
      \McI_\infty
      \ar[r, "\gamma"']
      &
      \Ho\McI_\infty
      \ar[u, "X"']
    \end{tikzcd}
  \end{equation}
\end{construction}

\begin{lemma}\label{lem:mat-comp}
  The comparison map between the matching objects from \Cref{constr:mat-comp} is
  natural in $X \in \McE^{\Ho\McI}$ and
  $i \in \partial\McI_n \hookrightarrow \McI_n$.
  Moreover, one as
  \begin{equation*}
    \begin{tikzcd}
      & X[i]_n \ar[rd, "\m_i(\gamma^*X)"] \ar[ld, "\om_{[i]_n}X"'] & \\
      \oM_{[i]_n}X \ar[rr, "\kappa_{X,i}"'] & & M_i(\gamma^*X)
    \end{tikzcd}
  \end{equation*}
\end{lemma}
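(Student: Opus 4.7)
The plan is to prove both assertions by appealing to the universal property of the target limit $\M_i(\gamma^*X)$: two parallel maps into it agree iff they agree after postcomposition with every limiting leg $\pi_{f^-} \colon \M_i(\gamma^*X) \to Xj$ indexed by $f^- \colon i \to j \in i/\McI_{<n}$. All three verifications below reduce to short computations once this strategy is fixed.

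For the triangle, I first compute $\pi_{f^-} \cdot \kappa_{X,i} \cdot \om_{[i]_n}X$. By the defining property of $\kappa_{X,i}$ in \Cref{constr:mat-comp}, $\pi_{f^-} \cdot \kappa_{X,i} = \overline\pi_{[f^-]_n}$, the limiting leg of $\oM_{[i]_n}X$ at the object $[f^-]_n \in ([i]_n/\Ho\McI_\infty)^\circ$. Composing with the matching map $\om_{[i]_n}X \colon X[i]_n \to \oM_{[i]_n}X$ yields the cone leg at $[f^-]_n$ evaluated on the constant cone at $X[i]_n$, which is the action $X[f^-]_n$. On the other side, $\pi_{f^-} \cdot \m_i(\gamma^*X)$ is by definition the component of the matching map at $f^-$, namely $(\gamma^*X)f^- = X[f^-]_n$. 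Uniqueness of factorisation through the limit then gives $\kappa_{X,i} \cdot \om_{[i]_n}X = \m_i(\gamma^*X)$.

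For naturality in $X$, given $\varphi \colon X \to X'$ in $\McE^{\Ho\McI}$, I would check that $\M_i(\gamma^*\varphi) \cdot \kappa_{X,i} = \kappa_{X',i} \cdot \oM_{[i]_n}\varphi$ as maps $\oM_{[i]_n}X \to \M_i(\gamma^*X')$. Postcomposing with $\pi_{f^-}$ and using both the defining property of the $\kappa$'s and the fact that $\M_i(\gamma^*\varphi)$ and $\oM_{[i]_n}\varphi$ are the unique maps between limits induced by the componentwise action of $\varphi$ on the respective diagrams, both sides become $\varphi_j \cdot \overline\pi_{[f^-]_n}$. Naturality then follows from uniqueness.

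For naturality in $i$, a morphism $g \colon i \to i' \in \partial\McI_n$ induces, by post-composition with $[g]_n$ in $\Ho\McI_\infty$ and by the functoriality of $\McI$ on $\McN^\op$, functors $g_\# \colon [i']_n/\Ho\McI_\infty \to [i]_n/\Ho\McI_\infty$ (strictly restricting to $g_\# \colon ([i']_n/\Ho\McI_\infty)^\circ \to ([i]_n/\Ho\McI_\infty)^\circ$ because all arrows in $\Ho\McI_\infty$ are epis by \Cref{asm:Ho-I-epi-E}, so no non-identity map is sent to the identity) and similarly $g_\# \colon i'/\McI_{<n} \to i/\McI_{<n}$; the square in \Cref{eqn:mat-comp-fact} for $i'$ pulled back along $g_\#$ is exactly that for $i$, so the induced functorial actions $\oM_gX$ and $\M_g(\gamma^*X)$ on matching objects are compatible, and once again the square $\M_g(\gamma^*X) \cdot \kappa_{X,i'} = \kappa_{X,i} \cdot \oM_gX$ is verified by postcomposition with $\pi_{f^-}$. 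The main obstacle is precisely this bookkeeping: keeping straight how degree-preserving arrows in $\partial\McI_n$ act on the two shapes of underslices and verifying that the factorisation in \Cref{eqn:mat-comp-fact} is strictly natural in $i$; once this is in hand, the universal-property argument is routine.
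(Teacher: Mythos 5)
Your argument for the triangle identity is precisely the paper's: postcompose both $\kappa_{X,i}\cdot\om_{[i]_n}X$ and $\m_i(\gamma^*X)$ with each limiting leg $\pi_{f^-}$ of $\M_i(\gamma^*X)$, observe both give $X[f^-]_n$, and conclude by the universal property of the limit. Your naturality-in-$X$ check is also the expected one (and is the only form of naturality actually invoked later in the paper). The paper itself dispatches all naturality with ``it is clear to observe naturality,'' so on those points you are aligned with (and more explicit than) the source.

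However, there is a genuine flaw in your naturality-in-$i$ argument. You claim that the functor $g_\# \colon [i']_n/\Ho\McI_\infty \to [i]_n/\Ho\McI_\infty$, $f \mapsto f\cdot[g]_n$, restricts to the $(\cdot)^\circ$ subcategories ``because all arrows in $\Ho\McI_\infty$ are epis, so no non-identity map is sent to the identity.'' This inference does not hold. If $f\cdot[g]_n = \id_{[i]_n}$, then $[g]_n$ is a split mono; combined with the epi hypothesis this forces $[g]_n$ to be an \emph{isomorphism}, and then $f = [g]_n^{-1}$ is a perfectly good non-identity object of $([i']_n/\Ho\McI_\infty)^\circ$ that $g_\#$ sends to the initial object $\id_{[i]_n}$. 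This is not a corner case: in the generalised inverse setting $\partial\McI_n = \bG_n(\McI)$ is a groupoid, so every non-identity $g$ is an isomorphism and the restriction fails whenever $\partial\McI_n$ has non-identity morphisms. Consequently the naive shape-functor $g_\#$ does \emph{not} induce a map $\oM_{[i]_n}X \to \oM_{[i']_n}X$ by pullback of limits, and some other device (e.g.\ factoring through $\ocosk_{[i]_n}$ as a functor on the full underslice before evaluating, so that the offending object is handled by the Kan extension rather than by $(\cdot)^\circ$) would be needed to make the $i$-functoriality precise. Since the paper itself does not supply this and the downstream lemmas only use naturality in $X$, this gap is not load-bearing for the results that cite \Cref{lem:mat-comp}, but your stated justification for the restriction is wrong and should be removed or replaced.
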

\begin{proof}
  It is clear to observe naturality.
  Recall that the matching map $\m_iX \colon X[i] \to \M_i(\gamma^*X)$ composed
  with the $(f^-)$-th limiting leg $\pi_{f^-}$ of $\M_i(\gamma^*X)$ for
  $f^- \colon i \to j \in i/\McI_{<n}$ gives $X[f^-]_n$, which is exactly
  $\om_{[i]_n}X$ composed with the corresponding limiting leg
  $\overline\pi_{[f^-]_n}$ of $\oM_{[i]_n}X$.
  But then by \Cref{constr:mat-comp},
  $\pi_{f^-} \cdot \kappa_{X,i} = \overline\pi_{[f^-]_n}$, and so
  $\pi_{f^-} \cdot \kappa_{X,i} \cdot \om_{[i]_n}X = \pi_{[f^-]_n} \cdot \om_{[i]_n}X$ for each
  $f^- \in i/\McI_{<n}$.
  \begin{equation*}
    \begin{tikzcd}
      & X[i]_n \ar[rd, "{\m_i(\gamma^*X)}"] \ar[ld, "\om_{[i]_n}X"']  & \\
      \oM_{[i]_n}X \ar[rr, "\kappa_{X,i}"{description, pos=0.75}] \ar[rd, "\overline\pi_{f^-}"']
      & & M_i(\gamma^*X) \ar[ld, "{\pi_{[f^-]_n}}"] \\
      & X[j]_n
      \ar[from=1-2, to=3-2, crossing over, "{X[f^-]_n}"{description, pos=0.25}]
    \end{tikzcd}
  \end{equation*}
  Thus, $\m_i(\gamma^*X) = \kappa_{X,i} \cdot \om_{[i]_n}X$.
\end{proof}

\begin{construction}\label{constr:pi-comp}
  By \Cref{lem:ho-I-pi,thm:E-iter-glue-Pi}, under \Cref{asm:ho-powerful},
  both $f \colon B \to A \in \McE^{\Ho\McI_\infty}$
  and $\gamma^*f \colon \gamma^*B \to \gamma^*A \in \McE^{\McI_\infty}$ are powerful.
  So, one has functors
  \begin{align*}
    \gamma^*(\Pi_B-), \Pi_{\gamma^*B}{\gamma^*-} \colon \McE^{\Ho\McI_\infty}/B
    \rightrightarrows \McE^{\McI_\infty}/\gamma^*A
  \end{align*}
  Set $\varphi \colon \gamma^*(\Pi_B-) \to \Pi_{\gamma^*B}{\gamma^*-}$ to be a
  natural transformation such that the transpose of
  $\varphi_C \colon \gamma^*(\Pi_BC) \to \Pi_{\gamma^*B}{\gamma^*C}$ for each
  $g \colon C \to B \in \McE^{\Ho\McI}/B$ over $\gamma^*A$ under the adjunction
  $(f\gamma)^* \dashv \Pi_{\gamma^*B}$ is given by
  \begin{equation}\label{eqn:phi-def}\tag{\textsc{$\varphi$-def}}
    \begin{tikzcd}
      \gamma^*B \times_{\gamma^*A} \gamma^*(\Pi_BC)
      \ar[r, "\cong"]
      \ar[rrd, bend right=10]
      &
      \gamma^*(B \times_A \Pi_BC)
      \ar[r, "\gamma^*(\ev)"]
      \ar[rd]
      &
      \gamma^*C
      \ar[d, "\gamma^*g"]
      &
      \gamma^*(\Pi_BC)
      \ar[r, "\varphi_C"]
      \ar[d, "{\gamma^*(\Pi_Bg)}"']
      &
      \Pi_{\gamma^*B}{\gamma^*C}
      \ar[ld, bend left, "\Pi_{\gamma^*B}{\gamma^*g}"]
      \\
      &
      &
      \gamma^*B
      \ar[r, "\gamma^*f"']
      &
      \gamma^*A
    \end{tikzcd}
  \end{equation}

  For each $i \in \partial\McI_n$, we also have maps
  \begin{equation*}
    \oM_{[i]_n}(\Pi_B-), \M_i(\Pi_{\gamma^*B}\gamma^*-) \colon \McE^{\Ho\McI}/B \rightrightarrows \McE
  \end{equation*}
  Using $\varphi$, define a map
  $\tphi \colon \oM_i(\Pi_B-) \to \M_i(\Pi_{\gamma^*B}{\gamma^*-})$ whose
  component at $g \colon C \to B \in \McE^{\Ho\McI}/B$ is the unique map such
  that for each $u^- \colon i \to j \in i/\McI_{<n}$,
  \begin{equation}\label{eqn:tphi-def}\tag{\textsc{$\tphi$-def}}
    \begin{tikzcd}
      \oM_{[i]_n}(\Pi_BC)
      \ar[r, "\tphi_{C,i}"]
      \ar[d, "{{\overline\pi}_{[u^-]_n}}"']
      &
      \M_i(\Pi_{\gamma^*B}{\gamma^*C})
      \ar[d, "\pi_{u^-}"]
      \\
      (\Pi_BC)[j]_n
      \ar[r, "\varphi_{C,j}"']
      &
      (\Pi_{\gamma^*B}{\gamma^*C})j
    \end{tikzcd}
  \end{equation}
  where $\pi_{u^-}$ and ${\overline\pi}_{u^-}$ are the respective limiting legs.
\end{construction}

\begin{lemma}\label{lem:pi-comp}
  For $\varphi_{C,i}$ and $\tphi_{C,i}$ from \Cref{constr:pi-comp}, we have
  \begin{equation*}
    \begin{tikzcd}
      (\Pi_BC)[i]_n
      \ar[r, "\varphi_{C,i}"]
      \ar[d, "\om_{[i]_n}(\Pi_BC)"']
      &
      (\Pi_{\gamma^*B}{\gamma^*C})i
      \ar[d, "\m_i(\Pi_{\gamma^*B}{\gamma^*C})"]
      \\
      \oM_{[i]_n}(\Pi_BC)
      \ar[r, "\tphi_{C,i}"]
      \ar[d, "\oM_{[i]_n}(\Pi_Bg)"']
      &
      \M_i(\Pi_{\gamma^*B}{\gamma^*C})
      \ar[d, "\M_{[i]_n}(\Pi_{\gamma^*B}\gamma^*g)"]
      \\
      \oM_{[i]_n}A
      \ar[r, "\kappa_{A,i}"]
      &
      \M_i(\gamma^*A)
    \end{tikzcd} \in \McE
  \end{equation*}
  where $\kappa_{A,i}$ is the comparison map between the matching objects from
  \Cref{constr:mat-comp}.
\end{lemma}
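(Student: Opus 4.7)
The plan is to establish each of the two squares separately by verifying the equality of the two parallel maps into the respective matching objects, which in turn is done by postcomposing with each limiting leg $\pi_{u^-}$ indexed by $u^- \colon i \to j \in i/\McI_{<n}$ and showing the resulting equalities. This is permissible because $\M_i(\Pi_{\gamma^*B}{\gamma^*C})$ and $\M_i(\gamma^*A)$ are limits over $i/\McI_{<n}$, so two maps into them agree iff they agree after composition with every leg.

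For the top square, the claim is $\m_i(\Pi_{\gamma^*B}\gamma^*C) \cdot \varphi_{C,i} = \tphi_{C,i} \cdot \om_{[i]_n}(\Pi_BC)$. After postcomposing with $\pi_{u^-}$, the right-hand side simplifies via~\Cref{eqn:tphi-def} to $\varphi_{C,j} \cdot \overline\pi_{[u^-]_n} \cdot \om_{[i]_n}(\Pi_BC) = \varphi_{C,j} \cdot (\Pi_BC)[u^-]_n$, using that the composition of the matching map with the $(u^-)$-th limiting leg is the functorial action of $\Pi_BC$ on $[u^-]_n$. The left-hand side gives $(\Pi_{\gamma^*B}\gamma^*C)(u^-) \cdot \varphi_{C,i}$, which by the naturality of $\varphi_C$ (as a natural transformation $\gamma^*(\Pi_BC) \to \Pi_{\gamma^*B}\gamma^*C$ of functors $\McI_\infty \to \McE$, applied at the map $u^- \in \McI_\infty$) equals $\varphi_{C,j} \cdot \gamma^*(\Pi_BC)(u^-) = \varphi_{C,j} \cdot (\Pi_BC)[u^-]_n$.

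For the bottom square, the claim is $\M_i(\Pi_{\gamma^*B}\gamma^*g) \cdot \tphi_{C,i} = \kappa_{A,i} \cdot \oM_{[i]_n}(\Pi_Bg)$. Postcomposing with $\pi_{u^-}$, the left-hand side becomes, by functoriality of $\M_i$ followed by~\Cref{eqn:tphi-def}, $(\Pi_{\gamma^*B}\gamma^*g)_j \cdot \varphi_{C,j} \cdot \overline\pi_{[u^-]_n}$. By~\Cref{eqn:phi-def} restricted to the component $j$, the map $\varphi_{C,j}$ is compatible with the projections to $\gamma^*A$, so $(\Pi_{\gamma^*B}\gamma^*g)_j \cdot \varphi_{C,j} = \gamma^*(\Pi_Bg)_j = (\Pi_Bg)_{[j]_n}$. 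The right-hand side, using the defining property of $\kappa_{A,i}$ from~\Cref{constr:mat-comp} (namely $\pi_{u^-} \cdot \kappa_{A,i} = \overline\pi_{[u^-]_n}$) and functoriality of $\oM_{[i]_n}$, yields $(\Pi_Bg)_{[j]_n} \cdot \overline\pi_{[u^-]_n}$, which matches.

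The main obstacle is purely bookkeeping: one must carefully distinguish the two systems of limiting legs (the $\overline\pi$'s for $\oM_{[i]_n}$ indexed by $([i]_n/\Ho\McI_\infty)^\circ$ and the $\pi$'s for $\M_i$ indexed by $i/\McI_{<n}$) and use the factorisation of~\Cref{eqn:mat-comp-fact} to transit between them via $\kappa$. Once that is set up, each square reduces to an application of either the naturality of $\varphi$ in the indexing variable (top square) or the compatibility of $\varphi_C$ with the structural projection down to $\gamma^*A$ (bottom square), both of which are formal consequences of the definitions in~\Cref{constr:pi-comp}.
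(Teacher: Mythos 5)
Your proposal is correct and follows essentially the same route as the paper: the paper's (terse, diagram-only) proof checks both squares by postcomposing with the limiting legs $\pi_{u^-}$ for $u^- \in i/\McI_{<n}$, reducing the top square to naturality of $\varphi_C$ and \Cref{eqn:tphi-def}, and the bottom square to the defining over-$\gamma^*A$ property of $\varphi_C$ from \Cref{eqn:phi-def} together with \Cref{eqn:tphi-def} and the defining triangle for $\kappa_{A,i}$. Your written-out version is a faithful verbal rendering of the two diagrams the paper exhibits.
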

\begin{proof}
  For each $u^- \colon i \to j \in i/\McI_{<n}$, one has the following diagram
  on the left for the top square and the diagram on the right for the bottom
  square
  % $\pi_{[u^-]_n} \cdot \om_{[i]_n}(\Pi_{\gamma^*B}{\gamma^*C}) \cdot \varphi_{C,i} =
  % \pi_{u^-} \cdot \tphi_{C,i} \cdot \oM_i(\Pi_BC)$:
  %
  \begin{equation*}
    \begin{tikzcd}[row sep=large]
      (\Pi_BC)[i]_n
      \ar[r, "\varphi_{C,i}"]
      \ar[d, "\om_{[i]_n}(\Pi_BC)" {description}]
      \ar[dd, "{(\Pi_BC)[u^-]_n}"', bend right=60, marking]
      &
      (\Pi_{\gamma^*B}{\gamma^*C})i
      \ar[d, "\m_i(\Pi_{\gamma^*B}{\gamma^*C})" {description}]
      \ar[dd, "(\Pi_{\gamma^*B}{\gamma^*C})u^-"', bend left=60, marking]
      \\
      \oM_{[i]_n}(\Pi_BC)
      \ar[r, "\tphi_{C,i}"]
      \ar[d, "{\overline\pi}_{[u^-]_n}" {description}]
      &
      \M_i(\Pi_{\gamma^*B}{\gamma^*C})
      \ar[d, "\pi_{u^-}" {description}]
      \\
      (\Pi_BC)[j]_n
      \ar[r, "\varphi_{C,j}"]
      &
      (\Pi_{\gamma^*B}{\gamma^*C})j
    \end{tikzcd}
    %
    % https://q.uiver.app/#q=WzAsOCxbMCwwLCJcXG9NX2koXFxQaV9CQykiXSxbMiwwLCJcXE1faShcXFBpX3tcXGdhbW1hXipCfXtcXGdhbW1hXipDfSkiXSxbMiwyLCIoXFxQaV97XFxnYW1tYV4qQn17XFxnYW1tYV4qQ30paiJdLFswLDIsIihcXFBpX0JDKV9qIl0sWzEsMywiQWoiXSxbMywzLCJBaiJdLFszLDEsIlxcTV9pKFxcZ2FtbWFeKkEpIl0sWzEsMSwiXFxvTV9pQSJdLFswLDcsIlxcb01faShcXFBpX0JnKSIsMV0sWzAsMywie1xcb3ZlcmxpbmVcXHBpfV97dV4tfSIsMV0sWzEsMiwiXFxwaV97dV4tfSIsMSx7ImxhYmVsX3Bvc2l0aW9uIjozMH1dLFs3LDYsIlxca2FwcGFfe0EsaX0iLDEseyJsYWJlbF9wb3NpdGlvbiI6MjB9XSxbMyw0LCIoXFxQaV9CZylfaiIsMV0sWzQsNSwiPSIsMV0sWzIsNSwiKFxcUGlfe1xcZ2FtbWFeKkJ9e1xcZ2FtbWFeKmd9KV9qIiwxXSxbNyw0LCJ7XFxvdmVybGluZVxccGl9X3t1Xi19IiwxLHsibGFiZWxfcG9zaXRpb24iOjMwfV0sWzYsNSwiXFxwaV97dV4tfSIsMV0sWzEsNiwiXFxNX2koXFxQaV97XFxnYW1tYV4qQn17XFxnYW1tYV4qZ30pIiwxXSxbMywyLCJcXHZhcnBoaV97QyxqfSIsMSx7ImxhYmVsX3Bvc2l0aW9uIjozMH1dLFswLDEsIlxcdHBoaV97QyxpfSIsMV1d&macro_url=https%3A%2F%2Fgist.githubusercontent.com%2Flim495062%2F710669c9f53e1a6bed1d326bb7111aa3%2Fraw%2F5baf171b3cedea6d04c315547e6ab418d54faa2c%2Fmarcos.tex
    \begin{tikzcd}[column sep=small]
      {\oM_{[i]_n}(\Pi_BC)} && {\M_{[i]_n}(\Pi_{\gamma^*B}{\gamma^*C})} \\
      & {\oM_{[i]_n}A} && {\M_i(\gamma^*A)} \\
      {(\Pi_BC)[j]_n} && {(\Pi_{\gamma^*B}{\gamma^*C})j} \\
      & A[j]_n && Aj
      \arrow["{\oM_i(\Pi_Bg)}"{description}, from=1-1, to=2-2]
      \arrow["{{\overline\pi}_{[u^-]_n}}"{description}, from=1-1, to=3-1]
      \arrow["{(\Pi_Bg)_{[j]_n}}"{description}, from=3-1, to=4-2]
      \arrow["{=}"{description}, from=4-2, to=4-4]
      \arrow["{(\Pi_{\gamma^*B}{\gamma^*g})_j}"{description}, from=3-3, to=4-4]
      \arrow["{\pi_{u^-}}"{description}, from=2-4, to=4-4]
      \arrow["{\M_i(\Pi_{\gamma^*B}{\gamma^*g})}"{description}, from=1-3, to=2-4]
      \arrow["{\varphi_{C,j}}"{description, pos=0.7}, from=3-1, to=3-3]
      \arrow["{\tphi_{C,i}}"{description}, from=1-1, to=1-3]
      \arrow["{\pi_{u^-}}"{description, pos=0.3}, from=1-3, to=3-3]
      \arrow["{{\overline\pi}_{[u^-]_n}}"{description, pos=0.3}, from=2-2, to=4-2, crossing over]
      \arrow["{\kappa_{A,i}}"{description, pos=0.2}, from=2-2, to=2-4, crossing over]
    \end{tikzcd}
  \end{equation*}
\end{proof}

\begin{construction}\label{constr:psiCi}
  In view of the bottom square of \Cref{lem:pi-comp} and the fact that
  $\kappa_{A,i}$ is under $A[i]_n = Ai$ by \Cref{lem:mat-comp}, one has a map of
  cospans
  \begin{equation*}
    \begin{tikzcd}[row sep=large, column sep=huge]
      A[i]_n
      \ar[r, "\om_{[i]_n}A"]
      \ar[d, "="']
      &
      \oM_{[i]_n}A
      \ar[d, "\kappa_{A,i}"{description}]
      &
      \oM_i(\Pi_BC)
      \ar[l, "\oM_i(\Pi_Bg)"']
      \ar[d, "\tphi_{C,i}"]
      \\
      Ai
      \ar[r, "\m_i(\gamma^*A)"']
      &
      \M_i(\gamma^*A)
      &
      \M_i(\Pi_{\gamma^*B}{\gamma^*C})
      \ar[l, "\M_i(\Pi_{\gamma^*B}{\gamma^*g})"]
    \end{tikzcd}
  \end{equation*}
  which induces a map
  $\psi_{C,i}
  \colon
  A[i]_n \times_{\oM_{[i]_n}A} \oM_{[i]_n}(\Pi_BC)
  \to
  Ai \times_{\M_i(\gamma^*A)} \M_i(\Pi_{\gamma^*B}{\gamma^*C})$.
\end{construction}

\begin{lemma}\label{lem:left-face}
  For $\psi_{C,i}$ the map from \Cref{constr:psiCi} and $\varphi_{C,i}$ the
  map from \Cref{constr:pi-comp}, one has
  \begin{equation*}
    \begin{tikzcd}
      (\Pi_BC)[i]_n
      \ar[r, "\varphi_{C,i}"]
      \ar[d, "{((\Pi_Bg)_{[i]_n}, \om_{[i]_n}(\Pi_BC))}"']
      &
      (\Pi_{\gamma^*B}{\gamma^*C})i
      \ar[d, "{((\Pi_{\gamma^*B}{\gamma^*g})_i, \m_i(\Pi_{\gamma^*B}{\gamma^*C}))}"]
      \\
      A[i]_n \times_{\oM_{[i]_n}A} \oM_{[i]_n}(\Pi_BC)
      \ar[r, "\psi_{C,i}"']
      &
      Ai \times_{\M_i(\gamma^*A)} \M_i(\Pi_{\gamma^*B}{\gamma^*C})
    \end{tikzcd}
  \end{equation*}
\end{lemma}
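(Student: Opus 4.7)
The plan is to invoke the universal property of the pullback in the bottom-right corner of the square. Since $Ai \times_{\M_i(\gamma^*A)} \M_i(\Pi_{\gamma^*B}{\gamma^*C})$ is a pullback, two parallel maps into it agree if and only if they agree after post-composition with the two projections $\pi_1$ and $\pi_2$, so it suffices to check the square after both projections separately.

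For the first projection, note that by Construction \ref{constr:psiCi} the map $\psi_{C,i}$ is a pullback-induced map whose first component is just the identification $A[i]_n = Ai$. Thus $\pi_1 \cdot \psi_{C,i} \cdot ((\Pi_Bg)_{[i]_n}, \om_{[i]_n}(\Pi_BC)) = (\Pi_Bg)_{[i]_n}$. On the other side, Construction \ref{constr:pi-comp} defines $\varphi_C$ as a morphism in $\McE^{\McI_\infty}/\gamma^*A$, so $(\Pi_{\gamma^*B}{\gamma^*g}) \cdot \varphi_C = \gamma^*(\Pi_Bg)$; evaluating at $i$ and using $(\gamma^*(\Pi_Bg))_i = (\Pi_Bg)_{[i]_n}$ yields the required equality.

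For the second projection, Construction \ref{constr:psiCi} shows that $\pi_2 \cdot \psi_{C,i}$ equals $\tphi_{C,i}$ precomposed with the projection onto $\oM_{[i]_n}(\Pi_BC)$. Hence $\pi_2 \cdot \psi_{C,i} \cdot ((\Pi_Bg)_{[i]_n}, \om_{[i]_n}(\Pi_BC)) = \tphi_{C,i} \cdot \om_{[i]_n}(\Pi_BC)$, while $\pi_2 \cdot ((\Pi_{\gamma^*B}{\gamma^*g})_i, \m_i(\Pi_{\gamma^*B}{\gamma^*C})) \cdot \varphi_{C,i} = \m_i(\Pi_{\gamma^*B}{\gamma^*C}) \cdot \varphi_{C,i}$. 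These two expressions agree by the top square of Lemma \ref{lem:pi-comp}, which is exactly the compatibility of $\varphi_{C,i}$ with matching maps that $\tphi_{C,i}$ was engineered to witness.

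There is no real obstacle here: the lemma is a bookkeeping assembly, reducing via the pullback's universal property to two already-proved compatibilities, namely that $\varphi_C$ lies over $\gamma^*A$ (from its very definition in Construction \ref{constr:pi-comp}) and that $\tphi_{C,i}$ intertwines the two matching maps (Lemma \ref{lem:pi-comp}). The main care needed is simply to track the identification $A[i]_n = (\gamma^*A)i = Ai$ through both projections, which is routine.
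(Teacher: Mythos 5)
Your proof is correct and takes essentially the same approach as the paper: the paper's proof also reduces to checking the square after postcomposing with the two projections of the target pullback, with the first projection handled via the fact that $\varphi_C$ is over $\gamma^*A$ (from \Cref{constr:pi-comp}) and the second via the top square of \Cref{lem:pi-comp}. The paper simply packages the two checks into a single diagram rather than spelling them out in prose as you do.
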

\begin{proof}
  Composing with the limiting legs of the pullback
  $Ai \leftarrow Ai \times_{\M_i(\gamma^*A)} \M_i(\Pi_{\gamma^*B}{\gamma^*C})
  \to \M_i(\Pi_{\gamma^*B}{\gamma^*C})$ and using \Cref{lem:pi-comp}, it is
  possible to observe
  %
  % https://q.uiver.app/#q=WzAsMTAsWzAsMiwiQWkgXFx0aW1lc197XFxvTV9pQX0gXFxvTV9pKFxcUGlfQkMpIl0sWzIsMiwiQWkgXFx0aW1lc197XFxNX2koXFxnYW1tYV4qQSl9IFxcTV9pKFxcUGlfe1xcZ2FtbWFeKkJ9e1xcZ2FtbWFeKkN9KSJdLFsxLDMsIlxcb01faShcXFBpX0JDKSJdLFszLDMsIlxcTV9pKFxcUGlfe1xcZ2FtbWFeKkJ9e1xcZ2FtbWFeKkN9KSJdLFsxLDUsIlxcb01faUEiXSxbMyw1LCJcXE1faShcXGdhbW1hXipBKSJdLFswLDQsIkFpIl0sWzIsNCwiQWkiXSxbMCwwLCIoXFxQaV9CQylpIl0sWzIsMCwiKFxcUGlfe1xcZ2FtbWFeKkJ9e1xcZ2FtbWFeKkN9KWkiXSxbNiw3LCI9IiwxLHsibGFiZWxfcG9zaXRpb24iOjYwfV0sWzYsNCwiXFxvbV9pQSIsMV0sWzQsNSwiXFxrYXBwYV97QSxpfSIsMV0sWzcsNSwiXFxtX2koXFxnYW1tYV4qQSkiLDFdLFsxLDddLFszLDUsIlxcTV9pKFxcUGlfe1xcZ2FtbWFeKkJ9e1xcZ2FtbWFeKmd9KSIsMV0sWzEsM10sWzEsNSwiIiwxLHsic3R5bGUiOnsibmFtZSI6ImNvcm5lciJ9fV0sWzAsMl0sWzAsNl0sWzIsNCwiXFxvTV9pKFxcUGlfQmcpIiwxLHsibGFiZWxfcG9zaXRpb24iOjMwfV0sWzAsNCwiIiwxLHsic3R5bGUiOnsibmFtZSI6ImNvcm5lciJ9fV0sWzIsMywiXFx0cGhpX3tDLGl9IiwxLHsibGFiZWxfcG9zaXRpb24iOjMwfV0sWzAsMSwiXFxleGlzdCFcXHBzaV97QyxpfSIsMSx7ImxhYmVsX3Bvc2l0aW9uIjo2MCwic3R5bGUiOnsiYm9keSI6eyJuYW1lIjoiZGFzaGVkIn19fV0sWzgsMF0sWzksMV0sWzksMywiXFxtX2koXFxQaV97XFxnYW1tYV4qQn17XFxnYW1tYV4qQ30pIiwxLHsiY3VydmUiOi0zfV0sWzgsMiwiXFxvbV9pKFxcUGlfQkMpIiwxLHsiY3VydmUiOi0zfV0sWzgsOSwiXFx2YXJwaGlfe0MsaX0iLDFdLFs4LDYsIihcXFBpX0JnKV9pIiwyLHsibGFiZWxfcG9zaXRpb24iOjIwLCJjdXJ2ZSI6NX1dLFs5LDcsIihcXFBpX3tcXGdhbW1hXipCfXtcXGdhbW1hXipnfSlfaSIsMSx7ImxhYmVsX3Bvc2l0aW9uIjoyMCwiY3VydmUiOjV9XV0=&macro_url=https%3A%2F%2Fgist.githubusercontent.com%2Flim495062%2F710669c9f53e1a6bed1d326bb7111aa3%2Fraw%2F95404349b92c7533f7e45720d37824c519b17218%2Freedy-macros.tex
  \begin{equation*}
    \begin{tikzcd}
      {(\Pi_BC)[i]_n} && {(\Pi_{\gamma^*B}{\gamma^*C})i} \\
      \\
      {A[i]_n \times_{\oM_{[i]_n}A} \oM_{[i]_n}(\Pi_BC)} && {Ai \times_{\M_i(\gamma^*A)} \M_i(\Pi_{\gamma^*B}{\gamma^*C})} \\
      & {\oM_{[i]_n}(\Pi_BC)} && {\M_i(\Pi_{\gamma^*B}{\gamma^*C})} \\
      A[i]_n && Ai \\
      & {\oM_{[i]_n}A} && {\M_i(\gamma^*A)}
      \arrow["{=}"{description, pos=0.6}, from=5-1, to=5-3]
      \arrow["{\om_{[i]_n}A}"{description}, from=5-1, to=6-2]
      \arrow["{\kappa_{A,i}}"{description}, from=6-2, to=6-4]
      \arrow["{\m_i(\gamma^*A)}"{description}, from=5-3, to=6-4]
      \arrow[from=3-3, to=5-3]
      \arrow["{\M_i(\Pi_{\gamma^*B}{\gamma^*g})}"{description}, from=4-4, to=6-4]
      \arrow[from=3-3, to=4-4]
      \arrow["\lrcorner"{pos=0}, phantom, from=3-3, to=6-4]
      \arrow[from=3-1, to=4-2]
      \arrow[from=3-1, to=5-1]
      \arrow["\lrcorner"{pos=0}, phantom, from=3-1, to=6-2]
      \arrow[from=1-1, to=3-1]
      \arrow[from=1-3, to=3-3]
      \arrow["{\m_i(\Pi_{\gamma^*B}{\gamma^*C})}"{description}, bend left, from=1-3, to=4-4]
      \arrow["{\varphi_{C,i}}"{description}, from=1-1, to=1-3]
      \arrow["{(\Pi_Bg)_{[i]_n}}"', bend right=90, from=1-1, to=5-1]
      \arrow["{(\Pi_{\gamma^*B}{\gamma^*g})_i}"{description, pos=0.2}, bend right=90, from=1-3, to=5-3]
      \arrow["{\psi_{C,i}}"{description, pos=0.6}, crossing over, from=3-1, to=3-3]
      \arrow["{\om_i(\Pi_BC)}"{description}, crossing over, bend left, from=1-1, to=4-2]
      \arrow["{\oM_{[i]_n}(\Pi_Bg)}"{description, pos=0.3}, crossing over, from=4-2, to=6-2]
      \arrow["{\tphi_{C,i}}"{description, pos=0.3}, crossing over, from=4-2, to=4-4]
    \end{tikzcd}
  \end{equation*}
\end{proof}

\begin{construction}\label{constr:tphi-pb-proj}
  By naturality of $\kappa$ from \Cref{constr:mat-comp} and by the bottom square
  of \Cref{lem:pi-comp}, one has a map of cospans
  \begin{equation*}
    \begin{tikzcd}[column sep=huge]
      \oM_{[i]_n}B \ar[r, "\oM_{[i]_n}f"] \ar[d, "\kappa_{B,i}"']
      &
      \oM_{[i]_n}A \ar[d, "\kappa_{A,i}"{description}]
      &
      \oM_{[i]_n}(\Pi_BC) \ar[l, "\oM_i(\Pi_Bg)"'] \ar[d, "\tphi_{C,i}"]
      \\
      \M_i(\gamma^*B) \ar[r, "\M_i(\gamma^*f)"']
      &
      \M_i(\gamma^*A)
      &
      \M_i(\Pi_{\gamma^*B}{\gamma^*C}) \ar[l, "\M_i(\Pi_{\gamma^*B}{\gamma^*g})"]
    \end{tikzcd}
  \end{equation*}
  which induces a map $\mu_{C,i} \coloneqq (\kappa_{B,i}, \tphi_{C,i})$ as
  follows:
  \begin{align*}
    \oM_{[i]_n}(B \times_A \Pi_BC)
    \cong
    \oM_{[i]_n}B \times_{\oM_{[i]_n}A} \oM_{[i]_n}(\Pi_BC)
    \xrightarrow{\mu_{C,i}}
    \M_i(\gamma^*B) \times_{\M_i(\gamma^*A)} \M_i(\Pi_{\gamma^*B}{\gamma^*C})
    \cong
    \M_i(\gamma^*B \times_{\gamma^*A} \Pi_{\gamma^*B}{\gamma^*C})
  \end{align*}
\end{construction}

\begin{lemma}\label{lem:tphi-pb-proj}
  The $\mu_{C,i}$ from \Cref{constr:tphi-pb-proj} is such that for any
  $u^- \colon i \to j \in i/\McI_{<n}$, one has
  \begin{equation*}
    \begin{tikzcd}
      \oM_{[i]_n}(B \times_A \Pi_BC) \ar[r, "\mu_{C,i}"] \ar[d, "{\overline\pi}_{[u^-]_n}"']
      &
      \M_i(\gamma^*B \times_{\gamma^*A} \Pi_{\gamma^*A}{\gamma^*C})
      \ar[d, "\pi_{u^-}"]
      \\
      B[j]_n \times_{A[j]_n} (\Pi_BC)[j]_n
      \ar[r, "Bj \times_{Aj} \varphi_{C,j}"']
      &
      Bj \times_{Aj} (\Pi_{\gamma^*B}{\gamma^*C})j
    \end{tikzcd}
  \end{equation*}
  where the $\varphi_{C,j}$ is the comparison map of dependent products from
  \Cref{constr:pi-comp}.
  From this, one concludes that
  \begin{equation*}
    \begin{tikzcd}
      \oM_{[i]_n}(B \times_A \Pi_BC) \ar[r, "\mu_{C,i}"] \ar[d, "{\oM_{[i]_n}(\ev)}"']
      &
      \M_i(\gamma^*B \times_{\gamma^*A} \Pi_{\gamma^*A}{\gamma^*C})
      \ar[d, "{\M_i(\ev)}"]
      \\
      \oM_{[i]_n}C
      \ar[r, "\kappa_{C,i}"']
      &
      \M_iC
    \end{tikzcd}
  \end{equation*}
\end{lemma}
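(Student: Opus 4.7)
The plan is to check both identities after post-composing with the limiting leg $\pi_{u^-}$ of the right-hand matching object, for an arbitrary $u^- \colon i \to j \in i/\McI_{<n}$, and then invoke the universal property of the limit defining each $\M_i(-)$.

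For the first square, I would first unpack $\mu_{C,i}$ as the pair $(\kappa_{B,i}, \tphi_{C,i})$ after the canonical isomorphisms $\oM_{[i]_n}(B \times_A \Pi_B C) \cong \oM_{[i]_n}B \times_{\oM_{[i]_n}A} \oM_{[i]_n}(\Pi_B C)$ and likewise for $\M_i$ (these exist because the functors $\oM_{[i]_n}$ and $\M_i$ are right adjoints and so preserve the defining pullback). Then the $u^-$-leg $\pi_{u^-}$ of the target matching object decomposes as a pair of $u^-$-legs, one landing in $Bj$ and one in $(\Pi_{\gamma^*B}\gamma^*C)j$. By \Cref{lem:mat-comp} applied to $B$, one has $\pi_{u^-} \cdot \kappa_{B,i} = \overline\pi_{[u^-]_n}$, and by the defining equation \ref{eqn:tphi-def} of $\tphi_{C,i}$, one has $\pi_{u^-} \cdot \tphi_{C,i} = \varphi_{C,j} \cdot \overline\pi_{[u^-]_n}$. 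Combining these two components, the composite $\pi_{u^-} \cdot \mu_{C,i}$ agrees with $(Bj \times_{Aj} \varphi_{C,j}) \cdot \overline\pi_{[u^-]_n}$ once the source projection $\overline\pi_{[u^-]_n}$ is similarly decomposed as a pair of projections onto $B[j]_n$ and $(\Pi_B C)[j]_n$. Since this holds for every $u^-$, universality yields the first square.

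For the second claim, I would again compose both sides with $\pi_{u^-}$. On the upper route, using the square just established,
\begin{equation*}
\pi_{u^-} \cdot \M_i(\ev) \cdot \mu_{C,i} \;=\; \ev_j \cdot \pi_{u^-} \cdot \mu_{C,i} \;=\; \ev_j \cdot (Bj \times_{Aj} \varphi_{C,j}) \cdot \overline\pi_{[u^-]_n}.
\end{equation*}
But the defining diagram \ref{eqn:phi-def} of $\varphi$ exhibits $\gamma^*(\ev)$ as the composite $\ev \cdot (\gamma^*B \times_{\gamma^*A} \varphi_C)$ modulo the pullback-preservation iso, so evaluating at $j$ gives $\ev_j \cdot (Bj \times_{Aj} \varphi_{C,j}) = \ev_{[j]_n}$. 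Naturality of the limiting cone $\overline\pi$ for $\oM_{[i]_n}$ then rewrites this as $\overline\pi_{[u^-]_n} \cdot \oM_{[i]_n}(\ev)$. On the lower route, using \Cref{lem:mat-comp} applied to $C$, one has $\pi_{u^-} \cdot \kappa_{C,i} = \overline\pi_{[u^-]_n}$, so the composite with $\oM_{[i]_n}(\ev)$ is the same expression. A final appeal to universality finishes the proof.

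The main obstacle will be the bookkeeping: making sure the canonical pullback-preservation isomorphisms for the right adjoints $\oM_{[i]_n}$ and $\M_i$ are inserted consistently on both sides, and verifying that the decomposition of a limiting leg into a target pullback really is the pair of the corresponding limiting legs onto each factor. Once that is carefully set up, the rest is a direct application of \Cref{lem:mat-comp}, the defining equation \ref{eqn:tphi-def} of $\tphi$, and the defining equation \ref{eqn:phi-def} of $\varphi$.
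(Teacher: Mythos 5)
Your proposal is correct and follows essentially the same route as the paper: for the first square, reduce to postcomposition with the two projections of the pullback $Bj \times_{Aj} (\Pi_{\gamma^*B}\gamma^*C)j$, identifying the two legs via the defining property of $\kappa_{B,i}$ from \Cref{constr:mat-comp} (which is where $\pi_{u^-}\cdot\kappa_{B,i}=\overline\pi_{[u^-]_n}$ actually lives, rather than in \Cref{lem:mat-comp}) and via \Cref{eqn:tphi-def}; then for the second square, postcompose with $\pi_{u^-}$, use the first square plus \Cref{eqn:phi-def}, and appeal to universality of the limit defining $\M_iC$. The ``bookkeeping'' you flag about the pullback-preservation isomorphisms is exactly what the paper's two diagram chases carry out.
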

\begin{proof}
  Because $Bj \times_{Aj} (\Pi_{\gamma^*B}{\gamma^*C})j$ is a pullback, to check
  the first diagram
  $\pi_{u^-} \cdot \mu_{C,i} = (Bj \times_{Aj} \varphi_{C,j}) \cdot
  {\overline\pi}_{[u^-]_n}$ is to check that their postcomposition with the
  limiting legs of the pullback commute.
  This is observed by chasing the below diagram, where the first diagram in the
  statement is highlighted in red.
  \begin{equation*}
    \begin{tikzcd}[cramped, column sep=-0.15em]
      & {\oM_{[i]_n}(\Pi_BC)} && {M_i(\Pi_{\gamma^*B}{\gamma^*C})} \\
      \\
      {\oM_{[i]_n}(B \times_A \Pi_BC)} && {\M_i(\gamma^*B \times_{\gamma^*A} \Pi_{\gamma^*B}{\gamma^*C})} \\
      & {\oM_{[i]_n}A} && {\M_i(\gamma^*A)} & {(\Pi_BC)j} && {(\Pi_{\gamma^*B}{\gamma^*C})j} \\
      \\
      {\oM_{[i]_n}B} && {\M_i(\gamma^*B)} & Aj \\
      &&& {Bj \times_{Aj} (\Pi_BC)j} && {Bj \times_{Aj} (\Pi_{\gamma^*B}{\gamma^*C})j} \\
      && Bj
      \arrow["{\kappa_{B,i}}"{description, pos=0.75}, from=6-1, to=6-3]
      \arrow["{\oM_{[i]_n}f}"{description, pos=0.7}, from=6-1, to=4-2]
      \arrow["{\kappa_{A,i}}"{description, pos=0.25}, from=4-2, to=4-4]
      \arrow["{\tphi_{C,i}}"{description}, from=1-2, to=1-4]
      \arrow["{\oM_{[i]_n}(\Pi_Bg)}"{description, pos=0.3}, from=1-2, to=4-2]
      \arrow[from=3-1, to=6-1]
      \arrow[from=3-1, to=1-2]
      \arrow["{\pi_{u^-}}"{description, pos=0.6}, from=6-3, to=8-3]
      \arrow[from=7-6, to=8-3]
      \arrow[from=7-4, to=8-3]
      \arrow["{{\overline\pi}_{[u^-]_n}}"{description}, from=6-1, to=8-3]
      \arrow["{\pi_{u^-}}"{description, pos=0.25}, from=4-4, to=6-4]
      \arrow["{{\overline\pi}_{[u^-]_n}}"{description, pos=0.3}, from=4-2, to=6-4]
      \arrow["{f_j}"{description, pos=0.75}, from=6-4, to=8-3]
      \arrow["{(\Pi_{\gamma^*B}{\gamma^*g})j}"{description}, from=4-7, to=6-4]
      \arrow["{\pi_{u^-}}"{description}, from=1-4, to=4-7]
      \arrow[from=7-6, to=4-7]
      \arrow["{{\overline\pi}_{u^-}}"{description, pos=0.3}, from=1-2, to=4-5]
      \arrow[from=4-5, to=6-4]
      \arrow["{\varphi_{C,j}}"{description}, from=4-5, to=4-7]
      \arrow["{\M_i(\gamma^*f)}"{description}, crossing over, from=6-3, to=4-4]
      \arrow["{\M_i(\Pi_{\gamma^*B}{\gamma^*g})}"{description}, crossing over, from=1-4, to=4-4]
      \arrow[crossing over, from=3-3, to=6-3]
      \arrow[crossing over, from=3-3, to=1-4]
      \arrow[crossing over, from=7-4, to=4-5]
      \arrow["{\mu_{C,i}}"{fill=white0}, color=red0, crossing over, from=3-1, to=3-3]
      \arrow["{\pi_{u-}}"{description, pos=0.8}, color=red0, crossing over, bend right=10, from=3-3, to=7-6]
      \arrow["{{\overline\pi}_{[u^-]_n}}"{description}, out=-90, in=180, color=red0, crossing over, from=3-1, to=7-4]
      \arrow["{Bj \times_{Aj} \varphi_{C,j}}", color=red0, crossing over, from=7-4, to=7-6]
    \end{tikzcd}
  \end{equation*}

  To observe the second diagram, note that for each $u^- \colon i \to j \in i/\McI_{<n}$,
  \begin{equation*}
    % https://q.uiver.app/#q=WzAsOCxbMCwyLCJcXG9NX2koQiBcXHRpbWVzX0EgXFxQaV9CQykiXSxbMSwzLCJcXE1faShcXGdhbW1hXipCIFxcdGltZXNfe1xcZ2FtbWFeKkF9IFxcUGlfe1xcZ2FtbWFeKkJ9e1xcZ2FtbWFeKkN9KSJdLFsxLDIsIlxcb01faUMiXSxbMiwzLCJcXE1faUMiXSxbMiwyLCJDaiJdLFszLDMsIkNqIl0sWzEsMCwiQmkgXFx0aW1lc197QWp9IChcXFBpX0JDKWoiXSxbMiwxLCJCaSBcXHRpbWVzX3tBan0gKFxcUGlfe1xcZ2FtbWFeKkJ9e1xcZ2FtbWFeKkN9KWoiXSxbMiwzLCJcXGthcHBhIiwxXSxbMCwxLCJcXG11X3tDLGl9IiwxXSxbNiw3LCJCaiBcXHRpbWVzX3tBan0gXFx2YXJwaGlfe0Msan0iLDFdLFswLDIsIlxcb01faShcXGV2KSIsMV0sWzEsMywiXFxNX2koXFxldikiLDFdLFszLDUsIlxccGlfe3VeLX0iLDFdLFsyLDQsIntcXG92ZXJsaW5lXFxwaX1fe3VeLX0iLDEseyJsYWJlbF9wb3NpdGlvbiI6MjB9XSxbNCw1LCI9IiwxXSxbNiw0LCJcXGV2IiwxXSxbNyw1LCJcXGV2IiwxXSxbMSw3LCJcXHBpX3t1Xi19IiwxLHsibGFiZWxfcG9zaXRpb24iOjcwLCJjb2xvdXIiOlswLDYwLDYwXX0sWzAsNjAsNjAsMV1dLFswLDYsIntcXG92ZXJsaW5lXFxwaX1fe3VeLX0iLDFdXQ==&macro_url=https%3A%2F%2Fgist.githubusercontent.com%2Flim495062%2F710669c9f53e1a6bed1d326bb7111aa3%2Fraw%2F95404349b92c7533f7e45720d37824c519b17218%2Freedy-macros.tex
    \begin{tikzcd}
      & {Bi \times_{Aj} (\Pi_BC)j} \\
      && {Bi \times_{Aj} (\Pi_{\gamma^*B}{\gamma^*C})j} \\
      {\oM_{[i]_n}(B \times_A \Pi_BC)} & {\oM_{[i]_n}C} & C[j]_n \\
      & {\M_i(\gamma^*B \times_{\gamma^*A} \Pi_{\gamma^*B}{\gamma^*C})} & {\M_iC} & Cj
      \arrow["{\kappa_{C,i}}"{description}, from=3-2, to=4-3, color=red0]
      \arrow["{\mu_{C,i}}"{description}, from=3-1, to=4-2, color=red0]
      \arrow["{Bj \times_{Aj} \varphi_{C,j}}"{description}, from=1-2, to=2-3]
      \arrow["{\oM_{[i]_n}(\ev)}"{description}, from=3-1, to=3-2, color=red0]
      \arrow["{\M_i(\ev)}"{description}, from=4-2, to=4-3, color=red0]
      \arrow["{\pi_{u^-}}"{description}, from=4-3, to=4-4]
      \arrow["{{\overline\pi}_{[u^-]_n}}"{description, pos=0.2}, from=3-2, to=3-3]
      \arrow["{=}"{description}, from=3-3, to=4-4]
      \arrow["\ev"{description}, from=1-2, to=3-3]
      \arrow["\ev"{description}, from=2-3, to=4-4]
      \arrow["{\pi_{u^-}}"{description, pos=0.7}, crossing over, from=4-2, to=2-3]
      \arrow["{{\overline\pi}_{[u^-]_n}}"{description}, from=3-1, to=1-2]
    \end{tikzcd}
  \end{equation*}
  where the left slanted face is by the first diagram, the right slanted face is
  by the construction of the comparison map $\varphi$ between dependent products
  as from \Cref{eqn:phi-def} in \Cref{constr:pi-comp}.
  Because $u^-$ is any object from $i/\McI_{<n}$, it follows from the universal
  property of the matching object that the second diagram in the statement
  (i.e. the bottom left square in the diagram above) commutes.
\end{proof}

\begin{assumption}\label{asm:bd-initial}
  Assume that for each $i \in \partial\McI_n$, the localisation restricts to a
  map $\gamma|_i \colon i/\McI_{<n} \to (i/\Ho\McI)^\circ$ that is initial
  (i.e. for each $\id \neq f \colon [i]_n \to j \in \Ho\McI$, the comma category
  $\gamma|_i \downarrow f$ is non-empty and connected).
\end{assumption}
The main reason for the initiality condition in the above \Cref{asm:bd-initial}
is so that we have that matching objects of $\McI_\infty$-shaped and
$(\Ho\McI_\infty)$-shaped diagrams are isomorphic.
This is made precise in the following sense.

\begin{lemma}\label{lem:mat-comp-iso}
  Under \Cref{asm:bd-initial}, the comparison map
  $\kappa_{X,i} \colon \oM_{[i]_n}X \to \M_i(\gamma^*X)$ of
  \Cref{constr:mat-comp} is an isomorphism.
\end{lemma}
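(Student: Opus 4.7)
My plan is to invoke the classical fact that initial (co-final in the limit sense) functors induce isomorphisms between limits: if $F \colon \McA \to \McB$ is initial and $D \colon \McB \to \McE$ is a diagram whose limit exists, then the canonical comparison $\lim D \to \lim (D \circ F)$ induced by restricting cones is an isomorphism. Here the initiality of $\gamma|_i \colon i/\McI_{<n} \to ([i]_n/\Ho\McI_\infty)^\circ$ furnished by \Cref{asm:bd-initial} is exactly what is needed.

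To apply this, I would first recall from \Cref{constr:mat-comp} (specifically \Cref{eqn:mat-comp-fact}) that $\oM_{[i]_n}X$ is by definition the limit of the composite
\begin{equation*}
  ([i]_n/\Ho\McI_\infty)^\circ \hookrightarrow [i]_n/\Ho\McI_\infty \to \Ho\McI_\infty \xrightarrow{X} \McE,
\end{equation*}
while $\M_i(\gamma^*X)$ is the limit of the precomposition of this diagram along $\gamma|_i$, namely
\begin{equation*}
  i/\McI_{<n} \xrightarrow{\gamma|_i} ([i]_n/\Ho\McI_\infty)^\circ \hookrightarrow [i]_n/\Ho\McI_\infty \to \Ho\McI_\infty \xrightarrow{X} \McE.
\end{equation*}
Both limits exist under \Cref{asm:Ho-I-epi-E,asm:E-enough-limits}. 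The comparison map $\kappa_{X,i}$ from \Cref{constr:mat-comp} is precisely the canonical map induced on limits by the restriction of cones along $\gamma|_i$.

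Now the main step is to verify that $\gamma|_i$ being initial implies this canonical comparison is an isomorphism. I would do this by directly constructing an inverse: given a cone $\lambda = (\lambda_{u^-} \colon L \to X[j]_n)_{u^- \colon i \to j \in i/\McI_{<n}}$ over the restricted diagram, for each non-identity map $f \colon [i]_n \to [j']_n \in \Ho\McI_\infty$, the comma category $\gamma|_i \downarrow f$ is non-empty and connected. Choosing any object in it yields a map $u^- \colon i \to j$ together with a factorization $f \sim \gamma(u^-)$ composed with a map in $\Ho\McI_\infty$, and applying $X$ this produces a candidate component $\lambda_f \colon L \to Xj' = X[j']_n$; connectedness of $\gamma|_i \downarrow f$ combined with the cone condition on $\lambda$ ensures this candidate is independent of the choice, and hence well-defined. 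Compatibility of the resulting family across maps in $([i]_n/\Ho\McI_\infty)^\circ$ is checked by another connectedness argument on a comma category. This yields a unique extension $\widetilde\lambda$ of $\lambda$ to a cone over the full diagram indexed by $([i]_n/\Ho\McI_\infty)^\circ$, and uniqueness gives the two-sided inverse to $\kappa_{X,i}$.

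The only real subtlety I anticipate is bookkeeping in the argument for well-definedness and compatibility of the extended cone—this is the standard proof that initial (left cofinal) functors preserve limits, applied in our setting; strictly, one could simply cite this result, so no genuine obstacle is expected beyond verifying that the concrete comparison map from \Cref{constr:mat-comp} agrees with the abstract one (which is immediate from its defining property that $\pi_{u^-} \cdot \kappa_{X,i} = \overline\pi_{[u^-]_n}$, i.e.\ it is the map induced by restriction of cones).
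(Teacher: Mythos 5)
The proposal is correct and takes the same approach the paper intends: the paper's one-line proof ("Straightforward, by \Cref{eqn:mat-comp-fact}") is implicitly invoking the classical result that an initial functor induces an isomorphism on limits, which you make explicit — including checking that $\kappa_{X,i}$ really is the cone-restriction comparison, which is immediate from its defining property $\pi_{u^-}\cdot\kappa_{X,i}=\overline\pi_{[u^-]_n}$. Your re-derivation of the classical fact via connectedness of the comma categories is sound but, as you note yourself, unnecessary once the result is cited.
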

\begin{proof}
  Straightforward, by \Cref{eqn:mat-comp-fact} in \Cref{constr:mat-comp}.
\end{proof}

The above isomorphism property of matching objects of homotopical diagrams is
crucial for the construction of the following natural transformation between
fibred section functors.
Roughly, this is because the fibred section functor is contravariant in the
domain and covariant in the codomain.
Thus, in general, maps between fibred sections are dinatural.
The isomorphism property above then allows one to construct a natural
transformation.
This is reminiscent of the situation in \Cref{subsec:gpd-Pi}.
In particular, the following construction is similar in spirit to
\Cref{constr:gpd-Pi}.

\begin{construction}\label{constr:pi-bd-iso}
  For $g \colon C \to B \in \McE^{\Ho\McI_\infty}/B$ and $i \in \partial\McI_n$
  for $n \in \McN$, define a map
  \begin{align*}
    \Pi_{\oM_{[i]_n} B}{\oM_{[i]_n}C} \xrightarrow{\rho_{C,i}}
    \Pi_{\M_i(\gamma^*B)}\M_i(\gamma^*C)
  \end{align*}
  as follows.

  By naturality of $\kappa$ in \Cref{lem:mat-comp} and the fact that each
  $\kappa_{X,i} \colon \oM_iX \to \M_i(\gamma^*X)$ is an isomorphism as by
  \Cref{lem:mat-comp-iso}, the bottom face below is a pullback.
  Thus, $\rho_{C,i}$ is the unique map whose transpose under
  $\M_i(\gamma^*f)^* \dashv \Pi_{\M_i(\gamma^*B)}$ is given by
  \begin{equation*}
    \begin{tikzcd}
      \M_i(\gamma^*f)^*(\Pi_{\oM_{[i]_n} B}{\oM_{[i]_n}C}) \cong
      (\oM_{[i]_n}f)^*(\Pi_{\oM_{[i]_n} B}{\oM_{[i]_n}C})
      \ar[r, "\ev"]
      &
      \oM_{[i]_n}C
      \ar[r, "\kappa_{C,i}", "\cong"']
      &
      \M_i(\gamma^*C)
    \end{tikzcd}
  \end{equation*}
  so that we have
  %
  % https://q.uiver.app/#q=WzAsMTAsWzMsMCwiXFxQaV97XFxvTV9pIEJ9e1xcb01faUN9Il0sWzYsMCwiXFxQaV97XFxNX2koXFxnYW1tYV4qQil9XFxNX2koXFxnYW1tYV4qQykiXSxbNSwzLCJcXE1faShcXGdhbW1hXipDKSJdLFswLDMsIlxcb01faUMiXSxbNiw0LCJcXE1faShcXGdhbW1hXipBKSJdLFs0LDYsIlxcTV9pKFxcZ2FtbWFeKkIpIl0sWzEsNiwiXFxvTV9pQiJdLFszLDQsIlxcb01faUEiXSxbNCwyLCIoXFxNX2koXFxnYW1tYV4qZikpXiooXFxQaV97XFxNX2koXFxnYW1tYV4qQil9XFxNX2koXFxnYW1tYV4qQykpIl0sWzEsMiwiXFxNX2koXFxnYW1tYV4qZileKihcXFBpX3tcXG9NX2kgQn17XFxvTV9pQ30pIl0sWzAsMSwiXFxyaG9fe0MsaX0iXSxbMSw0LCJcXFBpX3tcXE1faShcXGdhbW1hXipCKX17XFxNX2koXFxnYW1tYV4qZyl9IiwxXSxbMiw1LCIiLDAseyJjb2xvdXIiOlswLDYwLDYwXX1dLFs1LDQsIlxcTV9pKFxcZ2FtbWFeKmYpIiwxXSxbMyw2XSxbNiw1LCJcXGthcHBhX3tCLGl9IiwxXSxbNiw3LCJcXG9NX2lmIiwxXSxbMCw3LCJcXFBpX3tcXG9NX2lCfXtcXG9NX2lnfSIsMSx7ImxhYmVsX3Bvc2l0aW9uIjozMH1dLFs3LDQsIlxca2FwcGFfe0EsaX0iLDEseyJsYWJlbF9wb3NpdGlvbiI6MjB9XSxbNiw0LCIiLDEseyJzdHlsZSI6eyJuYW1lIjoiY29ybmVyIn19XSxbOCw1LCIiLDEseyJjb2xvdXIiOlswLDYwLDYwXX1dLFs4LDFdLFs4LDQsIiIsMSx7InN0eWxlIjp7Im5hbWUiOiJjb3JuZXIifX1dLFs4LDIsIlxcZXYiLDFdLFs5LDBdLFs5LDZdLFs5LDcsIiIsMSx7InN0eWxlIjp7Im5hbWUiOiJjb3JuZXIifX1dLFs5LDgsIlxcTV9pKFxcZ2FtbWFeKmYpXip7XFxyaG9fe0MsaX19IiwxLHsibGFiZWxfcG9zaXRpb24iOjQwLCJjb2xvdXIiOlswLDYwLDYwXX0sWzAsNjAsNjAsMV1dLFs5LDMsIlxcZXYiLDFdLFszLDIsIlxca2FwcGFfe0MsaX0iLDEseyJsYWJlbF9wb3NpdGlvbiI6MzAsImNvbG91ciI6WzI0MCw2MCw2MF19LFsyNDAsNjAsNjAsMV1dXQ==&macro_url=https%3A%2F%2Fgist.githubusercontent.com%2Flim495062%2F710669c9f53e1a6bed1d326bb7111aa3%2Fraw%2F95404349b92c7533f7e45720d37824c519b17218%2Freedy-macros.tex
  \begin{equation*}
    \begin{tikzcd}[column sep=-0.1em]
      &&& {\Pi_{\oM_{[i]_n} B}{\oM_{[i]_n}C}} &&& {\Pi_{\M_i(\gamma^*B)}\M_i(\gamma^*C)} \\
      \\
      & {\M_i(\gamma^*f)^*(\Pi_{\oM_i B}{\oM_iC})} &&& {(\M_i(\gamma^*f))^*(\Pi_{\M_i(\gamma^*B)}\M_i(\gamma^*C))} \\
      {\oM_{[i]_n}C} &&&&& {\M_i(\gamma^*C)} \\
      &&& {\oM_{[i]_n}A} &&& {\M_{[i]_n}(\gamma^*A)} \\
      \\
      & {\oM_{[i]_n}B} &&& {\M_i(\gamma^*B)}
      \arrow["{\rho_{C,i}}", from=1-4, to=1-7]
      \arrow["{\Pi_{\M_i(\gamma^*B)}{\M_i(\gamma^*g)}}"{description}, from=1-7, to=5-7]
      \arrow["{\M_i(\gamma^*f)}"{description}, from=7-5, to=5-7]
      \arrow[from=4-1, to=7-2]
      \arrow["{\kappa_{B,i}}"', "\cong", from=7-2, to=7-5]
      \arrow["{\oM_{[i]_n}f}"{description}, from=7-2, to=5-4]
      \arrow["{\Pi_{\oM_{[i]_n}B}{\oM_{[i]_n}g}}"{description, pos=0.6}, from=1-4, to=5-4]
      \arrow["{\kappa_{A,i}}"'{pos=0.2}, "\cong"{pos=0.2}, from=5-4, to=5-7]
      \arrow["\lrcorner"{pos=0, rotate=45}, phantom, from=7-2, to=5-7]
      \arrow[from=3-5, to=1-7]
      \arrow["\lrcorner"{pos=0, rotate=30}, phantom, from=3-5, to=5-7]
      \arrow["\ev"{description}, from=3-5, to=4-6]
      \arrow[from=3-2, to=1-4]
      \arrow[from=3-2, to=7-2]
      \arrow["\lrcorner"{pos=0, rotate=30}, phantom, from=3-2, to=5-4]
      \arrow["\ev"{description}, from=3-2, to=4-1]
      \arrow[crossing over, from=4-6, to=7-5]
      \arrow[crossing over, from=3-5, to=7-5]
      \arrow["{\M_i(\gamma^*f)^*{\rho_{C,i}}}"{description, pos=0.4}, crossing over, from=3-2, to=3-5]
      \arrow["{\kappa_{C,i}}"'{pos=0.3}, "\cong"{pos=0.3}, crossing over, from=4-1, to=4-6]
    \end{tikzcd}
  \end{equation*}
  And because $\kappa_{C,i}$ is an isomorphism, it is easy to see that
  $\rho_{C,i}$ has an inverse
  $\rho_{C,i}^{-1} \colon
  \Pi_{\M_i(\gamma^*B)}\M_i(\gamma^*C)
  \to
  \Pi_{\oM_{[i]_n}B}{\oM_{[i]_n}C}$
  whose transpose under $(\oM_{[i]_n}f)^* \dashv \Pi_{\oM_{[i]_n}B}$ is given by
  \begin{equation*}
    \begin{tikzcd}
      (\oM_{[i]_n}f)^*(\Pi_{\M_i(\gamma^*B)}{\M_i(\gamma^*C)}) \cong
      (\M_i(\gamma^*f))^*(\Pi_{\M_i(\gamma^*B)}{\M_i(\gamma^*C)})
      \ar[r, "\ev"]
      &
      \M_i(\gamma^*C)
      \ar[r, "\kappa_{C,i}^{-1}", "\cong"']
      &
      \oM_{[i]_n}C
    \end{tikzcd}
  \end{equation*}

  Furthermore, by the above construction of $\rho_{C,i}$ over $\kappa_{A,i}$ and
  the fact that $\kappa_{A,i}$ is over $Ai$ as from \Cref{constr:mat-comp}, one
  has a map of cospans
  \begin{equation*}
    \begin{tikzcd}[column sep=huge]
      \Pi_{\oM_{[i]_n}B}\oM_{[i]_n}C \ar[r, "\Pi_{\oM_{[i]_n}B}\oM_{[i]_n}g"] \ar[d, "\rho_{C,i}"', "\cong"]
      &
      \oM_{[i]_n}A \ar[d, "\kappa_{A,i}", "\cong"']
      &
      A[i]_n \ar[d, "="] \ar[l, "\om_iA"']
      \\
      \Pi_{\M_i(\gamma^*B)}\M_i(\gamma^*C)
      \ar[r, "\Pi_{\M_i(\gamma^*B)}\M_i(\gamma^*g)"']
      &
      \M_i(\gamma^*A)
      &
      Ai \ar[l, "\m_i(\gamma^*A)"]
    \end{tikzcd}
  \end{equation*}
  which induces a map
  $\trho_{C,i} \coloneqq (\id\relax, \rho_{C,i}) \colon A[i]_n
  \times_{\oM_{[i]_n}A} \Pi_{\oM_{[i]_n}B}{\oM_{[i]_n}C} \cong Ai
  \times_{\M_i(\gamma^*A)} \Pi_{\M_i(\gamma^*B)}{\M_i(\gamma^*C)}$.
\end{construction}

\begin{lemma}\label{lem:tphi-ev-rho}
  For $\tphi_{C,i}$ the map from \Cref{constr:pi-comp} and $\rho_{C,i}$ from
  \Cref{constr:pi-bd-iso},
  \begin{equation*}
% https://q.uiver.app/#q=WzAsNixbMCwwLCJcXG9NX2koXFxQaV9CQykiXSxbMCwyLCJcXE1faShcXFBpX3tcXGdhbW1hXipCfXtcXGdhbW1hXipDfSkiXSxbMiwwLCJcXFBpX3tcXG9NX2lCfXtcXG9NX2lDfSJdLFsyLDIsIlxcUGlfe1xcTV9pKFxcZ2FtbWFeKkIpfXtcXE1faShcXGdhbW1hXipDKX0iXSxbMSwxLCJcXG9NX2lBIl0sWzEsMywiXFxNX2koXFxnYW1tYV4qQSkiXSxbMyw1LCJcXFBpX3tcXE1faShcXGdhbW1hXipCKX17XFxNX2koXFxnYW1tYV4qZyl9IiwxXSxbMSw1LCJcXE1faShcXFBpX3tcXGdhbW1hXipCfXtcXGdhbW1hXipnfSkiLDFdLFswLDEsIlxcdHBoaV97QyxpfSIsMV0sWzAsMiwiXFxvTV9pKFxcZXYpXlxcZGFnZ2VyIl0sWzIsMywiXFxyaG9fe0MsaX0iLDFdLFswLDQsIlxcb01faShcXFBpX0JnKSIsMV0sWzQsNSwiXFxrYXBwYV97QSxpfSIsMSx7ImxhYmVsX3Bvc2l0aW9uIjozMH1dLFsyLDQsIlxcUGlfe1xcb01faUJ9e1xcb01faWd9IiwxXSxbMSwzLCJcXE1faShcXGV2KV5cXGRhZ2dlciIsMSx7ImxhYmVsX3Bvc2l0aW9uIjozMH1dXQ==&macro_url=https%3A%2F%2Fgist.githubusercontent.com%2Flim495062%2F710669c9f53e1a6bed1d326bb7111aa3%2Fraw%2F95404349b92c7533f7e45720d37824c519b17218%2Freedy-macros.tex
    \begin{tikzcd}
      {\oM_{[i]_n}(\Pi_BC)} && {\Pi_{\oM_{[i]_n}B}{\oM_{[i]_n}C}} \\
      & {\oM_{[i]_n}A} \\
      {\M_i(\Pi_{\gamma^*B}{\gamma^*C})} && {\Pi_{\M_i(\gamma^*B)}{\M_i(\gamma^*C)}} \\
      & {\M_i(\gamma^*A)}
      \arrow["{\Pi_{\M_i(\gamma^*B)}{\M_i(\gamma^*g)}}"{description}, from=3-3, to=4-2]
      \arrow["{\M_i(\Pi_{\gamma^*B}{\gamma^*g})}"{description}, from=3-1, to=4-2]
      \arrow["{\tphi_{C,i}}"{description}, from=1-1, to=3-1]
      \arrow["{\oM_{[i]_n}(\ev)^\ddagger}", from=1-1, to=1-3]
      \arrow["{\rho_{C,i}}"{description}, from=1-3, to=3-3]
      \arrow["{\oM_{[i]_n}(\Pi_Bg)}"{description}, from=1-1, to=2-2]
      \arrow["{\Pi_{\oM_{[i]_n}B}{\oM_{[i]_n}g}}"{description}, from=1-3, to=2-2]
      \arrow["{\M_i(\ev)^\ddagger}"{description, pos=0.3}, from=3-1, to=3-3]
      \arrow["{\kappa_{A,i}}"{description, pos=0.3}, crossing over, from=2-2, to=4-2]
    \end{tikzcd}
  \end{equation*}
\end{lemma}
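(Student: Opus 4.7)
The prism has six faces whose commutativity must be established, but five of them reduce to previously recorded facts. The top and bottom triangles through $\oM_{[i]_n}A$ and $\M_i(\gamma^*A)$ simply express that $\oM_{[i]_n}(\ev)^\ddagger$ and $\M_i(\ev)^\ddagger$ live in the slices over $\oM_{[i]_n}A$ and $\M_i(\gamma^*A)$ respectively, which is built into Construction~\ref{constr:glue-Pi}. The left face (that $\tphi_{C,i}$ is over $\kappa_{A,i}$) is the lower square of Lemma~\ref{lem:pi-comp}, and the right face (that $\rho_{C,i}$ is over $\kappa_{A,i}$) is precisely the map of cospans displayed at the end of Construction~\ref{constr:pi-bd-iso}. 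The only substantive content is the main face, namely $\rho_{C,i}\cdot\oM_{[i]_n}(\ev)^\ddagger = \M_i(\ev)^\ddagger\cdot\tphi_{C,i}$ as maps $\oM_{[i]_n}(\Pi_BC)\to\Pi_{\M_i(\gamma^*B)}{\M_i(\gamma^*C)}$.

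The plan for this main equation is to transpose both sides under $\M_i(\gamma^*f)^*\dashv\Pi_{\M_i(\gamma^*B)}$ and invoke Lemma~\ref{lem:tphi-pb-proj}. By naturality of $\kappa$ (Lemma~\ref{lem:mat-comp}) together with the fact that $\kappa_{A,i}$, $\kappa_{B,i}$, $\kappa_{C,i}$ are isomorphisms (Lemma~\ref{lem:mat-comp-iso}, which rests on \Cref{asm:bd-initial}), and the preservation of pullbacks by the matching-object functor $\oM_{[i]_n}$, the pullback $\M_i(\gamma^*f)^*\oM_{[i]_n}(\Pi_BC)$ identifies canonically with $(\oM_{[i]_n}f)^*\oM_{[i]_n}(\Pi_BC)\cong\oM_{[i]_n}(B\times_A\Pi_BC)$. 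Under this identification, unwinding the definition of $\rho_{C,i}$ in Construction~\ref{constr:pi-bd-iso} and the transposition formula for $\oM_{[i]_n}(\ev)^\ddagger$, the transpose of $\rho_{C,i}\cdot\oM_{[i]_n}(\ev)^\ddagger$ becomes the composite $\kappa_{C,i}\cdot\oM_{[i]_n}(\ev)$. Dually, transposing $\M_i(\ev)^\ddagger\cdot\tphi_{C,i}$ yields $\M_i(\ev)\cdot(\text{map induced by }(\kappa_{B,i},\tphi_{C,i}))$, and since $(\kappa_{B,i},\tphi_{C,i}) = \mu_{C,i}$ by the very definition of Construction~\ref{constr:tphi-pb-proj}, the transpose becomes $\M_i(\ev)\cdot\mu_{C,i}$.

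At this point the second diagram of Lemma~\ref{lem:tphi-pb-proj} closes the argument: it states precisely $\M_i(\ev)\cdot\mu_{C,i} = \kappa_{C,i}\cdot\oM_{[i]_n}(\ev)$, so the two transposes agree, and since the adjoint transpose is a bijection, the original equation follows. The chief obstacle is not conceptual but organisational: one must thread the three comparison isomorphisms $\kappa_{A,i}$, $\kappa_{B,i}$, $\kappa_{C,i}$ through the chain of pullback identifications $\M_i(\gamma^*f)^*(-)\cong(\oM_{[i]_n}f)^*(-)\cong\oM_{[i]_n}(B\times_A -)$ carefully enough to see that the transposed first leg of $\tphi_{C,i}$ reassembles into $\mu_{C,i}$ on the nose. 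This is ultimately routine naturality of $\kappa$, but it is precisely the bookkeeping that Lemma~\ref{lem:tphi-pb-proj} was engineered to discharge, so with that result in hand the verification is clean.
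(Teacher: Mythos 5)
Your proposal follows the paper's argument essentially step for step: reduce to the main face of the prism, transpose both composites under $\M_i(\gamma^*f)^* \dashv \Pi_{\M_i(\gamma^*B)}$, identify the transposes as $\M_i(\ev)\cdot\mu_{C,i}$ and $\kappa_{C,i}\cdot\oM_{[i]_n}(\ev)$ respectively (using that $\kappa$ is a natural isomorphism to shift the pullback base), and close with the second diagram of \Cref{lem:tphi-pb-proj}. The bookkeeping you flag as the "chief obstacle" is exactly what the paper's displayed chain of pullback identifications carries out, so the proposal is correct and matches the paper's proof.
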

\begin{proof}
  The left and right slanted faces are respectively by \Cref{lem:pi-comp} and
  the construction of $\rho$ from \Cref{constr:pi-bd-iso}.
  The top and bottom triangles are from the construction of
  $\oM_{[i]_n}(\ev)^\ddagger$ and $\M_i(\ev)^\ddagger$ by adapting the
  construction from \Cref{constr:glue-Pi}.
  It suffices to verify the back face.

  To do so, it suffices to show that the transposes of
  $\M_i(\ev)^\ddagger \cdot \tphi_{C,i}$ and
  $\rho_{C,i} \cdot \oM_{[i]_n}(\ev)^\ddagger$ under the adjunction
  $\M_i(\gamma^*f)^* \dashv \Pi_{\M_i(\gamma^*B)}$ agree.
  By the fact that $\kappa$ is a natural isomorphism as in
  \Cref{lem:mat-comp-iso,lem:mat-comp}, the bottom face of the gigantic cube in
  \Cref{constr:pi-bd-iso} is a pullback.
  % %
  % \begin{equation*}
  %   \begin{tikzcd}
  %     \oM_{[i]_n}B \ar[r, "\oM_{[i]_n}f"] \ar[d, "\kappa_{B,i}"', "\cong"]
  %     \ar[rd, "\lrcorner"{pos=0}, phantom]
  %     &
  %     \oM_{[i]_n}A \ar[d, "\kappa_{A,i}", "\cong"']
  %     \\
  %     \M_i(\gamma^*B) \ar[r, "\M_if"']
  %     &
  %     \M_i(\gamma^*A)
  %   \end{tikzcd}
  % \end{equation*}
  %
  Thus, the pullback of
  $\tphi_{C,i} \colon
  \oM_{[i]_n}(\Pi_BC)
  \to
  \M_i(\Pi_{\gamma^*B}{\gamma^*C}) \in
  \sfrac{\McE}{\M_i(\gamma^*A)}$ along
  $\M_i(\gamma^*f) \colon \M_i(\gamma^*B) \to \M_i(\gamma^*A)$
  is the map
  $\oM_{[i]_n}(B \times_A \Pi_BC) \to \M_i(\gamma^*B \times_{\gamma^*A}
  \Pi_{\gamma^*B}{\gamma^*C})$ induced by the map of cospans
  \begin{equation*}
    \begin{tikzcd}[column sep=huge]
      \oM_{[i]_n}B \ar[r, "\oM_{[i]_n}f"] \ar[d, "\kappa_{B,i}"']
      &
      \oM_{[i]_n}A \ar[d, "\kappa_{A,i}"{description}]
      &
      \oM_{[i]_n}(\Pi_BC) \ar[l, "\oM_{[i]_n}(\Pi_Bg)"'] \ar[d, "\tphi_{C,i}"]
      \\
      \M_i(\gamma^*B) \ar[r, "\M_i(\gamma^*f)"']
      &
      \M_i(\gamma^*A)
      &
      \M_i(\Pi_{\gamma^*B}{\gamma^*C}) \ar[l, "\M_i(\Pi_{\gamma^*B}{\gamma^*g})"]
    \end{tikzcd}
  \end{equation*}
  This is exactly $\mu_{C,i} = (\kappa_{B,i}, \tphi_{C,i})$ from
  \Cref{constr:tphi-pb-proj}.
  Also by the same reason, the pullback of $\oM_{[i]_n}(\ev)^\ddagger$ under
  $\M_i(\gamma^*f)$ is its pullback under $\oM_{[i]_n}f$.
  Hence, using the fact that $\rho_{C,j} = (\kappa_{C,i} \cdot \ev)^\ddagger$ as
  from \Cref{constr:pi-bd-iso}, the transposes of
  $\M_i(\ev)^\ddagger \cdot \tphi_{C,i}$ and
  $\rho_{C,i} \cdot \oM_{[i]_n}(\ev)^\ddagger$ are respectively
  $\M_i(\ev) \cdot \mu_{C,i}$ and $\kappa_{C,i} \cdot \oM_{[i]_n}(\ev)$.
  By the second diagram of \Cref{lem:tphi-pb-proj},
  $\kappa_{C,i} \cdot \oM_{[i]_n}(\ev) = \M_i(\ev) \cdot \mu_{C,i}$.
  And so the result follows.
\end{proof}

\begin{lemma}\label{lem:bot-face-left}
  For $n \in \McN$ and $i \in \partial\McI_n$, over $Ai = A[i]_n$,
  %
  % https://q.uiver.app/#q=WzAsNSxbMCwwLCJBaSBcXHRpbWVzX3tcXG9NX2lBfSBcXG9NX2koXFxQaV9CQykiXSxbMSwxLCJBaSBcXHRpbWVzX3tcXE1faShcXGdhbW1hXipBKX0gXFxNX2koXFxQaV97XFxnYW1tYV4qQn17XFxnYW1tYV4qQ30pIl0sWzMsMSwiQWkgXFx0aW1lc197XFxNX2koXFxnYW1tYV4qQSl9IFxcUGlfe1xcTV9pKFxcZ2FtbWFeKkIpfXtcXE1faShcXGdhbW1hXipDKX0iXSxbMiwwLCJBaSBcXHRpbWVzX3tcXG9NX2lBfSBcXFBpX3tcXG9NX2lCfXtcXG9NX2lDfSJdLFsxLDMsIkFpIl0sWzEsMiwiQWkgXFx0aW1lc197XFxNX2koXFxnYW1tYV4qQSl9IFxcTV9pKFxcZXYpXlxcZGRhZ2dlciIsMV0sWzMsMiwiXFx0cmhvX3tDLGl9IiwxXSxbMCwxLCJcXHBzaV97QyxpfSIsMV0sWzAsMywiQWkgXFx0aW1lc197XFxvTV9pQX1cXG9NX2koXFxldileXFxkZGFnZ2VyIiwxXSxbMyw0XSxbMCw0XSxbMSw0XSxbNCwyXV0=&macro_url=https%3A%2F%2Fgist.githubusercontent.com%2Flim495062%2F710669c9f53e1a6bed1d326bb7111aa3%2Fraw%2F56f0d854fd802856dcca2e5f352d6a905d68506f%2Freedy-macros.tex
  \begin{equation*}
    \begin{tikzcd}[column sep=-1em]
      {A[i]_n \times_{\oM_{[i]_n}A} \oM_{[i]_n}(\Pi_BC)} && {A[i]_n \times_{\oM_{[i]_n}A} \Pi_{\oM_{[i]_n}B}{\oM_{[i]_n}C}} \\
      & {Ai \times_{\M_i(\gamma^*A)} \M_i(\Pi_{\gamma^*B}{\gamma^*C})} && {Ai \times_{\M_i(\gamma^*A)} \Pi_{\M_i(\gamma^*B)}{\M_i(\gamma^*C)}} \\
      \\
      & Ai = A[i]_n
      \arrow["{\trho_{C,i}}"{description}, from=1-3, to=2-4]
      \arrow["{\psi_{C,i}}"{description}, from=1-1, to=2-2]
      \arrow["{Ai \times_{\oM_{[i]_n}A} \oM_{[i]_n}(\ev)^\ddagger}"{description}, from=1-1, to=1-3]
      \arrow[from=1-3, to=4-2]
      \arrow[from=1-1, to=4-2]
      \arrow[from=2-2, to=4-2]
      \arrow[from=2-4, to=4-2]
      \arrow["{Ai \times_{\M_i(\gamma^*A)} \M_i(\ev)^\ddagger}"{description}, crossing over, from=2-2, to=2-4]
    \end{tikzcd}
  \end{equation*}
\end{lemma}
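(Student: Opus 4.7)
The plan is to verify commutativity of the square by invoking the universal property of the pullback in the codomain, namely $Ai \times_{\M_i(\gamma^*A)} \Pi_{\M_i(\gamma^*B)}{\M_i(\gamma^*C)}$, and checking that the two composite maps from $A[i]_n \times_{\oM_{[i]_n}A} \oM_{[i]_n}(\Pi_BC)$ agree after postcomposition with each of the two projections. Both composites are taken over $Ai = A[i]_n$ by construction, so postcomposition with the projection to $Ai$ gives the structure map of the domain pullback on either side; this disposes of one of the two checks immediately.

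It then remains to show that the two composites agree after postcomposition with the projection $\pi_2 \colon Ai \times_{\M_i(\gamma^*A)} \Pi_{\M_i(\gamma^*B)}{\M_i(\gamma^*C)} \to \Pi_{\M_i(\gamma^*B)}{\M_i(\gamma^*C)}$. Unwinding the definitions of $\psi_{C,i}$ from \Cref{constr:psiCi} and $\trho_{C,i}$ from \Cref{constr:pi-bd-iso}, together with the fact that a map into a pullback composed with a product-style map $\id\times h$ projects to $h$ composed with the second projection, the composite $\pi_2\cdot(Ai \times_{\M_i(\gamma^*A)} \M_i(\ev)^\ddagger)\cdot \psi_{C,i}$ is equal to $\M_i(\ev)^\ddagger\cdot\tphi_{C,i}\cdot\pi_2$, while $\pi_2\cdot\trho_{C,i}\cdot(Ai \times_{\oM_{[i]_n}A} \oM_{[i]_n}(\ev)^\ddagger)$ is equal to $\rho_{C,i}\cdot\oM_{[i]_n}(\ev)^\ddagger\cdot\pi_2$.

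Thus the remaining identity boils down to the back face of the cube in \Cref{lem:tphi-ev-rho}, which has already been proved to commute. Putting everything together, the universal property of the pullback $Ai \times_{\M_i(\gamma^*A)} \Pi_{\M_i(\gamma^*B)}{\M_i(\gamma^*C)}$ then yields the equality of the two composites, establishing commutativity of the bottom-left face.

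The main obstacle is essentially already absorbed into \Cref{lem:tphi-ev-rho}, whose proof required the fact that $\kappa_{X,i}$ is an isomorphism (and hence depended on \Cref{asm:bd-initial}); once one has that lemma in hand, the present argument is simply a bookkeeping exercise in the universal property of pullbacks and the definitions of $\psi_{C,i}$ and $\trho_{C,i}$ as induced maps between pullbacks of cospans.
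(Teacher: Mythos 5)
Your argument is correct, and it reduces to the same key fact the paper relies on, namely the commutative square in Lemma~\ref{lem:tphi-ev-rho}. However, the route is genuinely different. The paper's proof is more structural: it observes that since $\kappa_{A,i}$ is an isomorphism under $Ai$ (by Lemmas~\ref{lem:mat-comp} and~\ref{lem:mat-comp-iso}), the square $A[i]_n \to \oM_{[i]_n}A$, $Ai \to \M_i(\gamma^*A)$ is a pullback, so pulling back along $\m_iA$ and along $\om_{[i]_n}A$ agree; it then recognizes the entire bottom-left face as the image of the square in Lemma~\ref{lem:tphi-ev-rho} under this pullback functor, with $\psi_{C,i}$ and $\trho_{C,i}$ being precisely the pullbacks of $\tphi_{C,i}$ and $\rho_{C,i}$. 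Your proof instead proceeds elementarily via the universal property of the codomain pullback $Ai \times_{\M_i(\gamma^*A)} \Pi_{\M_i(\gamma^*B)}\M_i(\gamma^*C)$, postcomposing with the two projections and discharging the first leg by the common structure map over $Ai$, and the second leg by projection bookkeeping followed by Lemma~\ref{lem:tphi-ev-rho}. The trade-off is that your argument avoids invoking the isomorphism $\kappa_{A,i}$ explicitly in this step (it is only needed inside Lemma~\ref{lem:tphi-ev-rho}), at the cost of an explicit projection chase; the paper's version is slicker but leans harder on recognizing the pullback structure of the whole square. Both are valid.
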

\begin{proof}
  Because $\kappa_{A,i} \colon \oM_iA \to \M_i(\gamma^*A)$ is an isomorphism by
  \Cref{lem:mat-comp-iso}, and $\kappa_{A,i}$ is under $Ai$ by \Cref{lem:mat-comp}, we have a pullback square
  \begin{equation*}
    \begin{tikzcd}
      A[i]_n \ar[r, "\om_{[i]_n}A"] \ar[d, "="{description}] \ar[rd, phantom, "\lrcorner"{pos=0}]
      &
      \oM_{[i]_n}A \ar[d, "\kappa_{A,i}", "\cong"']
      \\
      Ai \ar[r, "\m_iA"']
      &
      \M_iA
    \end{tikzcd}
  \end{equation*}
  Thus, pulling back
  $\oM_{[i]_n}(\ev)^\ddagger \colon \oM_{[i]_n}(\Pi_BC) \to \Pi_{\oM_{[i]_n}B}{\oM_{[i]_n}C} \in
  \McE/\M_i(\gamma^*A)$ along $\m_iA \colon Ai \to \M_i(\gamma^*A)$ is the same
  as it back along $\om_{[i]_n}A \colon A[i]_n \to \oM_{[i]_n}A$.
  Furthermore, by \Cref{constr:psiCi,constr:pi-bd-iso}, it is clear that the
  pullbacks of $\tphi_{C,i}$ and $\rho_{C,i}$ are respectively $\psi_{C,i}$ and
  $\trho_{C,i}$.
  Therefore, the result follows by pulling back the diagram in
  \Cref{lem:tphi-ev-rho} along $\m_iA$.
\end{proof}

\begin{construction}\label{constr:sigmaCi}
  For each $n \in \McN$ and $i \in \partial\McI_n$, define a map
  \begin{align*}
    \sigma_{C,i}
    \colon
    \Pi_{B[i]_n}(B[i]_n \times_{\oM_{[i]_n}B} \oM_{[i]_n}C)
    \to
    \Pi_{Bi}(Bi \times_{\M_i(\gamma^*B)} \M_i(\gamma^*C))
  \end{align*}
  over $Ai = A[i]_n$ such that its transpose under $f_i^* \dashv \Pi_{Bi}$ is
  given by $(Bi \times_{\M_i(\gamma^*B)} \kappa_{C,i}) \cdot \ev$:
  %
  % https://q.uiver.app/#q=WzAsNSxbMSw0LCJCaSJdLFswLDAsIkJpIFxcdGltZXNfe0FpfSBcXFBpX3tCaX0oQmkgXFx0aW1lc197XFxvTV9pQn0gXFxvTV9pQykiXSxbMiwwLCJCaSBcXHRpbWVzX3tBaX0gXFxQaV97Qml9KEJpIFxcdGltZXNfe1xcTV9pKFxcZ2FtbWFeKkIpfSBcXE1faShcXGdhbW1hXipDKSkiXSxbMywyLCJCaSBcXHRpbWVzX3tcXE1faShcXGdhbW1hXipCKX0gXFxNX2koXFxnYW1tYV4qQykiXSxbMSwyLCJCaSBcXHRpbWVzX3tcXG9NX2lCfSBcXG9NX2lDIl0sWzEsMiwiQmkgXFx0aW1lc197QWl9IFxcc2lnbWEiLDFdLFsyLDMsIlxcZXYiLDFdLFsxLDBdLFsyLDBdLFszLDBdLFsxLDQsIlxcZXYiLDFdLFs0LDBdLFs0LDMsIlxcdGthcHBhQ2kiLDFdXQ==&macro_url=https%3A%2F%2Fgist.githubusercontent.com%2Flim495062%2F710669c9f53e1a6bed1d326bb7111aa3%2Fraw%2F1209ccf81980c69d095b99aa9c36bdba6e963697%2Freedy-macros.tex
  \begin{equation*}
    \begin{tikzcd}[column sep=-2em]
      {B[i]_n \times_{A[i]_n} \Pi_{B[i]_n}(B[i]_n \times_{\oM_{[i]_n}B} \oM_{[i]_n}C)}
      && {Bi \times_{Ai} \Pi_{Bi}(Bi \times_{\M_i(\gamma^*B)} \M_i(\gamma^*C))} \\
      \\
      & {B[i]_n \times_{\oM_{[i]_n}B} \oM_{[i]_n}C} && {Bi \times_{\M_i(\gamma^*B)} \M_i(\gamma^*C)} \\
      \\
      & Bi = B[i]_n
      \arrow["{Bi \times_{Ai} \sigma_{C,i}}"{description}, from=1-1, to=1-3]
      \arrow["\ev"{description}, from=1-3, to=3-4]
      \arrow[from=1-1, to=5-2]
      \arrow[from=1-3, to=5-2]
      \arrow[from=3-4, to=5-2]
      \arrow["\ev"{description}, from=1-1, to=3-2]
      \arrow[from=3-2, to=5-2]
      \arrow["{(\id,\kappa_{C,i})}"{description}, crossing over, from=3-2, to=3-4]
    \end{tikzcd}
  \end{equation*}
  where $(\id,\kappa_{C,i})$ is the map between pullbacks induced by the map of
  cospans
  \begin{equation*}
    \begin{tikzcd}[column sep=huge]
      \oM_iC \ar[r, "\oM_ig"] \ar[d, "\kappa_{C,i}", "\cong"']
      \ar[rd, "\lrcorner"{pos=0}, phantom]
      &
      \oM_iB \ar[d, "\kappa_{A,i}", "\cong"']
      &
      Bi \ar[l, "\om_iB"'] \ar[d, "="']
      \\
      \M_i(\gamma^*C) \ar[r, "\M_i(\gamma^*g)"']
      &
      \M_i(\gamma^*B)
      &
      Bi \ar[l, "\m_i(\gamma^*B)"]
    \end{tikzcd}
  \end{equation*}
  by naturality of $\kappa$ and the fact that $\kappa_{B,i}$ is under $Bi$ from
  \Cref{lem:mat-comp}.
  Once again, because the left square is a pullback, it is also the pullback of
  $\kappa_{C,i} \colon \oM_{[i]_n}C \to \M_iC$ under
  $\m_i(\gamma^*B) \colon Bi \to \M_i(\gamma^*B)$.

  Clearly, $(\id, \kappa_{C,i})$ is an isomorphism because
  $\kappa_{C,i}$ is an isomorphism.
  Because $\sigma_{C,i}$ is the image of $(\id, \kappa_{C,i})$ under the
  functorial action of $\Pi_{Bi}$, it is easy to observe that it is an
  isomorphism.
\end{construction}

\begin{lemma}\label{lem:bot-face-right}
  For $\trho_{C,i}$ from \Cref{constr:pi-bd-iso} and $\sigma_{C,i}$ from
  \Cref{constr:sigmaCi}, we have that over $Ai$,
  %
  % https://q.uiver.app/#q=WzAsNSxbMCwwLCJBaSBcXHRpbWVzX3tcXG9NX2lBfSBcXFBpX3tcXG9NX2lCfXtcXG9NX2lDfSJdLFsxLDEsIkFpIFxcdGltZXNfe1xcTV9pKFxcZ2FtbWFeKkEpfSBcXFBpX3tcXE1faShcXGdhbW1hXipCKX17XFxNX2koXFxnYW1tYV4qQyl9Il0sWzIsMCwiXFxQaV97Qml9KEJpIFxcdGltZXNfe1xcb01faUJ9IFxcb01faUMpIl0sWzMsMSwiXFxQaV97Qml9KEJpIFxcdGltZXNfe1xcTV9pKFxcZ2FtbWFeKkIpfXtcXE1faShcXGdhbW1hXipDKX0pIl0sWzEsMywiQWkiXSxbMiwzLCJcXHNpZ21hX3tDLGl9IiwxXSxbMSwzLCIoQmkgXFx0aW1lc197XFxNX2koXFxnYW1tYV4qQil9IFxcZXYpXlxcZGRhZ2dlciIsMV0sWzAsMSwiXFx0cmhvX3tDLGl9IiwxXSxbMCwyLCIoQmkgXFx0aW1lc197XFxvTV9pQn0gXFxldileXFxkZGFnZ2VyIiwxXSxbMiw0XSxbMyw0XSxbMSw0XSxbMCw0XV0=&macro_url=https%3A%2F%2Fgist.githubusercontent.com%2Flim495062%2F710669c9f53e1a6bed1d326bb7111aa3%2Fraw%2F1209ccf81980c69d095b99aa9c36bdba6e963697%2Freedy-macros.tex
  \begin{equation*}
    \begin{tikzcd}[column sep=-1em]
      {A[i]_n \times_{\oM_{[i]_n}A} \Pi_{\oM_{[i]_n}B}{\oM_{[i]_n}C}}
      && {\Pi_{B[i]_n}(B[i]_n \times_{\oM_{[i]_n}B} \oM_{[i]_n}C)} \\
      & {Ai \times_{\M_i(\gamma^*A)} \Pi_{\M_i(\gamma^*B)}{\M_i(\gamma^*C)}} && {\Pi_{Bi}(Bi \times_{\M_i(\gamma^*B)}{\M_i(\gamma^*C)})} \\
      \\
      & Ai = A[i]_n
      \arrow["{\sigma_{C,i}}"{description}, from=1-3, to=2-4]
      \arrow["{\trho_{C,i}}"{description}, from=1-1, to=2-2]
      \arrow["{(\id\relax, \ev\relax)^\ddagger}"{description}, from=1-1, to=1-3]
      \arrow[from=1-3, to=4-2]
      \arrow[from=2-4, to=4-2]
      \arrow[from=2-2, to=4-2]
      \arrow[from=1-1, to=4-2]
      \arrow["{(\id\relax, \ev\relax)^\ddagger}"{description}, crossing over, from=2-2, to=2-4]
    \end{tikzcd}
  \end{equation*}
\end{lemma}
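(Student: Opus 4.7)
The plan is to transpose the square under the adjunction $f_i^* \dashv \Pi_{Bi}$, noting that $(\gamma^*B)i = B[i]_n$, $(\gamma^*A)i = A[i]_n$, and $f_{[i]_n} = f_i$, so the adjunctions $f_{[i]_n}^* \dashv \Pi_{B[i]_n}$ and $f_i^* \dashv \Pi_{Bi}$ appearing at the top-right and bottom-right corners of the square coincide. Both composites land in $\Pi_{Bi}(Bi \times_{\M_i(\gamma^*B)} \M_i(\gamma^*C))$ and live over $Ai = A[i]_n$, so by the adjunction it suffices to show their transposes (obtained by pulling back along $f_i$ and composing with the counit $\ev$) agree as maps into $Bi \times_{\M_i(\gamma^*B)} \M_i(\gamma^*C)$.

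First, I would compute the transpose of $\sigma_{C,i} \cdot (\id, \ev)^\ddagger$. The top map $(\id, \ev)^\ddagger$, by the pattern of \Cref{constr:glue-Pi} inherited through \Cref{lem:ho-I-pi}, is built as the transpose of $B[i]_n \times_{\oM_{[i]_n}B} \ev$ under $f_{[i]_n}^* \dashv \Pi_{B[i]_n}$, so its transpose is precisely $B[i]_n \times_{\oM_{[i]_n}B} \ev$ after the canonical pullback rearrangement. Composing with the pulled-back $\sigma_{C,i}$ and the counit then yields, by the defining identity of $\sigma_{C,i}$ in \Cref{constr:sigmaCi} (namely $\ev \cdot f_i^*(\sigma_{C,i}) = (\id, \kappa_{C,i}) \cdot \ev$), the transpose $(\id, \kappa_{C,i}) \cdot (B[i]_n \times_{\oM_{[i]_n}B} \ev)$.

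Second, I would compute the transpose of $(\id, \ev)^\ddagger \cdot \trho_{C,i}$. The bottom $(\id, \ev)^\ddagger$ transposes analogously to $Bi \times_{\M_i(\gamma^*B)} \ev$, and since $\trho_{C,i} = (\id_{Ai}, \rho_{C,i})$, pulling it back along $f_i$ yields an induced map on pullbacks acting as $\rho_{C,i}$ on the $\Pi$-factor. The crucial step is to identify this $\M_i(\gamma^*-)$-indexed pullback with the $\oM_{[i]_n}$-indexed one via the isomorphisms $\kappa_{A,i}$ and $\kappa_{B,i}$; this identification is available precisely because \Cref{lem:mat-comp-iso} ensures those comparison maps are isomorphisms (this is where \Cref{asm:bd-initial} enters), and \Cref{lem:mat-comp} guarantees they sit under $Ai$ and $Bi$ in the requisite way. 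Applying the defining identity of $\rho_{C,i}$ from \Cref{constr:pi-bd-iso}, namely that its transpose under $\M_i(\gamma^*f)^* \dashv \Pi_{\M_i(\gamma^*B)}$ is $\kappa_{C,i} \cdot \ev$, simplifies the transpose to $(\id, \kappa_{C,i}) \cdot (B[i]_n \times_{\oM_{[i]_n}B} \ev)$, matching the first computation. The main hurdle is the careful bookkeeping of these pullback rearrangements under the $\kappa$-isomorphisms; once this is done, the equality is immediate from the defining transpose identities of $\sigma_{C,i}$ and $\rho_{C,i}$.
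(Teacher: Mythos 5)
Your proposal is correct and takes essentially the same route as the paper's proof: transpose the square under $f_i^* \dashv \Pi_{Bi}$ (using that $f_{[i]_n}^* \dashv \Pi_{B[i]_n}$ coincides with it), then apply the defining transpose identities of $\rho_{C,i}$ from \Cref{constr:pi-bd-iso} and of $\sigma_{C,i}$ from \Cref{constr:sigmaCi}, with the $\kappa$-isomorphisms of \Cref{lem:mat-comp,lem:mat-comp-iso} handling the pullback rearrangements. The paper packages the same computation by first isolating the transpose of $\trho_{C,i}$ in its diagram \Cref{eqn:rho-trans} and then observing that the remaining comparison is the pullback of that diagram along $\om_iB$, but the content is the same.
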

\begin{proof}
  First, note that because $\kappa$ is a natural isomorphism by
  \Cref{lem:mat-comp,lem:mat-comp-iso}, the left face of the following prism as
  a pullback.
  And by \Cref{constr:pi-bd-iso}, the image of $\rho_{C,i}$ by pulling back along
  $\om_i(\gamma^*A)$ is exactly $\trho_{C,i}$.
  %
  % https://q.uiver.app/#q=WzAsOCxbMiw1LCJcXE1faShcXGdhbW1hXipBKSJdLFs1LDUsIkFpIl0sWzIsMiwiXFxNX2koXFxnYW1tYV4qQikiXSxbNSwyLCJCaSJdLFsxLDMsIlxcb01faUEiXSxbMSwwLCJcXG9NX2koXFxnYW1tYV4qQikiXSxbMSw0LCJcXFBpX3tcXE1faShcXGdhbW1hXipCKX17XFxNX2koXFxnYW1tYV4qQyl9Il0sWzAsMiwiXFxQaV97XFxvTV9pQn17XFxvTV9pQ30iXSxbMSwwLCJcXG1faShcXGdhbW1hXipBKSJdLFsyLDAsIlxcTV9pKFxcZ2FtbWFeKmcpIiwxLHsibGFiZWxfcG9zaXRpb24iOjQwfV0sWzMsMiwiXFxtX2koXFxnYW1tYV4qQikiLDFdLFszLDEsImZfaSIsMV0sWzQsMCwiXFxrYXBwYV97QSxpfSIsMV0sWzEsNCwiXFxvbV9pQSIsMV0sWzUsMiwiXFxrYXBwYV97QixpfSIsMV0sWzUsNCwiXFxvbV9pZyIsMV0sWzMsNSwiXFxvbV9pQiIsMV0sWzcsNiwiXFxyaG9fe0MsaX0iLDFdLFs3LDRdLFs2LDBdLFs1LDAsIiIsMSx7InN0eWxlIjp7Im5hbWUiOiJjb3JuZXIifX1dXQ==&macro_url=https%3A%2F%2Fgist.githubusercontent.com%2Flim495062%2F710669c9f53e1a6bed1d326bb7111aa3%2Fraw%2F1209ccf81980c69d095b99aa9c36bdba6e963697%2Freedy-macros.tex
  \[\begin{tikzcd}
      & {\oM_{[i]_n}B} \\
      \\
      {\Pi_{\oM_{[i]_n}B}{\oM_{[i]_n}C}} && {\M_i(\gamma^*B)} &&& Bi \\
      & {\oM_{[i]_n}A} \\
      & {\Pi_{\M_i(\gamma^*B)}{\M_i(\gamma^*C)}} \\
      && {\M_i(\gamma^*A)} &&& Ai
      \arrow["{\m_i(\gamma^*A)}", from=6-6, to=6-3]
      \arrow["{\m_i(\gamma^*B)}"{description}, from=3-6, to=3-3]
      \arrow["{f_i}"{description}, from=3-6, to=6-6]
      \arrow["{\kappa_{A,i}}"{description}, from=4-2, to=6-3]
      \arrow["{\om_{[i]_n}A}"{description, pos=0.4}, from=6-6, to=4-2]
      \arrow["{\kappa_{B,i}}"{description}, from=1-2, to=3-3]
      \arrow["{\oM_{[i]_n}f}"{description}, from=1-2, to=4-2]
      \arrow["{\om_{[i]_n}B}"{description}, from=3-6, to=1-2]
      \arrow["{\rho_{C,i}}"{description}, from=3-1, to=5-2]
      \arrow[from=3-1, to=4-2]
      \arrow[from=5-2, to=6-3]
      \arrow["\lrcorner"{pos=0, rotate=-45}, phantom, from=1-2, to=6-3]
      \arrow["{\M_i(\gamma^*f)}"{description, pos=0.4}, crossing over, from=3-3, to=6-3]
    \end{tikzcd}\]
  So further pulling $\trho_{C,i}$ back along $f_i$ is the same as pulling back
  $\rho_{C,i}$ along the composite
  $Bi \xrightarrow{\m_i(\gamma^*B)} \M_i(\gamma^*B) \xrightarrow{\M_i(\gamma^*f)} \M_i(\gamma^*A)$.
  But by \Cref{constr:pi-bd-iso}, pulling $\rho_{C,i}$ back along
  $\M_i(\gamma^*f)$ gives a map
  \begin{align*}
    \M_i(\gamma^*f)^*\rho_{C,i} \colon
    \oM_iB \times_{\oM_iA} \Pi_{\oM_iB}{\oM_iC}
    \to
    \M_i(\gamma^*B) \times_{\M_i(\gamma^*A)} \Pi_{\M_i(\gamma^*B)}{\M_i(\gamma^*C)}
  \end{align*}
  over $\M_i(\gamma^*B)$, which when
  composed with
  $\ev \colon \M_i(\gamma^*B) \times_{\M_i(\gamma^*A)}
  \Pi_{\M_i(\gamma^*B)}{\M_i(\gamma^*C)} \to \M_i(\gamma^*C)$ is the same as
  $\ev \cdot \kappa_{C,i}$.
  %
  % https://q.uiver.app/#q=WzAsNixbMywzLCJcXE1faShcXGdhbW1hXipCKSJdLFsyLDIsIlxcb01faShcXGdhbW1hXipCKSJdLFsyLDAsIlxcb01faUIgXFx0aW1lc197XFxvTV9pQX0gXFxQaV97XFxvTV9pQn17XFxvTV9pQ30gIl0sWzMsMSwiXFxNX2koXFxnYW1tYV4qQikgXFx0aW1lc197XFxNX2koXFxnYW1tYV4qQSl9IFxcUGlfe1xcTV9pKFxcZ2FtbWFeKkIpfXtcXE1faShcXGdhbW1hXipDKX0iXSxbMSwxLCJcXE1faShcXGdhbW1hXipDKSJdLFswLDAsIlxcb01faUMiXSxbMSwwLCJcXGthcHBhX3tCLGl9IiwxXSxbMiwxXSxbMywwXSxbMiwzLCJcXE1faShcXGdhbW1hXipmKV4qXFxyaG9fe0MsaX0iLDFdLFszLDQsIlxcZXYiLDEseyJsYWJlbF9wb3NpdGlvbiI6NDB9XSxbMiw1LCJcXGV2IiwxXSxbNSw0LCJcXGthcHBhX3tDLGl9IiwxXSxbNSwxLCIiLDEseyJjdXJ2ZSI6NX1dLFs0LDAsIiIsMSx7ImN1cnZlIjo1fV1d&macro_url=https%3A%2F%2Fgist.githubusercontent.com%2Flim495062%2F710669c9f53e1a6bed1d326bb7111aa3%2Fraw%2F1209ccf81980c69d095b99aa9c36bdba6e963697%2Freedy-macros.tex
  \begin{equation}\label{eqn:rho-trans}\tag{\textsc{$\rho$-trans}}
    \begin{tikzcd}[column sep=0em]
      {\oM_{[i]_n}C} && {\oM_{[i]_n}B \times_{\oM_{[i]_n}A} \Pi_{\oM_{[i]_n}B}{\oM_{[i]_n}C} } \\
      & {\M_i(\gamma^*C)} && {\M_i(\gamma^*B) \times_{\M_i(\gamma^*A)} \Pi_{\M_i(\gamma^*B)}{\M_i(\gamma^*C)}} \\
      && {\oM_{[i]_n}(\gamma^*B)} \\
      &&& {\M_i(\gamma^*B)}
      \arrow["{\kappa_{B,i}}"{description}, from=3-3, to=4-4]
      \arrow[from=1-3, to=3-3]
      \arrow[from=2-4, to=4-4]
      \arrow["{\M_i(\gamma^*f)^*\rho_{C,i}}"{description}, from=1-3, to=2-4]
      \arrow["\ev"{description}, from=1-3, to=1-1]
      \arrow["{\kappa_{C,i}}"{description}, from=1-1, to=2-2]
      \arrow[bend right=30, from=1-1, to=3-3]
      \arrow["\ev"{description, pos=0.4}, from=2-4, to=2-2, crossing over]
      \arrow[bend right=30, from=2-2, to=4-4, crossing over]
    \end{tikzcd}
  \end{equation}

  Now, by \Cref{constr:sigmaCi}, taking the pullback of the top face of the
  diagram in the statement of this lemma along $f_i \colon Bi \to Ai$ yields the
  upper slanted face of the following diagram (over $Bi$).
  Further composing the slanted face by the counit
  $Bi \times_{Ai} \Pi_{Bi} (Bi \times_{M_i(\gamma^*B)} \M_i(\gamma^*C)) \to Bi
  \times_{M_i(\gamma^*B)} \M_i(\gamma^*C)$, we obtain the transpose
  of the top face of the diagram in the statement of this lemma under
  $f_i^* \dashv \Pi_{Bi}$ (again over $Bi$).
  Hence, it suffices to check that the middle layer below commutes.
  %
  % https://q.uiver.app/#q=WzAsOCxbMCwyLCJCaSBcXHRpbWVzX3tcXG9NX2lCfSBcXG9NX2lDIl0sWzEsNSwiQmkgXFx0aW1lc197XFxNX2koXFxnYW1tYV4qQil9IFxcTV9pKFxcZ2FtbWFeKkMpIl0sWzAsMCwiQmkgXFx0aW1lc197QWl9IFxcUGlfe0JpfShcXGJ1bGxldCkiXSxbMSwzLCJCaSBcXHRpbWVzX3tBaX0gXFxQaV97Qml9KFxcYnVsbGV0KSJdLFszLDUsIkJpIFxcdGltZXNfe1xcTV9pKFxcZ2FtbWFeKkEpfSBcXFBpX3tcXE1faShcXGdhbW1hXipCKX17XFxNX2koXFxnYW1tYV4qQyl9Il0sWzIsMiwiQmkgXFx0aW1lc197XFxvTV9pQX0gXFxQaV97XFxvTV9pQn17XFxvTV9pQ30iXSxbMCw0LCJCaSJdLFsxLDcsIkJpIl0sWzAsMSwiQmkgXFx0aW1lc197XFxNX2koXFxnYW1tYV4qQil9IFxca2FwcGFfe0MsaX0iLDFdLFsyLDAsIlxcZXYiLDFdLFszLDEsIlxcZXYiLDFdLFsyLDMsIkJpIFxcdGltZXNfe0FpfSBcXHNpZ21hX3tDLGl9IiwxXSxbNCwzLCJCX2kgXFx0aW1lc197QWl9IChBaSBcXHRpbWVzX3tcXE1faShcXGdhbW1hXipBKX0gXFxldileXFxkZGFnZ2VyIiwxXSxbNSw0LCJCaSBcXHRpbWVzX3tBaX0gXFx0cmhvX3tDLGl9IiwxXSxbNSwyLCJCX2kgXFx0aW1lc197QWl9IChBaSBcXHRpbWVzX3tcXG9NX2lBfSBcXGV2KV5cXGRkYWdnZXIiLDFdLFs1LDAsIkFpIFxcdGltZXNfe1xcb01faUF9IFxcZXYiLDEseyJsYWJlbF9wb3NpdGlvbiI6MzB9XSxbNCwxLCJBaSBcXHRpbWVzX3tcXE1faShcXGdhbW1hXipBKX0gXFxldiIsMV0sWzAsNl0sWzEsN10sWzQsNywiIiwxLHsiY3VydmUiOi00fV0sWzYsN10sWzUsNiwiIiwxLHsiY3VydmUiOi00fV1d&macro_url=https%3A%2F%2Fgist.githubusercontent.com%2Flim495062%2F710669c9f53e1a6bed1d326bb7111aa3%2Fraw%2F1209ccf81980c69d095b99aa9c36bdba6e963697%2Freedy-macros.tex
  \begin{equation*}
    \begin{tikzcd}[column sep=0em]
      {B[i]_n \times_{A[i]_n} \Pi_{B[i]_n}(\bullet)} \\
      \\
      {B[i]_n \times_{\oM_{[i]_n}B} \oM_{[i]_n}C}
      &&
      {B[i]_n \times_{\oM_{[i]_n}A} \Pi_{\oM_{[i]_n}B}{\oM_{[i]_n}C}} \\
      & {Bi \times_{Ai} \Pi_{Bi}(\bullet)} \\
      B[i]_n \\
      & {Bi \times_{\M_i(\gamma^*B)} \M_i(\gamma^*C)} && {Bi \times_{\M_i(\gamma^*A)} \Pi_{\M_i(\gamma^*B)}{\M_i(\gamma^*C)}} \\
      \\
      & Bi
      \arrow["\ev"{description}, from=1-1, to=3-1]
      \arrow["{(\id, \M_i(\gamma^*f)^*\rho_{C,i}) = Bi \times_{Ai} \trho_{C,i}}", from=3-3, to=6-4]
      \arrow["{B[i]_n \times_{A[i]_n} (A[i]_n \times_{\oM_{[i]_n}A} \ev\relax)^\ddagger}"', from=3-3, to=1-1]
      \arrow["{(\id,\ev\relax)}"{description, pos=0.5}, from=3-3, to=3-1]
      \arrow["{(\id, \ev\relax)}"{description}, from=6-4, to=6-2]
      \arrow[from=3-1, to=5-1]
      \arrow[from=6-2, to=8-2]
      \arrow[from=5-1, to=8-2, "="{description}]
      \arrow[bend left=30, from=6-4, to=8-2]
      \arrow[bend left=30, from=3-3, to=5-1]
      \arrow["{(\id, \kappa_{C,i})}"{description}, from=3-1, to=6-2, crossing over]
      \arrow["{Bi \times_{Ai} \sigma_{C,i}}"{description, pos=0.3}, from=1-1, to=4-2, crossing over]
      \arrow["\ev"{description, pos=0.25}, from=4-2, to=6-2, crossing over]
      \arrow["{B_i \times_{Ai} (Ai \times_{\M_i(\gamma^*A)} \ev)^\ddagger}"{description}, from=6-4, to=4-2, crossing over]
    \end{tikzcd}
  \end{equation*}
  But the middle layer above is exactly the pullback of the top face of
  \Cref{eqn:rho-trans} under $\om_iB$.
  The result now follows.
\end{proof}

\begin{lemma}\label{lem:right-face}
  The map $\sigma_{C,i}$ constructed in \Cref{constr:sigmaCi} is under
  $\Pi_{Bi}{Ci}$.
  \begin{equation*}
    \begin{tikzcd}
      & \Pi_{B[i]_n}{C[i]_n} = \Pi_{Bi}{Ci}
      \ar[rd, "{\Pi_{Bi}(g_i, \m_i(\gamma^*C))}"]
      \ar[ld, "{\Pi_{Bi}(g_i, \om_iC)}"']
      \\
      \Pi_{Bi}(Bi \times_{\oM_iB} \oM_iC) \ar[rr, "\sigma_{C,i}"']
      &
      &
      \Pi_{Bi}(Bi \times_{\M_i(\gamma^*B)} \M_i(\gamma^*C))
    \end{tikzcd}
  \end{equation*}
\end{lemma}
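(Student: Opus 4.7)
The plan is to exploit the functoriality of $\Pi_{Bi}$ and reduce the commutativity of the triangle to a routine identity of maps in the slice $\McE/Bi$, where the substantive input is \Cref{lem:mat-comp}.

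First, I would re-read \Cref{constr:sigmaCi} to extract the fact that $\sigma_{C,i}$ is nothing more than the image under the functor $\Pi_{Bi}\colon \sfrac{\McE}{Bi} \to \sfrac{\McE}{Ai}$ of the map $(\id,\kappa_{C,i}) \colon Bi\times_{\oM_{[i]_n}B}\oM_{[i]_n}C \to Bi\times_{\M_i(\gamma^*B)}\M_i(\gamma^*C)$ over $Bi$. This is because the transpose of $\sigma_{C,i}$ under $f_i^*\dashv\Pi_{Bi}$ is prescribed to be $(\id,\kappa_{C,i})\cdot\ev$, and by the triangle identity this description characterises $\Pi_{Bi}(\id,\kappa_{C,i})$ uniquely. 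In the same vein, both $\Pi_{Bi}(g_i,\om_{[i]_n}C)$ and $\Pi_{Bi}(g_i,\m_i(\gamma^*C))$ are the images under $\Pi_{Bi}$ of the evident comparison maps out of $Ci = C[i]_n$; checking that $(g_i,\om_{[i]_n}C)$ and $(g_i,\m_i(\gamma^*C))$ really do factor through the respective pullbacks is an immediate consequence of the naturality of $\om$ and $\m$ in the square involving $g\colon C\to B$.

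Second, by functoriality of $\Pi_{Bi}$, it suffices to verify the identity
\begin{equation*}
  (\id,\kappa_{C,i}) \cdot (g_i,\om_{[i]_n}C) \;=\; (g_i,\m_i(\gamma^*C))
\end{equation*}
as maps from $Ci$ into $Bi\times_{\M_i(\gamma^*B)}\M_i(\gamma^*C)$ in $\sfrac{\McE}{Bi}$. Post-composing with the two projections, the first coordinate is trivially $g_i$, while the second coordinate is $\kappa_{C,i}\cdot\om_{[i]_n}C$. By \Cref{lem:mat-comp}, this equals $\m_i(\gamma^*C)$, which is exactly the required second coordinate. Applying $\Pi_{Bi}$ then yields $\sigma_{C,i}\cdot\Pi_{Bi}(g_i,\om_{[i]_n}C) = \Pi_{Bi}(g_i,\m_i(\gamma^*C))$, which is the asserted commutativity.

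There is no genuine obstacle here: the only mildly delicate point is recognising that the transpose presentation of $\sigma_{C,i}$ means it literally is $\Pi_{Bi}$ applied to $(\id,\kappa_{C,i})$, after which the argument is purely formal. The content is entirely packaged inside \Cref{lem:mat-comp}, so this lemma should be viewed as the bookkeeping that allows the two descriptions of the matching map on $C$ to be interchanged inside the dependent product functor.
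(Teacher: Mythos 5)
Your proposal is correct and follows essentially the same route as the paper: recognize $\sigma_{C,i}$ as $\Pi_{Bi}$ applied to $(\id,\kappa_{C,i})$, reduce to checking $(\id,\kappa_{C,i})\cdot(g_i,\om_{[i]_n}C)=(g_i,\m_i(\gamma^*C))$ by post-composing with the two pullback projections, and close the argument using the triangle $\kappa_{C,i}\cdot\om_{[i]_n}C=\m_i(\gamma^*C)$ from \Cref{lem:mat-comp}. No gaps.
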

\begin{proof}
  As observed in \Cref{constr:sigmaCi}, $\sigma_{C,i}$ is the functorial action
  of $\Pi_{Bi}$ on
  $(\id, \kappa_{C,i})
  \colon
  B[i]_n \times_{\oM_{[i]_n}B} \oM_{[i]_n}C
  \to
  Bi \times_{\M_i(\gamma^*B)} \M_i(\gamma^*C)$ induced by the map of cospans
  \begin{equation*}
    \begin{tikzcd}
      B[i]_n \ar[r, "{\om_{[i]_n}B}"] \ar[d, "="']
      & \oM_{[i]_n}B \ar[d, "\kappa_{B,i}"{description}]
      & \ar[l, "\oM_{[i]_n}g"'] \oM_{[i]_n}C \ar[d, "\kappa_{C,i}"]
      \\
      Bi \ar[r, "\m_i(\gamma^*B)"'] & \M_i(\gamma^*B) & \ar[l, "\om_i(\gamma^*g)"] \M_i(\gamma^*C)
    \end{tikzcd}
  \end{equation*}
  Therefore, it suffices to check that under $Ci$, one has
  $(\id,\kappa_{C,i}) \cdot (g_i, \om_iC) = (g_i, m_i(\gamma^*C))$.
  But this is obvious because composing with the limiting leg
  $Bi \times_{\M_i(\gamma^*B)} \M_i(\gamma^*C) \to Bi$, both maps
  $(\id,\kappa_{C,i}) \cdot (g_i, \om_iC)$ and $(g_i, m_i(\gamma^*C))$
  give rise to $g_i$ (as the map of cospans inducing $\sigma'$ is identity on $Bi$).
  And composing both maps with the limiting leg
  $Bi \times_{\M_i(\gamma^*B)} \M_i(\gamma^*C) \to \M_i(\gamma^*C)$, one obtains
  $\kappa_{C,i} \cdot \om_{[i]}C = \m_i(\gamma^*C)$ because of \Cref{lem:mat-comp}.
\end{proof}

\begin{theorem}\label{thm:pi-comp-iso}
  Under \Cref{asm:E-enough-limits,asm:Ho-I-epi-E,asm:ho-powerful,asm:bd-initial}, the canonical
  comparison map between dependent products from \Cref{constr:pi-comp} is a
  natural isomorphism so
  $\gamma^* \colon \McE^{\Ho\McI_\infty} \to \McE^{\McI_\infty}$ preserves
  internal products.

  Explicitly, this means that for $(\McN,<,\McI,\partial\McI,\McI^\circ)$ the
  data for an iterated gluing diagram with $\McI_\infty$ equipped with a set of
  weak equivalences $\McW$ and a category $\McE$ such that the following
  assumptions hold:
  \begin{itemize}
    \item All limits indexed by each $\oint_{j \in \McI_{<n}} \McI^\circ(i,j)$
    and $([i]_n/\Ho\McI)^\circ$ for $n \in \McN$ and $i \in \partial\McI_n$
    exists in $\McE$.
    \item All maps in $\Ho\McI_\infty$ are epis.
    \item If one puts $\Ho\McI_\infty \coloneqq \McW^{-1}\McI_\infty$ and
    $\gamma \colon \McI_\infty \to \Ho\McI_\infty$ for the homotopical
    localisation then $\gamma$ restricts to
    $\gamma|_i \colon i/\McI_{<n} \to (i/\Ho\McI)^\circ$ that is initial
    (i.e. for each $\id \neq f \colon [i]_n \to j \in \Ho\McI$, the comma
    category $\gamma|_i \downarrow f$ is non-empty and connected) for each
    $i \in \partial\McI_n$.
  \end{itemize}
  then for a map $f \colon B \to A \in \McE^{\Ho\McI_\infty}$ such
  that for each $n \in \McN$ and $i \in \partial\McI_n$,
  \begin{itemize}
    \item The restriction
    $f|_{=n} \colon B|_{=n} \to A|_{=n} \in \McE^{\partial\McI_n}$
    \item The functorial action of the $n$-th absolute matching object functor
    $\M_n(\gamma^*f) \colon \M_n(\gamma^*B) \to \M_n(\gamma^*A) \in
    \McE^{\partial\McI_n}$
    \item The component $f_{[i]_n} \colon B[i]_n \to A[i]_n \in \McE$
    \item The map $\oM_{[i]_n}f \colon \oM_{[i]_n}B \to \oM_{[i]_n}A \in \McE$
    as from \Cref{def:local-boundary}
  \end{itemize}
  are powerful, along with $g \colon C \to B \in \McE^{\Ho\McI}/B$, there is a
  canonical isomorphism
  \begin{align*}
    \gamma^*(\Pi_BC) \cong \Pi_{\gamma^*B}{\gamma^*C} \in \sfrac{\McE^{\McI_\infty}}{\gamma^*A}
  \end{align*}
  given by the map $\varphi_C$ from \Cref{constr:pi-comp}.
\end{theorem}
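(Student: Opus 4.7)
The plan is to verify that at each object $[i]_n \in \Ho\McI_\infty$ with $i \in \partial\McI_n$, the component $\varphi_{C,i}$ from \Cref{constr:pi-comp} is an isomorphism; naturality of $\varphi$ in $C$ follows immediately from how $\varphi$ was defined via the transpose in \Cref{eqn:phi-def}, together with naturality of the limiting cones defining the matching objects.

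The key observation is that both $\gamma^*(\Pi_B C)[i]_n$ (by \Cref{lem:ho-I-pi} applied at $[i]_n$) and $(\Pi_{\gamma^*B}{\gamma^*C})i$ (by \Cref{thm:E-iter-glue-Pi}) are constructed as pullbacks of three-term cospans of precisely the same shape, namely $A \bullet \times \Pi_{\bullet B}{\bullet C} \to \Pi_{\bullet B}(\bullet B \times \bullet C) \leftarrow \Pi_{B}{C}$, with $\bullet$ instantiated at $\oM_{[i]_n}$ for the former and $\M_i$ for the latter. I will exhibit a morphism of these two cospans whose three vertical components are $\psi_{C,i}$, $\trho_{C,i}$ and the identity on $\Pi_{B[i]_n}{C[i]_n} = \Pi_{Bi}{Ci}$, with target leg modified by $\sigma_{C,i}$, and whose induced map between the pullbacks is exactly $\varphi_{C,i}$.

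For this, I will assemble the required cube by invoking the four face lemmas: \Cref{lem:left-face} supplies the left face relating $\varphi_{C,i}$ to $\psi_{C,i}$; \Cref{lem:bot-face-left} and \Cref{lem:bot-face-right} supply commutativity of the two halves of the bottom face of the cube, relating $\psi_{C,i}$, $\trho_{C,i}$ and $\sigma_{C,i}$ through the constructions $(-)^\ddagger$; and \Cref{lem:right-face} supplies the right face, checking that $\sigma_{C,i}$ is compatible with the projection from $\Pi_{B i}{C i}$. Together these present the two pullback squares as the front and back of a commutative cube whose lateral faces are the three constructed comparison morphisms, so $\varphi_{C,i}$ is the canonically induced comparison map between pullbacks of isomorphic cospans.

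The decisive input — and the hardest conceptual step, although already isolated into a separate lemma — is \Cref{asm:bd-initial} entering through \Cref{lem:mat-comp-iso}: it guarantees that $\kappa_{X,i}$ is a natural isomorphism for every $X$. This in turn ensures, by direct inspection of \Cref{constr:pi-bd-iso} and \Cref{constr:sigmaCi}, that both $\trho_{C,i}$ and $\sigma_{C,i}$ are isomorphisms (indeed $\trho_{C,i}$ was defined via the map of cospans whose two non-identity components are $\kappa_{A,i}$ and $\rho_{C,i}$, each manifestly invertible once $\kappa$ is, and $\sigma_{C,i}$ is obtained by applying $\Pi_{Bi}$ to an iso). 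With two of the three legs of the cospan morphism being isomorphisms and the third the identity, the induced map between pullbacks is an isomorphism; this is precisely $\varphi_{C,i}$, completing the proof.
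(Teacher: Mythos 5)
There is a genuine gap. You have misidentified the cospan over which the pullbacks are taken, and this hides the part of the argument that actually needs work. The pullback square from \Cref{lem:ho-I-pi} (resp.\ \Cref{thm:E-iter-glue-Pi}) presents $(\Pi_BC)[i]_n$ (resp.\ $(\Pi_{\gamma^*B}\gamma^*C)i$) as the pullback of a cospan whose left-hand corner is $A[i]_n \times_{\oM_{[i]_n}A}\oM_{[i]_n}(\Pi_BC)$ (resp.\ $Ai \times_{\M_i(\gamma^*A)}\M_i(\Pi_{\gamma^*B}\gamma^*C)$) --- \emph{not} $A[i]_n \times \Pi_{\oM B}{\oM C}$ as you wrote. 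The latter is merely an intermediate node through which the left leg of the cospan factors. Consequently the three vertical components of the cospan morphism are $\psi_{C,i}$, $\sigma_{C,i}$, and the identity on $\Pi_{Bi}Ci$; $\trho_{C,i}$ is a map on the intermediate node and is used only to check that the two triangular halves of the bottom face commute (\Cref{lem:bot-face-left,lem:bot-face-right}), not as a leg of the cospan morphism.

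The consequence is that your argument that ``two of the three legs are isos and the third is the identity'' does not close. \Cref{lem:mat-comp-iso} gives that $\kappa_{X,i}$ is an isomorphism for every diagram $X$, and from this $\trho_{C,i}$ and $\sigma_{C,i}$ are indeed isomorphisms for free, but $\psi_{C,i}$ is not: it is built (\Cref{constr:psiCi}) from $\kappa_{A,i}$ (iso) \emph{and} from $\tphi_{C,i}\colon \oM_{[i]_n}(\Pi_BC) \to \M_i(\Pi_{\gamma^*B}\gamma^*C)$, and $\tphi_{C,i}$ is a comparison of limits whose legs are precisely the maps $\varphi_{C,j}$ for $u^-\colon i\to j\in i/\McI_{<n}$ (see \Cref{eqn:tphi-def}). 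In other words $\tphi_{C,i}$ compares the matching object of $\Pi_BC$ with the matching object of $\Pi_{\gamma^*B}\gamma^*C$, and without already knowing that $\Pi_BC$ and $\Pi_{\gamma^*B}\gamma^*C$ agree at lower degrees, there is nothing forcing $\tphi_{C,i}$ (hence $\psi_{C,i}$) to be invertible. Note in particular that $\kappa_{\Pi_BC,i}$ being an iso is not enough: its codomain is $\M_i(\gamma^*(\Pi_BC))$, not $\M_i(\Pi_{\gamma^*B}\gamma^*C)$, and identifying these is the content of the theorem itself. The paper closes this gap by a levelwise induction on degree: assuming $\varphi_{C,j}$ is an isomorphism for all $j$ of lower degree, one deduces that $\tphi_{C,i}$ is a morphism of limits whose every leg $\varphi_{C,j}$ is an iso, hence an iso; then $\psi_{C,i}$ is an iso; then, all three cospan legs being isos, $\varphi_{C,i}$ is one too. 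Your proof needs this induction --- as written, the isomorphism of the pullbacks cannot be concluded.
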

\begin{proof}
  Fix $g \colon C \to B \in \McE^{\Ho\McI_\infty}/B$.
  For each $n \in \McN$ and $i \in \partial\McI_n$,
  we have a map between pullbacks constructed as follows:
  \begin{equation*}
    \begin{tikzcd}[column sep=-4em, nodes={font=\small}]
      (\Pi_BC)[i]_n
      \ar[dd] \ar[rrrr]
      \ar[rrdd, phantom, "\lrcorner"{pos=0}]
      \ar[rd, "\phi_{C,i}"{description}]
      &
      &
      &
      &
      \Pi_{Bi}{Ci}
      \ar[dd]
      \ar[rd, "="]
      \\
      &[-4em]
      (\Pi_{\gamma^*B}{\gamma^*C})i
      \ar[rrrr, crossing over]
      \ar[rrdd, phantom, "\lrcorner"{pos=0}]
      &
      &
      &
      &
      \Pi_{B[i]_n}{C[i]_n}
      \ar[dd]
      \\
      A[i]_n \times_{\oM_{[i]_n} A} \oM_{[i]_n}(\Pi_B C)
      \ar[rr]
      \ar[rd, "{\psi_{C,i}}"{description}]
      &
      &
      A[i]_n \times_{\oM_{[i]_n} A} \Pi_{\oM_{[i]_n} B}{\oM_{[i]_n}C}
      \ar[rr]
      \ar[rd, "\trho_{C,i}", "\cong"']
      &
      &
      \Pi_{B[i]_n}(B[i]_n \times_{\oM_{[i]_n} B} \oM_{[i]_n}C)
      \ar[rd, "\sigma_{C,i}", "\cong"']
      \\
      &
      Ai \times_{\M_i{\gamma^*A}} \M_i(\Pi_{\gamma^*B}\gamma^*C)
      \ar[uu, crossing over, leftarrow]
      \ar[rr]
      &
      &
      A \times_{\M_i{(\gamma^*A)}} \Pi_{\M_i(\gamma^*B)}\M_i(\gamma^*C)
      \ar[rr]
      &
      &
      \Pi_{Bi}(Bi \times_{\M_i(\gamma^*B)} \M_i(\gamma^*C))
    \end{tikzcd}
  \end{equation*}
  where:
  \begin{itemize}
    \item The front face is by \Cref{thm:E-iter-glue-Pi}.

    \item The back face is by \Cref{lem:ho-I-pi}.
    \item The left face is by \Cref{lem:left-face}.
    \item The bottom left and right faces are respectively by
    \Cref{lem:bot-face-left,lem:bot-face-right}.
    \item The right face is by \Cref{lem:right-face}.
  \end{itemize}
  And moreover, the maps $\tphi_{C,i}$ and $\sigma_{C,i}$ are seen to be
  isomorphisms by \Cref{constr:pi-bd-iso,constr:sigmaCi}.
  We now show that for each $n \in \McN$ and $i \in \partial\McI_n$, we have an
  isomorphism
  $\phi_{C,i} \colon (\Pi_BC)[i]_n \to (\Pi_{\gamma^*B}{\gamma^*C})i$ by way of
  levelwise induction.

  Fix $n \in \McN$ and $i \in \partial\McI_n$.
  Assume that $\psi_{C,j}$ is an isomorphism for each $j \in \McI_{<n}$.
  Per \Cref{constr:pi-comp}, we see that
  $\tphi_{C,i} \colon \oM_{[i]_n}(\Pi_BC) \to \M_i(\Pi_{\gamma^*B}{\gamma^*C})$ is
  constructed as the unique map such
  that for each $u^- \colon i \to j \in i/\McI_{<n}$,
  \begin{equation*}
    \begin{tikzcd}
      \oM_{[i]_n}(\Pi_BC)
      \ar[r, "\tphi_{C,i}"]
      \ar[d, "{\overline\pi}_{[u^-]_n}"']
      &
      \M_i(\Pi_{\gamma^*B}{\gamma^*C})
      \ar[d, "\pi_{u^-}"]
      \\
      (\Pi_BC)j
      \ar[r, "\varphi_{C,j}"']
      &
      (\Pi_{\gamma^*B}{\gamma^*C})j
    \end{tikzcd}
  \end{equation*}
  In particular, for each $u^- \colon i \to j \in i/\McI_{<n}$, by induction,
  the bottom map
  $\varphi_{C,j} \colon (\Pi_BC)[j]_n \cong (\Pi_{\gamma^*B}{\gamma^*C})j$
  is an isomorphism.
  Hence,
  $\tphi_{C,i} \colon \oM_{[i]_n}(\Pi_BC) \cong
  \Pi_i(\Pi_{\gamma^*B}{\gamma^*C})$ is also an isomorphism.
  But then by \Cref{constr:psiCi}, the map
  $\psi_{C,i}
  \colon
  A[i]_n \times_{\oM_{[i]_n}A} \oM_{[i]_n}(\Pi_BC)
  \to
  Ai \times_{\M_i(\gamma^*A)} \M_i(\Pi_{\gamma^*B}{\gamma^*C})$
  is induced by the map of cospans
  \begin{equation*}
    \begin{tikzcd}[row sep=large, column sep=huge]
      A[i]_n
      \ar[r, "\om_{[i]_n}A"]
      \ar[d, "="']
      &
      \oM_{[i]_n}A
      \ar[d, "\kappa_{A,i}"{description}]
      &
      \oM_{[i]_n}(\Pi_BC)
      \ar[l, "\oM_i(\Pi_Bg)"']
      \ar[d, "\tphi_{C,i}", "\cong"']
      \\
      Ai
      \ar[r, "\m_i(\gamma^*A)"']
      &
      \M_i(\gamma^*A)
      &
      \M_i(\Pi_{\gamma^*B}{\gamma^*C})
      \ar[l, "\M_i(\Pi_{\gamma^*B}{\gamma^*g})"]
    \end{tikzcd}
  \end{equation*}
  And further \Cref{lem:mat-comp-iso} gives $\kappa_{A,i}$ as an isomorphism, so
  $\psi_{C,i}$ is an isomorphism.
  From this, it follows that
  $\varphi_{C,i} \colon (\Pi_BC)[i]_n \cong (\Pi_{\gamma^*B}{\gamma^*C})i$.

  Because $\varphi_{C,i}$ is natural in $C$ and $i$ and $\varphi$ is over
  $\gamma^*A$ by \Cref{constr:pi-comp}, this shows that $\gamma^*$ preserves
  internal products.
\end{proof}

Specialising to the case of inverse diagrams, we conclude the following.
\begin{corollary}\label{cor:inv-pi-comp-iso}
  Suppose
  $(\McN,\McI,\partial\McI,\McI^\circ) = (\bN, \McI_{\leq -},\bG_-(\McI),
  \McI_{\leq -}(-,-))$ is the iterated gluing data for a small generalised inverse
  category $\McI$.

  Let $\McI$ be equipped with a wide subcategory of weak equivalences
  $\McW \subseteq \McI$ and put $\Ho\McI \coloneqq \McW^{-1}\McI$ with
  homotopical localisation $\gamma \colon \McI \to \Ho\McI$.
  Assume that
  \begin{itemize}
    \item All maps in $\Ho\McI$ are epis.
    \item The restriction $\gamma|_i \colon \McI^-(i) \to (i/\Ho\McI)^\circ$ is
    an initial functor
  \end{itemize}

  Let $\McE$ be a complete category with an initial object where all maps are
  powerful.
  Denote by $\gamma^* \colon \McE^{\Ho\McI} \to \McE^\McI$ the inclusion of
  $\McI$-shaped homotopical diagrams into the category of all $\McI$-shaped
  diagrams.
  Then, for any maps of homotopical categories
  $f \colon B \to A \in \McE^{\Ho\McI}$, one has an isomorphism
  \begin{align*}
    \gamma^*(\Pi_B-) \cong \Pi_{\gamma^*B}\gamma^*(-) \colon \McE^{\Ho\McI}/B \rightrightarrows \McE^{\McI}/\gamma^*A
  \end{align*}
\end{corollary}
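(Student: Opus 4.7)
The plan is to derive this corollary as a direct instance of~\Cref{thm:pi-comp-iso}, so the work reduces to checking that each of its hypotheses holds in the specialised setting of a generalised inverse category with powerful ambient category~$\McE$.

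First, I would unpack the identification of iterated-gluing data: for $(\bN,\McI_{\leq-},\bG_-(\McI),\McI_{\leq-}(-,-))$, the $n$-th stratum $\partial\McI_n$ is the groupoid $\bG_n(\McI)$, and for $i\in\bG_n(\McI)$ the comma $i/\McI_{<n}$ coincides with the strictly degree-decreasing underslice $\McI^-(i)$ from~\Cref{def:gen-inv}. Under this identification, the hypothesis ``$\gamma|_i\colon\McI^-(i)\to (i/\Ho\McI)^\circ$ is initial'' is literally the initiality condition~\Cref{asm:bd-initial} required by~\Cref{thm:pi-comp-iso}. The assumption that $\McE$ is complete furnishes the limits demanded by~\Cref{asm:E-enough-limits,asm:Ho-I-epi-E} (both the $\oint_{j\in\McI_{<n}}\McI^\circ(i,j)$-indexed limits and the $([i]_n/\Ho\McI)^\circ$-indexed limits), and the epi hypothesis in $\Ho\McI$ is the remaining part of~\Cref{asm:Ho-I-epi-E}.

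Next, I would verify the fourfold powerful hypothesis of~\Cref{asm:ho-powerful} for an arbitrary $f\colon B\to A\in\McE^{\Ho\McI}$. The two components living in $\McE$ itself, namely $f_{[i]_n}\colon B[i]_n\to A[i]_n$ and $\oM_{[i]_n}f\colon\oM_{[i]_n}B\to\oM_{[i]_n}A$, are powerful because by hypothesis every map in $\McE$ is. For the two maps $f|_{=n}\colon B|_{=n}\to A|_{=n}$ and $\M_n(\gamma^*f)\colon\M_n(\gamma^*B)\to\M_n(\gamma^*A)$ in the groupoid-indexed category $\McE^{\bG_n(\McI)}$, each of their pointwise components lies in $\McE$ and is therefore powerful; applying~\Cref{thm:all-invert-Pi-htpy}, whose hypothesis that $\bG_n(\McI)$ is connected or $\McE$ has an initial object is guaranteed here because $\McE$ has an initial object, promotes pointwise powerfulness to powerfulness in~$\McE^{\bG_n(\McI)}$.

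With all hypotheses of~\Cref{thm:pi-comp-iso} verified for every $f\colon B\to A\in\McE^{\Ho\McI}$ and every $g\colon C\to B\in\McE^{\Ho\McI}/B$, the canonical comparison $\varphi_C\colon\gamma^*(\Pi_BC)\to\Pi_{\gamma^*B}\gamma^*C$ from~\Cref{constr:pi-comp} is an isomorphism over $\gamma^*A$, and naturality in $C$ was already recorded there. This yields the asserted natural isomorphism of functors $\McE^{\Ho\McI}/B\rightrightarrows\McE^{\McI}/\gamma^*A$. The only real work is the notational translation between inverse-structure language (the $\McI^-(i)$ and degree filtration) and iterated-gluing language (the strata $\partial\McI_n$ and the interior $\McI_{<n}$); no additional obstacle arises beyond that bookkeeping.
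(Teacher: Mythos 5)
Your proposal is correct and follows the paper's approach exactly: the paper's own proof is the one-liner ``By direct application of \Cref{thm:pi-comp-iso,thm:all-invert-Pi-htpy},'' and you have simply unpacked that citation by checking each hypothesis of \Cref{thm:pi-comp-iso}, using \Cref{thm:all-invert-Pi-htpy} to promote the pointwise powerfulness in $\McE$ to powerfulness of $f|_{=n}$ and $\M_n(\gamma^*f)$ in the groupoid-indexed category $\McE^{\bG_n(\McI)}$.

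One small remark on wording: you cite \Cref{thm:all-invert-Pi-htpy} as having the hypothesis ``$\bG_n(\McI)$ is connected or $\McE$ has an initial object,'' but as stated that theorem only requires the components of $h$ to be powerful; the connectedness/initial-object dichotomy belongs to \Cref{lem:gpd-conn-mono-ptwise,prop:gpd-Omega-ptwise} in the subobject-classifier discussion (and is also listed in \Cref{cor:I-Pi}, which matches your phrasing). This does not affect the argument, since $\McE$ has an initial object by assumption, but attributing the extra hypothesis to \Cref{thm:all-invert-Pi-htpy} slightly misreads it. Separately, the initial-object hypothesis \emph{is} genuinely used in this corollary, but its real point of entry is \Cref{lem:local-Pi} (used via \Cref{lem:ho-I-pi} inside the proof of \Cref{thm:pi-comp-iso}), so it is worth noting that this is where the hypothesis earns its keep, rather than in the groupoid dependent-product step.
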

\begin{proof}
  By direct application of \Cref{thm:pi-comp-iso,thm:all-invert-Pi-htpy}.
\end{proof}

%%% Local Variables:
%%% TeX-engine: default
%%% TeX-master: "./main.tex"
%%% End:

\printbibliography

\end{document}